\newcommand{\hair}{\ifmmode\mskip1mu\else\kern0.08em\fi}
\newcommand{\Var}{\mathrm{Var}}
\newcommand{\E}{\mathbb{E}}
\renewcommand{\P}{\mathbb{P}}
\newcommand{\A}{\mathcal{A}}
\newcommand{\F}{\mathcal{F}}
\newcommand{\FBB}{\mathcal{F}_{\mathrm{BB}}}
\newcommand{\R}{\mathbb{R}}
\newcommand{\N}{\mathbb{N}}
\newcommand{\Cov}{\mathrm{Cov}}
\renewcommand{\tilde}{\widetilde}
\newcommand{\one}{\mathbbm{1}}
\newcommand{\eps}{\varepsilon}
\newcommand{\B}{\mathcal{B}}
\newcommand{\PF}{\mathbb{P}_{\!\mathcal F}}
\newcommand{\EF}{\mathbb{E}_{\!\mathcal F}}
\newcommand{\dip}{d_\mathrm{ip}}
\newtheorem{theorem}{Theorem}[chapter]
\newtheorem{proposition}[theorem]{Proposition}
\newtheorem{lemma}[theorem]{Lemma}
\newtheorem{corollary}[theorem]{Corollary}
\theoremstyle{definition}
\newtheorem{definition}[theorem]{Definition}
\theoremstyle{definition}
\newtheorem{remark}[theorem]{Remark}
\theoremstyle{definition}
\newtheorem*{notation}{Notation}
\def\subsection{\@startsection{subsection}{2}%
  \z@{.5\linespacing\@plus.7\linespacing}{-.5em}%
  {\normalfont\bfseries}}
\def\subsubsection{\@startsection{subsubsection}{3}%
  \z@\z@{-.5em}%
  {\normalfont\itshape}}
\renewenvironment{proof}[1][\proofname]{\par
  \pushQED{\qed}%
  \normalfont \topsep6\p@\@plus6\p@\relax
  \trivlist
  \normalparindent\z@
  \item[\hskip\labelsep
        \scshape
    #1\@addpunct{.}]\ignorespaces
}{%
  \popQED\endtrivlist\@endpefalse
}
\newcommand{\tent}{\mathrm{Tent}}
\newcommand{\jcost}{V}
\newcommand{\scost}{S} 
\newcommand{\pole}{P}
\renewcommand{\L}{\mathcal L}
\newcommand{\fav}{\mathsf{Fav}}
\newcommand{\phiend}{\phi_\mathrm{end}}
\newcommand{\phistart}{\phi_\mathrm{start}}
\newcommand{\nt}{\mathsf{NT}}
\newcommand{\nz}{\mathsf{NZ}}
\newcommand{\maxloc}{\mathsf{MaxLoc}}
\newcommand{\numnt}{\mathsf{NumNT}}
\newcommand{\numnz}{\mathsf{NumNZ}}
\newcommand{\Corner}{\mathrm{Corner}}
\newcommand{\pass}{\mathsf{Pass}}
\newcommand{\bme}{B_{\mathrm{me}}}
\renewcommand{\epsilon}{\eps}
\newcommand{\intint}[1]{\llbracket 1,#1 \rrbracket}
\renewcommand{\mf}{\mathfrak}
\newcommand{\mrm}{\mathrm}
\renewcommand{\emptyset}{\text{\O}}
\title{Brownian structure in the KPZ fixed point}
\author{Jacob Calvert}
\address{Department of Statistics\\
 U.C. Berkeley \\
  451 Evans Hall \\
  Berkeley, CA, 94720-3840 \\
  U.S.A.}
  \email{jacob\_calvert@berkeley.edu}
 \author{Alan Hammond}
\address{Departments of Mathematics and Statistics\\
 U.C. Berkeley \\
  899 Evans Hall \\
  Berkeley, CA, 94720-3840 \\
  U.S.A.}
 \email{alanmh@berkeley.edu}
 \author{Milind Hegde}
\address{Department of Mathematics\\
 U.C. Berkeley \\
  1039 Evans Hall \\
  Berkeley, CA, 94720-3840 \\
  U.S.A.}
 \email{milind.hegde@berkeley.edu}
\subjclass{82C22, 82B23 and 60H15.}
\keywords{Brownian last passage percolation, multi-line Airy process, Airy line ensemble.}
\begin{document}

\begin{abstract}
Many models of one-dimensional local random growth are expected to lie in the Kardar-Parisi-Zhang (KPZ)
universality class. For such a model, the interface profile at advanced time may be viewed in scaled coordinates specified via characteristic KPZ scaling exponents of one-third and two-thirds.
When the long time limit of this scaled interface is taken, it is expected---and proved for a few integrable models---that, up to a parabolic shift, the Airy$_2$ process $\mathcal{A}:\R \to \R$ is obtained. 
This process may be embedded via the Robinson-Schensted-Knuth correspondence as the uppermost curve in an $\mathbb{N}$-indexed system of random continuous curves, the Airy line ensemble.

Among our principal results is the assertion that the Airy$_2$ process enjoys a very strong similarity to Brownian motion $B$ (of rate two) on unit-order intervals.  This result yields bounds on the Airy$_2$ probabilities of a large class of events from the counterpart bounds on Brownian motion probabilities. The result has the consequence that the Radon-Nikodym derivative of the law of $\mathcal{A}$ on say $[-1,1]$, with respect to the law of $B$ on the same interval, lies in every $L^p$ space for $p \in (1,\infty)$.  In fact, the quantitative comparison of probability bounds we prove also holds for the scaled energy profile with Dirac delta initial condition of the model of Brownian last passage percolation, a model that lies in the KPZ universality class and in which the energy of paths in a random Brownian environment is maximised.

Our technique of proof harnesses a probabilistic resampling or {\em Brownian Gibbs} property satisfied by the Airy line ensemble after parabolic shift, and this article develops Brownian Gibbs analysis of this ensemble begun in~\cite{corwin2014brownian} and pursued in~\cite{hammond2017brownian}. Our Brownian comparison for scaled interface profiles is an element in the ongoing programme of studying KPZ universality via probabilistic and geometric methods of proof, aided by limited but essential use of integrable inputs. Indeed, the comparison result is a useful tool for studying this universality class. We present and prove several applications, concerning for example the structure of near ground states in Brownian last passage percolation, or Brownian structure in scaled interface profiles that arise from evolution from any element in a  very general class of initial data. 

\end{abstract}

\maketitle

\renewcommand{\baselinestretch}{1.2}\normalsize
\tableofcontents

\chapter{Introduction}
\section{KPZ universality}\label{s.intro}

The field of Kardar-Parisi-Zhang (KPZ) 
universality concerns one-dimensional interfaces that evolve randomly in time, and universal random structures that describe, independently of the microscopic details that specify the local evolution of such random models, the geometry and fluctuation of the interface when time is advanced. The KPZ universality class, whose members are random processes that are expected to evince these late-time characteristics, is very broad. The basic features of a random growth model that indicate that it may be a member of the KPZ universality class are that growth occurs in a direction normal to the present local slope of the interface at a rate influenced by the slope, alongside two competing forces: a smoothing effect generated by surface tension, and a roughening effect caused by forces in the environment that are local and random. 

A fundamental example of a model of local random growth that is expected to demonstrate at advanced time all pertinent geometric features for membership of the KPZ class was already considered in the seminal work~\cite{kardar1986dynamic}. This is the solution  $\mathcal{H}: [0,\infty)\times \R \to \R$ (called a \emph{height} function) of the Kardar-Parisi-Zhang stochastic PDE, in which the interface profile at time $t \geq 0$ is modeled by the random process $\R \to \R: x \mapsto \mathcal{H}(t,x)$ where $\mathcal{H}$, formally speaking, solves the equation 
\begin{align}\label{eq:KPZ}
\partial_t \mathcal{H}(t,x) = \frac{1}{2}\partial^2_x \mathcal{H}(t,x) + \frac{1}{2} (\partial_x \mathcal{H}(t,x))^2 + \xi(t,x) \, .
\end{align}
The rate of interface growth above a point, represented by the left-hand term, takes the form of a smoothening Laplacian term modified by
a slope dependent term---represented by the squared gradient---and a roughening induced by the space-time Gaussian white noise $\xi$, which is a field of randomness that is independent between distinct space-time points.
The rigorous mathematical meaning and resulting analysis of~(\ref{eq:KPZ}) has been the subject of intense recent activity, including analytic advances such as Hairer's theory of regularity structures~\cite{hairer2013solving}; paracontrolled distributions~\cite{gubinelli2012paracontrolled,gubinelli2017kpz}; and energy solutions~\cite{gonccalves2014nonlinear}. 
The equation does, however, have a physically relevant {\it Cole-Hopf} solution that is straightforward to specify. This solution is defined by $\mathcal{H}(t,x) = \log\mathcal{Z}(t,x)$, where $\mathcal{Z}$ is the solution to the stochastic heat equation (SHE)
\begin{align}\label{eq:SHE}
\partial_t \mathcal{Z}(t,x) &= \frac{1}{2}\partial^2_x \mathcal{Z}(t,x) + \xi(t,x) \mathcal{Z}(t,x) \, .
\end{align}
The fundamental solution to the SHE---which is known by the name ``narrow wedge'' and which we may denote by $\mathcal{Z}^{\mathbf{nw}}(t,x)$---has initial data $\mathcal{Z}(0,x)$ set equal to a Dirac delta function $\delta(x)$ at $x=0$. We may think of local random growth as occurring from a seed location at the origin $x=0$ and at time zero.

We will say nothing further about the KPZ equation itself, beyond using this important example as a convenient means of explaining the basic scaling that a model in the KPZ class verifies at late time; scalings that are needed in order to introduce {\em scaled coordinates} through the lens of which we are able to view at advanced time the canonical scaled random structures at the heart of the KPZ universality class. 

A pair of scaling exponents---one-third and two-thirds---dictate the form of these scaled coordinates.  The narrow wedge interface $\mathcal{H}^{\mathbf{nw}}(t,x) : = \log \mathcal{Z}^{\mathbf{nw}}(t,x)$ above a given location, the origin say, is a random function of time, namely $[0,\infty) \to \R: t\mapsto \mathcal{H}^{\mathbf{nw}}(t,0)$. For a model in the KPZ class, the height function takes the form $at + bt^{1/3}L_t$ for certain model-dependent constants $a, b \in \R$, where the random variables $\big\{ L_t : t \geq 1 \big\}$ form a tight collection that converges in the limit of high $t$ (under initial conditions analogous to narrow wedge) to a canonical distribution which arises in the study of extreme eigenvalues of certain natural ensembles of large random matrices. This limiting distribution is the Gaussian Unitary Ensemble (GUE) Tracy-Widom distribution. That is, the interface over a given point has a dominant linear term; when the interface is centred by the subtraction of this leading term, fluctuations occur on a scale of $t^{1/3}$. For $\mathcal H^{\mathbf{nw}}(t,0)$, this was proved in~\cite{amir2011probability} with $a=-1/24$ and $b=2^{-1/3}$.

The second exponent of two-thirds describes the spatial scale on which fluctuation begins to significantly decorrelate from its value when $x = 0$. Above we may write $L_t = L_t(0)$, with a view to setting, in a consistent way,
\begin{equation}\label{e.scalednw}
  L_t(x) = b^{-1}t^{-1/3} \big( \mathcal{H}^{\mathbf{nw}}\big(t,ct^{2/3}x \big) - at \Big) \, ;
\end{equation}
The random process $L_t: \R \to \R$ then offers a scaled coordinate description of random fluctuation on the part of the narrow wedge KPZ equation. Here $c$ is a third model-dependent constant whose value for the KPZ narrow wedge solution is $2^{1/3}$ and is set to obtain a scaling limit with convenient coefficients of one, which we will discuss shortly. The factor of $t^{2/3}$ that appears against the spatial variable $x$ anticipates that it is spatial variation of order $t^{2/3}$ that leads to non-trivial correlation for fluctuation.

The principal results of this monograph offer a very strong assertion concerning the geometry of the canonical random object---the Airy$_2$ process with parabolic curvature---that describes the scaled attributes of the narrow wedge solution to models in the KPZ universality class. Our results further offer a counterpart description that holds in a rather uniform sense in the prelimit 
 for a random model in the KPZ class known as {\em Brownian last passage percolation} (LPP). The geometric inference that we make is a powerful assertion of the Brownian nature of the scaled interface profile---the analogue of $L_t$ in the above notation---and it is the engine for a wide array of applications about scaled random growth models and their KPZ universality limiting structure. 

 (We note in brief that ``prelimit'' in the previous paragraph does \emph{not} correspond to finite $t$ in the preceding discussion on the KPZ narrow wedge solution. The correct prelimit analogue of finite $t$ is a \emph{positive} temperature version of LPP, which is itself a zero temperature model, with the temperature being scaled to $\infty$ at a particular $t$-dependent rate. The interested reader is referred to \cite{alberts2014continuum,alberts2014intermediate} for more details.)

The parabolically shifted Airy$_2$ process is the first in a family of limiting processes, indexed by the initial condition, which are expected to be universal objects in the KPZ universality class, and which may be referred to collectively as the \emph{KPZ fixed point}. The name arises from the physical view that such objects are expected to be fixed points of suitable renormalization operators. (Indeed, an important related object has been constructed by~\cite{matetski2016kpz} in a paper bearing the name ``The KPZ fixed point''. 
Our usage of this term is a corruption of theirs, but only mildly so. We will comment further on~\cite{matetski2016kpz}  in Section~\ref{s.intro kpz fixed point}.) One application of our results will be a statement about a certain form of Brownian regularity for these KPZ fixed point profiles, i.e., limiting scaled interface profiles from general initial data in Brownian LPP.

We defer the definition of Brownian last passage percolation and the presentation of our main theorem, expressed in prelimiting terms that capture Brownian LPP, to Section~\ref{s.notation brownian lpp}. What we are able to indicate in the ensuing paragraphs is the form of our principal assertion in the limiting case of scaled KPZ structure, corresponding to the limit of high~$t$ for the scaled narrow wedge solution of the KPZ solution. Indeed, we will next use the lens of scaled coordinates offered by this scaled solution to specify the pertinent limiting object, namely the parabolic Airy$_2$ process; and then we state our principal conclusion as it applies to this process.

The Airy$_2$ process $\mc{A}:\R \to \R$ is a stationary process first introduced by Pr\"ahofer and Spohn \cite{prahofer2002scale} 
in a scaled description of the polynuclear growth (PNG) model; or equivalently, of another famous last passage percolation model, Poissonian LPP. 
 It was defined in \cite{prahofer2002scale} in terms of its finite-dimensional distributions---written via determinantal formulas involving the Airy kernel---and shown to have a continuous version. 
 Its basic role in KPZ universality may be expressed via the scaled narrow wedge interface $L_t:\R \to \R$ for the KPZ equation in~(\ref{e.scalednw}). It is widely expected, and it is a major open problem to rigorously show, that in the distributional limit of high $t$ with respect to the topology of locally uniform convergence on the space of continuous functions with domain and co-domain the real line, the process $x \mapsto L_t(x)$
 converges to a limit that takes the form $\mc{A}(x) - x^2$ of a parabolically shifted Airy$_2$ process. We set $c=2^{1/3}$ in \eqref{e.scalednw} so that the coefficients of both terms are one.
This inference is conjectural for the KPZ equation, but it has been validated for several stochastic growth models with narrow wedge initial data beyond PNG.  These models include the totally asymmetric simple exclusion process (TASEP) \cite{borodin2008large} as well as the model that will be the principal object of rigorous attention in this monograph, namely  Brownian LPP.  In the latter case, the convergence is proved via a distributional relation with Dyson Brownian motion that will be reviewed in Section~\ref{s.notation brownian lpp}. 


\subsection{Locally Brownian nature of the limiting process} In the case of Brownian LPP, as with the KPZ narrow wedge solution, the limiting process is $\A(x) - x^2$. We define 
$$\L(x):=2^{-1/2}\left(\A(x) - x^2\right),$$
and call it the \emph{parabolic Airy$_2$ process}, in spite of the factor of $2^{-1/2}$, as this is a main object of study that benefits from a shorter name than ``parabolically shifted Airy$_2$ process". The factor $2^{-1/2}$ is included to make comparisons with Brownian motion more convenient and will be made clearer momentarily. 

The limiting process globally adopts a parabolic form, but it is locally Brownian---see Figure~\ref{f.locally brownian}. 
The term ``locally Brownian'' may be interpreted in several ways, with a progression to stronger forms of interpretation, reflecting recent progress in understanding this limiting scaled profile. ``Locally Brownian'' could mean that, for any given $x \in \R$, the distributional process limit of $y \mapsto \e^{-1/2}\mc{L}(x + \e y)$ as $\e \searrow 0$ is standard Brownian motion (where it is the presence of the factor $2^{-1/2}$ in the definition of $\L$ that permits the diffusion rate to equal one, as $\A$ itself is locally of rate two). H\" agg 
 proved such a convergence for finite-dimensional distributions in~\cite{hagg2008} by analysing determinantal structure implicated in the definition of the Airy$_2$ process.

\begin{figure}[t]
\centering{\epsfig{file=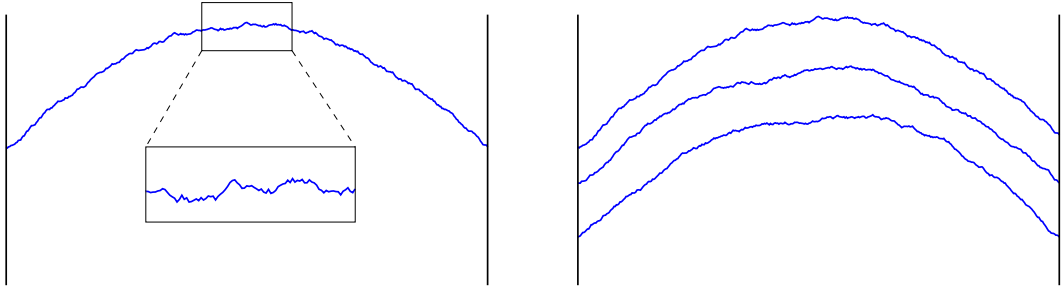, scale=0.8}}
\caption{On the left panel is an illustration of the locally Brownian, globally parabolic nature of the limiting process $\mc{L}$. On the right, we depict the top three curves of the parabolic Airy line ensemble, a collection of continuous, non-intersecting curves whose top line is $\L$.\label{f.locally brownian}}
\end{figure}

``Locally Brownian'' could mean the stronger assertion that the process $\mc{L} - \mc{L}(a)$, when restricted to any given compact interval $[a,b]$,
is absolutely continuous with respect to standard Brownian motion on this same interval. This probabilistic assertion does not seem amenable to approaches that utilise directly the determinantal structure of the Airy$_2$ process. It was proved in~\cite{corwin2014brownian} by a technique that lies at the heart of the investigation of the present monograph. This Brownian Gibbs technique involves embedding the process $\mc{L}: \R \to \R$ as the uppermost curve in an infinite system of random continuous non-intersecting curves, called the parabolic Airy line ensemble
(see the right panel of Figure~\ref{f.locally brownian}). The ensemble of curves may be viewed as a growing system of mutually avoiding rate one Brownian bridges viewed at their edge (i.e., any fixed number of the uppermost curves as the number of curves in the system grows). This implies that the uppermost curves have an attractive Gibbs resampling property involving the rejection sampling of independent Brownian bridges on a condition of avoidance of the lower curves. 

However, to say merely that one measure $\nu$ is absolutely continuous with respect to another $\mu$ is to leave unquantified the relation between $\e$ and $\delta$ in the assertion that $\mu(A) < \epsilon$ implies that $\nu(A) < \delta$. For example, the relation $\delta = \e^{1 - 1/p - o(1)}$ will be valid as $\epsilon \searrow 0$, for given $p \in [1,\infty)$, if it is the case that the Radon-Nikodym derivative of $\nu$ with respect $\mu$ lies in $L^p({\rm d}\mu)$.

Perhaps by ``locally Brownian'', what we mean is that the Radon-Nikodym derivative of $\mc{L} - \mc{L}(a)$ on $[a,b]$ with respect to standard Brownian motion on the same interval
lies in $L^{\infty-}$, i.e., in every $L^p$ space, for $p \geq 1$; if so, this term would indicate that an event whose Brownian motion probability is a low value $\epsilon \in (0,1)$ would have probability at most $\epsilon^{1 - o(1)}$ for  $\mc{L} - \mc{L}(a): [a,b] \to \R$. Our principal result, as it applies to limiting structure, establishes that this is the case. It is shown that the error factor $\epsilon^{-o(1)}$ in the latter probability $\epsilon^{1 - o(1)}$ may take the form $\exp \big\{ O(1) (\log \epsilon^{-1})^{5/6} \big\}$. 

\begin{theorem}\label{T.MAIN THEOREM FOR AIRY}
Let $\A:\R\to\R$ be the Airy$_2$ process; let $\L:\R\to \R$ be given by $2^{-1/2}\left(\A(x) - x^2\right)$; and, for fixed $d>0$, let $\mc C$ be the space of continuous functions on the interval $[-d,d]$ that vanish at $-d$. Let $d\geq 1$, let $A$ be a Borel measurable subset of $\mc C$, and let $\epsilon = \B(A)$, where $\B$ is the law of standard Brownian motion on $[-d,d]$. There exists $\epsilon_0=\epsilon_0(d)>0$ and an absolute finite constant $G$, such that, if $\epsilon \in (0, \epsilon_0)$, then
$$\P\Big(\L(\,\cdot\,) - \L(-d) \in A\Big) \leq \epsilon \cdot \exp\left(Gd(\log \epsilon^{-1})^{5/6}\right).$$
\end{theorem}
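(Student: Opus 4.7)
My approach is to exploit the Brownian Gibbs property of the parabolic Airy line ensemble $(\L_i)_{i\ge 1}$, in which $\L_1=\L$ is the top curve. I would condition on the $\sigma$-algebra $\mathcal G$ generated by $\L_1$ off $(-d,d)$ together with $\L_2$ on $[-d,d]$. By the Brownian Gibbs property developed in \cite{corwin2014brownian,hammond2017brownian}, the $\mathcal G$-conditional law of $\L|_{[-d,d]}$ is that of a rate-one Brownian bridge $B^{bb}$ from $\L(-d)$ to $\L(d)$ conditioned to lie strictly above $\L_2$ on $[-d,d]$; the factor $2^{-1/2}$ in the definition of $\L$ is precisely what converts the natural rate-two bridge to a rate-one one. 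Dropping the avoidance constraint from the numerator of the conditional probability gives the basic inequality
\begin{equation*}
\P\bigl(\L(\cdot)-\L(-d)\in A \,\big|\, \mathcal G\bigr)
\;\le\; \frac{\P^{bb}\bigl(B^{bb}-\L(-d)\in A\bigr)}{\P^{bb}(B^{bb}>\L_2)}.
\end{equation*}

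Next I would compare the numerator, which is the probability that a Brownian bridge from $0$ to $\Delta:=\L(d)-\L(-d)$ on $[-d,d]$ lies in $A$, against $\B(A)=\epsilon$. A density-ratio computation (standard on a slightly shrunken interval, and extended to the full closed interval either by an endpoint coupling or by first applying Brownian Gibbs on a slightly enlarged interval $[-d-h,d+h]$ and then restricting) produces an upper bound of the form $\Phi(d,\Delta)\cdot\epsilon$, where $\Phi$ depends on $d$ and $\Delta$ and grows like $\mathrm{poly}(d)\cdot \exp(c\Delta^2/d)$ for the naive comparison. Writing $p_\star(t)$ for a quantitative lower bound on the avoidance denominator, valid on a suitable high-probability event, combining these two ingredients leaves us with an inequality of the form
\begin{equation*}
\P(\L-\L(-d)\in A)\;\le\; \epsilon\cdot \frac{\Phi(d,\Delta)}{p_\star(t)}\;+\; \P(\text{bad event}).
\end{equation*}

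I would then introduce a scale parameter $t>0$ and a good event on which (i) $|\L(\pm d)+d^2/\sqrt 2|\le t$ (so $|\Delta|\le 2t$), and (ii) $\L_2$ is sufficiently separated from the bridge endpoints for $\P^{bb}(B^{bb}>\L_2)\ge p_\star(t)$. Tracy--Widom upper-tail estimates for the parabolic Airy$_2$ one-point marginal, combined with analogous estimates for $\L_2$ (themselves obtained via a further Brownian Gibbs argument), yield a bad-event probability of at most $C\exp(-ct^{3/2})$. Choosing $t$ of order $(\log\epsilon^{-1})^{2/3}$ makes this term $\le\epsilon$; the first summand then contributes $\epsilon\cdot\exp\bigl(O(d)\cdot (\log\epsilon^{-1})^{5/6}\bigr)$ provided one can establish the sharp estimate $\log\bigl(\Phi(d,\Delta)/p_\star(t)\bigr)\le O(d)\cdot t^{5/4}$ on the good event. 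Matching the exponents $t^{3/2}$ (Tracy--Widom) and $t^{5/4}$ (cost on the good event) via the relation $(5/4)\cdot(2/3)=5/6$ is what pins down the advertised exponent $5/6$.

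The principal obstacle is precisely this sharpness. A naive combination—$\Phi(d,\Delta)=\exp(c\Delta^2/d)$ together with a constant lower bound on the avoidance probability—only produces exponent $2$ in $t$, and hence $4/3$ rather than $5/6$ in $\log\epsilon^{-1}$. Recovering the exponent $5/6$ therefore requires a refined Brownian Gibbs analysis that simultaneously controls $\Phi$ and the avoidance cost using quantitative gap estimates for $\L_1-\L_2$ near the endpoints $\pm d$, sharpening the techniques of \cite{corwin2014brownian,hammond2017brownian}. Once such an estimate is in hand, taking expectations and optimising in $t$ yields the theorem.
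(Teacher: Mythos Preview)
Your proposal correctly identifies the Brownian Gibbs property as the basic tool and honestly pinpoints the central difficulty: the naive bridge-to-motion density comparison $\Phi(d,\Delta)\sim\exp(c\Delta^2/d)$ combined with Tracy--Widom control $|\Delta|\lesssim t$ only yields a cost $\exp(ct^2/d)$, which with $t\sim(\log\epsilon^{-1})^{2/3}$ gives exponent $4/3$ rather than $5/6$. However, your resolution---``quantitative gap estimates for $\L_1-\L_2$ near the endpoints $\pm d$''---is not a concrete mechanism, and the target inequality $\log(\Phi/p_\star)\le O(d)\,t^{5/4}$ is reverse-engineered from the answer rather than derived from any structure you have set up. As stated, the proposal does not close the gap.

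The paper's proof proceeds quite differently and the missing idea is substantial. Rather than applying Brownian Gibbs on $[-d,d]$, one works on a much larger interval $[-2T,2T]$ with $T=D_k(\log\epsilon^{-1})^{1/3}$, and replaces the full lower-curve avoidance by a \emph{jump ensemble}: Brownian bridges conditioned only to clear a coarsened set of poles drawn from the concave majorant of $\L_2$. This candidate passes the true avoidance test with probability at least $\exp(-c(\log\epsilon^{-1})^{2/3})$, which is already subpolynomial in $\epsilon^{-1}$. The comparison of the jump curve to Brownian motion is then made by writing both as integrals of the same Brownian bridge probability over the joint density $f_J(y,z)$ of the deviations $Y=J(p-4d)-\mathrm{Tent}(p-4d)$ and $Z=J(p+4d)-\mathrm{Tent}(p+4d)$; the task becomes showing $f_J(y,z)\cdot V\cdot S\le\exp(O(T^{5/2}))$, where $V$ and $S$ are explicit \emph{vault} and \emph{slope} costs. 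The exponent $5/6$ arises because $T^{5/2}=D_k^{5/2}(\log\epsilon^{-1})^{5/6}$. The density bound is proved in three separate regimes of $(y,z)$, the delicate one being $y<0<z$ with $y+z<0$, where one must show that the pole forces $J$ to clear it with essentially zero margin, so that the effective variance at $p+4d$ given $Y$ is only $4d$ rather than $8d$; this uses a local randomisation argument and is where the sharp cancellation occurs. None of this apparatus---the $(\log\epsilon^{-1})^{1/3}$-scale enlargement, the pole coarsening, the cost framework, or the case analysis on the density---appears in your outline, and each is essential to obtaining $5/6$ rather than $4/3$.
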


Theorem~\ref{T.MAIN THEOREM FOR AIRY} is the culmination of a probabilistic study of narrow wedge KPZ structure across several papers, including~\cite{corwin2014brownian} and \cite{hammond2017brownian}. The form of Brownian comparison made by this result and by its upcoming prelimiting counterpart Theorem~\ref{t.airytail.ln} is strong enough to open up an exciting array of applications 
concerning KPZ universality and last passage percolation models. There are six applications that we discuss in this monograph---two or three are simple and direct, while our treatment of another will take the form of a summary of upcoming work in which the Brownian comparison proved here will play a foundational role; and the final two concern not narrow-wedge but \emph{general} initial conditions, which we will discuss in Section~\ref{s.intro general init condition Brownian regularity}.

We want to indicate promptly several of these applications. In the next section, we introduce an LPP model, Bernoulli LPP, with a simple and pleasing definition. We prove nothing about it rigorously---as mentioned earlier, it is Brownian LPP about which we will prove new results---but we hope that introducing the model will provide a helpful alternative introduction to basic considerations such as scaled coordinates for LPP; and reference to the model will aid in our presentation of applications in Section~\ref{s.intro applications}.

The road to results such as Theorem~\ref{T.MAIN THEOREM FOR AIRY} via the probabilistic Brownian Gibbs technique begins with the absolute continuity comparison made by~\cite{corwin2014brownian}. An important intermediate step was achieved in~\cite{hammond2017brownian}, in which a comparison formally very similar to that made by   Theorem~\ref{T.MAIN THEOREM FOR AIRY} was made, but with the compared processes being affinely shifted so that their interval endpoint values vanish. The relation of  Theorem~\ref{T.MAIN THEOREM FOR AIRY} to this counterpart result in~\cite{hammond2017brownian}
is important both for its formal similarity and its striking differences; and for the technique of proof. Indeed, our proof of  Theorem~\ref{T.MAIN THEOREM FOR AIRY} will harness a substantial part of the technical apparatus of~\cite{hammond2017brownian}, but employ it in a substantially new manner. We turn to a more detailed overview of previous work in Section~\ref{s.intro.previous work} and of the relations of the present work to that of~\cite{hammond2017brownian} in the final section of the introduction, Section~\ref{s.intro method of proof}.


 



\subsection{A further heuristic overview, via Bernoulli LPP}\label{s.intro.bernoulli}

\begin{figure}[t]
\centering{\epsfig{file=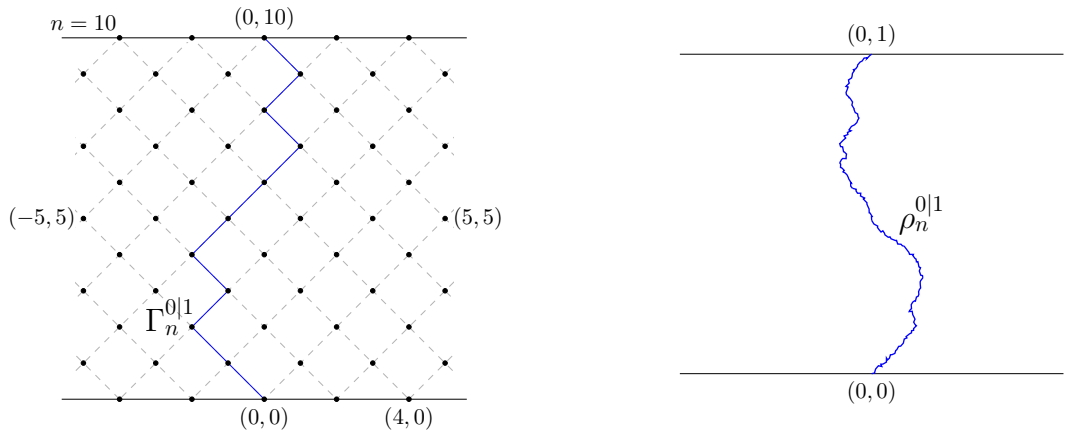, scale=0.85}}
\caption{On the left panel is a depiction of $\Z^2_\angle$ and its coordinate system, with a blue geodesic path $\Gamma^{0|1}_n$ from $(0,0)$ to $(0,n)$ indicated; here $n=10$. On the right panel is the polymer $\rho^{0|1}_n$ after the scaling map $S_n$ has been applied to the geodesic (not the same as in the first panel); here $n$ is unspecified but large. The right panel is simply a depiction and not a simulation.}\label{f.bernoulli}
\end{figure}

Let $\Z^2_\angle$ be the lattice depicted in Figure~\ref{f.bernoulli}, regarded as a subset of $\R^2$ via the depicted coordinate system; this lattice may be thought of as $\Z^2$ rotated by $45^\circ$ counterclockwise and scaled up by a factor of $2^{1/2}$. In Bernoulli last passage percolation, a random environment is specified by assigning independently to each vertex in $\Z^2_\angle$ a value of zero or one. The probability of assigning one equals a given value $p \in (0,1)$ for each vertex; for simplicity, we consider the case that $p =1/2$. A valid directed path in $\Z^2_\angle$ is a nearest-neighbour directed path in that lattice, i.e., each of the path's moves is northwest or northeast by one lattice unit. The energy of any such path is the sum of the values assigned to the vertices in $\Z^2_\angle$ that lie on the path. Let $M_n^{0|1} \big[ (0,0) \to (0,n) \big]$---or, more briefly, $M_n^{0|1}$---denote the maximum energy of directed paths that begin at $(0,0)$ and that end at $(0,n)$ in the coordinate system of $\Z^2_\angle$ illustrated by Figure~\ref{f.bernoulli}, with the $0|1$ indicating the Bernoulli environment. Later, the Brownian analogues of this and other quantities will be referred to by the corresponding symbols with the $0\mid1$ omitted.

The Bernoulli LPP model is widely expected to lie in the KPZ universality class. Indeed, if we set 
$a_n =  n^{-1}\E M^{0|1}_n$, a simple subadditivity argument yields the existence of the leading order growth coefficient $a = \lim a_n$. It is easily seen that $a \in (1/2, 1)$. Indeed, $a$ is the limiting expected proportion of sites on any maximum weight path (necessarily consisting of $n+1$ vertices) which are assigned a value of one by the random environment, so that $a \in [1/2,1]$; and it is an exercise to exclude the possibilities that $a$ equals one-half or one. If we further write 
\begin{equation}\label{e.an}
 M^{0|1}_n = an +  b n^{1/3} \weight^{0|1}_n \, ,
\end{equation}
the random {\em weight} $\weight^{0|1}_n$ is a measure of the scaled fluctuation of the maximum energy for the route $(0,0) \to (0,n)$; this is the analogue of $L_t$ discussed in the context of the narrow-wedge KPZ equation. Indeed, the system of random variables $\big\{ \weight^{0|1}_n : n \in \N \big\}$, as with the system $\{L_t:t\geq 1\}$, may be expected to be tight, and, for a suitable choice of the constant $b > 0$, to converge to the GUE Tracy-Widom distribution. The maximum energy $M_n^{0|1}$ is attained by a {\em geodesic} $\Gamma^{0|1}_n$ from $(0,0)$ to $(0,n)$. This is the directed path between these endpoints whose energy equals $M^{0\mid 1}_n$. In fact, for this discrete model, there is usually more than one maximiser; but it is easily seen that there is a unique leftmost maximiser and, for definiteness, we set the geodesic $\Gamma_n^{0|1}$ equal to this path. 

Just as the weight $\weight^{0|1}_n$ offers a scaled description of energy, it is natural to represent $\Gamma_n^{0|1}$ in scaled coordinates, as a path that traverses a unit-order distance while making random fluctuation also of unit order. To set up such a description, consider the  scaling map $S_n: \R^2 \to \R^2$ that scales horizontally by $n^{2/3}$ and vertically by $n$, i.e., $(0,0)$ is mapped to itself; $(0,n)$ is mapped to $(0,1)$; and $(n^{2/3},0)$ is mapped to $(1,0)$. If we treat any directed path in $\Z^2_\angle$ as a subset of $\R^2$ by viewing it as the union of the nearest-neighbour edges that it crosses, then we may set $\rho^{0|1}_n$ equal to the image under $S_n$ of the geodesic~$\Gamma^{0|1}_n$. This object $\rho^{0|1}_n$ is then the image of a piecewise affine curve in $\R^2$ that connects $(0,0)$ and $(0,1)$. 
We call it a \emph{polymer}, and regard it as a scaled version of $\Gamma^{0|1}_n$. The vertical advancement of $\Gamma^{0|1}_n$ from the origin to $(0,n)$ corresponds to a unit vertical advancement of the polymer between its endpoints. The two-thirds spatial scaling exponent for KPZ is represented by the expectation that $\Gamma^{0|1}_n$ at generic heights, on the interval of heights $(n/4,3n/4)$ say, will be at a horizontal distance from the $y$-axis of order $n^{2/3}$. The scaling map $S_n$ has thus been specified so that the polymer $\rho^{0|1}_n$ may have non-degenerate random horizontal fluctuation of unit order; see the second panel of Figure~\ref{f.bernoulli}.

(This use of the terms weight and polymer is hardly standard, but agrees with the terminology in \cite{hammond2017brownian}. Indeed, in the literature the term polymer often refers to realizations of the path measure in last passage percolation models at \emph{positive} temperature, unlike the zero temperature case discussed here.)

We may further set $\weight^{0|1}_n \big[ (0,0) \to (0,1) \big] = \weight^{0|1}_n$ and  $\rho_n^{0|1} \big[ (0,0) \to (0,1) \big] = \rho^{0|1}_n$ with a view to generalising these objects into a broader scaled description of geodesics and their energies. 
Indeed, for $x,y \in \R$, we may specify $\rho_n^{0|1} \big[ (x,0) \to (y,1) \big]$ to be the image under $S_n$ of the geodesic that runs between 
$S_n^{-1}(x,0) = \big( n^{2/3}x,0 \big)$ and $S_n^{-1}(y,1) = \big(n^{2/3}y,n \big)$. (The latter pair of planar points should lie in the lattice $\Z^2_\angle$, but we neglect this nicety in this heuristic discussion.) The polymer  $\rho^{0|1}_n \big[ (x,0) \to (y,1) \big]$ has weight  $\weight^{0|1}_n \big[ (x,0) \to (y,1) \big]$ given by the scaled expression for the energy of the mentioned geodesic. That is, we set 
$$
M^{0|1} \big[ ( n^{2/3}x,0 ) \to (n^{2/3}y,n ) \big] = a n + b n^{1/3} \weight^{0|1}_n \big[ (x,0) \to (y,1) \big] \, ,
$$
where the constants $a$ and $b$ have been specified after~(\ref{e.an}).

We will often be interested in the situation when the starting point is held fixed at the origin, i.e., $x=0$. For future reference, let us highlight a supremum formula for $\weight^{0|1}_n[(0,0)\to (y,1)]$. For $w\in(0,1)$,
\begin{equation}\label{e.supremum formula}
\weight^{0|1}_n[(0,0)\to (y,1)] = \sup_{x\in \R}\left(\weight^{0|1}_n[(0,0)\to (x,w)] + \weight^{0|1}_n[(x,w)\to (y,1)]\right).
\end{equation}
This formula is easy to understand: the expression in the supremum is the weight of the best scaled path forced to pass through $(x,w)$, and, fixing $w$, the polymer $\rho_n[(0,0)\to(y,1)]$ will clearly pass through $(x,w)$ for the best such $x$. We note also that the two terms inside the supremum are independent by the independence of the environment, and that, if we have also fixed $y$, both terms can be considered as weight profiles. (This independence claim is not precisely correct as the two terms actually share the weight of a single vertex, which is microscopic. We ignore this minor point in this expository discussion.) In fact, as in this decomposition, the sum of two independent weight profiles often arises in LPP studies, and one of the applications we will discuss later, though not proved in this paper, extends Theorem~\ref{T.MAIN THEOREM FOR AIRY} and its prelimiting version Theorem~\ref{t.airytail.ln} to such objects.

The parabolic Airy$_2$ process $\mc{L}:\R \to \R$ 
is the description expected to arise in the limit of high $n$ of the scaled energy of  scaled LPP paths that, in accordance with narrow wedge initial data, emanate from the origin. Simply put, for an appropriate choice of the constant~$c>0$, $\mc{L}(y)$ equals $2^{-1/2}\cdot \weight^{0|1}_\infty \big[ (0,0) \to (cy,1) \big]$, where we set $\weight^{0|1}_\infty$ to be the putatively existing high~$n$ limit of the weight system $\weight^{0|1}_n$; the factor $2^{-1/2}$ arises from the definition of $\L$, and $c$ plays the same role as it did in \eqref{e.scalednw} in the context of the narrow wedge KPZ solution.   

With the example of Bernoulli LPP in mind, we may move to describing the applications of our main results.



\section{Applications of Theorem~\ref{T.MAIN THEOREM FOR AIRY}} \label{s.intro applications}

We have postponed the full statement of our main theorem as it requires the introduction of more general objects than just the parabolic Airy$_2$ process. As indicated, these general objects arise naturally as weight profiles in Brownian last passage percolation, which we will define in Section~\ref{s.notation and regular ensembles}. It is because of this example of Brownian LPP that we have chosen to prove our results in a more general framework than Theorem~\ref{T.MAIN THEOREM FOR AIRY}, and we anticipate a number of future applications that make use of this general form. 

\begin{remark}\label{r.applications extend to weight profile}
While the applications proved in this paper are stated mainly for the parabolic Airy$_2$ process, we point out that essentially the same statements can be obtained for the prelimiting Brownian LPP narrow-wedge weight profile in each case by replacing the usage of Theorem~\ref{T.MAIN THEOREM FOR AIRY} in the proofs with our upcoming main result Theorem~\ref{t.airytail.ln}, which is applicable to Brownian LPP. This is simply because the underlying Brownian motion estimates which serve as input to Theorem~\ref{T.MAIN THEOREM FOR AIRY} in the each of the application's proofs can equally well be used as input for Theorem~\ref{t.airytail.ln}.
\end{remark}

Our main result is a powerful tool, and 
we present six applications. The first two are presented in the next two sections. The first is a simple corollary of Theorem~\ref{T.MAIN THEOREM FOR AIRY} and its proof is given immediately; while the second (and also the third) concerns  near maxima of the Airy$_2$ process and requires more involved arguments with Brownian motion, and their proofs are given in Chapter~\ref{ch.application proofs}. Several of the applications
can be viewed, in a rough but we hope profitable sense, as part of a theme that concerns energy landscapes, their valleys, and chaotic trajectories in these landscapes. In Section~\ref{s.intro.energy landscape}, we briefly survey connections in this vein elucidated  in Sourav Chatterjee's work \cite{chatterjee2014superconcentration}.
After presenting in Section~\ref{s.intro.many near touch} a result concerning the improbability of many well-separated near maximisers,
we turn in Section~\ref{s.intro.rooted weight profile} to our fourth application, an extension of our main result to an object that often appears in LPP problems. In contrast to the others, we merely state this application, which will appear a forthcoming work \cite{LPPtools} of Shirshendu Ganguly and the second author.
The final two applications concern a quantified notion of Brownian regularity for scaled Brownian LPP energy profiles begun from \emph{general} initial conditions. 
After a review of pertinent recent work in KPZ in Section~\ref{s.intro.previous work}, they  
are described in Section~\ref{s.intro general init condition Brownian regularity}, with proofs appearing in Chapter~\ref{ch.patchwork quilt}.


\subsection{Movement of Airy$_2$ in an interval}\label{s.intro.simple application}
An immediate application of Theorem~\ref{T.MAIN THEOREM FOR AIRY} which illustrates its utility is the following corollary, which gives a tail bound on the amount the Airy process or its parabolic version moves in a unit-order interval.
\begin{corollary}\label{c.airy process movement in interval}
Let $d\geq 1$. Then there exist $C<\infty$, $C'<\infty$ and $x_0>0$ such that, for $x>x_0$,
\begin{align*}
&\P\left(\sup_{s\in[-d,d]} |\L(s) -\L(-d)| \geq x\right) \leq e^{-x^2/4d + Cd^{1/6} x^{5/3}}
\end{align*}
and
\begin{align*}
&\P\left(\sup_{s\in[-d,d]} |\A(s) -\A(-d)| \geq x\right) \leq e^{-x^2/8d + C'd x^{5/3}}.
\end{align*}
\end{corollary}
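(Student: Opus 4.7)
The plan is to apply Theorem~\ref{T.MAIN THEOREM FOR AIRY} with the event
$$A_x = \Bigl\{ f \in \mathcal{C} : \sup_{s \in [-d,d]} |f(s)| \geq x \Bigr\},$$
whose standard-Brownian probability is controlled by the reflection principle. Since standard Brownian motion $B$ on $[-d,d]$ with $B(-d)=0$ runs for time $2d$, we have $\B(A_x) \leq 2\exp(-x^2/(4d))$; call this quantity $\eps$. For $x$ larger than some threshold $x_0 = x_0(d)$ we have $\eps < \eps_0(d)$, and Theorem~\ref{T.MAIN THEOREM FOR AIRY} gives
$$\P\Bigl(\sup_{s \in [-d,d]} |\L(s) - \L(-d)| \geq x\Bigr) \leq \eps \cdot \exp\bigl(Gd (\log \eps^{-1})^{5/6}\bigr).$$
Using $\log \eps^{-1} \leq x^2/(4d)$, the term $Gd (\log \eps^{-1})^{5/6}$ is at most $(G/4^{5/6}) \cdot d^{1/6} x^{5/3}$. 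The prefactor $2$ coming from reflection is absorbed by slightly enlarging the constant (since $\log 2 \leq d^{1/6} x^{5/3}$ for $x > x_0$), yielding the first stated bound.

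For the Airy$_2$ bound, I would use the definitional identity $\A(s) - \A(-d) = \sqrt{2}\,(\L(s) - \L(-d)) + (s^2 - d^2)$. Since $|s^2 - d^2| \leq d^2$ on $[-d,d]$, the event $\{\sup_s |\A(s) - \A(-d)| \geq x\}$ implies $\{\sup_s |\L(s) - \L(-d)| \geq y\}$ where $y := (x - d^2)/\sqrt{2}$. For $x > x_0(d)$ chosen suitably large we have $y \geq x/(2\sqrt{2})$ as well as $y > x_0(d)$, so the first bound applies at threshold $y$. Expanding
$$\frac{y^2}{4d} = \frac{x^2}{8d} - \frac{dx}{4} + \frac{d^3}{8}$$
and inserting into the first inequality yields an exponent of the form $-x^2/(8d) + dx/4 + C \cdot 2^{-5/6} d^{1/6} x^{5/3}$. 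The unwanted linear term $dx/4$ is dominated by $C'd\, x^{5/3}$ for $x > x_0$ and suitable $C'$, giving the second bound.

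The only real obstacle is constant management — there is no substantive analytic difficulty, since the reflection principle and the definitional relation between $\A$ and $\L$ do all the work. The care required is to choose $x_0 = x_0(d)$ large enough that simultaneously: (i) $\eps < \eps_0(d)$ so that Theorem~\ref{T.MAIN THEOREM FOR AIRY} is applicable; (ii) the reflection prefactor $2$ and the linear correction $dx/4$ are absorbed into the $d^{1/6} x^{5/3}$ (respectively $d\, x^{5/3}$) term; and (iii) the inequality $y \geq x/(2\sqrt{2})$ used in the Airy$_2$ bound holds. All three reduce to a single lower bound on $x$ depending polynomially on $d$.
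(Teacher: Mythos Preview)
Your proposal is correct and follows essentially the same route as the paper's own proof: the reflection principle plus Theorem~\ref{T.MAIN THEOREM FOR AIRY} for the $\L$ bound, then the definitional relation $\A(s)-\A(-d)=\sqrt{2}\bigl(\L(s)-\L(-d)\bigr)+(s^2-d^2)$ for the $\A$ bound, with the linear correction $dx/4$ absorbed into the $d\,x^{5/3}$ term. The only cosmetic difference is that the paper records the reflection constant as $4$ rather than $2$ (from the union bound over $\sup B$ and $-\inf B$), but as you note this constant is absorbed into $C$.
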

Recall that $\L$ and $\A$ are respectively, in an idealised sense, rate one and rate two processes, and that we are considering the tail probability of an increment over an interval of length $2d$. For Brownian motions of rates one and two, these probabilities can be respectively understood as being roughly $\exp(-x^2/4d)$ and $\exp(-x^2/8d)$, and this accounts for the dominant terms in the exponents in the bounds in Corollary~\ref{c.airy process movement in interval}; the remaining terms in the exponents of the form $d^{1/6}x^{5/3}$ or $dx^{5/3}$ are sub-dominant corrections arising from Theorem~\ref{T.MAIN THEOREM FOR AIRY}.

We remark that estimates on similar quantities have previously appeared in the literature with a weaker tail bound exponent of $3/2$, instead of $2$ as obtained here, such as in \cite[Proposition 1.6]{dauvergne2018basic} and \cite[Theorem 2.14]{hammond2017brownian}; additionally, we obtain an explicit coefficient for the $-x^2$ term in the exponent, as well as a quantified sub-dominant correction. The two just cited estimates are for the prelimiting Brownian LPP weight profiles, denoted in this paper by $\weight_n[(0,0)\to(\cdot,1)]$ but, as mentioned in Remark~\ref{r.applications extend to weight profile} above, the statement of Corollary~\ref{c.airy process movement in interval} can be easily obtained for $\weight_n$ as well.

\begin{proof}[Proof of Corollary~\ref{c.airy process movement in interval}]
Let $B$ be standard Brownian motion on $[-d,d]$ started at zero. Writing $N(0,2d)$ for a normal random variable with mean zero and variance $2d$, we see that, by the reflection principle for Brownian motion,
$$\P\left(\sup_{s\in[-d,d]} |B(s)| \geq x\right) \leq 2\cdot\P\left(\sup_{s\in[-d,d]} B(s) \geq x\right) = 4\cdot \P\Bigl(N(0,2d) \geq x\Bigr) \leq 4\cdot e^{-x^2/4d}.$$
The last inequality is due to the Chernoff bound. Now we apply Theorem~\ref{T.MAIN THEOREM FOR AIRY} after letting $x$ be large enough that the last quantity is less than the $\epsilon_0$ from Theorem~\ref{T.MAIN THEOREM FOR AIRY}, and raise the value of $G$ obtained from Theorem~\ref{T.MAIN THEOREM FOR AIRY} further to absorb the multiplicative constant of $4$, to get the first bound in Corollary~\ref{c.airy process movement in interval}. The second follows from the first by noting that $\sup_{s\in[-d,d]}|\L(s)-\L(-d)|$ differs from $2^{-1/2}\sup_{s\in[-d,d]}|\A(s)-\A(-d)|$ by at most $2^{-1/2}d^2$, and by bounding $xd/4 + Cd^{1/6}x^{5/3}$ by $C'dx^{5/3}$, where $C'$ is defined by say $C' = C+1$.
\end{proof}

The proof of Corollary~\ref{c.airy process movement in interval} illustrates that the usefulness of Theorem~\ref{T.MAIN THEOREM FOR AIRY} lies in allowing us to use all of the many powerful probabilistic tools and symmetries available for Brownian motion and the normal distribution in the problem of estimating the probabilities of very naturally arising events for the parabolic Airy$_2$ process. The remaining applications make more sophisticated use of Brownian motion.

\subsection{A quantified local version of Johansson's conjecture}\label{s.intro.local johansson}
As we saw in the discussion on Bernoulli LPP in Section~\ref{s.intro.bernoulli}, the weight profile $\weight^{0|1}_n[(0,0)\to (y,1)]$ captures the weight of the polymer path as the endpoint $(y,1)$ varies.
When the starting point $(0,0)$ is held fixed, the parabolic Airy$_2$ process is the limiting weight profile process as $n$ tends to infinity in a number of LPP models, and so it is of interest to understand its maximiser---this corresponds to maximum weight scaled paths with fixed starting points. Johansson conjectured in \cite{johansson2003discrete} that the process $\L$ defined above almost surely has a unique global maximiser, a fact that now has a number of proofs: Corwin and Hammond \cite{corwin2014brownian} establish this via comparison to Brownian motion; Moreno Flores, Quastel, and Remenik \cite{flores2013endpoint} prove an explicit formula for the maximiser; and an argument of Pimentel \cite{pimentel2014location} shows that any stationary process minus a parabola has a unique maximiser.

In many LPP situations, it is of interest not only to look at the energy-maximising path, but also at paths which are \emph{nearly} energy-maximising, as these would perhaps become the maximising path under a small perturbation of the random environment. (In the context of Bernoulli LPP, the existence of nearly energy-maximising paths can also be interpreted as the non-uniqueness of the maximising path.) So, it is useful to know that paths far away from the maximising path are not very close in energy to the maximum. 

In terms of the parabolic Airy$_2$ process, this corresponds to understanding the occurrence of \emph{near maxima} at some distance from the unique maximiser, and this is our next focus of study. In order to respect the Brownian scaling we expect to see on unit order intervals, the closeness of the near maximum energy we consider has a square root relationship with the magnitude of the separation we impose from the unique actual maximiser. We prove a result bounding the probability of an event of this type, which we refer to as a quantified local Johansson result.

Let $X:[-d,d]\to\R$ be a stochastic process with an almost surely unique maximiser in $[-d,d]$, and let this unique maximiser be $x_{\max}$. Let $M = X(x_{\max})$. For $a,\eta \in(0,1)$, define the \emph{near touch} event $\nt(X, \eta, a)$ by
$$\nt(X,\eta,a) := \left\{\sup_{|z|\geq 1} X(x_{\max}+z\eta) \geq M - a\eta^{1/2}\right\},$$
where for $x\not\in[-d,d]$ we take $X(x) = -\infty$ by convention. We omit the $d$-dependence of the event in the notation.

\begin{theorem}[Quantified local Johansson]\label{t.local johansson}
Let $d\geq 1$ and $a,\eta\in(0,1)$. There exist $a_0$ and $G<\infty$ such that, if $0<a< a_0$,
$$\P\big(\nt(\L,\eta,a)\big)
 \leq a\cdot \exp\left(Gd (\log a^{-1})^{5/6}\right).$$
\end{theorem}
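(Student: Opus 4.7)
\emph{Reduction to a Brownian estimate.} My plan is to reduce the problem to a Brownian-motion analogue via Theorem~\ref{T.MAIN THEOREM FOR AIRY}. The event $\nt(\L, \eta, a)$ depends on $\L$ only through the increments $\L(\cdot) - M$ where $M = \L(x_{\max})$, and hence is invariant under addition of a constant; in particular, it is measurable with respect to $\L(\cdot) - \L(-d)$ on $[-d, d]$. Applying Theorem~\ref{T.MAIN THEOREM FOR AIRY} yields
$$\P\bigl(\nt(\L, \eta, a)\bigr) \leq \B\bigl(\nt(\cdot, \eta, a)\bigr) \cdot \exp\Bigl(Gd\bigl(\log \B(\nt)^{-1}\bigr)^{5/6}\Bigr),$$
provided $\B(\nt) < \epsilon_0$. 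It therefore suffices to show the Brownian bound $\B(\nt(\cdot, \eta, a)) \leq Ca$ for an absolute constant $C$, uniformly in $d \geq 1$ and $\eta \in (0,1)$: the prefactor $C$ can then be absorbed into a slightly enlarged $G$ using the subadditivity $(\log(Ca)^{-1})^{5/6} \leq (\log a^{-1})^{5/6} + (\log C^{-1})^{5/6}$, at the cost of reducing $a_0$.

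\emph{Brownian estimate via the Williams decomposition.} Let $B \sim \B$ have almost surely unique argmax $t_0 \in [-d, d]$ and maximum $m = B(t_0)$. Williams' path decomposition of Brownian motion at its maximum states that, conditionally on $(t_0, m)$, the two processes
$$s \mapsto m - B(t_0 + s) \text{ on } [0, d - t_0], \qquad s \mapsto m - B(t_0 - s) \text{ on } [0, t_0 + d],$$
are independent Brownian meanders started at $0$---that is, Brownian motions from $0$ conditioned to remain strictly positive on the corresponding interval. The event $\nt(B, \eta, a)$ is then exactly the event that at least one of these meanders takes a value $\leq a\sqrt{\eta}$ at some time $s \geq \eta$.

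\emph{Bessel$(3)$ hitting estimate.} Brownian meander has the same small-time law as a Bessel$(3)$ process, and more is true: by a Radon--Nikodym comparison on any compact time-interval, hitting probabilities for the meander differ from those of unconditioned Bessel$(3)$ only by a bounded factor. By Brownian scaling $R(\eta t) = \sqrt{\eta}\, \tilde R(t)$, the problem reduces to showing that a Bessel$(3)$ process $R$ started at $0$ satisfies $\P(\inf_{t \geq 1} R(t) \leq a) = O(a)$. By the strong Markov property at time $1$ together with the standard Bessel$(3)$ hitting probability $\P_r(\text{hit } a) = a/r$ for $r > a$,
$$\P\bigl(\inf_{t \geq 1} R(t) \leq a\bigr) = \E\bigl[\min(a/R(1), 1)\bigr] \leq a\cdot \E[1/R(1)] + \P(R(1) \leq a) = O(a),$$
using the density $f_{R(1)}(r) = \sqrt{2/\pi}\, r^2 e^{-r^2/2}$, which makes $\E[1/R(1)] = \sqrt{2/\pi}$ finite and $\P(R(1) \leq a) = O(a^3)$. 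A union bound over the two sides of the Williams decomposition then yields $\B(\nt(\cdot, \eta, a)) \leq C a$ for some absolute constant $C$.

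\emph{Main obstacle.} The delicate step is transferring the clean Bessel$(3)$ estimate to the Brownian meander on an interval of length $L = d - t_0$ or $L = t_0 + d$, uniformly in $L$; a priori the $t_0$ can lie close to an endpoint $\pm d$, making $L$ as small as desired. When $L < \eta$, that side cannot contribute to $\nt$ and may be discarded; when $L \geq \eta$, one compares the meander to Bessel$(3)$ restricted to $[0, L]$ using explicit transition densities, and verifies that the relevant Radon--Nikodym factor is bounded uniformly in $L$ (after integrating out the free endpoint). Substituting the resulting bound $\B(\nt) \leq Ca$ back into the first-paragraph reduction, and adjusting $G$ upward by an absolute constant to absorb $C$ via the subadditivity of $x \mapsto x^{5/6}$, completes the proof.
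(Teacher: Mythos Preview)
Your overall strategy matches the paper's: reduce to a Brownian-motion estimate via Theorem~\ref{T.MAIN THEOREM FOR AIRY}, decompose Brownian motion around its maximum into two independent meanders (the paper cites this as Denisov's decomposition, Proposition~\ref{p.decomposition around max}), and bound the near-zero probability for each meander. The paper proves the meander bound directly (Lemma~\ref{l.near zero meander}, argued in Appendix~\ref{a.brownian meander calculations}): condition on $\bme(\eta)$, use that the meander on $[\eta,1]$ given $\bme(\eta)$ is Brownian motion conditioned to stay positive, apply the reflection principle to the infimum, and invoke an elementary monotonicity lemma (Lemma~\ref{l.conditioned inf distribution bound}) to get $\P(\inf_{[\eta,1]}\bme < a\sqrt\eta) \leq 2a$ uniformly in $\eta$.

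Your Bessel$(3)$ route for this step has a gap. The assertion that ``Brownian meander has the same small-time law as a Bessel$(3)$ process'' is false; they are merely mutually absolutely continuous via Imhof's relation, with Radon--Nikodym derivative proportional to $1/R(L)$ for the meander on $[0,L]$, and this is unbounded. Your Bessel$(3)$ hitting bound $\P(\inf_{t\geq 1} R(t) \leq a) = O(a)$ is correct, but transferring it to the meander means controlling $\E_R\bigl[R(L)^{-1}\one_{\inf R \leq a\sqrt\eta}\bigr]$, and the phrase ``bounded uniformly in $L$ after integrating out the free endpoint'' does not carry that weight: a naive Cauchy--Schwarz gives only $O(\sqrt a)$, and when $L$ is close to $\eta$ the near-zero event and the small-endpoint event are correlated in a way that needs real work to disentangle. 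This can be pushed through, but it is at least as much computation as the paper's direct transition-density argument, and you have not supplied it.
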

Note that when we consider the event $\nt(\L,\eta, a)$, we do not say that the global maximiser lies in $[-d,d]$; we are considering only near touches with the maximiser when restricted to $[-d,d]$. We note the estimate \cite[Corollary 4.6]{corwin2014brownian}, which bounds the probability that the global maximiser lies outside a given interval centred at 0. By taking large $d$, applying this estimate, and using the parabolic curvature of $\L$, along with a union bound, it is plausible that Theorem~\ref{t.local johansson} can be extended to a statement about the global maximiser. However, we do not pursue this line of reasoning further here.

Theorem~\ref{t.local johansson} is proved in Section~\ref{ch.application proofs}. The idea of the proof is again to make strong use of information known about Brownian motion. We will use the classical decomposition of Brownian motion around its maximiser in terms of independent Brownian meanders and the explicit transition probability formulas for the latter process.

\subsection{The energy landscape}\label{s.intro.energy landscape}
Before stating our next two applications in the two upcoming subsections, we pause to discuss a useful perspective on these results, that of the \emph{energy landscape}. In fact, the previous application of a quantified local Johansson result also falls within the purview of this viewpoint.

In many complex statistical mechanical systems, the structure of the energy landscape is vitally consequential. For instance, in the context of last passage percolation, we view the landscape as being the space of directed paths, and the energy associated to each path is the energy defined in Section~\ref{s.intro.bernoulli}, i.e., the sum of weights along the path, but with an extra negative sign. (This conflict of signs should not cause confusion as it is restricted to only this subsection.) Thus in this energy landscape the geodesic is the \emph{energy minimiser}; such a state of minimum energy is known as a ground state.

In recent decades the importance of the energy landscape of statistical systems for understanding challenging problems---from protein folding~\cite{onuchic1997theory} to the physics of spin glasses~\cite{mezard1987spin} to machine learning~\cite{ballard2017energy}---has been identified in an array of scientific disciplines.  This is because investigating certain properties of the energy landscape can lead to an understanding of important features of the system's behaviour. For example, how many ground states are there? How many states come close to attaining the minimum energy? (Our second application concerning near maxima is an aspect of this question in LPP.) Such near ground states form valleys in the energy landscape; can the structure of mountain passes, or routes of minimal highest energy that connect these valleys, be conveniently described? These are often difficult questions for any particular mathematical model, but their answers yield insights into the behaviour of the complex system; for instance, the existence of many valleys in the energy landscape of a LPP model has connections to the variance of the energy being sublinear in $n$ (with the prediction from KPZ universality being that the variance scales as $n^{2/3}$). For more on these ideas and a formal statement of the connection between the energy landscape and sublinearity of variance in a particular LPP model, the reader is referred to Sourav Chatterjee's monograph \cite{chatterjee2014superconcentration}.

In fact, \cite{chatterjee2014superconcentration} elucidates an additional important principle of the energy landscape of complex systems, which is that the existence of multiple valleys is essentially equivalent to the presence of \emph{chaos} in the system. For the purposes of the discussion here, we interpret the presence of chaos in a system as a high sensitivity to small perturbations of the system. Thus understanding the existence of multiple valleys and the effect of small perturbations is of interest in last passage percolation, and these two themes are present in our next two applications.

\subsection{Many near touch}\label{s.intro.many near touch}
Before presenting our third application, we discuss a Bernoulli LPP form of a well-known problem in last passage percolation, the \emph{slow bond} problem; it was solved in its Poissonian and exponential LPP forms by Basu, Sidoravicius, and Sly \cite{basu2014last}.

Fix $\epsilon>0$ and consider the unscaled independent Bernoulli environment as in Figure~\ref{f.bernoulli}, with the following change: for vertices on the vertical line $x=0$, the probability of the assigned value being one is increased from $1/2$ to $1/2+\epsilon$, with the probability of zero being reduced from $1/2$ to $1/2-\epsilon$. In the original environment, the coefficient of linear growth was some $a\in(1/2, 1)$---in the modified environment, does the coefficient become strictly larger than $a$? In other words, is the system sensitive to the microscopic perturbation of slightly increased value on the vertical line, i.e., does this change lead to a macroscopically visible increase in the geodesic energy? In the analogous perturbations of standard Poissonian or exponential LPP models, this is the question that was answered in the affirmative in \cite{basu2014last}.

A natural further question is the rate of decay as $\epsilon \searrow 0$ of the magnitude of the increase in the linear coefficient. One physical prediction~\cite{janowsky1994exact,costin2012blockage}, in the model of exponential LPP, is that the magnitude of increase should decay as $\exp(-c\epsilon^{-1})$ for some $c>0$. Recently, Allan Sly has announced~\cite{allansly2019talk} a forthcoming result, joint with Sourav Sarkar and Lingfu Zhang, that this quantity, again in the model of exponential LPP, decays at least superpolynomially in $\epsilon$ as $\epsilon$ tends to zero. 

Allan Sly has conveyed to one of us that control over the number of near maxima occurring in a bounded interval is of importance in their proof; the following statement provides such a bound for the limiting $\L$, but, as in the other applications, can also be given for the prelimiting narrow wedge Brownian LPP weight profile. Of course, the proof of Sarkar, Sly, and Zhang would require such an estimate for exponential LPP and so this result is not of direct applicability in their argument, but we hope the context showcases the interest of such results. We set up the notation to state this result next.

In order to have a reasonable notion of the number of near maxima, we define for $d\geq 1$ the \emph{number of near touches} random variable $\numnt(X)$ for a random process $X:[-d,d]\to\R$ as follows, where $M = \sup_{t\in[-d,d]}X(t)$:
$$\numnt(X, \eta) = \max \left\{|S| \ :\  \begin{tabular}{@{}c@{}}
$S\subseteq [-d,d]$, s.t. $s\in S \implies X(s)\geq M-\eta^{1/2}$\\ 
and $s,t\in S, s\neq t\implies |s-t|\geq \eta$.
\end{tabular}\right\}.$$
In words, this quantity is the size of the maximum collection of $\eta$-separated times at which $X$ comes within~$\eta^{1/2}$ of its global maximum on $[-d,d]$.

The following result says that the number of near maxima has exponential tails.

\begin{theorem}[Many near touch]\label{t.many near touch}
Let $d\geq 1$ and $0<\eta<d$. Then there exists $\ell_0$ and $c>0$ such that, for $\ell>\ell_0$,
$$\P\Big(\numnt(\L, \eta) \geq \ell\Big) \leq e^{-c\ell}.$$
\end{theorem}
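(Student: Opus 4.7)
The plan is to apply Theorem~\ref{T.MAIN THEOREM FOR AIRY} to transfer the problem to a Brownian motion statement, then prove that Brownian statement by analysing the local structure of Brownian motion around its global maximum.

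First, since $\numnt(X,\eta)$ depends on $X$ only through $X(\cdot) - \sup_{s\in[-d,d]} X(s)$, the statistic $\numnt$ is invariant under adding a constant to $X$, and in particular $\numnt(\L,\eta) = \numnt(\L - \L(-d),\eta)$. The event $\{\numnt(\L,\eta)\geq \ell\}$ therefore equals $\{\L - \L(-d)\in A\}$ for the Borel set $A=\{f\in \mc C : \numnt(f,\eta)\geq \ell\}$. Setting $\epsilon=\B(A)=\P(\numnt(B,\eta)\geq \ell)$, with $B$ a standard Brownian motion on $[-d,d]$ with $B(-d)=0$, Theorem~\ref{T.MAIN THEOREM FOR AIRY} yields
$$\P(\numnt(\L,\eta)\geq \ell)\leq \epsilon\cdot \exp\bigl(Gd(\log \epsilon^{-1})^{5/6}\bigr).$$
Once I show that $\epsilon \leq e^{-c_0\ell}$ for some $c_0=c_0(d,\eta)>0$ and all $\ell\geq \ell_0(d,\eta)$, then after further enlarging $\ell_0$ in terms of $d$ the sub-exponential correction $\exp(Gd(c_0\ell)^{5/6})$ is absorbed into a factor $e^{c_0\ell/2}$, giving the stated bound with $c=c_0/2$.

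Second, by Brownian scaling, $\tilde B(t) := \eta^{-1/2}B(\eta t)$ is a standard Brownian motion on $[-d/\eta,d/\eta]$, and $\numnt(B,\eta)$ on $[-d,d]$ is equal in law to $\numnt(\tilde B,1)$ on $[-N,N]$ with $N := d/\eta \geq 1$. It therefore suffices to prove a uniform-in-$N$ bound $\P(\numnt(\tilde B,1)\geq \ell \text{ on } [-N,N])\leq Ce^{-c_0\ell}$. To this end, let $t^*\in[-N,N]$ be the a.s.\ unique maximiser of $\tilde B$, with value $M$, and let $S := \{t\in[-N,N] : \tilde B(t)\geq M-1\}$. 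Then $S$ is a union of $K$ closed intervals of total Lebesgue measure $|S|$, and since any $1$-separated subset of $S$ has cardinality at most $|S|+K$, we obtain $\numnt(\tilde B,1)\leq |S|+K$. It is thus enough to establish exponential tails for each of $|S|$ and $K$, uniform in $N$.

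Third, the Brownian estimate on $|S|$ and $K$ proceeds via the Williams/Denisov decomposition of $\tilde B$ around $t^*$: conditionally on $(t^*,M)$, the processes $s\mapsto M-\tilde B(t^*\pm s)$ are one-sided Brownian meanders started at $0$, and $|S|$ equals the sum of the Lebesgue measures of $\{s : M-\tilde B(t^*\pm s)\leq 1\}$. Since a Brownian meander is (in a precise sense) a transient Bessel-$3$ process escaping from $0$ on a unit scale, $|\{s\leq T : W^+(s)\leq 1\}|$ has exponential tails uniformly in $T$, yielding the desired bound on $|S|$. For $K$, each additional component of $S$ requires $\tilde B$, after having dropped below $M-1$, to return to within $1$ of $M$ while remaining below $M$; the strong Markov property applied at each downcrossing of $M-1$ shows that the conditional probability of such a return is bounded below $1$ uniformly, yielding geometric tails for $K$. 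The principal obstacle is this uniform-in-$N$ Brownian tail bound on $|S|$ and $K$, which requires careful handling of the joint conditioning by the pair $(t^*,M)$ and ultimately rests on the transience of the Bessel-$3$ process. Once these Brownian estimates are in hand, the theorem follows by the comparison in the first paragraph.
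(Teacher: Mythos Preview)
Your proposal is correct and follows essentially the same route as the paper: use the shift-invariance of $\numnt$ to apply Theorem~\ref{T.MAIN THEOREM FOR AIRY}, reduce to an exponential tail bound for $\numnt(B,\eta)$ with $B$ Brownian, and prove the latter by decomposing $B$ around its maximiser into two independent Brownian meanders and showing each meander has only geometrically many well-separated near-zeroes.

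The differences are cosmetic. You scale $\eta$ to $1$ and let the interval grow to $[-N,N]$, whereas the paper scales the interval to $[0,1]$ and keeps $\eta$ variable; these are equivalent parametrisations and both require a uniformity (yours in $N$, the paper's in $\eta\in(0,1/2]$). You bound $\numnt$ by $|S|+K$ (Lebesgue measure of the sub-level set plus its number of components) and appeal to Bessel-$3$ transience, while the paper instead builds a sequence of stopping times and dominates the count by a geometric sum of independent geometrics, using explicit meander transition-density estimates (its Lemmas~\ref{l.brownian meander from 0 estimate}--\ref{l.prob of return from above}). Your downcrossing argument for $K$ is exactly the content of the paper's Lemma~\ref{l.prob of return from above}. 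Either route works; the paper's stopping-time construction is more self-contained, while your occupation-time formulation is conceptually cleaner but leans on the (standard) fact that Bessel-$3$ occupation time of $[0,1]$ has exponential tails.
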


We again note that the stated result studies near maxima with respect to the maximiser on $[-d,d]$. However, unlike Theorem~\ref{t.local johansson}, the result proved implies the same bound for the number of near touches in $[-d,d]$ of the \emph{global} maximiser, i.e., the random variable with the same definition as $\numnt(\L, \eta)$ but with $M$ being the global maximum value. This is simply because if $x\in[-d,d]$ is a point of near touch with the global maximum, it must also be a point of near touch with the maximum on $[-d,d]$. Bounding the number of near touches of the global maximiser which occur anywhere, not necessarily in $[-d,d]$, is again likely to be tractable using \cite[Corollary 4.6]{corwin2014brownian} and the parabolic curvature of $\L$.

As with Theorem~\ref{t.local johansson}, the proof of Theorem~\ref{t.many near touch} is given in Chapter~\ref{ch.application proofs} and relies on bounding the probability for the same event under Brownian motion using information about Brownian meander.


The next application we discuss involves Brownian LPP and is not proved in this paper; as such, we aim at giving only a heuristic description underscoring the points of interest.

\subsection{Brownianity of the rooted weight profile}
\label{s.intro.rooted weight profile}


We explain a pertinent random function in the LPP setting,  harnessing the notation that we have introduced for the Bernoulli model. Recall the discussion around the supremum formula \eqref{e.supremum formula} for $\weight_n^{0|1}$. Let $w \in (0,1)$ be a height in the scaled copy of the plane through which the polymer $\rho_n = \rho_n \big[ (0,0) \to (0,1) \big]$ passes. Define the {\em rooted weight profile} $Z_w:\R \to \R$ indexed by height $w$ to be the function that reports the highest weight of a scaled path that is forced to pass through a given location at height $w$. That is, regarding $\weight_n$ as the analogue of $\weight^{0|1}_n$ in the setting of Brownian LPP, we set
\begin{equation}\label{e.za}
 Z_w(x) = \weight_n \big[ (0,0) \to (x, w) \big] + \weight_n \big[ (x, w) \to (0,1) \big] \, ,
\end{equation}
the right-hand side being the maximum scaled energy of a scaled path that begins at $(0,0)$; ends at $(0,1)$; and makes a visit to the location $(x,w)$. Thus, $Z_w(x)$ is analogous to (what may be called) the supremand in~\eqref{e.supremum formula}. 
Note that by \eqref{e.supremum formula}, any point in $\rho_n$ of the form $(x,w)$ is such that $Z_w(x)$ achieves the maximum value of $Z_w$.  
A natural question in the study of near polymers, which is related to the question of the existence of multiple valleys, is: ``how probable is it that a near maximiser of $Z_w$
is achieved at a significant distance from the maximiser?'' 
The two right-hand terms in~(\ref{e.za}) locally resemble standard Brownian motion in the sense of Theorem~\ref{T.MAIN THEOREM FOR AIRY} (though as we are discussing a weight profile in Brownian LPP, this is a consequence of the upcoming main result, Theorem~\ref{t.airytail.ln});
 so $Z_w$ plausibly resembles Brownian motion of rate two.  
 
 For example, this intuition suggests that the probability of occurrence of a near maximiser of $Z_w$ should obey a similar bound as in Theorem~\ref{t.local johansson}. That is, if $x$ is a maximiser of $Z_w$, the probability that $Z_w(y)$  attains a value that exceeds $Z_w(x) - \alpha h^{1/2}$ at a location $y \in \R$ for which $\vert y - x \vert \geq h$ should be at most $\alpha$, up to a multiplicative error that rises subpolynomially in the limit $\alpha \searrow 0$. This is in fact true, and will follow from the next result in the same way that Theorem~\ref{t.local johansson} follows from Theorem~\ref{T.MAIN THEOREM FOR AIRY}.


\begin{theorem}\label{t.rootedweightprofile}
Let $w$ lie in a compact interval in $(0,1)$, $d\geq 1$, and $\mc C$ be as in Theorem~\ref{T.MAIN THEOREM FOR AIRY}.
The rooted weight profile $Z_w: \R \to \R$ is strongly comparable to rate-two Brownian motion $B$ on $[-d,d]$. That is, there exists $G<\infty$ and $g>0$ such that if $A$ is a Borel measurable subset of $\mc{C}$
for which $\PP \big( B \in A \big) = \epsilon \in (0,1)$, and if $\epsilon$ is both sufficiently small and greater than $\exp(-gn^{1/12})$, then $\P \big( Z_w - Z_w(-d) \in A \big) \leq \epsilon \exp \big\{ G d ( \log \epsilon^{-1} )^{5/6} \big\}$.
\end{theorem}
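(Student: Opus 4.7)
The plan rests on the decomposition $Z_w = W_1 + W_2$ with $W_1(x) = \weight_n[(0,0)\to(x,w)]$ and $W_2(x) = \weight_n[(x,w)\to(0,1)]$. These profiles are independent because the driving Brownian environment in Brownian LPP splits cleanly at the horizontal level $w$ (unlike the Bernoulli heuristic, there is no microscopic overlap to contend with in the continuum setting). The profile $W_1$ is a narrow-wedge weight profile of exactly the form to which Theorem~\ref{t.airytail.ln} applies, while $W_2$ becomes such a profile after the reflection $(x,s)\mapsto(x,1-s)$ that interchanges the roles of starting and ending point. Consequently each of $W_i(\cdot) - W_i(-d)$ is strongly comparable to standard Brownian motion on $[-d,d]$, with constants uniform in $w$ over any compact subinterval of $(0,1)$.

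The second step is to pass from the tail-form comparison for each individual profile to an $L^p$-norm bound on its Radon--Nikodym derivative with respect to $\B$. A direct Cavalieri-formula computation shows that the bound $\mu_{W_i}(A)\leq \epsilon\exp(Gd(\log\epsilon^{-1})^{5/6})$ for all $\epsilon = \B(A)$ in the prelimiting range $[\exp(-gn^{1/12}),\epsilon_0]$ is equivalent, up to absolute constants, to $\|d\mu_{W_i}/d\B\|_{L^p(\B)}\leq \exp(c\,d\,p^5)$ for $p$ in a corresponding range. By independence, the joint density on $\mc C\times \mc C$ factors, and Fubini gives $\|d(\mu_{W_1}\times\mu_{W_2})/d(\B\times\B)\|_{L^p(\B\times\B)}\leq \exp(2cdp^5)$.

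The final step translates the product bound into a bound for the sum. Given a Borel set $A\subseteq\mc C$, set $\widetilde A = \{(f,g)\in \mc C\times\mc C : f+g\in A\}$. The event $\{Z_w - Z_w(-d)\in A\}$ coincides with $\{(W_1 - W_1(-d),\,W_2 - W_2(-d))\in \widetilde A\}$, and under $\B\times\B$ the set $\widetilde A$ has measure $\P(B_1+B_2\in A) = \P(B\in A) = \epsilon$, since the sum of two independent standard Brownian motions is a rate-two Brownian motion $B$. H\"older applied to the joint density yields
\[
\P\bigl(Z_w - Z_w(-d) \in A\bigr) \;\leq\; \exp(2cdp^5)\cdot \epsilon^{\,1-1/p};
\]
optimizing $p\sim(\log\epsilon^{-1})^{1/6}$ exactly recovers the shape $\exp\bigl(G'd(\log\epsilon^{-1})^{5/6}\bigr)$, with $G'$ a universal multiple of $G$.

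The main obstacle I expect is bookkeeping of the admissible range of $\epsilon$ (equivalently, of $p$) through the $L^p$ passage: the prelimiting threshold $\exp(-gn^{1/12})$ in Theorem~\ref{t.airytail.ln} restricts the range of $p$ in the derived $L^p$-bound, and after taking the product and re-optimizing one must verify that the optimal $p\sim(\log\epsilon^{-1})^{1/6}$ lies in the admissible range precisely when $\epsilon \geq \exp(-g'n^{1/12})$ for some possibly smaller constant $g'$. A secondary technical point is that the reflection applied to $W_2$ must produce a process falling within the hypothesis class of Theorem~\ref{t.airytail.ln} with constants uniform in $w$ over its compact subinterval of $(0,1)$; this is a matter of invoking the starting/ending-point invariances built into the prelimiting result and scaling in $w$ and $1-w$.
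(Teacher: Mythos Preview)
The paper does not prove Theorem~\ref{t.rootedweightprofile}. Immediately after its statement the authors write: ``This theorem will appear in~\cite{LPPtools}, where it will be derived from our main result, Theorem~\ref{t.airytail.ln}.'' So there is no paper proof to compare against; one can only assess whether your outline is a viable derivation from Theorem~\ref{t.airytail.ln}.

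Your strategy---decompose $Z_w$ as the sum of two independent narrow-wedge profiles, apply Theorem~\ref{t.airytail.ln} to each after the appropriate $w$- and $(1-w)$-rescaling, and then combine---is the natural one and almost certainly the intended route. The step that deserves the most scrutiny is your conversion of the tail-form comparison into an honest $L^p$-bound on the Radon--Nikodym derivative. As stated, this conversion is not quite correct: the event-wise bound $\mu(A)\le \B(A)^{1-1/p}$ yields only a \emph{weak}-$L^p$ estimate on $d\mu/d\B$ (via the level sets $\{d\mu/d\B>t\}$), and to integrate $\int_0^\infty pt^{p-1}\B(d\mu/d\B>t)\,dt$ you need decay of $\B(d\mu/d\B>t)$ for \emph{all} large $t$. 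In the prelimiting case the hypothesis of Theorem~\ref{t.airytail.ln} only covers $\B(A)\ge \exp(-gn^{1/12})$, so you obtain no information once $\B(d\mu/d\B>t)$ drops below this threshold, and the Cavalieri integral is not a priori finite. You flag the $\epsilon$-range bookkeeping as an obstacle, but the issue is a bit sharper than bookkeeping: without an independent a priori bound on the essential supremum of $d\mu_{W_i}/d\B$, the $L^p$-norm could be infinite even when the event-wise bound holds throughout its stated range.

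One way around this is to avoid $L^p$ altogether and argue directly by conditioning and a dyadic decomposition: write $\P(W_1'+W_2'\in A)=\E\bigl[\P(W_1'\in A-W_2'\mid W_2')\bigr]$, set $\phi(g)=\B(A-g)$ so that $\E_\B[\phi]=\epsilon$, stratify on the value of $\phi(W_2')$ in dyadic shells, and apply Theorem~\ref{t.airytail.ln} once to the inner probability and once (via Markov on $\phi$) to the measure of each shell. This uses only the event-wise comparison in its stated range and sidesteps the Radon--Nikodym issue; the threshold condition on $\epsilon$ then propagates with at most a constant loss in~$g$, as you anticipated.
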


This theorem will appear in \cite{LPPtools}, where it will be derived from our main result, Theorem~\ref{t.airytail.ln}.

The stretched-exponential-in-$n$ lower bound condition on $\epsilon$ expressed in Theorem~\ref{t.rootedweightprofile} is an artifact of Theorem~\ref{t.airytail.ln}, which is the prelimiting counterpart of Theorem~\ref{T.MAIN THEOREM FOR AIRY} that is valid for Brownian LPP. In most applications the condition is irrelevant, as events of interest do not usually have probabilities which decay faster than polynomially in $n$, (and sometimes do not decay with $n$ at all). The condition does not appear in Theorem~\ref{T.MAIN THEOREM FOR AIRY} because there $n$ takes the value $\infty$, in which case the lower bound condition is vacuously true.

We have placed Theorem~\ref{t.rootedweightprofile} in the context of existence of near maxima or multiple valleys. It is also plausible that the strong control in the unperturbed environment provided by this theorem would prove valuable for studying the behaviour of the system under small perturbations, consistent with the ideas expounded in \cite{chatterjee2014superconcentration} and briefly discussed in Section~\ref{s.intro.energy landscape}.

Recall that we have till now focused on the narrow wedge initial condition, which leads to the parabolic Airy$_2$ process in the limit. The last two applications will be discussed in Section~\ref{s.intro general init condition Brownian regularity}, after we have described a form of Brownian regularity for scaled energy profiles begun from \emph{general} initial conditions. But we first turn to reviewing related work in whose purview this article falls.

\section{Pertinent recent work}\label{s.intro.previous work}
We start by giving a brief account of the general background of the Airy$_2$ process. The interested reader is referred to the survey \cite{quastel2014airy} for a more detailed review, though from the slightly different viewpoint of integrable probability.

The one-point distribution of the stationary Airy$_2$ process is the GUE Tracy-Widom distribution, first discovered in random matrix theory as the distribution of the limiting scaled fluctuations of the largest eigenvalue of the Gaussian Unitary Ensemble \cite{tracy1994level}. A breakthrough in the field of KPZ was Baik, Deift, and Johansson \cite{baik1999distribution} proving that the same GUE Tracy-Widom distribution arises as the distribution of the limiting scaled fluctuations of the point-to-point energy in Poissonian LPP, through an equivalent description in terms of the longest increasing subsequence of a uniform random permutation.

The jump from the one-point GUE Tracy-Widom distribution to the full Airy$_2$ process was made in \cite{prahofer2002scale}, where it was shown that the weight profile in Poissonian LPP (which has a bijection with the PNG model) converges weakly to the Airy$_2$ process minus a certain parabola, in the sense of finite dimensional distributions. This convergence was strengthened to hold on the space of continuous functions in a closely related model in \cite{johansson2003discrete}.

The locally Brownian nature of the Airy$_2$ process has been previously established in a number of different formulations. One relatively weak version is to consider \emph{local limits} of the Airy$_2$ process; i.e., to study the Gaussianity of $\epsilon^{-1/2}\left(\A(x+\epsilon)-\A(x)\right)$ for a given $x\in \R$ as $\epsilon \searrow 0$. The appearance of Brownian motion in this limit was proven in~\cite{hagg2008,CATOR2015538,quastel2013local}. The final of these three articles, \cite{quastel2013local}, also establishes H\"older $\frac{1}{2}-$ continuity of the Airy$_2$ (as well as Airy$_1$) process, which is extended to limiting weight profiles arising from a very general class of initial conditions in \cite[Theorem 4.13]{matetski2016kpz}. A stronger notion of the locally Brownian nature of the Airy$_2$ process is absolute continuity of $\A$ with respect to Brownian motion on a unit order compact interval. This was first proved in \cite{corwin2014brownian}, and was used in the same paper to prove Johansson's conjecture mentioned above.

Another line of work has established various Brownian features in the \emph{pre-limiting} weight profiles. For instance, \cite{basu2018time} establishes local Brownian fluctuations (in the sense of sub-Gaussian tails) in the weight profile of point-to-point exponential LPP, while \cite{hammond2017modulus} establishes a (sharp) version of the Holder $\frac{1}{2}-$ continuity mentioned above for the pre-limiting weight profiles in Brownian LPP (which also applies with quite general initial conditions).

However, none of these results addresses the question of bounding probabilities involving the Airy$_2$ process in terms of Brownian probabilities, or, equivalently, providing growth bounds on the Radon-Nikodym derivative with respect to some Brownian process.

A result in this direction was proved in \cite{hammond2017brownian}. There the comparison was between a modification of $\L$, denoted $\L^{[-d,d]}$, that is defined by affinely shifting $\L$ to be zero at both endpoints of $[-d,d]$, and Brownian bridge, instead of between a vertically shifted version of $\L$ and Brownian motion as in Theorem~\ref{T.MAIN THEOREM FOR AIRY}. The form of the result, however, is otherwise much the same:

\begin{theorem}[Theorem 1.10 of \cite{hammond2017brownian}]\label{t.old bb comparison}
Let $d\geq 1$ and let $\mc C_{0,0}$ be the space of continuous functions which vanish at both endpoints of $[-d,d]$. Let $A$ be a Borel measurable subset of $\mc C_{0,0}$, and let $\epsilon = \B^{[-d,d]}(A)$, where $\B^{[-d,d]}$ is the law of standard Brownian bridge on $[-d,d]$ (i.e., with vanishing endpoints). There exists $\epsilon_0 = \epsilon_0(d)>0$ and an absolute finite constant $G$ such that, if $\epsilon \in (0, \epsilon_0)$, then
$$\P\left(\L^{[-d,d]}(\,\cdot\,) \in A\right) \leq \epsilon \cdot \exp\left(Gd^2(\log \epsilon^{-1})^{5/6}\right).$$
\end{theorem}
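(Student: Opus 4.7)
The approach is to exploit the Brownian Gibbs property of the parabolic Airy line ensemble $(\mathcal{L}_k)_{k \geq 1}$, whose top line is $\L = \mathcal{L}_1$. Conditioning on the endpoint pair $(\L(-d), \L(d))$ and on the curve $\mathcal{L}_2$ restricted to $[-d,d]$, the law of $\L|_{[-d,d]}$ is a rate-one Brownian bridge between the given endpoint values, conditioned to lie strictly above $\mathcal{L}_2$ on $(-d,d)$. Writing $\mathrm{aff}_{\ell,r}$ for the affine interpolation on $[-d,d]$ sending $-d\mapsto \ell$ and $d\mapsto r$, subtracting $\mathrm{aff}_{\L(-d),\L(d)}$ gives $\L^{[-d,d]}$ as a standard Brownian bridge $B$ from $0$ to $0$ on $[-d,d]$, conditioned on the event $\{B(x) \geq f(x)\text{ for all }x\in[-d,d]\}$, where $f(x) := \mathcal{L}_2(x) - \mathrm{aff}_{\L(-d),\L(d)}(x)$ is a random floor measurable with respect to the conditioning data.

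From this Gibbs representation, with $\E_B$ denoting expectation under $\B^{[-d,d]}$, the conditional probability takes the ratio form
$$
\P\bigl(\L^{[-d,d]} \in A \,\bigm|\, \L(\pm d),\mathcal{L}_2\bigr) = \frac{\E_B\bigl[\mathbf{1}_{\{B\in A\}} \mathbf{1}_{\{B \geq f\}}\bigr]}{\E_B\bigl[\mathbf{1}_{\{B \geq f\}}\bigr]}.
$$
Applying Hölder's inequality to the numerator with conjugate exponents $(p,q)$ bounds it by $\B^{[-d,d]}(A)^{1/p} \, \E_B[\mathbf{1}_{\{B \geq f\}}]^{1/q} = \epsilon^{1/p}\, Q^{1-1/p}$, where $Q := \E_B[\mathbf{1}_{\{B\geq f\}}]$ is the acceptance probability. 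Dividing by $Q$ and integrating over the conditioning data yields
$$
\P\bigl(\L^{[-d,d]} \in A\bigr) \leq \epsilon^{1/p} \cdot \E\bigl[Q^{-1/p}\bigr].
$$

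The crux is to bound the negative moment $\E[Q^{-1/p}]$ with the right dependence on $d$. Because the parabolic Airy line ensemble has $\mathcal{L}_2(x) \approx -x^2/\sqrt{2}$ and $\L(\pm d) \approx -d^2/\sqrt{2}$, the subtracted affine interpolation is approximately the constant $-d^2/\sqrt{2}$, so that $f$ is typically a positive ``hump'' of height of order $d^2$ on $[-d,d]$. Consequently, $Q$ is governed by how likely a standard bridge from $0$ to $0$ is to surmount this hump. I would introduce a favorable event $F$ controlling (i) the endpoint fluctuations $\L(\pm d)$ via Tracy--Widom-type one-point tails, and (ii) the maximum of $\mathcal{L}_2$ on $[-d,d]$ via the one-point and supremum estimates developed for the Airy line ensemble in \cite{corwin2014brownian}. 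On $F$ one obtains an explicit lower bound on $Q$ of the form $\exp(-\Phi(d))$ from Gaussian/reflection estimates for the bridge against a parabolic barrier, while the contribution from $F^{c}$ is absorbed by the Gaussian decay of these tail probabilities.

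The final step is to optimize $p$ as a function of $(d,\epsilon)$, writing $\epsilon^{1/p} = \epsilon \cdot \epsilon^{-(p-1)/p}$ and balancing the linear-in-$\log\epsilon^{-1}$ cost of the Hölder exchange against the $\Phi(d)/p$ cost of inverting the acceptance probability on $F$. Exploiting the Gaussian barrier cost $\exp(-H^{2}/d)$ for a bridge on $[-d,d]$ of reaching height $H$, together with the $\exp(-c s^{3/2})$ upper-tail scaling inherited from GUE Tracy--Widom, leads after optimization to an exponent of the form $Gd^{2}(\log\epsilon^{-1})^{5/6}$, where the fractional power $5/6$ reflects the interpolation between the Gaussian bridge rate and the Tracy--Widom rate. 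The main obstacle is executing this two-scale analysis while keeping the dependence on $d$ no worse than quadratic rather than the naive cubic: this requires either a ``jump ensemble''-style regularization in which an auxiliary bridge is introduced to absorb the parabolic backbone of $f$ before the Hölder comparison is applied, or a careful decomposition of $f$ into its deterministic parabolic shape and its stochastic residual, so that only the residual need be handled via the Hölder bound.
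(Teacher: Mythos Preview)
The paper does not prove this theorem from scratch: it is quoted from \cite{hammond2017brownian}, and the paper observes in one line that it also follows immediately from the new Brownian \emph{motion} comparison Theorem~\ref{T.MAIN THEOREM FOR AIRY}, since the affine shift sending $f\mapsto f^{[-d,d]}$ carries Brownian motion to Brownian bridge (and carries $\L(\cdot)-\L(-d)$ to $\L^{[-d,d]}$). So the paper's route is: prove the harder motion statement, then read off the bridge statement as a corollary.

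Your proposal instead sketches a direct attack via the Brownian Gibbs property on the interval $[-d,d]$, and there is a genuine gap. Conditioning on $\bigl(\L(\pm d),\mathcal{L}_2|_{[-d,d]}\bigr)$ and controlling the acceptance probability $Q=\P_B(B\geq f)$ is precisely what \emph{cannot} be done naively: as the paper explains in Section~\ref{s.jump ensemble heuristic}, the floor $\mathcal{L}_2$ can have peaks near the endpoints or of large height that drive $Q$ to zero with no usable lower bound, and your favorable-event argument based only on one-point tails of $\L(\pm d)$ and $\sup\mathcal{L}_2$ does not rule these out. The remedy in \cite{hammond2017brownian} is not a mild ``regularization'' but the full jump ensemble: one works on the much larger interval $[-2T,2T]$ with $T=D_k(\log\epsilon^{-1})^{1/3}$, retains side-bridge data, and replaces $\mathcal{L}_2$ by a coarsened pole set. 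The exponent $5/6$ then arises as $T^{5/2}$ in those estimates (see Proposition~\ref{p.T_3(J) bound}), not from the H\"older/Tracy--Widom balancing you describe.

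A second issue is that the H\"older step is both unnecessary and lossy. Since the event $\{B\in A\}$ depends only on the bridge $B$ and not on the endpoints, one has the trivial bound $\E_B[\mathbf{1}_{\{B\in A\}}\mathbf{1}_{\{B\geq f\}}]\leq \B^{[-d,d]}(A)=\epsilon$, giving the ratio $\leq \epsilon/Q$ directly; this is exactly how the paper (and \cite{hammond2017brownian}) proceeds, cf.\ the reduction in \eqref{e.ln prob breakup}. Introducing H\"older with $p>1$ only degrades $\epsilon$ to $\epsilon^{1/p}$ and forces an optimization that your sketch does not carry out.
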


This also follows immediately from Theorem~\ref{T.MAIN THEOREM FOR AIRY} and the fact that performing the affine shift described on Brownian motion results in Brownian bridge.

Theorem~\ref{t.old bb comparison} and our new Theorem~\ref{T.MAIN THEOREM FOR AIRY} are formally very similar, the latter obtained merely by substituting Brownian motion for Brownian bridge. However, it is found in  many contexts that Theorem~\ref{t.old bb comparison} is unable to provide the kind of information that is desired. This is because, though the process $\L(\,\cdot\,) - \L(-d)$ can be obtained from the bridge $\L^{[-d,d]}(\,\cdot\,)$ and the endpoint $\L(d)$, the desired information gets away from us due to potentially pathological correlations between these two random objects. Controlling this correlation is especially required to understand the slope or maximum of $\L$ on an interval; the slope or maximum are often of relevance in LPP problems, as can be seen in the applications discussed in Section~\ref{s.intro applications}.

The proof of Theorem~\ref{T.MAIN THEOREM FOR AIRY} is significantly more involved and subtle than the proof of Theorem~\ref{t.old bb comparison} in \cite{hammond2017brownian} because of the need to handle these correlations.
We make some more comments contrasting the proofs in Section~\ref{s.intro method of proof}.

Theorem~\ref{t.old bb comparison} was a crucial tool in the four-part study of Brownian LPP undertaken in \cite{hammond2017brownian,hammond2017modulus,hammond2017rarity,hammond2017patchwork}. In the final paper \cite{hammond2017patchwork}, a form of Brownian regularity was proved for pre-limiting weight profiles for general initial conditions, to which we return shortly. But we first turn to discussing the Brownian Gibbs property, a crucial idea in the proofs of Theorem~\ref{t.old bb comparison} as well as our own main result.

\subsection{The Brownian Gibbs property}
A central player in our approach is the \emph{Brownian Gibbs property}, and here we discuss previous work in this line of study. The Brownian Gibbs property was first employed in \cite{corwin2014brownian}, to study the \emph{Airy line ensemble}. The Airy line ensemble is an $\N$-indexed collection of continuous, non-intersecting curves, whose uppermost curve is the Airy$_2$ process. The Brownian Gibbs property is an explicit spatial Markov property enjoyed by the Airy line ensemble after a parabolic shift and multiplication by a factor $2^{-1/2}$, resulting in the \emph{parabolic Airy line ensemble}. In short, the Brownian Gibbs property says that the conditional distribution of any set of $k$ consecutive curves on an interval $[a,b]$, conditionally on all the other curves on all of $\R$ and the $k$ curves themselves on $(a,b)^c$, is given by $k$ independent rate one Brownian bridges between appropriate endpoints and conditioned to intersect neither each other nor the preceding and succeeding curves.

The Brownian Gibbs property and various softenings of it have proved to be a versatile tool in probabilistic investigations of KPZ. Beyond the already mentioned \cite{corwin2014brownian}, there have been numerous works on line ensembles enjoying this or an analogous property, which we briefly discuss.

The Brownian Gibbs property itself was a central theme in the previously mentioned four-part study \cite{hammond2017brownian,hammond2017modulus,hammond2017rarity,hammond2017patchwork} of Brownian LPP. While \cite{corwin2014brownian} established that the Brownian Gibbs property is enjoyed by the Airy line ensemble, and hence by the limiting weight profiles in a number of LPP models, Brownian LPP is special in that its weight profile satisfies the Brownian Gibbs property even in the \emph{pre-limit}. This is a crucial integrable input first observed by \cite{o2002representation} (who related the energy profiles in Brownian LPP to Dyson Brownian motion), and is the reason why Brownian LPP is the setting of the mentioned four-part study, as well as why our main results will apply to it. Apart from this four-part study, we mention some other works in this vein. The work \cite{corwin2014ergodicity} establishes the ergodicity of the Airy line ensemble using the Brownian Gibbs property. The fractal nature of a certain limiting weight difference profile in Brownian LPP is investigated in \cite{basu2019fractal}, using inputs from the four-part study mentioned earlier. The Brownian Gibbs property is used in \cite{caputo2019confinement,caputo2019tightness} to analyse tightness of families of non-intersecting Brownian bridges above a hard wall, subject to a tilting of measure in terms of the area the curves capture below them; they also establish that an area-tilted form of the Brownian Gibbs property is enjoyed by the limiting ensemble.

A softened version of Brownian Gibbs, in which intersection is not prohibited but suffers an energetic penalty, was used in an investigation of the {\em scaled} solution to the KPZ equation with narrow-wedge initial condition \cite{corwin2016kpz}, establishing for that process absolute continuity with respect to Brownian motion on compact intervals. This form of Brownian Gibbs was also used in the recent \cite{corwin2018kpz} to obtain bounds on the one-point upper and lower tails for the solution to the KPZ equation from quite general initial data, and in \cite{corwin2019kpz} to establish the rate of decay of correlations with time of the narrow wedge solution at the origin. A discrete Gibbsian property was used in \cite{corwin2018transversal} to study the transversal fluctuation exponent and tightness of the appropriately scaled height function in the asymmetric simple exclusion process and stochastic six vertex model, started with step initial conditions. A sequence of discrete line ensembles associated to the inverse gamma directed polymer, which obeys a softened discrete version of the Brownian Gibbs property, was shown to be tight in \cite{wu2019tightness}.

Finally, we mention the valuable contribution \cite{dauvergne2018directed}, aided by~\cite{dauvergne2018basic}, which establishes the existence of the space-time Airy sheet using Brownian LPP and the Brownian Gibbs property. We shall say more on this in the next subsection on limiting weight profiles from general initial conditions.

\subsection{The KPZ fixed point and the directed landscape}\label{s.intro kpz fixed point}
As mentioned, the Airy$_2$ process arises as a limiting process under very particular initial conditions, often called narrow-wedge (which corresponds to the step initial condition for TASEP). Given our knowledge of the strong comparison to Brownian motion that the Airy$_2$ process enjoys via Theorem~\ref{T.MAIN THEOREM FOR AIRY}, a natural question is whether such a comparison extends to the limiting processes arising from general initial conditions.

Of course, before wondering about the Brownian regularity of a limiting profile process under general initial conditions, we must show that such a limiting process exists. Two recent results are pertinent to this basic question.

The first is the construction in \cite{matetski2016kpz} of a scale invariant Markov process, the {\em KPZ fixed point}  \footnote{The usage of this term in our title mildly corrupts that in~\cite{matetski2016kpz}. In speaking of Brownian structure in the KPZ fixed point, we refer to Brownian structure in the random function obtained by evolving the Markov process in question from given initial data, whether narrow wedge or more general. Properly, and as used in \cite{matetski2016kpz}, the KPZ fixed point is the Markov process, rather than the resulting profile.},  in the context of the TASEP model.  In this work, the authors establish formulas for the finite dimensional distributions of the height function under very general initial data, which, on taking the scaling limit according to KPZ exponents, results in a scale invariant Markov process of central importance in the KPZ class. 
Simply put, the time-one evolution of the Markov process of \cite{matetski2016kpz} on given initial data coincides with the limiting weight profile begun from the same initial condition, 
modulo the fact that \cite{matetski2016kpz} works in TASEP and not Brownian LPP. For example, the time-one evolution of this limiting Markov process begun at the narrow-wedge initial condition (i.e., step initial condition for TASEP) results in the parabolic Airy$_2$ process. The statement that the time one evolution of the Markov process on general initial data coincides with the limiting weight profile arising from the same initial data is not yet known to hold exactly because the analysis in \cite{matetski2016kpz} is done in the pre-limiting model of TASEP, which lacks exact Brownian structure. For the same reason, our results do not directly apply to these limiting processes. If the scale invariant Markov process of \cite{matetski2016kpz} is constructed using Brownian LPP, our results can say something further.

However, a Brownian structure is brought to the pre-limiting model in the recent advance, made by Dauvergne, Ortmann, and Vir\'ag in \cite{dauvergne2018directed} and assisted by \cite{dauvergne2018basic}, which proves the existence of the \emph{space-time Airy sheet}. The space-time Airy sheet is a previously conjectured \cite{corwin2015renormalization} universal object in the KPZ universality class and is a process with two temporal and two spatial arguments; these arguments should be thought of as the coordinates of a pair of planar points, with each point's coordinates consisting of one spatial and one temporal argument. For fixed values of the temporal arguments, which corresponds to restricting the mentioned points to lie on two fixed lines, the marginal of the space-time Airy sheet in either of its two spatial arguments is the Airy$_2$ process. The marginal spatial process when the temporal arguments take fixed values is sometimes referred to as simply the Airy sheet.

The pre-limiting model used in the construction of the Airy sheet in \cite{dauvergne2018directed} is Brownian last passage percolation, and an analysis of the bulk behaviour of curves deep in the parabolic Airy line ensemble is undertaken in \cite{dauvergne2018basic} in a manner that assists the construction of the scaling limit of Brownian LPP, i.e., the process limit of $\weight_n[(x,s)\to(y,t)]$ as a function of all four arguments $x$, $s$, $y$, and $t$; this scaling limit is called the \emph{directed landscape} in~\cite{dauvergne2018directed} and is a parabolically shifted space-time Airy sheet. (The directed landscape can be thought of as the scaled KPZ energy landscape, similar to the energy landscape discussed in Section~\ref{s.intro.energy landscape}, under the interpretation that the directed landscape assigns scaled energies or weights to scaled limiting paths, and modulo the change in sign of the energy compared to Section~\ref{s.intro.energy landscape}.) The proof of the construction of this scaling limit in \cite{dauvergne2018directed} proceeds via equating last passage percolation values in the original environment to an LPP problem in a new environment defined by last passage values in the original environment, a novel extension of the Robinson-Schensted-Knuth correspondence. Using this result, it follows that limiting weight profiles from general initial conditions exist, and we may further say something about its Brownian regularity using our results. We move in this direction next.

\section{A form of Brownian regularity for weight profiles with general initial conditions}\label{s.intro general init condition Brownian regularity}


 In this section we describe the two remaining applications of Theorem~\ref{T.MAIN THEOREM FOR AIRY}. As was just mentioned, a fairly direct consequence of the results of \cite{dauvergne2018directed} is the existence of the limiting weight profile from general initial conditions in Brownian LPP (Proposition~\ref{p.limit of general weight profile}). A natural next question is whether these general limiting weight profiles enjoy a similar comparison to Brownian motion as provided by Theorem~\ref{T.MAIN THEOREM FOR AIRY} for the limiting narrow wedge weight profile, i.e., the parabolic Airy$_2$ process. In fact, convergence of the \emph{local} limit to Brownian motion was already known in different senses for various classes of initial conditions; for example, this Brownian local limit was shown for the Airy$_1$ process in \cite{quastel2013local} in the sense of finite dimensional distributions; for a class of Airy processes arising from quite general initial conditions as constructed in \cite[Theorem 3.13]{matetski2016kpz}, again in the sense of finite dimensional distributions, in \cite[Theorem 4.14]{matetski2016kpz}; while the same local limit in the space of continuous functions has been proven in \cite{pimentel2018local} under an assumption which is verified for certain specific initial conditions (such as the mixed profile flat$\to$stationary).

However, fully general initial conditions do not enjoy the many algebraic properties that are present in the narrow wedge and the other previously studied cases mentioned, and so results are much weaker or absent for those weight profiles. In particular, though it is believed that even with general initial condition the limiting weight profile should be locally Brownian in a strongly quantifiable sense, it seems difficult to establish something along the lines of Theorem~\ref{T.MAIN THEOREM FOR AIRY}.

Nevertheless, it is possible to use the general form of Theorem~\ref{T.MAIN THEOREM FOR AIRY} to prove a form of Brownian regularity similar to one which was introduced in \cite{hammond2017patchwork}. But before addressing the Brownian regularity of the limiting profile, let us look at the pre-limiting weight profile, as this has a direct interpretation as the weight associated to polymers.

\subsection{Discovering a polymer forest and a patchwork quilt}

We return to the model of Bernoulli LPP to illustrate what is meant by general initial conditions. The initial condition is described by a function $f:\R\to\R\cup\{-\infty\}$, and for such a fixed $f$, we consider scaled (via the scaling function $S_n$) paths which may begin anywhere on the real line at time zero, but must end at $(y,1)$. The \emph{$f$-rewarded weight} of the path is the sum of $f$ evaluated at the starting point---a reward---and the weight collected by the path on its journey. The maximum $f$-rewarded weight over all such paths is denoted $\weight^{f,\,0\mid 1}_{n}[(*,0)\to(y,1)]$. More precisely,
$$\weight^{f,\,0|1}_n[(*,0)\to (y,1)] := \sup\left\{\weight^{0|1}_n[(x,0)\to (y,1)] + f(x) : x\in\R\right\};$$
the $*$ in the notation is to indicate that the starting point is free. A mild condition that $f$ grows at most linearly is needed to make this object suitable for study.

The narrow-wedge initial condition we have been focusing on thus far corresponds to the case that $f(x)$ is zero when $x$ is zero, and $-\infty$ otherwise. In that case, we saw that the weight profile has a globally parabolic shape, and that, in the $n\to\infty$ limit, it converges to the parabolic Airy$_2$ process. What will the weight and limiting weight profiles look like when $f$ is more general?

For example, suppose that we set $f(x)$ to be zero for $x=0$ and $x=1$ and $-\infty$ everywhere else; in other words, we allow growth from the two starting locations $x=0$ and $x=1$. For various ending points $(y,1)$, we may trace back the polymer with that ending point and observe at which starting point it originated. In fact, the polymers originating from either of the points 0 and 1 will form a tree structure; the energy-maximising objective means that the trees can be viewed as \emph{competing} with one another. Within the canopy of a single tree, where its weight is dominant, we can expect the weight profile to look like that of growth from a single point (see Figure~\ref{f.polymer forest}).

\begin{figure}[h]
\centering{\epsfig{file=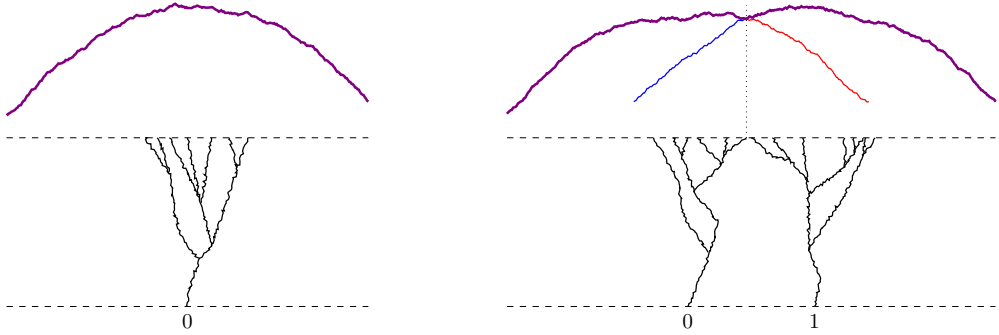, width=0.8\textwidth}}
\caption{An illustration of polymer forest and the patchwork quilt. Growth is from a single point in the left figure, and from two points in the right one. In both figures, the actual weight profile is represented as a thicker curve in purple, and is the upper envelope. In the right figure, the two polymer trees compete, and the canopies of each are adjacent to each other; the boundaries of the canopies correspond to where the weight profile is divided into two patches. The fabric piece on each patch is the weight profile of growth from a single point, and are represented by the red and blue parabolic profiles, which become thicker and purple in the patch where they agree with the actual weight profile.}\label{f.polymer forest}
\end{figure}

Thus we may surmise that in this situation of growth from two points, the full weight profile can be seen as a piecewise function, where each piece (called a \emph{patch}) has the distribution of growth from a single point; each patch can be expected to enjoy a certain Brownian regularity similar to that in Theorem~\ref{T.MAIN THEOREM FOR AIRY}. Under fully general initial conditions, where growth may be from any point on the lower line with a certain reward associated to each point that is added to the energy of the path, essentially the same picture holds: the weight profile can be broken up into a number of patches, corresponding to the canopies of the surviving polymer trees. The only difference is that the number of patches will be random. (Actually, this description is slightly simplified: for technical reasons, the final patches will be sub-patches of the patches that we have described. The reader is referred to \cite{hammond2017patchwork} for a fuller discussion.)

This leads us to the following notion of regularity for general weight profiles. The (pre-limiting) weight profile on a unit interval is divided into a random number of subintervals, the patches, with random boundary points in such a way that the restriction of the profile (called a \emph{fabric piece}) to each patch enjoys a comparison to Brownian motion similar to the one described in Theorem~\ref{T.MAIN THEOREM FOR AIRY}, though perhaps in the weaker $L^{p-}$ form rather than $L^{\infty-}$. In this way, we may say that the weight profile is a \emph{patchwork quilt of Brownian fabrics}. The strength of the regularity of the patchwork quilt depends largely on the control available for the number of patches, but also on the Brownian motion regularity guaranteed for each fabric piece as specified by the value of $p$. A precise definition is provided in Definition~\ref{d.patchwork quilt}. Proving an analogue to Theorem~\ref{T.MAIN THEOREM FOR AIRY} would correspond to establishing this regularity with a \emph{single} patch and with $p=\infty$.

Using our main theorem, we are able to show that this notion of Brownian motion regularity holds for the pre-limiting weight profiles from a general class of initial conditions, with a certain decay on the number of patches. We will state this precisely in Chapter~\ref{ch.patchwork quilt} as Theorem~\ref{t.patchwork quilt} after the relevant definitions are made. This result is a refinement of a result in \cite{hammond2017patchwork}, where the comparison of each fabric piece was made to Brownian bridge instead of Brownian motion, using a generalised form of Theorem~\ref{t.old bb comparison} that was proved in that article. 

It is natural to expect that this form of Brownian regularity for the pre-limiting weight profiles should pass to the limiting profile if it exists, as was remarked in \cite{hammond2017patchwork}. With the existence of the limiting profile established (in Proposition~\ref{p.limit of general weight profile}) using the result of \cite{dauvergne2018directed}, we also show that this notion of regularity passes to the limit, which we state informally now and formally as Theorem~\ref{t.limiting weight profile is quiltable}. This theorem may be seen as the most general form of Brownian regularity in the KPZ fixed point proved in this paper.

Again, we emphasise that this result is proved for the limiting weight profile obtained via Brownian LPP and not Bernoulli LPP. Though the formal definitions have been deferred, we denote the limiting Brownian LPP weight profile with reward function $f$ as the function $y\mapsto \weight^f_{\infty}[(*,0)\to (y,1)]$, i.e., without the $0\mid1$ in the superscript and with $n$ taking the formal value of $\infty$.

\begin{theorem}[Informal version of Theorem~\ref{t.limiting weight profile is quiltable}]\label{t.informal limiting quiltable}
Let $f:\R\to\R\cup\{-\infty\}$ be such that there exists $\Psi<\infty$ such that $f\not\equiv -\infty$ and $f(x)\leq \Psi(1+|x|)$. Then we have that $y\mapsto\weight_{\infty}^f[(*,0)\to(y,1)]$ is Brownian motion patchwork quiltable; the comparison with Brownian motion may be made in $L^{3-}$, and the random number of patches has a polynomial tail with exponent $2-\epsilon$ for any $\epsilon>0$.
\end{theorem}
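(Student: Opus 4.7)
The strategy is to derive Theorem~\ref{t.limiting weight profile is quiltable} from its prelimit counterpart Theorem~\ref{t.patchwork quilt} by passing to the limit in $n$, using the existence of the scaling limit of Brownian LPP weight profiles provided by Proposition~\ref{p.limit of general weight profile}. Fix a reward function $f$ satisfying the linear growth hypothesis and restrict attention to a compact interval $[-d,d]$. For each finite $n$, Theorem~\ref{t.patchwork quilt} supplies a patchwork decomposition of $y \mapsto \weight_n^f[(*,0)\to(y,1)]$: a random number $N_n$ of patches with boundaries in $[-d,d]$, and fabric pieces coinciding with the profile on their patches, where the fabric pieces satisfy an $L^{3-}$ Brownian comparison and $N_n$ has polynomial tail of exponent $2-\epsilon$ for every $\epsilon>0$, with bounds uniform in $n$.

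The next step is to couple, for each $n$, the weight profile with its associated patchwork structure and to establish tightness of the resulting joint law on an appropriate product space. Weak convergence of the profile on $C([-d,d])$ follows from Proposition~\ref{p.limit of general weight profile}; tightness of $N_n$ follows from its uniform polynomial tail; the boundary points lie in the compact set $[-d,d]$; and the fabric pieces are tight in $C([-d,d])$ since their laws satisfy the prelimit Brownian comparison with constants independent of $n$. Extracting a joint subsequential weak limit produces a candidate patchwork structure for $\weight_\infty^f$.

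It remains to verify that this subsequential limit constitutes a valid Brownian motion patchwork quilt with the stated parameters. Joint convergence together with continuity of the profile ensures that each limiting fabric piece agrees with $\weight_\infty^f$ on the associated limit patch. The polynomial tail of the limiting patch count follows from that of the prelimit $N_n$ via the portmanteau theorem. The $L^{3-}$ comparison passes to the limit as follows: for any closed $A \subset \mc C$ with $\B(A)=\epsilon$, the portmanteau inequality yields
$$\P\bigl( F^\infty_i - F^\infty_i(-d) \in A \bigr) \leq \limsup_k \P\bigl( F^{(n_k)}_i - F^{(n_k)}_i(-d) \in A \bigr) \leq \epsilon \cdot g(\epsilon),$$
with $g$ the deterministic, $n$-independent function furnished by the prelimit $L^{3-}$ Brownian comparison, and this extends to arbitrary Borel $A$ by inner regularity of Borel measures on the Polish space $\mc C$.

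The principal obstacle is ensuring the coherence of the limiting patchwork structure with $\weight_\infty^f$: coalescing patch boundaries in the limit must not destroy the quilt description, and fabric pieces must not degenerate. A resolution is to construct the prelimit patches in Theorem~\ref{t.patchwork quilt} as measurable functionals of the Brownian environment that behave well under scaling limits, so that the limiting patch boundaries are canonical features of $\weight_\infty^f$ itself (for instance, transitions between canopies in the limiting polymer forest built from the directed landscape of \cite{dauvergne2018directed}). With this in hand, both the Brownian comparison and the tail bound on the patch count survive the weak limit essentially automatically, since each prelimit bound is controlled by a deterministic function of $\epsilon$ that does not depend on $n$.
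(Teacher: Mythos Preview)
Your strategy matches the paper's: the formal version, Theorem~\ref{t.limiting weight profile is quiltable}, is deduced from Theorem~\ref{t.patchwork quilt} via a general weak-limit result (Proposition~\ref{p.limit is quiltable}) whose proof follows the outline you give---tightness of fabric pieces from the uniform $L^{3-}$ comparison, tightness of stitch points as point processes, joint subsequential limits, and Portmanteau for both the patch-count tail and the Brownian comparison. (The paper uses open sets and outer regularity rather than your closed sets and inner regularity; both work since $\epsilon\mapsto\epsilon^{1-\beta^{-1}-\eta}$ is increasing.)

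The one place you diverge is the ``principal obstacle'' of coalescing stitch points. Your proposed fix---to make the prelimit patches canonical functionals of the environment so that limiting boundaries are intrinsic features of the directed landscape---is considerably more ambitious and vague than what is needed. The paper's solution is purely combinatorial: regard the limiting stitch set $S_{\infty,\alpha}$ as an integer-valued measure (allowing multiplicity), let $S_\alpha$ be its support, and when a point has multiplicity $m>1$, simply discard the $m-1$ fabric pieces whose patches were squeezed to length zero, re-indexing the remaining pieces accordingly. The quilt identity then follows from Skorokhod representation and almost-sure convergence. No geometric input from \cite{dauvergne2018directed} beyond the existence of the limit (Proposition~\ref{p.limit of general weight profile}) is used.
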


With the fact that the limiting weight profile enjoys the Brownian motion patchwork quilt description, we may state our final application as our final theorem, which gives a uniform Brownian-motion-like bound on the $2-\eta$ moment of an increment of the limiting weight profile, for any $\eta>0$ and an extremely broad class of initial conditions. 

\begin{theorem}\label{t.weight profile increment moment bound}
Let $f:\R\to\R\cup\{-\infty\}$ be such that there exists $\Psi<\infty$ such that $f\not\equiv -\infty$ and $f(x)\leq \Psi(1+|x|)$, and let $0<\eta<\frac{1}{2}$. Then there exist constants $G<\infty$ and $y_0>0$ such that, for $|y|<y_0$,
$$\E\left[\left|\weight^f_{\infty}[(*,0)\to (y,1)] - \weight^f_{\infty}[(*,0)\to (0,1)]\right|^{2-\eta}\right] \leq G |y|^{1-\eta/2}.$$
\end{theorem}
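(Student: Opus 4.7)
The plan is to exploit the Brownian motion patchwork quilt description of $W:=\weight^f_\infty[(*,0)\to(\cdot,1)]$ on $[-1,1]$ provided by Theorem~\ref{t.limiting weight profile is quiltable}: for any $\epsilon>0$, a random patch count $\mc N$ with polynomial tail $\P(\mc N\geq k)\leq Ck^{-(2-\epsilon)}$, a random partition $P_1,\ldots,P_{\mc N}$ of $[-1,1]$, and fabric pieces $F_1,\ldots,F_{\mc N}$ on $[-1,1]$, each comparable in $L^{3-}$ to rate-two Brownian motion on $[-1,1]$ and agreeing with $W$ on its patch. I fix $\epsilon\in(0,\eta)$, take $y\in(0,y_0)$ by symmetry, and write $\Delta:=W(y)-W(0)$.

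The first step is to reduce $\Delta$ to a sum of fabric-piece increments. Let $0=t_0<t_1<\cdots<t_M=y$ denote the patch boundaries in $[0,y]$, with $M\leq\mc N$, and let $k_i$ index the patch containing $[t_{i-1},t_i]$. Continuity of $W$ and of each $F_k$ at the boundaries forces $W(t_i)=F_{k_i}(t_i)$, so telescoping yields $\Delta=\sum_{i=1}^M\bigl(F_{k_i}(t_i)-F_{k_i}(t_{i-1})\bigr)$. Setting $\omega_k:=\sup_{a,b\in[0,y]}|F_k(a)-F_k(b)|$ for the oscillation of $F_k$ over $[0,y]$, one obtains $|\Delta|^{2-\eta}\leq\mc N^{2-\eta}\cdot\max_{k\leq\mc N}\omega_k^{2-\eta}$.

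I then apply Hölder's inequality twice. First, with conjugate exponents $a,a'>1$,
$$\E\bigl[|\Delta|^{2-\eta}\bigr]\leq \E\bigl[\mc N^{(2-\eta)a'}\bigr]^{1/a'}\cdot \E\Bigl[\max_{k\leq\mc N}\omega_k^{(2-\eta)a}\Bigr]^{1/a};$$
since $\epsilon<\eta$, one can choose $a'\in\bigl(1,(2-\epsilon)/(2-\eta)\bigr)$, making the first factor a finite constant by the polynomial tail. To bound the second factor, dominate $\max_{k\leq\mc N}\omega_k^{(2-\eta)a}$ by $\sum_{k=1}^\infty \omega_k^{(2-\eta)a}\one_{k\leq\mc N}$ and apply Hölder termwise with conjugate exponents $b,b'$:
$$\E\bigl[\omega_k^{(2-\eta)a}\one_{k\leq\mc N}\bigr]\leq \E\bigl[\omega_k^{(2-\eta)ab}\bigr]^{1/b}\cdot\P(\mc N\geq k)^{1-1/b}.$$
The $L^{3-}$ comparison of $F_k$ (Hölder against $(dF_k/dB)^p$ for some $p<3$) combined with the Brownian scaling $\E\bigl[(\sup_{[0,y]}B-\inf_{[0,y]}B)^m\bigr]=C_m\, y^{m/2}$ yields $\E[\omega_k^{(2-\eta)ab}]^{1/b}\leq C\,y^{(2-\eta)a/2}$ uniformly in $k$. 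Choosing $b>(2-\epsilon)/(1-\epsilon)$ makes $\sum_k\P(\mc N\geq k)^{1-1/b}\leq C\sum_k k^{-(2-\epsilon)(1-1/b)}$ converge, so that $\E[\max_{k\leq\mc N}\omega_k^{(2-\eta)a}]^{1/a}\leq C\,y^{1-\eta/2}$ and thus $\E[|\Delta|^{2-\eta}]\leq G|y|^{1-\eta/2}$.

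The main obstacle is the coupled choice of Hölder exponents $a,a',b$: the $\mc N$-moment $\E[\mc N^{(2-\eta)a'}]$ must be finite, the $L^{3-}$ comparison must supply finite $\omega_k$ moments of order $(2-\eta)ab$, and the tail sum $\sum_k\P(\mc N\geq k)^{1-1/b}$ must converge. These three requirements are jointly compatible precisely when $\epsilon<\eta$, which is exactly what Theorem~\ref{t.limiting weight profile is quiltable}'s freedom to take $\epsilon>0$ arbitrarily small provides, and this is what necessitates the restriction $\eta>0$ in the theorem.
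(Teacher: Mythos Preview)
Your proposal is correct and follows essentially the same route as the paper's proof (given there as Corollary~\ref{c.precise increment moment bound}): decompose the increment along the quilt's stitch points, separate the patch-count contribution from the fabric-piece oscillations via H\"older, and use the $L^{3-}$ Brownian comparison plus Brownian scaling to extract the $|y|^{1-\eta/2}$ factor. The only differences are cosmetic: the paper uses the convexity bound $\bigl(\sum_i a_i\bigr)^{2-\eta}\le N^{1-\eta}\sum_i a_i^{2-\eta}$ followed by a single three-exponent generalized H\"older, whereas you use the cruder $\bigl(\sum_i a_i\bigr)^{2-\eta}\le \mc N^{2-\eta}\max_i a_i^{2-\eta}$ and two ordinary H\"older steps; both exponent budgets close for the same reason, namely that the patch count has finite $(2-\epsilon)$-moment for every $\epsilon>0$. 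One small slip: the fabric pieces agree with $W$ on their patches only up to the additive constants $v_i$, so your claim ``$W(t_i)=F_{k_i}(t_i)$'' is not literally true, but the telescoping identity $\Delta=\sum_i\bigl(F_{k_i}(t_i)-F_{k_i}(t_{i-1})\bigr)$ that you need is correct regardless.
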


In fact, $G$ can be taken as a uniform constant over all $f$ in a certain class of initial conditions that we will define in Section~\ref{s.notation and regular ensembles}. We will state and prove that result as Corollary~\ref{c.precise increment moment bound}, which immediately implies Theorem~\ref{t.weight profile increment moment bound}.

Theorem~\ref{t.weight profile increment moment bound} applies, for example, to flat initial conditions, where $f\equiv 0$. The limiting weight profile in this case is expected to be the Airy$_1$ process, and this is indeed known in the Poissonian and exponential LPP models; in Brownian LPP, however, we were unable to locate such a result in the literature. For Airy$_1$, it is widely expected that the second moment of the increment grows linearly in the size of the increment when the size is small \cite{ferrari2008survey}, but a formal result does not appear to be known.

A slight improvement to Theorem~\ref{t.weight profile increment moment bound} will lead to the bound for Airy$_1$ increments posited in \cite{ferrari2008survey}. The improvement required concerns control over the decay of the number of patches in the patchwork quilt. Currently, the probability there are at least $\ell$ patches intersecting a unit interval is only known to decay as $\ell^{-(2-\epsilon)}$ for any $\epsilon>0$, as stated in Theorem~\ref{t.informal limiting quiltable}; the improvement to Theorem~\ref{t.weight profile increment moment bound} would require the bound to be at most $\ell^{-(2+\epsilon)}$ for some $\epsilon>0$. Obtaining this additional control may be technically challenging and would require a furthering of the methods of \cite{hammond2017patchwork}, which we do not pursue here.

Lemma 3.3 of \cite{quastel2013local} states a bound related to Theorem~\ref{t.weight profile increment moment bound} in the case of flat initial conditions. More precisely, it gives a bound on even moments of a truncated version of the increment. However, the truncation is a significant enough modification that the behaviour of the bound is no longer Brownian, as it is in Theorem~\ref{t.weight profile increment moment bound}, and so it does not seem possible to recover the expected Brownian nature of the second moment that was raised in \cite{ferrari2008survey}.


\section{Method of proof}\label{s.intro method of proof}
In this final section of the introduction we compare, on a high level, the method of proof of the main theorem with that of Theorem~\ref{t.old bb comparison} as given in \cite{hammond2017brownian}. Chapter~\ref{ch.proof framework} is devoted to describing the general framework for the proof of our main result, with Section~\ref{s.heuristics for proof} addressing the conceptual framework specific to the main result Theorem~\ref{t.airytail.ln}.

At the highest level, the method of proof of Theorem~\ref{t.old bb comparison} in \cite{hammond2017brownian} relies on embedding the parabolic Airy$_2$ curve as the uppermost curve in the parabolic Airy line ensemble and employing the Brownian Gibbs property. In \cite{hammond2017brownian}, a significant amount of additional technical apparatus, known as the jump ensemble, was developed to further this technique, which culminated in the proof of Theorem~\ref{t.old bb comparison}.

The proof of our main theorem is based squarely on the Brownian Gibbs property via the jump ensemble as well, but the details of the proof differ quite substantially from that of Theorem~\ref{t.old bb comparison} because of the difficulties that arise from possibly pathological correlations between the bridge $\L^{[-d,d]}$ and the endpoints $\L(-d)$ and $\L(d)$.

\begin{figure}[h]
\centering{\epsfig{file=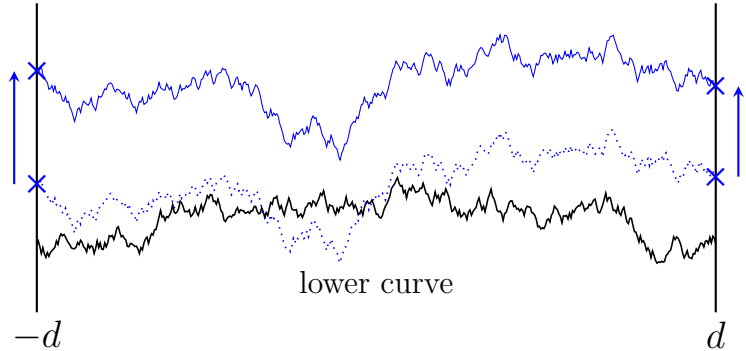, width=0.6\textwidth}}
\caption{The Brownian Gibbs property and an independence property of Brownian bridges essentially reduces the proof of Theorem~\ref{t.old bb comparison} given in \cite{hammond2017brownian} to understanding the probability of non-intersection with the lower curve (i.e., the second curve of the parabolic Airy line ensemble) conditionally on the endpoint values at $-d$ and $d$. This probability has an important monotonicity property which substantially simplifies the proof of Theorem~\ref{t.old bb comparison}: if we raise the endpoint values, the top curve is more likely to be fully above the lower curve. This can be seen by the stochastic domination depicted here, as the top curve with lower endpoint values (blue and dotted) intersects the lower curve (thick and in black), while on raising the endpoint values, non-intersection is achieved. This monotonicity will \emph{not} be available in the proof of the main result of this paper. This is because we do not have access to the independence property of Brownian bridges, which is what allowed the decoupling of the probability of the event under consideration from the probability of non-intersection.}\label{f.monotonicity simplification}
\end{figure}

A flavour of this difficulty can be seen even in a purely Brownian toy example quite easily, and this example will be fairly representative because of the Brownian Gibbs property. Suppose we are trying to bound the probability that a Brownian process lies in a particular measurable subset of continuous functions. We are contrasting the situation when the Brownian process is Brownian motion with when it is Brownian bridge; we note that applying the affine shifting procedure described before Theorem~\ref{t.old bb comparison}, which defines $\L^{[-d,d]}$ from $\L$, to Brownian motion results in Brownian bridge. Let $B$ be a standard rate one Brownian motion on $[-d,d]$ started at zero, and let $B^{[-d,d]}$ be the Brownian bridge on $[-d,d]$ resulting from the affine shifting procedure.

A standard fact is that $B^{[-d,d]}$ is \emph{independent of the original endpoint value} $B(d)$ of the Brownian motion. Thus, when evaluating the probability that $B^{[-d,d]}$ lies in some subset of continuous functions, one simply has to integrate over $B(d)$; the conditional probability given $B(d)$ is the same for all of them. 

In contrast, consider the probability that $B$ lies in a subset $A$ of continuous functions. If we here try to decompose the process by conditioning on its endpoint value $B(d)$, the conditional probability of $A$ depends on $B(d)$. More importantly, the nature of the dependence is not the same for all $A$, and so there is no clear way to decouple the conditional probability of $A$ from the endpoint values in an event-agnostic way.

The Brownian Gibbs property in some sense relates the statement to be proved, here regarding the process $\L$ in the form of Theorem~\ref{T.MAIN THEOREM FOR AIRY}, to considerations similar to this toy example. Recall that, in a 
loose sense, the Brownian Gibbs property says that the conditional distribution of $\L$ on an interval is that of a Brownian bridge with appropriate endpoints conditioned on being above a lower curve over the whole interval; the lower curve is the second curve of the parabolic Airy line ensemble. We are considering the probability that $\L^{[-d,d]}$ belongs to an event $A$. On a heuristic level, applying the Brownian Gibbs property and the independence from endpoints enjoyed by Brownian bridge, bounding the conditional probability of $A$ given non-intersection and the endpoint values $\L(-d)$ and $\L(d)$ reduces to bounding the probability of non-intersection given $\L(-d)$ and $\L(d)$; the probability of $A$ under Brownian bridge factors out. 

A simplifying feature of the conditional probability of non-intersection given $\L(-d)$ and $\L(d)$ is that it enjoys an intuitive monotonicity in the endpoint values: when they are higher, avoiding the lower curve is more probable (see Figure~\ref{f.monotonicity simplification}). Using this monotonicity, it is sufficient for the proof of Theorem~\ref{t.old bb comparison} to bound the non-intersection probability by obtaining a bound on the endpoint value density in only the case when the endpoints are very low. This is a crucial technical result in \cite{hammond2017brownian}, stated as Lemma~5.17. (This description is not completely accurate as in the proof of Theorem~\ref{t.old bb comparison} the technical apparatus of the jump ensemble allows the non-intersection condition to be not with the entire lower curve but only a certain subset of it. We ignore this point here.)

However, for the process $\L(\,\cdot\,) - \L(-d)$, analogous to the Brownian motion discussion,  the probability of an event $A$ and the probability of non-intersection cannot be decoupled given the endpoint values, and the probability of the combined event does not enjoy a monotonicity property in the endpoint values. (Of course, for certain events this monotonicity property would be true, but it does not hold in an event-agnostic manner.) Thus, while in the proof of Theorem~\ref{t.old bb comparison} it was sufficient to have an endpoint value density bound in only the case where the endpoint values are very low, for the main result of this paper we will need corresponding density bounds for the remaining ranges of endpoint values as well. The case of low endpoint values is handled by using the same statement of \cite{hammond2017brownian}, Lemma 5.17 there, here stated as Proposition~\ref{p.negative density}, but the other ranges of endpoint values give rise to additional cases of greater technical difficulty. 

A fuller discussion of the ideas and approach of the proof is provided in Chapter~\ref{ch.proof framework}, with Section~\ref{s.jump ensemble} discussing the jump ensemble and Section~\ref{s.heuristics for proof} discussing the framework specific to the proof of the main result.


\section{Organization of the article}
In Chapter~\ref{ch.application proofs} we prove some of the consequences of Theorem~\ref{T.MAIN THEOREM FOR AIRY} discussed in Section~\ref{s.intro applications}. In Chapter~\ref{ch.notation and setup}, we introduce the Brownian Gibbs property and the more general objects to which our results apply, and then state the main result in its general form as Theorem~\ref{t.airytail.ln}. Chapter~\ref{ch.proof framework} sets up the framework in which our proof operates: in Section~\ref{s.jump ensemble} we introduce the jump ensemble, and in Section~\ref{s.heuristics for proof} we provide a conceptual framework for the proof of the principal result. Finally, the main theorem is proved in Chapter~\ref{ch.the proof} across four sections, each covering a different case. Chapter~\ref{ch.patchwork quilt} is devoted to introducing the definitions of the patchwork quilt and stating and proving precise versions of the statements qualitatively described in Section~\ref{s.intro general init condition Brownian regularity}. In particular, Theorem~\ref{t.weight profile increment moment bound} is proved as Corollary~\ref{c.precise increment moment bound}.

We have attempted to state lemmas as they are required and give proofs immediately in most cases, with a few exceptions. One exception we highlight here is in Chapter~\ref{ch.application proofs}, where some straightforward calculations involving Brownian meander required for the proofs of Theorems~\ref{t.local johansson} and \ref{t.many near touch} have been deferred to Appendix~\ref{a.brownian meander calculations}.

\subsection*{Acknowledgments} The authors thank Ivan Corwin for pointing them to \cite[Lemma 3.3]{quastel2013local}, and Shirshendu Ganguly for helpful discussions. Alan Hammond is supported by the NSF through grants DMS-1512908 and DMS-1855550. Milind Hegde acknowledges the generous support of the U.C. Berkeley Mathematics Department through a summer grant and the Richman Fellowship.


\chapter{Proofs of applications}\label{ch.application proofs}

In this chapter we use Theorem~\ref{T.MAIN THEOREM FOR AIRY} to provide the proofs of two of the applications described in Section~\ref{s.intro applications}, namely Theorems~\ref{t.local johansson} and \ref{t.many near touch}. The strategy of proof, of course, is to prove a similar bound as desired for Brownian motion and then translate it to one for the parabolic Airy$_2$ process $\L$.

\section{Local Johansson}\label{s.application proofs.local johansson}
We start with the proof of Theorem~\ref{t.local johansson}, the quantified and local version of Johansson's conjecture. In fact, we will prove a stronger result which immediately implies Theorem~\ref{t.local johansson}. For an interval $I\subseteq [-d,d]$, and a stochastic process $X$ on $[-d,d]$, we define the \emph{maximiser location} event $\maxloc(X,I)$ that $x_{\max}\in I$. Let $\mathrm{ArcSin}_d(I)$ be the measure of $I$ under the arcsine law on $[-d,d]$, which has density $\pi^{-1}\left(d^2-x^2\right)^{-1/2}$ for $x\in(-d,d)$.

\begin{proposition}\label{p.near touch  and max loc for L}
Let $d\geq 1$, $I\subseteq [-d,d]$, $a\in(0,1)$, $\eta>0$, and $\epsilon = \mathrm{ArcSin}_d(I)$. Then there exist $\epsilon_0$ and $G<\infty$ such that, for $0<a\epsilon< \epsilon_0$,
$$\P\big(\nt(\L,\eta,a)\cap \maxloc(\L,I)\big) \leq a\epsilon\cdot \exp\left(Gd (\log (a\epsilon)^{-1})^{5/6}\right).$$
\end{proposition}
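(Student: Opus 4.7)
The approach parallels Corollary~\ref{c.airy process movement in interval}: prove the analogous joint bound for standard Brownian motion $B$ on $[-d,d]$ with $B(-d) = 0$, then transfer it to $\L(\cdot) - \L(-d)$ via Theorem~\ref{T.MAIN THEOREM FOR AIRY}. Both $\nt(\cdot, \eta, a)$ and $\maxloc(\cdot, I)$ are invariant under vertical translation (they depend on the process only through its increments and the location of its argmax), so they determine Borel measurable subsets of the space $\mc C$ of continuous functions on $[-d,d]$ vanishing at $-d$. Both $B$ and $\L$ admit an almost surely unique argmax on $[-d,d]$, so $\nt$ is well-defined in both cases; uniqueness for $\L$ on $[-d,d]$ follows from absolute continuity to Brownian motion on compact intervals (\cite{corwin2014brownian}) and the Brownian fact.

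For $B$ on $[-d,d]$ with $B(-d) = 0$, the argmax $\tau = \arg\max_{t \in [-d,d]} B(t)$ has the $\mathrm{ArcSin}_d$ density $\pi^{-1}(d^2 - u^2)^{-1/2}$ on $(-d,d)$, giving $\P\bigl(\maxloc(B,I)\bigr) = \epsilon$. By a classical Williams-type decomposition around $\tau$, conditionally on $\tau$ the two processes $s \mapsto B(\tau) - B(\tau \pm s)$ are independent Brownian meanders $m_\pm$ of respective durations $T_\pm = d \mp \tau$. Under this decomposition the near-touch event $\nt(B, \eta, a)$ translates into the event that at least one of the meanders $m_\pm$ satisfies $\inf_{s \in [\eta, T_\pm]} m_\pm(s) \leq a\eta^{1/2}$ (vacuous on a side with $T_\pm < \eta$). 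The central Brownian input is the uniform estimate
\begin{equation*}
\P\bigl( \inf_{s \in [\eta, T]} m(s) \leq a\eta^{1/2} \bigr) \leq C a
\end{equation*}
for some absolute $C$, valid for all $T \geq \eta$ and all small $a \in (0,1)$; this is precisely the type of Brownian meander computation collected in Appendix~\ref{a.brownian meander calculations}. A union bound over the two sides, followed by integration of the resulting conditional bound $2Ca$ against the arcsine density of $\tau$ restricted to $I$, yields $\P\bigl(\nt(B, \eta, a) \cap \maxloc(B, I)\bigr) \leq 2Ca\epsilon$.

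Applying Theorem~\ref{T.MAIN THEOREM FOR AIRY} to the Borel set $A = \{ f \in \mc C : \nt(f, \eta, a) \cap \maxloc(f, I) \}$ with Brownian probability at most $2Ca\epsilon$ (and choosing $\epsilon_0$ small enough that $2Ca\epsilon$ falls below the $\epsilon_0$ threshold of that theorem) delivers the stated bound, after absorbing the constant $2C$ into the exponential prefactor by slightly enlarging $G$. The principal technical obstacle is the uniform-in-$T$ meander estimate above: one must control the meander both at short times (where $m(\eta)$ is of order $\eta^{1/2}$, so the meander is unlikely to already lie below $a\eta^{1/2}$ at time $\eta$) and at long times (where the meander resembles a $3$-dimensional Bessel process and the martingale $1/r$ supplies the expected $O(a)$ hitting probability for the level $a\eta^{1/2}$); the Williams decomposition and the Brownian arcsine law are otherwise classical.
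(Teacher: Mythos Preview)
Your proposal is correct and follows essentially the same route as the paper: the paper isolates the Brownian bound as Proposition~\ref{p.near touch for bm} (with constant $4$ in place of your $2C$), proves it via the Denisov decomposition of Brownian motion around its maximiser into two independent meanders (Proposition~\ref{p.decomposition around max}) together with the meander near-zero estimate $\P(\nz(\bme,\eta,a))\leq 2a$ (Lemma~\ref{l.near zero meander}, proved in Appendix~\ref{a.brownian meander calculations} by a direct computation with the meander transition density rather than a Bessel-process argument), and then transfers to $\L$ by vertical-shift invariance and Theorem~\ref{T.MAIN THEOREM FOR AIRY}. The only cosmetic difference is that the paper rescales each meander to the unit interval (replacing your uniform-in-$T$ statement by the fixed-interval Lemma~\ref{l.near zero meander} applied with $\tilde\eta_i$), which is equivalent by Brownian scaling.
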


By taking $I = [-d,d]$, we obtain Theorem~\ref{t.local johansson}. To prove Proposition~\ref{p.near touch  and max loc for L}, we isolate the result for Brownian motion as a separate proposition.

\begin{proposition}\label{p.near touch for bm}
Let $d\geq 1$, $\eta>0$, and $B$ be standard Brownian motion on $[-d,d]$. Then
$$\P\big(\nt(B,\eta,a)\cap \maxloc(B,I)\big) \leq 4a\cdot \mathrm{ArcSin}_d(I).$$
\end{proposition}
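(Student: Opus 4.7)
The plan is to condition on the location of the maximiser $x_{\max}$ and invoke Denisov's path decomposition of Brownian motion around its argmax. Since $B$ is standard Brownian motion on $[-d,d]$ vanishing at $-d$, the classical arcsine law (translated from $[0,2d]$) gives that $x_{\max}$ has density $\pi^{-1}(d^2-x^2)^{-1/2}$ on $(-d,d)$. Integrating the conditional probability of $\nt(B,\eta,a)$ against this density over $I$ produces the factor $\mathrm{ArcSin}_d(I)$, so it suffices to prove, uniformly in $m \in (-d,d)$, that
$$
\P\bigl(\nt(B,\eta,a) \bigm| x_{\max} = m\bigr) \leq 4a.
$$

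The structural input is Denisov's theorem: conditionally on $x_{\max} = m$, the two time-reversed processes $R_-(s) := M - B(m-s)$ for $s \in [0,\,m+d]$ and $R_+(s) := M - B(m+s)$ for $s \in [0,\,d-m]$ are \emph{independent} Brownian meanders of lengths $m+d$ and $d-m$ respectively. In these coordinates, $\nt(B,\eta,a)$ is exactly the event that at least one of $R_+, R_-$ attains a value $\leq a\eta^{1/2}$ at some time in $[\eta,\text{length}]$ (and is vacuously false on a side whose length is below $\eta$, since $B \equiv -\infty$ outside $[-d,d]$). A union bound over the two meanders then reduces the proposition to a single-meander estimate: for any $T \geq \eta$ and any Brownian meander $m^T$ of length $T$,
$$
\P\bigl(\inf_{s \in [\eta, T]} m^T(s) \leq a\eta^{1/2}\bigr) \leq 2a.
$$
Brownian scaling $m^T(s) \stackrel{d}{=} \sqrt{T}\, m^1(s/T)$ converts this to the dimensionless claim
$$
\P\bigl(\inf_{u \in [\eta', 1]} m^1(u) \leq a\sqrt{\eta'}\bigr) \leq 2a \qquad \text{for all } \eta' \in (0,1],\ a \in (0,1),
$$
on the standard meander $m^1$ of length one.

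This last meander estimate is where the real work lies and is the main obstacle. The approach is to invoke Imhof's absolute-continuity relation, which realises $m^1$ as the three-dimensional Bessel process $R = |W|$ reweighted by $\sqrt{\pi/2}/R_1$. Under this representation the event becomes $\{\inf_{u \in [\eta',1]}|W_u| \leq a\sqrt{\eta'}\}$; then the strong Markov property at the first passage time below $a\sqrt{\eta'}$, combined with the harmonic-measure identity $\P_x(|W| \text{ hits ball of radius } b) = b/|x|$ for transient three-dimensional Brownian motion, reduces the probability to a closed-form expression involving the Maxwell density of $|W_{\eta'}|$ and a Gaussian integral of $1/|W_1|$ (which can be computed explicitly as $(2\Phi(b/\sqrt{1-t})-1)/b$). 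The outcome is linear in $a$, and the delicate step is tracking constants to ensure the coefficient does not exceed $2$. These meander computations are precisely the kind that the authors have announced are collected in Appendix~\ref{a.brownian meander calculations}, which is the natural home for the detailed verification.
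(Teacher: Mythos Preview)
Your reduction is essentially the paper's: condition on the maximiser (arcsine law), invoke Denisov's decomposition into two independent meanders, union bound, and rescale to the standard unit-length meander so that everything rests on the near-zero estimate $\P(\inf_{[\eta',1]} m^1 \leq a\sqrt{\eta'}) \leq 2a$. The only point of departure is how you propose to prove that meander bound. You suggest Imhof's Bessel-3 representation plus the harmonic-measure identity $\P_x(|W| \text{ hits } B_b) = b/|x|$; the paper (Lemma~\ref{l.near zero meander}, proved in Appendix~\ref{a.brownian meander calculations}) instead conditions on $\bme(\eta)$, observes that on $[\eta,1]$ the meander is Brownian motion conditioned to stay positive, applies the reflection principle to realise the infimum as $-|N(0,1-\eta)|$, and then uses an elementary monotone-density inequality (Lemma~\ref{l.conditioned inf distribution bound}) to bound the conditional probability by $a\eta^{1/2}/\bme(\eta)$, which integrates to $2a$ exactly against the explicit density of $\bme(\eta)$. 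The paper's route is shorter and delivers the constant $2$ without any delicate tracking; your Bessel approach is plausible in outline but the reweighting by $\sqrt{\pi/2}/R_1$ interacts with the stopping-time argument in a way you have not spelled out, and the constant you would extract is not obviously $\leq 2$.
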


We first apply Theorem~\ref{T.MAIN THEOREM FOR AIRY} via Proposition~\ref{p.near touch for bm} to obtain Proposition~\ref{p.near touch  and max loc for L}.

\begin{proof}[Proof of Proposition~\ref{p.near touch  and max loc for L}]
This follows immediately from Proposition~\ref{p.near touch for bm} and Theorem~\ref{T.MAIN THEOREM FOR AIRY} after noting that both events are unchanged by a vertical shift, i.e.,
\begin{equation*}
\nt(\L, \eta, a) = \nt(\L(\,\cdot\,)-\L(-d),\eta, a) \quad \text{and} \quad 
\maxloc(\L, I) = \maxloc(\L(\,\cdot\,)-\L(-d), I). \qedhere
\end{equation*}
\end{proof}

To prove Proposition~\ref{p.near touch for bm} (as well as other statements in this chapter), we will make use of the classic connection between Brownian motion around its unique maximiser and \emph{Brownian meander}. Recall that standard Brownian meander on $[0,1]$ is a (non-homogeneous) Markov process which can be intuitively understood as a Brownian motion conditioned to remain positive (though this singular conditioning requires rigorous treatment). See \cite{biane1988quelques,imhof1984density,pitman1999brownian,revuz2013continuous} for references on Brownian meander.

We will need a number of calculations involving the Brownian meander; to maintain this article's focus, in this chapter we will state the results of the calculations as needed, but we will defer the proofs to Appendix~\ref{a.brownian meander calculations} in cases where we think the proof is not illuminating.

We now record the classical decomposition of Brownian motion around its maximiser to which we alluded above; a proof can be found, for example, in \cite{denisov1984random}.

\begin{proposition}\label{p.decomposition around max}
Let $x_{\mathrm{max}}$ be the a.s. unique maximiser of a standard Brownian motion $B$ on $[0,1]$, and let $B(x_{\mathrm{max}}) = M$. Then the processes 
\begin{align*}
\bme^\rightarrow &:=(1-x_{\mathrm{max}})^{-1/2}\Big(M-B\big(x_{\mathrm{max}} + u(1-x_{\mathrm{max}})\big)\Big)_{u\in[0,1]}, \qquad \text{and}\\
\bme^\leftarrow &:=x_{\mathrm{max}}^{-1/2}\Big(M-B\big(x_{\mathrm{max}}(1-u)\big)\Big)_{u\in[0,1]}
\end{align*}
are standard Brownian meanders, independent of each other and of $x_{\mathrm{max}}$.
\end{proposition}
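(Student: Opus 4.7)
The plan is to carry out this classical path decomposition in three main steps, combining the explicit joint density of $(x_{\max}, M, B(1))$ with the relation between Brownian bridges conditioned to stay positive and 3-dimensional Bessel bridges.

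First, I would establish that, conditional on $(x_{\max}, M) = (t,m)$, the pre- and post-max fragments $(B(s))_{0\leq s\leq t}$ and $(B(s))_{t\leq s\leq 1}$ are independent. Because $x_{\max}$ is not a stopping time, the strong Markov property is unavailable directly. The standard workaround is to let $T_n$ be the first hitting time of the level $M-1/n$, which is a stopping time in a filtration enlarged by $M$, apply the strong Markov property there, and take $n\to\infty$, combined with a parallel time-reversal argument at the right endpoint. A cleaner route is to invoke Williams' path decomposition of Brownian motion at its maximum, which furnishes this conditional independence directly.

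Second, I would identify each conditional law. Given $(x_{\max}, M) = (t, m)$, the pre-max fragment is distributed as a Brownian bridge from $0$ to $m$ on $[0, t]$ conditioned to stay strictly below $m$ on $(0, t)$. Applying the time reversal and Brownian rescaling that define $\bme^\leftarrow$ converts this into a Brownian bridge from $0$ to $m/\sqrt{t}$ on $[0,1]$ conditioned to stay strictly positive on $(0,1]$; by the classical consequence of Imhof's theorem identifying such conditioned bridges with 3-dimensional Bessel bridges, this is a Bessel(3) bridge from $0$ to $m/\sqrt{t}$ on $[0,1]$. An analogous argument identifies the law of $\bme^\rightarrow$ given $(x_{\max}, M, B(1)) = (t, m, y)$ as a Bessel(3) bridge from $0$ to $(m-y)/\sqrt{1-t}$ on $[0,1]$.

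Third, since a standard Brownian meander is precisely a 3-dimensional Bessel bridge from $0$ whose endpoint is Rayleigh-distributed (density $x e^{-x^2/2}$ on $(0,\infty)$), it remains only to check that
\[
U := \frac{M}{\sqrt{x_{\max}}}, \qquad V := \frac{M-B(1)}{\sqrt{1-x_{\max}}}
\]
are independent Rayleigh variables, jointly independent of $x_{\max}$. I would compute the joint density of $(x_{\max}, M, B(1))$ via the reflection principle applied separately to the two fragments, obtaining
\[
f(t, m, y) = \frac{m(m-y)}{\pi\, t^{3/2}(1-t)^{3/2}}\,\exp\!\left(-\frac{m^2}{2t} - \frac{(m-y)^2}{2(1-t)}\right)
\]
on $\{t \in (0,1),\ m > 0,\ y < m\}$. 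Under the change of variables $(t, m, y) \mapsto (t, U, V)$, whose Jacobian is $\sqrt{t(1-t)}$, this density factorizes as $\tfrac{1}{\pi\sqrt{t(1-t)}}\cdot U e^{-U^2/2}\cdot V e^{-V^2/2}$, exhibiting the arcsine law in $t$ and independent Rayleigh laws in $U$ and $V$. Combined with the Bessel-bridge identification from the second step, this yields the claim.

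The main obstacle is the conditional independence assertion in the first step, where the non-stopping-time nature of $x_{\max}$ forces either a delicate stopping-time approximation with time reversal or a direct reliance on Williams' decomposition. The joint-density derivation in the third step is essentially bookkeeping with the reflection principle, but one must correctly account for the fact that $M$ appears in both fragment densities; the elegant factorization under the substitution $(m,y)\mapsto(U,V)$ is the signature of the arcsine/Rayleigh invariance that drives the whole result.
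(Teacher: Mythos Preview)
Your proposal is correct and complete as an outline; the density computation in your third step checks out exactly, and the identification of positive-conditioned bridges with Bessel(3) bridges via Imhof is the right tool. However, the paper does not actually prove this proposition: it simply records the statement and cites Denisov~\cite{denisov1984random} for a proof. So there is no ``paper's own proof'' to compare against in detail.

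That said, your route is genuinely different from Denisov's. Denisov's argument is short and direct: it works with the explicit finite-dimensional distributions and establishes the meander law on each side essentially by computation, without passing through Williams' decomposition or the Bessel-bridge representation of the meander. Your approach is more structural: you first isolate the conditional independence (which, as you note, is the only genuinely delicate point since $x_{\max}$ is not a stopping time), then identify each fragment as a Bessel(3) bridge with a data-dependent endpoint, and finally use the arcsine/Rayleigh factorisation of the joint law of $(x_{\max}, M/\sqrt{x_{\max}}, (M-B(1))/\sqrt{1-x_{\max}})$ to decouple everything. This buys conceptual clarity and connects the result to the broader circle of path-decomposition theorems (Williams, Pitman, Imhof), at the cost of invoking more machinery than Denisov's bare-hands computation.
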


This proposition suggests that we should look at the event that a Brownian meander comes near zero. Let use define, for a Brownian meander $\bme$, the event $\nz(\bme,\eta,a)$ by
$$\nz(\bme,\eta,a) = \left\{\inf_{u\in[\eta,1]}\bme(u) < a\eta^{1/2}\right\},$$
where the event is empty if $\eta>1$. The following lemma is the main input for Proposition~\ref{p.near touch for bm}.

\begin{lemma}\label{l.near zero meander}
Let $\bme$ be a standard Brownian meander on $[0,1]$. Let $a\in(0,1)$ and $\eta>0$. Then
$$\P\Big(\nz(\bme,\eta,a)\Big) \leq 2a.$$
\end{lemma}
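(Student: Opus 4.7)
The plan is to exploit the Markov property of the Brownian meander at time $\eta$ to reduce the probability to a reflection-principle calculation for Brownian motion conditioned to stay positive, and then evaluate the resulting integral in closed form. Write $c = a\eta^{1/2}$ and $T = 1-\eta$, and assume $\eta \leq 1$ (otherwise the event is empty). A classical derivation---via, for example, Imhof's representation of $\bme$ as a $3$-dimensional Bessel bridge from $0$ to $\bme(1)\sim\mathrm{Rayleigh}$---gives the marginal density
\[
f_{\bme(\eta)}(x) \;=\; \frac{x}{\eta^{3/2}}\, e^{-x^2/(2\eta)}\, \erf\!\left(\frac{x}{\sqrt{2T}}\right), \qquad x>0,
\]
and the Markov property tells us that, conditionally on $\bme(\eta) = x$, the restriction of $\bme$ to $[\eta,1]$ is a Brownian motion from $x$ conditioned to remain positive on an interval of length $T$.

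For $x \ge c$, the reflection principle applied to this conditioned Brownian motion gives
\[
\P\!\left(\inf_{[0,T]} W \ge c \,\Big|\, W(0)=x,\ W>0 \text{ on } [0,T]\right) \;=\; \frac{\erf((x-c)/\sqrt{2T})}{\erf(x/\sqrt{2T})},
\]
while this conditional probability is $0$ for $x<c$. Integrating against $f_{\bme(\eta)}$, the $\erf(x/\sqrt{2T})$ factors cancel pleasantly between the density and the conditional probability, leaving the clean expression
\[
\P\!\left(\inf_{[\eta,1]} \bme \ge c\right) \;=\; \frac{1}{\eta^{3/2}}\int_c^\infty \erf\!\left(\frac{x-c}{\sqrt{2T}}\right) x\, e^{-x^2/(2\eta)}\, dx.
\]

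I would then evaluate this integral by integration by parts, antidifferentiating $xe^{-x^2/(2\eta)}\,dx$ to $-\eta e^{-x^2/(2\eta)}$ and differentiating the $\erf$; the boundary terms vanish and the completion-of-squares identity $-x^2/(2\eta) - (x-c)^2/(2T) = -(x-\eta c)^2/(2\eta T) - c^2/2$ reduces what remains to a standard Gaussian tail. The output is
\[
\P(\nz(\bme,\eta,a)) \;=\; 1 - 2 e^{-a^2\eta/2}\bigl(1 - \Phi(a\sqrt{1-\eta})\bigr),
\]
where $\Phi$ is the standard normal distribution function. Using $1 - 2(1-\Phi(z)) = \erf(z/\sqrt 2)$ to split the right-hand side as
\[
\erf\!\bigl(a\sqrt{(1-\eta)/2}\bigr) + 2\bigl(1-\Phi(a\sqrt{1-\eta})\bigr)\bigl(1 - e^{-a^2\eta/2}\bigr),
\]
and applying the elementary inequalities $\erf(z)\le 2z/\sqrt\pi$, $1-e^{-z}\le z$, and $1-\Phi\le \tfrac{1}{2}$, one obtains $\P(\nz) \le a\sqrt{2/\pi} + a^2\eta/2 \le 2a$ for $a\in(0,1)$ and $\eta\in(0,1]$. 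The main technical point is the integration by parts together with the square-completion bookkeeping; there is no conceptual probabilistic obstacle, since both the explicit density of $\bme(\eta)$ and the Markov property of the meander are classical.
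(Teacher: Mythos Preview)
Your proof is correct, and it takes a genuinely different route from the paper's.

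Both arguments begin identically: condition on $\bme(\eta)=x$, use the Markov property to identify the post-$\eta$ process as Brownian motion from $x$ conditioned to stay positive, and invoke the reflection principle. From there the paths diverge. The paper does not compute the conditional probability exactly; instead it observes that $|N(0,1-\eta)|$ has a non-increasing density and applies a general monotonicity lemma (if $X\ge 0$ has non-increasing density then $\P(X\ge x\mid X\le y)\le (y-x)/y$) to bound the conditional probability by $a\eta^{1/2}/\bme(\eta)$. This bound, integrated against the meander density, collapses to $2a$ immediately via $\tilde\Phi\le\tfrac12$ and the formula for $\E|N(0,\eta)|$. Your approach, by contrast, keeps the exact ratio of $\erf$'s, exploits the cancellation of $\erf(x/\sqrt{2T})$ between density and conditional probability, and then evaluates the remaining integral in closed form to obtain the exact expression $\P(\nz)=1-2e^{-a^2\eta/2}(1-\Phi(a\sqrt{1-\eta}))$, which you then bound by elementary inequalities.

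Your method is more computational but yields strictly more: an exact formula rather than only an upper bound. The paper's method is shorter and avoids the integration-by-parts bookkeeping, at the cost of introducing an auxiliary lemma. Both are perfectly valid.
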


The proof of Lemma~\ref{l.near zero meander} proceeds by conditioning on $\bme(\eta)$ and using transition probability formulas for Brownian meander to obtain the bound; as the calculations are straightforward, we have deferred the proof to Appendix~\ref{a.brownian meander calculations}. We now show how we may use Proposition~\ref{p.decomposition around max} and Lemma~\ref{l.near zero meander} to obtain Proposition~\ref{p.near touch for bm}, which completes the proof of Theorem~\ref{t.local johansson}.

\begin{proof}[Proof of Proposition~\ref{p.near touch for bm}]
By Brownian scaling and translation, it is sufficient to prove Proposition~\ref{p.near touch for bm} on the interval $[0,1]$, which corresponds to $d=1/2$. Observe that
\begin{equation}\label{e.nt condition on x_max}
\P\Big(\nt(B,\eta,a)\cap \maxloc(B,I) \ \big|\  x_{\max}\Big) = \P\Big(\nt(B,\eta,a)  \ \big|\   x_{\max}\Big)\cdot\one_{x_{\max\in I}};
\end{equation}
and that, from Proposition~\ref{p.decomposition around max},
\begin{equation}\label{e.nt breakup}
\P\Big(\nt(B,\eta,a)  \ \big|\  x_{\max}\Big)\leq \P\Big(\nz(\bme^\rightarrow, \tilde\eta_1,a) \ \big|\  x_{\max}\Big) + \P\Big(\nz(\bme^\leftarrow, \tilde\eta_2,a) \ \big|\  x_{\max}\Big),
\end{equation}
where $\tilde\eta_1$ and $\tilde\eta_2$ are defined as $\tilde\eta_1 = (1-x_{\max})^{-1}\eta$ and $\tilde\eta_2 = x_{\max}^{-1}\eta.$

From  Lemma~\ref{l.near zero meander} and the independence of $\bme^\rightarrow$, $\bme^\leftarrow$, and $x_{\max}$, we have that each term on the right-hand side of \eqref{e.nt breakup} is bounded by $2a$, giving an overall bound of $4a$ for $\P\left(\nt(B,\eta,a)\mid x_{\max}\right)$. Finally, the statement of Proposition~\ref{p.near touch for bm} follows from \eqref{e.nt condition on x_max} and the well-known arcsine law of the maximiser of Brownian motion (see, for example, \cite[Chapter 5]{morters2010brownian}).
\end{proof}

\section{Many near touch}

Now we turn to proving Theorem~\ref{t.many near touch}. We will of course first establish a similar result for Brownian motion.

\begin{proposition}\label{p.many near touch for bm}
Let $d\geq 1$, $B:[-d,d]\to\R$ be a standard Brownian motion, and let $0<\eta<d$. Then there exist $\ell_0$ and $c>0$ such that, for $\ell>\ell_0$,
$$\P\big(\numnt(B, \eta) \geq \ell\big) \leq e^{-c\ell}.$$
\end{proposition}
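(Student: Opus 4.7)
The plan is to mirror the proof of Proposition~\ref{p.near touch for bm}: reduce to Brownian meander via the decomposition around the global maximizer, and establish the corresponding exponential bound for the meander directly. First, by Brownian scaling, I would reduce to the case $d = 1/2$ and the interval $[0,1]$, with $\eta$ rescaled accordingly. Then, applying Proposition~\ref{p.decomposition around max}, $(M - B(\,\cdot\,))$ decomposes around $x_{\max}$ into two independent Brownian meanders $\bme^\rightarrow$ and $\bme^\leftarrow$, with an $\eta$-separated near-touch of $B$ on the right of $x_{\max}$ corresponding to an $\eta/(1-x_{\max})$-separated time at which $\bme^\rightarrow$ is at most $\sqrt{\eta/(1-x_{\max})}$. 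By pigeonhole, $\numnt(B, \eta) \geq \ell$ implies that at least one meander contains at least $\lceil \ell/2 \rceil$ such ``near-zero'' points.

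Defining $\numnz(\bme, \gamma)$ as the size of the maximum $\gamma$-separated collection of times in $[0,1]$ at which $\bme \leq \sqrt{\gamma}$, the problem reduces to showing that there exist $c, k_0 > 0$ (depending on $\gamma$) such that for $k \geq k_0$,
\begin{equation*}
\P\bigl(\numnz(\bme, \gamma) \geq k\bigr) \leq e^{-ck}.
\end{equation*}
I would prove this via stopping times: set $\tau_0 = 0$ and $\tau_{i+1} = \inf\{t \geq \tau_i + \gamma : \bme(t) \leq \sqrt{\gamma}\}$, so that $\numnz(\bme, \gamma) \geq k$ iff $\tau_k \leq 1$. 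By the strong Markov property of meander, conditionally on $\mathcal{F}_{\tau_i}$, the remaining process $(\bme(\tau_i + s))_{s \in [0,\,1-\tau_i]}$ is a Brownian motion started from $\bme(\tau_i) \leq \sqrt{\gamma}$ conditioned to stay positive on $[0, 1-\tau_i]$.

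The heart of the argument is the ``escape'' estimate: conditional on $\tau_i < \infty$, the probability that no further $\gamma$-separated near-zero occurs in $[\tau_i + \gamma, 1]$ is bounded below by some $p_0 > 0$, uniformly in $\tau_i \in [0, 1-\gamma]$. Given this, iteration yields $\P(\tau_k \leq 1) \leq (1-p_0)^{k-1}$ and hence the desired exponential decay.

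The main obstacle is establishing this uniform escape bound. The idea is that the positivity conditioning of the meander pushes $\bme(\tau_i + \gamma)$ strictly above $\sqrt{\gamma}$ with positive probability: by Brownian scaling, the ratio $\bme(\tau_i + \gamma)/\sqrt{\gamma}$ is stochastically bounded below by a non-degenerate random variable taking values exceeding $1$ with probability at least some $p_1 > 0$. Then, on that event with $Y = \bme(\tau_i + \gamma) \geq c\sqrt{\gamma}$ for $c > 1$, the conditional probability of staying above $\sqrt{\gamma}$ on the remaining interval is $(2\Phi((Y-\sqrt{\gamma})/\sqrt{T}) - 1)/(2\Phi(Y/\sqrt{T}) - 1)$, which for small $\gamma/T$ behaves like $(Y-\sqrt{\gamma})/Y \geq 1 - 1/c > 0$. (Equivalently, via Imhof's absolute continuity with $\text{BES}(3)$, the corresponding escape probability from level $\sqrt{\gamma}$ to infinity starting from $Y$ is exactly $(Y - \sqrt{\gamma})/Y$ by the BES(3) scale function.) The technical transition-density calculations for the meander supporting this uniform bound would naturally be deferred to Appendix~\ref{a.brownian meander calculations}, following the convention used for Lemma~\ref{l.near zero meander}.
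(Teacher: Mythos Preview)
Your approach is essentially the paper's: reduce to meander via Proposition~\ref{p.decomposition around max}, then prove the exponential bound for $\numnz(\bme,\gamma)$ using stopping times, the strong Markov property, and meander transition estimates deferred to the appendix. The paper's organization differs only in that it uses a two-tier geometric structure (a geometric number $K$ of ``runs,'' each run a geometric number of consecutive low steps, separated by the thresholds $\eta^{1/2}$ and $1.1\eta^{1/2}$) rather than your single ``escape forever at each near-zero'' geometric; both rest on the same meander transition estimates (the paper's Lemmas~\ref{l.brownian meander from 0 estimate}--\ref{l.prob of return from above}).

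One point to tighten: you write that $c,k_0$ may depend on $\gamma$, but since $\gamma = \eta/(1-x_{\max})$ is \emph{random}, you actually need the meander bound uniform over $\gamma \in (0,1/2]$ (and must separately dispose of $x_{\max} > 1-2\eta$, where at most a bounded number of near-touches can occur on that side). The paper's meander lemmas do give absolute constants, and your sketched argument does too once made precise---but the ``Brownian scaling'' justification for the lower bound on $\bme(\tau_i+\gamma)/\sqrt{\gamma}$ is not literally correct, since the meander on $[0,1]$ is not self-similar and its transition density depends on the remaining time $1-t$. So the appendix calculation is genuinely needed there, not merely routine.
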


Similarly to before, combining this with Theorem~\ref{T.MAIN THEOREM FOR AIRY} readily yields Theorem~\ref{t.many near touch}.

\begin{proof}[Proof of Theorem~\ref{t.many near touch}]
Again observing that $\numnt(\L,\eta) = \numnt(\L(\,\cdot\,)-\L(-d),\eta)$, we may apply Theorem~\ref{T.MAIN THEOREM FOR AIRY} to Proposition~\ref{p.many near touch for bm} to get that, for large enough $\ell$ and some $G<\infty$,
$$\P\big(\numnt(\L, \eta) \geq \ell\big) \leq e^{-c\ell + Gd\ell^{5/6}}.$$
We may reduce $c$ to $c'=c/2$ and enforce a sufficiently high lower bound on $\ell$ to obtain that this probability is bounded by $e^{-c'\ell}$, which is the statement of Theorem~\ref{t.many near touch} with $c'$ replacing $c$ in the exponent.
\end{proof}

To prove Proposition~\ref{p.many near touch for bm} we will again rely on Proposition~\ref{p.decomposition around max}. To do so, we must bound the number of times that standard Brownian meander comes within (a particular multiple of) $\eta^{1/2}$ of $0$. Let us define the random variable recording the \emph{number of near zeroes} $\numnz(X)$ of a process $X:[0,1]\to\R$ with $X(0)=0$ by
$$\numnz(X,\eta) = \max \left\{|S| \ \middle|\ \begin{tabular}{@{}c@{}}
$S\subseteq [0,1]$  finite, $s\in S \implies |X(s)|\leq \eta^{1/2}$,\\
 and  $s,t\in S$  with $s\neq t \implies |s-t|\geq \eta$,
\end{tabular} \right\}.
$$
Let $\bme$ denote the standard Brownian meander. The following estimate on $\numnz(\bme,\eta)$ is the main estimate needed for Proposition~\ref{p.many near touch for bm}.

\begin{proposition}\label{p.num near zero meander}
Let $0<\eta\leq 1/2$. Then there exists $\ell_0$ and $c>0$ such that, for $\ell>\ell_0$,
$$\P\big(\numnz(\bme, \eta) \geq \ell\big) \leq e^{-c\ell}.$$
\end{proposition}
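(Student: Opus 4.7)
The plan is to transfer the estimate from the Brownian meander to the three-dimensional Bessel process $R$ started at zero via Imhof's identity, and then to give an excursion-based argument for $R$ exploiting its transience. Imhof's relation asserts that the law of $\bme$ is absolutely continuous with respect to that of $R$ on $[0,1]$, with Radon--Nikodym derivative $d\P_\bme/d\P_R = \sqrt{\pi/2}/R(1)$. Since a direct computation gives $\E[R(1)^{-2}] = 1$, Cauchy--Schwarz yields $\P_\bme(A) \le \sqrt{\pi/2}\,\P_R(A)^{1/2}$ for every measurable event $A$, so it suffices to show $\P_R\bigl(\numnz(R, \eta) \ge \ell\bigr) \le C e^{-2c\ell}$.

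For the Bessel-3 estimate, set $T_0 = 0$ and define recursively $U_k = \inf\{t \ge T_{k-1} : R(t) \ge 2\eta^{1/2}\}$ and $T_k = \inf\{t > U_k : R(t) = \eta^{1/2}\}$, calling each $[T_{k-1}, U_k]$ a low sojourn. On each low sojourn $R$ is confined to $[0, 2\eta^{1/2})$, so every near-zero time in $[0,1]$ lies in some low sojourn, and any $\eta$-separated subset of an interval of length $D$ has size at most $D/\eta + 1$, giving
\[
\numnz(R, \eta) \le \sum_{i=1}^K \bigl(D_i/\eta + 1\bigr),
\]
with $D_i = U_i - T_{i-1}$ and $K$ the number of low sojourns encountered. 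Two exponential bounds control this sum. Applying optional stopping to the local martingale $1/R$, the probability of reaching $\eta^{1/2}$ starting from $2\eta^{1/2}$ is exactly $1/2$, so that $\P(K \ge k) \le (1/2)^{k-1}$. Meanwhile, a standard substitution $f(r) = g(r)/r$ reduces the Bessel-3 eigenproblem on $(0, 2\eta^{1/2})$ with absorption at $2\eta^{1/2}$ to $g'' + 2\lambda g = 0$ with Dirichlet conditions at both endpoints, giving first eigenvalue $\pi^2/(8\eta)$ and hence $\P(D_i \ge t) \le C e^{-\pi^2 t / (8\eta)}$ uniformly in the starting value $R(T_{i-1}) \in \{0, \eta^{1/2}\}$; equivalently, the scaled variable $D_i/\eta$ has an exponential tail of rate independent of $\eta$.

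With these ingredients, conditioning on $K$ and using the independence of successive sojourn durations guaranteed by the strong Markov property at each $T_k$, the moment generating function $\E[\exp(\theta\,\numnz(R,\eta))]$ factorises as a geometric sum that is finite for all sufficiently small $\theta>0$ (since $e^\theta\,\E[\exp(\theta D_i/\eta)]/2 \to 1/2$ as $\theta \to 0$). A Chernoff bound then delivers $\P_R(\numnz(R, \eta) \ge \ell) \le C e^{-2c\ell}$, and the Imhof reduction completes the proof. The main obstacle is to set up the sojourn decomposition carefully so that the near-zero set is faithfully captured by the union of low sojourns and so that the duration estimates are uniform in whether the sojourn begins from $R = 0$ (for the first cycle) or from $R = \eta^{1/2}$ (all subsequent cycles); this uniformity follows from the Dirichlet eigenvalue estimate together with applications of the strong Markov property at each $T_k$.
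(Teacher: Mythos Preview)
Your argument is correct and takes a genuinely different route from the paper. The paper works directly with the meander: it defines stopping times $\tau_k$ that step forward by $\eta$ while $\bme$ remains below $1.1\eta^{1/2}$ and otherwise waits for the next return to level $\eta^{1/2}$, then dominates $\numnz(\bme,\eta)$ by a geometric-in-$K$ sum of geometric run-lengths $N_i$, with the parameters supplied by three explicit transition-density estimates for the meander (Lemmas~2.11--2.13). Your approach instead passes to the three-dimensional Bessel process via Imhof's identity, where time-homogeneity makes the excursion bookkeeping much cleaner: the hitting probability $1/2$ for the return from $2\eta^{1/2}$ to $\eta^{1/2}$ drops out of the scale function $1/r$, and the sojourn-duration tail comes from a Dirichlet eigenvalue rather than a hands-on density bound. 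Your route avoids the appendix calculations entirely; the paper's route is more self-contained in that it needs no Bessel or absolute-continuity input.

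One small point to tighten: for the moment-generating-function factorisation you need the $D_i$ to be independent of each other and of $K$. If $K$ denotes the number of sojourns intersecting $[0,1]$, this fails, since $K$ depends on the cumulative times $T_{k-1}$. The fix is immediate: bound instead by the total number $K'$ of sojourns ever begun (so $K' \ge K$), for which $\P(K' \ge k) = (1/2)^{k-1}$ exactly, and for which the strong Markov property at the $U_k$ and $T_k$ genuinely decouples the $D_i$ from the return indicators. With that adjustment the MGF computation and Chernoff bound go through as you describe.
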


We show how Proposition~\ref{p.many near touch for bm} follows from Proposition~\ref{p.num near zero meander}, and then turn to proving the latter.

\begin{proof}[Proof of Proposition~\ref{p.many near touch for bm}]
Again by Brownian scaling and translation of the domain, it is enough to prove Proposition~\ref{p.many near touch for bm} on the interval $[0,1]$, which corresponds to $d=1/2$. (Notice that $\eta<d$ before scaling becomes $\eta<1/2$ after.) 

As in Proposition~\ref{p.decomposition around max}, conditionally on $x_{\max}$, let $\bme^\rightarrow$ and $\bme^\leftarrow$ denote the two independent Brownian meanders on $[0,1]$, with $\bme^\rightarrow$ corresponding to the right of the maximiser and $\bme^\leftarrow$ to the left. It will be helpful in the proof to consider separately the number of near touches that occur on the right and on the left. Accordingly, we define, with $M= X(x_{\max})$,
\begin{align*}
\numnt^\rightarrow(X, \eta) &= \max \left\{|S| \ \middle|\ \begin{tabular}{@{}c@{}}
$S\subseteq [0,1]$  finite, $s\in S \implies M-X(s)\leq \eta^{1/2}$, and\\
 $s,t\in S$  with $s\neq t \implies |s-t|\geq \eta,\  \min S>x_{\max}$
\end{tabular} \right\} \quad \text{and}\\
\numnt^\leftarrow(X, \eta) &= \max \left\{|S| \ \middle|\ \begin{tabular}{@{}c@{}}
$S\subseteq [0,1]$  finite, $s\in S \implies M-X(s)\leq \eta^{1/2}$, and\\
 $s,t\in S$  with $s\neq t \implies |s-t|\geq \eta,\  \max S < x_{\max}$
\end{tabular} \right\},
\end{align*} 
the superscript arrows indicating in which direction from $x_{\max}$ we are considering the process. Now we have
\begin{align*}
\P\big(\numnt(B, \eta) \geq \ell\big) \leq \P\big(\numnt^{\rightarrow}(B, \eta) \geq \ell/2\big) + \P\big(\numnt^{\leftarrow}(B, \eta) \geq \ell/2\big).
\end{align*}
We give only the argument to bound the first term in this break-up, as the argument for the second is identical. Observe that, if $\ell\geq 5$, we have
\begin{align*}
\P\big(\numnt^{\rightarrow}(B, \eta) \geq \ell/2\mid x_{\max}\big) = \P\big(\numnt^{\rightarrow}(B, \eta) \geq \ell/2 \mid x_{\max}\big)\cdot\one_{x_{\max} \leq 1-2\eta},
\end{align*}
%
%
since the number of near touches that can occur in an interval of length $2\eta$ is at most two.
Now for the given $\eta>0$, define 
$\tilde\eta = (1-x_{\max})^{-1}\eta.$
Observe that, when $x_{\max}\leq 1-2\eta$, we have that $\tilde\eta \leq 1/2$, allowing us to apply Proposition~\ref{p.num near zero meander}. Applying Proposition~\ref{p.decomposition around max}  and then Proposition~\ref{p.num near zero meander}, we have
\begin{align*}
\P\big(\numnt^\rightarrow(B, \eta) \geq \ell/2\mid x_{\max}\big)\cdot\one_{x_{\max\leq 1-2\eta}} = \P\big(\numnz(\bme^\rightarrow, \tilde \eta) \geq \ell/2\mid x_{\max}\big)\cdot\one_{x_{\max\leq 1-2\eta}} \leq e^{-c\ell},
\end{align*}
so that the proof is complete.
%
\end{proof}

So it remains to prove Proposition~\ref{p.num near zero meander}. The strategy is to consider a random variable $N$ which dominates $\numnz(\bme, \eta)$. 
Define a sequence of stopping times $\tau_i$ inductively. We set $\tau_0=0$, and, for $k\geq 1$,
\begin{align*}
\tau_k &= \begin{cases}
\tau_{k-1}+\eta & \text{if }\bme(\tau_{k-1}+\eta)\leq 1.1\eta^{1/2}, \tau_{k-1}<1-9\eta\\
\inf\big\{t\geq \tau_{k-1}+\eta : \bme\left(t\right) \leq \eta^{1/2}, t\leq 1-10\eta\big\} & \text{if } \bme(\tau_{k-1}+\eta)> 1.1\eta^{1/2}, \tau_{k-1}<1-9\eta\\
\infty & \text{if } \tau_{k-1}\geq 1-9\eta.
\end{cases}
\end{align*}
We adopt the convention that the infimum of the empty set is $+\infty$. Also, let $k_0=0$ and $k_1<k_2< \ldots $ be the indices $k$ where $\tau_{k-1}+\eta< \tau_k <\infty$, i.e., where the second branch of the definition of $\tau_{k}$ was active, and let $K$ be the number of times this occurred; i.e., $K=\big|\{j : \tau_{j-1}+\eta < \tau_j <\infty\}\big|$. 

Now we define $N$ to be
$$N = 10 + \max\left\{j : \tau_j<\infty\right\}.$$
In essence, $N$ counts the consecutive $\eta$-separated instances of $\bme(x)< 1.1 \eta^{1/2}$; but, when this condition is violated, no count is made until $\bme$ returns to $\eta^{1/2}$. As such, $N$ is an overcount of $\numnz(\bme, \eta)$. Because some of our upcoming estimates on Brownian meander only hold on $[0,1-10\eta]$, we simply assume that the maximum possible number of $\eta$-separated near zeroes occur in the interval $[1-10\eta,1]$, which is 10. From here the claimed stochastic domination of $N$ over $\numnz(\bme, \eta)$ is apparent.

We may write $N$ in terms of the $k_i$. Define $N_1=k_1-1$, $N_i = (k_i-k_{i-1})$ for $1<i<K$, and $N_K = N-9-k_{K-1}$. Then
$$N = 10 + \sum_{i=1}^K N_i.$$





The idea of the proof is that the $N_i$ are dominated by geometric random variables (whose parameter is given by a uniform in $t$ bound on the probability that $B(t+\eta)\leq 1.1\eta^{1/2}$ given $B(t)\leq 1.1\eta^{1/2}$), and $K$ is also dominated by a geometric variable. By the strong Markov property, all these variables are independent. The needed uniformity in the bounds on the parameters of these random variables is a consequence of the following probability estimates. The proofs of these are straightforward but somewhat tedious calculations which have been deferred to Appendix~\ref{a.brownian meander calculations}.

The first two lemmas bound the probability that the Brownian meander is within $1.1\eta^{1/2}$ of 0 after an interval of time $\eta$, given that it is so at the current time. The second lemma treats the case that the current time is not 0, while the first does so when the time is 0, as the transition densities are slightly different in the two cases. These bounds will give the parameter for the geometric variables bounding $N_i$.

\begin{lemma}\label{l.brownian meander from 0 estimate}
Let $\bme$ be a standard Brownian meander on $[0,1]$. For $0<\eta\leq 1/2$,
$$\P\Big(\bme(\eta) < 1.1\eta^{1/2}\Big) \leq \frac{1}{2}.$$
\end{lemma}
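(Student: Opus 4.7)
My plan is to reduce the lemma to a concrete numerical estimate via the explicit one-point marginal density of the Brownian meander. A standard computation---for instance, by Imhof's relation, which writes the law of $\bme$ on $[0,1]$ as that of a $3$-dimensional Bessel process $R$ from the origin reweighted by $\sqrt{\pi/2}\cdot R_1^{-1}$, combined with the Markov property and an elementary integration against the Bessel transition density---gives the density
$$f_{\bme(\eta)}(x) \;=\; \frac{x}{\eta^{3/2}}\, e^{-x^2/(2\eta)}\bigl(2\Phi(x/\sqrt{1-\eta})-1\bigr),\qquad x>0,$$
where $\Phi$ denotes the standard normal CDF. The Bessel step reduces to evaluating $\int_0^\infty y^{-1}\,q^{\mathrm{Bes}^3}_{1-\eta}(x,y)\,\d y$, which collapses to $(2\Phi(x/\sqrt{1-\eta})-1)/x$ after the usual difference-of-Gaussians manipulation.

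With the density in hand, I will use the elementary bound $2\Phi(t)-1\leq \sqrt{2/\pi}\cdot t$ for $t\geq 0$, immediate from the fact that the normal density is at most $1/\sqrt{2\pi}$. Substituting this estimate into the density and then changing variables via $v = x/\sqrt{\eta}$ collapses all $\eta$-dependence inside the integral and leaves
$$\P\bigl(\bme(\eta)<1.1\sqrt{\eta}\bigr) \;\leq\; \sqrt{\frac{2}{\pi(1-\eta)}}\int_0^{1.1} v^2 e^{-v^2/2}\,\d v.$$
The hypothesis $\eta\leq 1/2$ gives $(1-\eta)^{-1/2}\leq\sqrt{2}$, so the right-hand side is bounded by $(2/\sqrt{\pi})\int_0^{1.1}v^2 e^{-v^2/2}\,\d v$. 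A single integration by parts (with $u=v$ and $\d w = v\,e^{-v^2/2}\,\d v$) evaluates the remaining integral in closed form as $\sqrt{2\pi}(\Phi(1.1)-1/2)-1.1\,e^{-0.605}$, whose value is approximately $0.312$; multiplying by $2/\sqrt{\pi}\approx 1.128$ yields a final bound of roughly $0.35$, comfortably below $1/2$.

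The only step requiring any real thought is the derivation of the density formula, which is classical. Everything else is a chain of elementary estimates, in keeping with the paper's description of this appendix proof as ``straightforward but somewhat tedious.'' The main verification is simply that the numerical slack leaves the bound strictly below $1/2$, which it does with comfortable room to spare---the constant $1.1$ on the right-hand side of the lemma could be increased noticeably before the argument would fail.
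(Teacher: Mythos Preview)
Your proof is correct and follows essentially the same route as the paper's: write down the explicit one-point density of $\bme(\eta)$, bound the error-function factor linearly via $2\Phi(t)-1\leq\sqrt{2/\pi}\,t$, and use $\eta\leq 1/2$ to control $(1-\eta)^{-1/2}$. The only difference is that the paper additionally discards the Gaussian factor via $e^{-x^2/(2\eta)}\leq 1$, reducing the integral to $\int_0^{1.1} v^2\,\d v=(1.1)^3/3$, whereas you retain it and evaluate $\int_0^{1.1} v^2 e^{-v^2/2}\,\d v$ by parts---this yields a somewhat sharper numerical constant but is otherwise the same argument.
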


\begin{lemma}\label{l.brownian meander increment estimate}
Let $\bme$ be a standard Brownian meander on $[0,1]$. Fix $\eta>0$ and let $t\in (\eta,1-4\eta]$. For $\eta>0$ and $x\leq 1.1\eta^{1/2}$,
$$\P\Big(\bme(t) < 1.1\eta^{1/2}  \ \Big|\  \bme\left(t-\eta\right)=x\Big) \leq \frac{3}{4}.$$
\end{lemma}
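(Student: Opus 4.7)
The plan is to invoke the explicit transition density of Brownian meander, as recorded for instance in Imhof's work \cite{imhof1984density}. For $0 < s < t < 1$ and $x,y > 0$, this density takes the factorized form
\begin{equation*}
p^{\bme}_{s,t}(x, y) \;=\; \frac{1}{\sqrt{2\pi(t-s)}}\Bigl(e^{-(y-x)^2/2(t-s)} - e^{-(y+x)^2/2(t-s)}\Bigr) \cdot \frac{\Psi\bigl(y/\sqrt{1-t}\bigr)}{\Psi\bigl(x/\sqrt{1-s}\bigr)},
\end{equation*}
where the first factor is the transition density of Brownian motion killed at zero, and the second factor is the correction for the conditioning that the meander stay positive up to time $1$ (with $\Psi$ the appropriate survival function from the cited reference). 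Specializing to $s = t-\eta$ and integrating $y$ over $[0, 1.1\eta^{1/2}]$ expresses the probability of interest as an integral against $p^{\bme}_{t-\eta,t}(x,\cdot)$.

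Next I would bound the two factors separately. Since the reflected term is nonnegative for $x, y > 0$, dropping it yields the upper bound $\frac{1}{\sqrt{2\pi\eta}} e^{-(y-x)^2/2\eta}$ on the first factor. Under the substitution $u = (y-x)/\eta^{1/2}$, integrating this over $[0, 1.1\eta^{1/2}]$ becomes the probability that a standard normal variable lies in an interval of length $1.1$, namely $[-x\eta^{-1/2},\, (1.1\eta^{1/2}-x)\eta^{-1/2}]$. The maximum mass a standard normal can place on any interval of fixed length $1.1$ is attained when the interval is centered at $0$, giving the explicit numerical bound $\Phi(0.55) - \Phi(-0.55)$, which is strictly less than $1/2$.

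For the $\Psi$-ratio, I would exploit the hypothesis $t \leq 1-4\eta$, which gives $1-t \geq 4\eta$ and $1-(t-\eta) \geq 5\eta$. Hence both ratio arguments satisfy $x/\sqrt{1-s} \leq 1.1/\sqrt{5}$ and $y/\sqrt{1-t} \leq 1.1/\sqrt{4}$, so both lie in a bounded range. A direct estimate on $\Psi$ on this range yields an absolute constant upper bound for the ratio, whose numerical value can be computed from the explicit form of $\Psi$. Combining the two bounds produces a number that one must verify is at most $3/4$.

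The main obstacle I anticipate is arithmetic rather than conceptual: making sure that the product of the Gaussian interval probability (around $0.42$) and the $\Psi$-ratio bound is strictly below $3/4$. If the crude bound on the $\Psi$-ratio is too loose, one can refine it by noting that $\Psi$ is increasing and that the denominator argument $x/\sqrt{1-s}$ and numerator argument $y/\sqrt{1-t}$ are of comparable size when $\eta$ is small compared to $1-t$, so the ratio approaches $1$ in the small-$\eta$ regime. A uniform constant bound slightly above $1$ should suffice, and the final combined bound will come out comfortably below $3/4$ for all admissible $\eta \leq 1/2$.
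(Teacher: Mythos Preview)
There is a genuine gap. Your plan bounds the killed-Brownian-motion factor and the $\Psi$-ratio \emph{separately}, but the $\Psi$-ratio cannot be controlled by an absolute constant uniformly in the conditioning value $x$. The lemma must hold for every $x \in (0,1.1\eta^{1/2}]$, and as $x \downarrow 0$ the denominator $\Psi\bigl(x/\sqrt{1-s}\bigr)$ behaves like $c\,x$ (since $\Psi$ is the half-Gaussian CDF, vanishing linearly at $0$). For $y$ near $1.1\eta^{1/2}$ the numerator stays bounded away from $0$, so the ratio scales like $y/x$ and blows up. Your remark that ``the ratio approaches $1$ in the small-$\eta$ regime'' is true only when $y$ and $x$ are comparable, not when you take the supremum over $y$ in the integration range for a fixed small $x$.

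The paper's proof handles this by \emph{not} dropping the reflected term. It rewrites
\[
\varphi_\eta(y-x) - \varphi_\eta(y+x) \;=\; \varphi_\eta(y-x)\bigl(1 - e^{-2xy/\eta}\bigr) \;\le\; \varphi_\eta(y-x)\cdot \tfrac{2xy}{\eta},
\]
which supplies an extra factor of $x$ in the numerator. Combined with the lower bound $\Psi(x/\sqrt{1-s}) \ge c\,x$, the $x$'s cancel, and one is left integrating $y^2$ against $\varphi_\eta$ over $[0,1.1\eta^{1/2}]$, yielding the numerical constant $\tfrac{2(1.1)^3 e^{2/5}}{3\sqrt{\pi}} < \tfrac{3}{4}$. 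The cancellation, not a crude bound on the ratio, is the key step you are missing.
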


The final lemma bounds the probability that the Brownian meander returns to level $\eta^{1/2}$  before time one if it is currently above $1.1\eta^{1/2}$. This estimate will be needed for the parameter of the geometric variable bounding~$K$.

\begin{lemma}\label{l.prob of return from above}
Let $\bme$ be a standard Brownian meander on $[0,1]$. Fix $\eta>0$ and $t \in (\eta, 1-10\eta]$. There exists an absolute constant $\delta>0$ (independent of $\eta$ and $t$) such that, for $\eta>0$ and $x>1.1\eta^{1/2}$,
$$\P\left(\inf_{s\in[t-\eta, 1]} \bme(s) < \eta^{1/2}  \ \Big|\  \bme\left(t-\eta\right) =x\right) \leq 1-\delta.$$

\end{lemma}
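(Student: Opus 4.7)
The plan is to pass to the complementary event, apply the Markov description of the Brownian meander to reduce the problem to a ratio of explicit Brownian probabilities, and then bound this ratio below by a short case analysis. The main thing to watch for is that the resulting bound is uniform in $t$, $x$, and $\eta$; this will come out of the fact that Brownian scaling reduces the question to a scale-invariant quantity whose input parameters are constrained to a compact set.

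Concretely, it suffices to prove that for some absolute $\delta>0$,
$$\P\!\left(\inf_{s\in[t-\eta,1]} \bme(s) \geq \eta^{1/2} \,\Big|\, \bme(t-\eta) = x\right) \geq \delta.$$
The Brownian meander is a time-inhomogeneous Markov process, and classically (see e.g.\ \cite{imhof1984density,denisov1984random}) the conditional law of $\bme|_{[s,1]}$ given $\bme(s)=y>0$ is that of a standard Brownian motion $B$ started from $y$ at time $s$ and conditioned to remain positive on $[s,1]$. Applying this with $s=t-\eta$ and setting $T:=1-(t-\eta)$, the displayed conditional probability equals
$$\frac{\P_{x}\bigl(\min_{r\in[0,T]} B_{r} \geq \eta^{1/2}\bigr)}{\P_{x}\bigl(\min_{r\in[0,T]} B_{r} > 0\bigr)}.$$
The hypothesis $t\leq 1-10\eta$ gives $T\geq 10\eta$.

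Next, by Brownian scaling with $y:=x/\sqrt{T}$ and $u:=\sqrt{\eta/T}$, followed by the reflection principle applied to the numerator (after a shift by $u$) and to the denominator, this ratio reduces to
$$\frac{2\Phi(y-u)-1}{2\Phi(y)-1},$$
where $\Phi$ is the standard normal distribution function. The constraints $T\geq 10\eta$ and $x>1.1\eta^{1/2}$ translate to $u\leq 1/\sqrt{10}$ and $y>1.1\hair u$, whence $(y-u)/y\geq 1/11$.

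It remains to bound this ratio below by an absolute constant, by a two-case argument on $y$. If $y\leq 1$, then $y$ and $y-u$ both lie in $[0,1]$, on which $z\mapsto 2\Phi(z)-1$ is sandwiched between the linear functions $2\phi(1)\hair z$ and $2\phi(0)\hair z$; hence the ratio is at least $(\phi(1)/\phi(0))\cdot(y-u)/y=e^{-1/2}(y-u)/y\geq e^{-1/2}/11$. If instead $y>1$, then the denominator is at most $1$ while $y-u\geq 1-1/\sqrt{10}$, so the numerator is at least the absolute positive constant $2\Phi(1-1/\sqrt{10})-1$. Taking $\delta$ to be the minimum of these two absolute lower bounds completes the proof. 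The only conceptually nontrivial step is the opening appeal to the Markov description of $\bme$; the rest is a direct Brownian calculation from standard identities.
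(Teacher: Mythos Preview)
Your proof is correct and follows essentially the same approach as the paper: both invoke the Markov description of the meander as Brownian motion conditioned to stay positive and then control the resulting ratio of reflection-principle probabilities. The only cosmetic difference is that you pass to the complementary event and use Brownian scaling plus a case analysis on $y=x/\sqrt{T}$, whereas the paper bounds the original event directly with explicit numerical estimates, arriving at the ratio bound $\tfrac{1}{1.1}\exp(1.2/20)<1$.
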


We will also need a tail bound on a sum of independent geometric random variables. Here, ``$X$ is distributed as $\mathrm{Geo}(p)$'' means
$$\P\left(X\geq k\right) = (1-p)^k$$
for $k=1,2, \ldots$ For the tail bound we cite a simplified form of \cite[Theorem 2.1]{janson2018tail}.

\begin{lemma}\label{l.tail of sum of geometrics}
Let $G_i$ be i.i.d. $\mathrm{Geo}(p)$ random variables, and let $G=\sum_{i=1}^n G_i$, for $n\in\N$. Also let $\mu = \E[G] = n/p$ and $\lambda\geq 1$. Then,
$$\P\bigl(G\geq \lambda\mu\bigr)\leq \exp(-p\mu\lambda).$$
\end{lemma}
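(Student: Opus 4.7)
The plan is to quote Janson's \cite[Theorem 2.1]{janson2018tail}, as the statement already advertises. For completeness, let me also describe the self-contained Chernoff-bound argument that underlies it, and what I see as the main obstacle.

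First I would set up the exponential-moment (Chernoff) inequality. The convention $\P(G_1\geq k)=(1-p)^k$ identifies $G_1$ with the number of failures before the first success in a Bernoulli$(p)$ scheme, so $\P(G_1=k)=p(1-p)^k$ for $k\geq 0$ and
$$
\E[e^{tG_1}] \;=\; \frac{p}{1-(1-p)e^t}, \qquad 0<t<-\log(1-p).
$$
Independence yields $\E[e^{tG}]=\bigl(p/(1-(1-p)e^t)\bigr)^n$, and Markov's inequality gives, for every admissible $t$,
$$
\P(G \geq \lambda \mu) \;\leq\; e^{-t\lambda \mu}\left(\frac{p}{1-(1-p)e^t}\right)^n.
$$

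The next step is to optimise this in $t$. A clean route that sidesteps explicit calculus uses negative-binomial duality: the quantity $G+n$ can be realised as the total number of Bernoulli$(p)$ trials needed for $n$ successes, so the event $\{G\geq k\}$ coincides with $\{S_{k+n-1}\leq n-1\}$ for $S_N\sim\mathrm{Binomial}(N,p)$. A standard lower-tail Chernoff bound for the binomial with $N\approx \lambda n/p$ and mean $Np\approx \lambda n$ then delivers an exponential-in-$n\lambda$ estimate of the form stated. Alternatively, one may invoke Janson's theorem directly, which in the equal-parameter setting produces $\P(G\geq\lambda\mu)\leq\lambda^{-1}(1-p)^{(\lambda-1)\mu}$; the elementary inequality $\log(1-p)\leq -p$ bounds $(1-p)^{(\lambda-1)\mu}$ by $e^{-p(\lambda-1)\mu}$, which together with $\lambda\geq 1$ and absorption of constants yields the announced form $\exp(-p\mu\lambda)$.

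The only real obstacle is a bit of bookkeeping of constants so that the resulting bound is clean and uniform in $p\in(0,1)$ and $n\in\N$: a naive optimisation of the Chernoff bound produces a rate of the shape $p\mu\,I(\lambda)$ with $I$ a convex rate function satisfying $I(1)=0$ and $I(\lambda)\sim \lambda$ as $\lambda\to\infty$, and some care is needed to convert this into the simple exponential $\exp(-p\mu\lambda)$ valid on the whole range $\lambda\geq 1$. Janson's theorem is specifically designed to do this bookkeeping, so in practice the proof reduces to a one-line appeal to that reference.
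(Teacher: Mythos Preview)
Your proposal matches the paper exactly: the lemma is not proved in the paper but simply introduced as ``a simplified form of \cite[Theorem~2.1]{janson2018tail},'' which is precisely your plan. Your supplementary Chernoff sketch is a helpful gloss, and you are right that the delicate step is the constant bookkeeping (note that passing from $e^{-p(\lambda-1)\mu}$ to $e^{-p\mu\lambda}$ costs a factor $e^{p\mu}=e^{n}$, which is not an absorbable constant), which is exactly why both you and the paper defer to Janson.
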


Given these estimates, we may turn to the proof of Proposition~\ref{p.num near zero meander}.

\begin{proof}[Proof of Proposition~\ref{p.num near zero meander}]
As noted, we have $\P\left(\numnz(\bme, \eta)\geq \ell\right) \leq \P(N\geq \ell)$. We want to dominate $N$ by a sum of a geometric number of independent geometric random variables using the well-known strong Markov property of $\bme$. 

Let $G_i$ be distributed as $\mathrm{Geo}(1/4)$ for $i =1,2, \ldots$ and $K'$ as $\mathrm{Geo}(\delta)$, all independent of each other. Then it is straightforward from the above description and the probability estimates of Lemmas~\ref{l.brownian meander from 0 estimate}, \ref{l.brownian meander increment estimate}, and \ref{l.prob of return from above}, along with the strong Markov property, that we have a coupling of the $N_i$, $G_i,$ $K$, and $K'$ such that
$$N = 10+\sum_{i=1}^K N_i \leq 10+\sum_{i=1}^{K'} G_i.$$
Indeed, for $i\leq K-1$, $N_i$ is the number of times that the meander is within $\eta^{1/2}$ of zero when checked at times separated by exactly $\eta$; and so the joint domination of $N_i$ by $G_i$ follows from Lemmas~\ref{l.brownian meander from 0 estimate} and \ref{l.brownian meander increment estimate} and the strong Markov property. For $i=K$, we have essentially the same thing except that the count is truncated when the time reaches $1-10\eta$; as it is a truncation, the same stochastic domination still holds. Similarly, $K$ is the number of consecutive times the process is able to get from $1.1\eta^{1/2}$ to $\eta^{1/2}$ in a unit interval, and so by the strong Markov property and Lemma~\ref{l.prob of return from above}, $K$ is dominated by $K'$.

Thus we have, noting that $\E\big[\sum_{i=1}^{K'} G_i\mid K'\big] = K'\cdot\E[G_i] = 4K'$ and using Lemma~\ref{l.tail of sum of geometrics},
\begin{align*}
 \P\big(N\geq \ell \mid K'\big) \leq \P\left(\sum_{i=1}^{K'} G_i \geq \ell/2\  \middle| \ K'\right)
 %
 &= \exp\left(-\ell/8\right)\cdot\one_{\ell\geq 8K'} + \one_{\ell\leq 8K'},
 \end{align*}
for $\ell\geq 20$. Taking expectations, we find
\begin{align*}
\P\big(N\geq \ell \big)  &\leq \exp(-\ell/8) + \P\left(K'\geq \ell/8\right)
= \exp(-\ell/8) + (1-\delta)^{\ell/8},
\end{align*}
completing the proof.
\end{proof}


\chapter{Notation and setup}\label{ch.notation and setup}

In this chapter we introduce some notation we will be using throughout the article; give the definitions of the main objects of study; and then state the main result, Theorem~\ref{t.airytail.ln}.

\section{Notation, Brownian Gibbs, and regular ensembles}\label{s.notation and regular ensembles}

\subsection{General notation}\label{s.notation general}
We take the set of natural numbers $\N$ to be $\{1,2, \ldots \}$. For $k\in \N$, we use an overbar to denote a $k$-vector, i.e., $\overline x\in \R^k$. We denote the integer interval $\{i,i+1, \ldots, j\}$ by $\llbracket i, j\rrbracket$. For a function $f:\intint{k}\times \R\to\R$, we write $\overline f(x)$ for $(f(1,x), \ldots, f(k,x))$. A $k$-vector $\overline x = (x_1, \ldots, x_k)\in \R^k$ is called a $k$-decreasing list if $x_1>x_2> \ldots >x_k$. For a set $I\subseteq \R$, let $I^k_> \subseteq I^k$ be the set of $k$-decreasing lists of elements of $I$, and $I^k_{\geq}$ be the analogous set of $k$-non-increasing lists.

For a real valued function $f$ whose domain of definition contains an interval $[a,b]$, we define $f^{[a,b]}:[a,b]\to \R$ to be the affinely shifted bridge version of $f$ that is zero at both endpoints, i.e., for $x\in[a,b]$, 
$$f^{[a,b]}(x) := f(x) - \frac{x-a}{b-a}\cdot f(b) - \frac{b-x}{b-a}\cdot f(a).$$
For an interval $[a,b]\subseteq \R$, we denote the space of continuous functions with domain $[a,b]$ which vanish at~$a$ by $\mc C_{0,*}([a,b], \R)$, and the space of continuous functions which may take any value at the endpoints by $\mc C_{*,*}([a,b], \R)$. The asterisk should be thought of as a wildcard indicating that any value may be taken.

\subsection{Line ensembles and the Brownian Gibbs property}
\begin{definition}[Line ensembles]
Let $\Sigma$ be an (possibly infinite) interval of $\Z$, and let $\Lambda$ be a (possibly unbounded) interval of $\R$.
Let $\mc X$ be the set of continuous functions $f:\Sigma\times \Lambda \rightarrow \R$ endowed with the topology of uniform convergence on compact subsets of $\Sigma\times\Lambda$, and let $\mathscr{C}$ denote the Borel $\sigma$-algebra of $\mc X$.

A {\it $\Sigma$-indexed line ensemble} $\L$ is a random variable defined on a probability space $(\Omega,\mathscr{B},\PP)$, taking values in $\mc X$ such that $\L$ is a $(\mathscr{B},\mathscr{C})$-measurable function. We regard $\L$ as a $\Sigma$-indexed collection of random continuous curves (despite the usage of the word ``line''), each of which maps $\Lambda$ into $\R$. We will slightly abuse notation and write $\L:\Sigma\times \Lambda \rightarrow \R$, even though it is not $\L$ which is such a function, but rather $\L(\omega)$ for each $\omega \in \Omega$. 
A line ensemble is {\em ordered} if, for all $i,j \in \Sigma$ with $i<j$, it holds that $\L(i,x)>\L(j,x)$ for all $x \in \Lambda$.
Statements such as this are understood as being asserted almost surely with respect to $\P$.
\end{definition}

\begin{definition}[Normal, Brownian bridge, and Brownian motion laws]
We will use $N(m,\sigma^2)$ to denote the normal distribution with mean $m$ and variance $\sigma^2$, and sometimes, with abuse of notation, a random variable with this distribution.

Let $k \in \N$, $a, b \in \R$ with $a<b,$ and $\overline{x}, \overline{y} \in \R_{>}^{k}$. We write $\B_{k; \overline x, \overline y}^{[a,b]}$ for the law of $k$ independent Brownian bridges $(B_1,\ldots, B_k)$ of diffusion parameter one, with $B_i : [a,b] \to \R$ and $B_i(a) = x_i$ and $B_i(b) = y_i$, for $i=1, \ldots, k$.

We will also need the law of standard Brownian motion started at 0 on the interval $[a,b]$, which we will denote by $\B_{0, *}^{[a,b]}$; i.e., $\B^{[a,b]}_{0,*}$ is the law of a rate one Brownian motion $B$ with $B(a)=0$. 

Now let $f:[a,b]\to \R\cup\{-\infty\}$ be a measurable function such that $x_k> f(a)$ and $y_k> f(b)$. Define the non-intersection event on a set $A\subseteq [a,b]$ with lower boundary curve $f$ by
$$\notouch^A_f =  \Big\{ \, \textrm{for all } x \in A \, ,  \,  B(i,x) > B(i+1,x) \textrm{ for each } 1\leq i\leq k-1, \textrm{ and  $B(k,x) > f(x)$} \,\Big\} \, . $$
When $A=[a,b]$, we omit its mention in the notation, i.e., we write $\notouch_f$.
\end{definition}

With this definition, we can move to defining the Brownian Gibbs property.

\begin{definition}[Brownian Gibbs property]
Let $n\in \N$, $I\subseteq \R$ be an interval, $k\in\intint{n}$, and $a,b\in I$ with $a<b$. Let $D_{k;a,b} = \intint{k}\times(a,b)$ and $D_{k;a,b}^c = (\intint{n}\times I)\setminus D_{k;a,b}$. Let $\L:\intint{n}\times I\to \R$ be an ordered line ensemble. We say that $\L$ has the Brownian Gibbs property if the following holds for all such choices of $k, a$, and $b$:
$$\mathrm{Law}\left(\left.\mathcal{L}\right|_{D_{k; a, b}} \text { conditionally on }\left.\mathcal{L}\right|_{D_{k ; a, b}^{c}}\right)=\B_{k; \overline{x}, \overline{y}}^{[a, b]}\left(\,\cdot \mid \notouch_{f}\right),$$
where $\overline x = \overline \L(a)$, $\overline y=\overline \L(b)$, and $f(\,\cdot\,) = \L(k+1,\cdot\,)$ on $[a,b]$.

In words, the conditional distribution of the top $k$ curves of $\L$ on $[a,b]$, given the form on $\L$ on $D^c_{k;a,b}$, is the law of $k$ independent Brownian bridges, the $i^\text{th}$ from $\L(i,a)$ to $\L(i,b)$, which are conditioned to intersect neither each other nor the lower curve $\L(k+1,\cdot\,)$ on $[a,b]$.
\end{definition}

In the next definition we define \emph{regular ensembles}, which are the general objects to which our main result will apply. The definition is the same as~\cite[Definition $2.4$]{hammond2017brownian}, with the parameter $\overline \varphi$ in that definition taking the value $(1/3,1/9, \infty)$; the value of $\infty$ for the third parameter is a formal device to indicate that the range of $s$ in point (2) below is $[1,\infty)$ instead of $[1, n^{\varphi_3}]$ for a finite value of $\varphi_3$.

\begin{definition}[Regular Brownian Gibbs ensemble]\label{d.regularsequence} 
Consider a Brownian Gibbs ensemble that has the form
$$
\mc{L}: \intint{\nmac} \times \big[-\xnmac , \infty \big) \to \R   \, ,
$$

and which is defined on a probability space under the law~$\PP$.
The number $\nmac = \nmac(\L)$ of ensemble curves and the absolute value $\xnmac$ of the finite endpoint may take any values in $\N$ and $[0,\infty)$. (In fact, we may also take $\xnmac = \infty$, in which case we would take the domain of $\mc{L}$ to be $\intint{n} \times \R$.) Let $\rsC$ and $\rsc$ be two positive constants. The ensemble $\mc{L}$ is said to be $(\rsc,\rsC)$-regular if the following conditions are satisfied.

\begin{enumerate}

\item {\bf Endpoint escape.} $\xnmac \geq  \rsc \nmac^{1/3}$.

\item {\bf One-point lower tail.} If $z \geq -\xnmac$ satisfies $\vert z \vert \leq \rsc \nmac^{1/9}$, then

$$
\PP \Big( \mc{L} \big( 1,z\big) + 2^{-1/2}  z^2 \leq - s \Big) \leq \rsC \exp \big\{ - \rsc s^{3/2} \big\}
$$

for all $s \in \big[1, \infty \big)$.

\item {\bf One-point upper tail.}  If $z \geq -\xnmac$ satisfies $\vert z \vert \leq \rsc \nmac^{1/9}$, then

$$
\PP \Big( \mc{L} \big( 1,z\big) +  2^{-1/2} z^2 \geq  s \Big) \leq \rsC \exp \big\{ - \rsc s^{3/2} \big\}
$$

for all $s \in [1, \infty)$.

\end{enumerate}
We reserve the symbols $c$ and $C$ for this usage in the remainder of this paper.

The symbol $n$ will be reserved in the rest of the paper for the number of curves in the regular ensemble under consideration, which we will denote by $\L_n$.
\end{definition}

Though the definition of regular ensembles only includes one-point tail information for the top curve, this actually extends to the lower curves as well \cite[Proposition 2.7]{hammond2017brownian}. Though we do not state this result, we will have need of two associated sequences of constants for the statement of our main results. For a $(c,C)$-regular ensemble, define $C_1 = 140 C$, $c_1 = 2^{-5/2}c\wedge 1/8$; and, for each $k\geq 2$,
\begin{equation*}\label{e.formere}
 \formerE_k = \max \Big\{  10 \cdot 20^{k-1} 5^{k/2} \Big( \tfrac{10}{3 - 2^{3/2}} \Big)^{k(k-1)/2} C \, , \, e^{c/2} \Big\} 
 \end{equation*}
and 
\begin{equation}\label{e.littlec}
 c_k =   \big( (3 - 2^{3/2})^{3/2} 2^{-1} 5^{-3/2} \big)^{k-1} c_1.
\end{equation}
These symbols will retain these meanings throughout the article.
%

One example of a regular Brownian Gibbs line ensemble is the \emph{parabolic Airy line ensemble}, given by 
$$\L(i,x) = 2^{-1/2}(\A(i,x) - x^2),$$
for $(i,x)\in \N\times \R$, where $\A:\N\times \R\to \R$ is the Airy line ensemble. (We again include a factor of $2^{-1/2}$ to allow comparisons to be made with rate \emph{one} Brownian objects.) The Airy line ensemble was constructed as an ensemble of continuous non-intersecting curves in \cite[Theorem 3.1]{corwin2014brownian}, and tightness estimates furnished by each of \cite{dauvergne2018basic}, \cite{dauvergne2019uniform}, and \cite{hammond2017brownian} lead to simplified constructions. It is defined as follows.

\begin{definition}[Airy line ensemble]
The Airy line ensemble $\mc{A}: \N \times \R \to \R$  is a collection of random continuous curves $\mc{A}(j,\cdot\,)$ for $j\in \N$. For any finite set $I \subset \R$, define the random object $\mc{A}[I]$ to be the point process  on $I \times \R$ given by $\big\{ \big( s,\mc{A}(j,s) \big) \mid j \in \N  \, , \, s \in I \big\}$. The law of $\mc{A}$ is defined as the unique distribution supported on such collections of continuous curves such that, for each finite $I = \{ t_1,\cdots,t_m \}$, $\mc{A}[I]$ is a determinantal point process whose kernel is the extended Airy$_2$ kernel~$K^{{\rm ext}}_2$, specified by
$$
K^{{\rm ext}}_2 \big( s_1,x_1;s_2,x_2  \big)  = \begin{cases}
  \int_0^\infty e^{-\lambda(s_1 - s_2)}   {\rm Ai}\big(x_1+\lambda\big){\rm Ai}\big(x_2+\lambda\big) \dd \lambda \, \, & \textrm{if $s_1 \geq s_2$}
 \, , \\
 - \int_{-\infty}^0 e^{-\lambda(s_1 - s_2)}   {\rm Ai}\big(x_1+\lambda\big){\rm Ai}\big(x_2+\lambda\big) \dd \lambda \, \, & \textrm{if $s_1 < s_2$} \, , 
\end{cases}
$$ 
where ${\rm Ai}:\R \to \R$ is the Airy function. The Airy line ensemble's curves are ordered, with $\mc{A}(1,\cdot\,)$ uppermost.
\end{definition}

\section{An important example of regular ensembles: Brownian LPP weight profiles}\label{s.notation brownian lpp}
Here we introduce the Brownian last passage percolation model, which will generate an important example of regular ensembles via the RSK correspondence, and weight profiles from general initial conditions. These definitions are not logically required for the proof of our main theorem, but do motivate our decision to prove the result in the more general context of regular ensembles. Additionally, these objects will be used in the applications involving the patchwork quilt. Many of these objects were introduced in Section~\ref{s.intro} in their analogous forms in Bernoulli LPP with the superscript ``$0|1$'', which is now dropped for their Brownian LPP versions.

\subsection{The model's definition}
We work in a probability space with a law $\P$, which is rich enough to support a collection $B:\Z\times \R\to \R$ of independent two-sided standard Brownian motions $B(k,\cdot\,):\R\to\R$ for $k\in \Z$.

Let $i,j\in \Z$ with $i\leq j$, and $x,y\in\R$ with $x\leq y$. Let $z_k\in [x,y]$ for $k\in \llbracket i+1, j \rrbracket$ be a non-decreasing list, i.e. $z_{i+1}\leq  \ldots  \leq z_j$. We adopt the convention that $z_i =x$ and $z_{j+1}=y$.

To any such list, we associate an energy, which is $\sum_{k=i}^{j} \left(B(k+1,z_{k+1}) - B(k,z_k)\right)$. The maximum energy over all such lists (for fixed $i,j,x,y$) is denoted by $M[(x,i)\to(y,j)]$.

The process $M[(0,1) \to (\cdot,n)]: [0,\infty) \to \R$ may have been considered first by~\cite{glynn1991departures}; it was studied  further in~\cite{o2002representation}.

Taking into account the KPZ scaling exponents of one-third and two-thirds discussed in Section~\ref{s.intro}, we define a centred and scaled version of the maximum energy process $M[(\cdot, 0)\to (\cdot, n)]$, where we have set $i=0$ and $j=n$. We call this centred and scaled quantity the \emph{weight}; it is specified by the formula
\begin{equation}\label{e.weightmzeroone} 
  \weight_{n}[(x,0)\to (y,1)] \,     =  \,   2^{-1/2} n^{-1/3} \Big(  M[(2n^{2/3}x,0) \to (n  + 2n^{2/3}y,n)] - 2n  -  2n^{2/3}(y-x) \Big) \, .
\end{equation}

The quantity  $\weight_{n}[(x,0)\to (y,1)]$ may be expected to be, for given real choices of $x$ and $y$, a unit-order random quantity, whose law is tight in the scaling parameter $n \in \N$. In fact, more is true: for $x=0$, the function $y \mapsto\weight_{n}[(0,0)\to (y,1)]$ (which we call the \emph{weight profile}) is a tight sequence of random functions which converges to~$\L$, the parabolic Airy$_2$ process mentioned in Section~\ref{s.intro}, which is the top curve in the parabolic Airy line ensemble mentioned above. These inferences follow from the relation between $M[(0,1)\to(\cdot, n)]$ and Dyson Brownian motion proved in \cite{o2002representation} and the fact that the scaling limit of Dyson Brownian motion is the Airy$_2$ process \cite{adler2005pdes} in the sense of finite-dimensional distributions, upgraded to the space of continuous functions by \cite{corwin2014brownian}. The equality in distribution with Dyson Brownian motion for the top line alone was proved earlier in \cite{gravner2001limit} and also \cite{baryshnikov2001gues}.

We may regard this function $y \mapsto\weight_{n}[(0,0)\to (y,1)]$ as the top line in an ensemble of $n$ continuous curves which we denote $\L_n^{\scal}:\intint{n}\times [-\frac{1}{2}n^{1/3},\infty)\to \R$, i.e., $\L_n^{\scal}(1,y) = \weight_{n}[(0,0)\to (y,1)]$. We will not formally define the remaining curves in this ensemble as doing so is not required for this article, but merely say that their definition goes via the Robinson-Schensted-Knuth correspondence and the weights of multi-polymer watermelons, which are maximal energy collections of disjoint (except for endpoints) point-to-point paths. We also have that~$\L_n^{\scal}$ converges to the parabolic Airy line ensemble under the standard notion of weak convergence given the locally uniform topology on curves. This is proved by the same references mentioned in the previous paragraph for the top line of the ensemble.


Our reason for considering this ensemble of curves is that it enjoys the Brownian Gibbs property and is in fact regular.

\begin{proposition}[Proposition 2.5 of \cite{hammond2017brownian}]\label{p.lereg}
There exist choices of the positive constants $\rsc$ and $\rsC$ such that each of the scaled Brownian LPP line ensembles
$\mc{L}_n^{\scal}: \intint{n} \times \big[- \tfrac{1}{2} n^{1/3}  , \infty \big) \to \R$, $n \in \N$,
is $\big(\rsc,\rsC\big)$-regular.

 \end{proposition}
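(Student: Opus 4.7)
The plan is to verify the three conditions of Definition~\ref{d.regularsequence} for the ensemble $\mc{L}_n^{\scal}$ individually. The endpoint escape condition is immediate with any constant $c \leq 1/2$, since the ensemble is defined on the interval $[-\tfrac{1}{2} n^{1/3}, \infty)$.

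For the two one-point tail conditions, I would combine two ingredients. The first is a reduction from a general $y$ with $|y| \leq c n^{1/9}$ to the case $y = 0$, via Brownian scaling of the underlying environment. Rescaling the spatial coordinate of each $B(k, \cdot)$ by a factor $\alpha > 0$ produces an environment of the same joint law up to the deterministic prefactor $\alpha^{1/2}$, yielding the identity $M[(0,0) \to (\alpha n, n)] \stackrel{d}{=} \alpha^{1/2}\, M[(0,0) \to (n, n)]$. Choosing $\alpha = 1 + 2 n^{-1/3} y$ and substituting into~\eqref{e.weightmzeroone}, a Taylor expansion of $\alpha^{1/2}$ in the small parameter $n^{-1/3} y = O(n^{-2/9})$ produces the parabolic term $-2^{-1/2} y^2$ as the leading-order correction, with all higher-order deterministic terms uniformly bounded over $|y| \leq c n^{1/9}$ once $c$ is taken small enough. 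The resulting identity is
\begin{equation*}
\weight_n[(0,0) \to (y, 1)] + 2^{-1/2} y^2 \;\stackrel{d}{=}\; (1 + 2 n^{-1/3} y)^{1/2}\, \weight_n[(0,0) \to (0, 1)] \;+\; r_n(y),
\end{equation*}
where $r_n$ is a deterministic function of absolutely bounded size; since the scaling factor $(1 + 2 n^{-1/3} y)^{1/2}$ remains in a bounded interval around $1$ throughout the allowed range of $y$, both tail conditions at generic $y$ reduce to tail bounds on the single random variable $W_n := \weight_n[(0,0) \to (0, 1)]$.

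The second ingredient is the distributional identity of \cite{baryshnikov2001gues,gravner2001limit,o2002representation}, equating $M[(0,0) \to (n, n)]$ in law with the largest eigenvalue of an $n \times n$ GUE matrix under the appropriate normalisation. The one-point tails required for $W_n$ now translate directly into the classical edge-tail estimates of the form $\P(\lambda_1 - 2\sqrt{n} \geq s n^{-1/6}) \leq C \exp(-c s^{3/2})$ for $s \geq 1$, together with the complementary lower-tail bound (whose true exponent of $3$ is more than sufficient for the $3/2$ required in Definition~\ref{d.regularsequence}). The main technical effort in fleshing out this plan lies in assembling and citing the sharp random-matrix tail estimates, and in verifying that the lower tail is not compromised by the (possibly less-than-one) scaling factor $\alpha^{1/2}$ appearing in the reduction; the rest is a clean scaling argument, and the restriction $|y| \leq c n^{1/9}$ is precisely what is needed to keep the Taylor expansion of $\alpha^{1/2}$ uniformly controlled across the required range.
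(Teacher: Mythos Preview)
Your approach is correct and is essentially the one taken in the cited reference \cite{hammond2017brownian}: the Brownian scaling identity $M[(0,0)\to(\alpha n,n)]\stackrel{d}{=}\alpha^{1/2}M[(0,0)\to(n,n)]$ reduces the tail bounds at general $y$ to those at $y=0$ (with the $n^{1/9}$ restriction arising exactly as you say, from controlling the $O(n^{-1/3}y^3)$ remainder in the expansion of $\alpha^{1/2}$), and the $y=0$ case is then handled by the GUE largest-eigenvalue tail. Note that the present paper does not itself supply a proof but simply cites \cite[Proposition~2.5]{hammond2017brownian}; the only additional content here is the Remark following the proposition, which points out that to get the lower-tail bound valid for all $s\in[1,\infty)$ (rather than just $s\in[1,n^{1/3}]$ as in the original) one should swap the random-matrix input \cite[Lemma~A.1(1)]{hammond2017brownian} for the sharper moderate-deviation estimate \cite[Theorem~3.1]{dauvergne2018basic}.
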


\begin{remark}
In fact, \cite[Proposition 2.5]{hammond2017brownian} proves that $\L_n^{\scal}$ is $(c,C)$ regular in a slightly weaker sense, namely with point (2) in Definition~\ref{d.regularsequence} holding for $s\in[1, n^{1/3}]$ only. The argument to extend this to all of $s\in[1,\infty)$ simply replaces the use of \cite[Lemma A.1(1)]{hammond2017brownian} in the proof of \cite[Proposition 2.5]{hammond2017brownian} with \cite[Theorem 3.1]{dauvergne2018basic}; this latter theorem is an improved moderate deviation bound for the $k^\text{th}$ line of Dyson Brownian motion (equivalently, the $k^\text{th}$ eigenvalue of the Gaussian Unitary Ensemble), which we need for only $k=1$. 

We have stated this slightly improved regularity of $\L_n^\scal$ in comparison to the statement of \cite[Proposition 2.5]{hammond2017brownian} for completeness. If Proposition~\ref{p.lereg} were used in place of \cite[Proposition 2.5]{hammond2017brownian} in the arguments of \cite{hammond2017brownian}, minor improvements to certain statements quoted from \cite{hammond2017brownian} that we use later in this paper, in Section~\ref{s.jump ensemble}, could be made; additionally, an improvement in the regularity of the Brownian motion patchwork quilt result proved in Chapter~\ref{ch.patchwork quilt} as Theorem~\ref{t.patchwork quilt} (in particular, $\beta_4 = 1/12$ stated in that theorem would improve to $1/6$) would be available. However, in view of the minor and technical nature of these improvements, we do not formally claim, state, or use them in our arguments, and therefore we will not carry through these improved effects of Proposition~\ref{p.lereg} further.
\end{remark}

\subsection{Basic parabolic symmetry of regular ensembles.}\label{s.notation parabolic invariance} Here we record a straightforward proposition that allows us to translate the interval of consideration and still retain a regular ensemble (with an extra linear term). 

Let $Q : \mathbb{R} \rightarrow \mathbb{R}$ denote the parabola
$Q(x)=2^{-1 / 2} x^{2},$ and let $l : \mathbb{R}^{2} \rightarrow \mathbb{R}$ be given by $l(x, y)=-2^{-1 / 2} y^{2}-2^{1 / 2} y(x-y) .$ Note that
$x \mapsto l(x, y)$ is the tangent line of the parabola $x \mapsto-Q(x)$ at the point $(y,-Q(y)) .$ Note also that,
for any $x, y \in \mathbb{R},$
\begin{equation}\label{e.parabola line relation}
Q(x) = -l(x,y) + Q(x-y).
\end{equation}
For $z_{n} \geq 0,$ consider a regular ensemble $\L_{n} :\intint{n} \times\left[-z_{n}, \infty\right) \to \R .$ For any $y_{n}>-z_{n},$ define
$\L_{n, y_{n}}^{\mathrm{shift}} :[1, n] \times\left[-z_{n}-y_{n}, \infty\right) \to \R$ to be the shifted ensemble given by
$$\L_{n, y_{n}}^{\mathrm{shift}}(i, x)=\L_{n}\left(i, x+y_{n}\right)-l\left(x+y_{n}, y_{n}\right)$$
By \eqref{e.parabola line relation}, $\L_{n, y_{n}}^{\mathrm{shift}} = \L_n(i,x+y_n) + Q(x+y_n) - Q(x)$.

\begin{lemma}[Lemma 2.26 of \cite{hammond2017brownian}]\label{l.shifted is regular}
Let $c,C>0$ and $n\in \N$. Suppose that $\L_n : \intint{n}\times [-z_n, \infty)\to \R$ is a $(c,C)$-regular ensemble. Whenever $y_n\in\R$ satisfies $|y_n|\leq c/2\cdot n^{1/9}$, the ensemble $\L_{n, y_{n}}^{\mathrm{shift}}$ is $(c/2, C)$-regular.
\end{lemma}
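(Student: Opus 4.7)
The plan is to verify the three defining properties of a $(c/2, C)$-regular ensemble for $\L^{\mathrm{shift}}_{n,y_n}$ directly, using the parabola--tangent identity \eqref{e.parabola line relation} as the only nontrivial ingredient. First I would observe that $\L^{\mathrm{shift}}_{n,y_n}$ inherits the Brownian Gibbs property from $\L_n$: the shift consists of translation of the spatial argument (which commutes with Brownian Gibbs because Brownian bridge laws are translation invariant) together with subtraction of the \emph{same} affine function $x \mapsto l(x+y_n, y_n)$ from every curve (which preserves both the non-intersection event and the Brownian bridge laws).

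Next, for the endpoint escape condition, the shifted ensemble is defined on $[-z_n-y_n,\infty)$, so I need $z_n + y_n \geq (c/2) n^{1/3}$. Using $z_n \geq c n^{1/3}$ and $|y_n| \leq (c/2) n^{1/9} \leq (c/2) n^{1/3}$ (for $n \geq 1$), this follows at once.

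For the one-point tails---the substantive content of the lemma---I would invoke \eqref{e.parabola line relation} with $y = y_n$ and $x$ replaced by $x + y_n$, which gives the identity $-l(x+y_n, y_n) = Q(x+y_n) - Q(x)$. Hence the parabola-corrected top curve of the shifted ensemble coincides with that of the original:
\[
\L^{\mathrm{shift}}_{n,y_n}(1,x) + Q(x) \;=\; \L_n(1, x+y_n) + Q(x+y_n).
\]
For $x$ in the shifted domain with $|x| \leq (c/2) n^{1/9}$, the triangle inequality combined with $|y_n| \leq (c/2) n^{1/9}$ yields $|x+y_n| \leq c n^{1/9}$, which is exactly the range in which the one-point upper and lower tails for $\L_n$ apply with constants $(c, C)$. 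Transferring them through the above equality gives the same bound $C \exp(-c s^{3/2})$, which is stronger than the required $C \exp(-(c/2) s^{3/2})$.

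There is no genuine obstacle; the one point requiring care is the bookkeeping that $x + y_n \in [-z_n, \infty)$ whenever $x \in [-z_n - y_n, \infty)$, which is automatic. In short, the whole lemma reduces to the algebraic identity \eqref{e.parabola line relation} plus the mild halving of the constant $c$ to absorb the $|y_n|$-contribution in the domain constraints.
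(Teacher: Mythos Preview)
Your argument is correct. The paper does not supply its own proof of this lemma, instead citing it from \cite{hammond2017brownian}; however, the key identity you rely on is precisely the one the paper records immediately before the lemma statement, namely $\L_{n,y_n}^{\mathrm{shift}}(i,x) = \L_n(i,x+y_n) + Q(x+y_n) - Q(x)$, so your approach is the intended one and the verification of the three regularity conditions is straightforward from there.
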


This lemma will allow our main result to apply to an interval $[K-d,K+d]$ not necessarily centred at the origin.

\subsection{Other initial conditions}
The initial condition of the weight profile $\weight_{n}[(0,0)\to (y,1)]$, since it is started from the single point $(0,0)$, is the \emph{narrow-wedge} initial condition, and we would like to define energy profiles from general initial conditions. We introduced the notation $\weight^f_{\infty}[(*,0)\to (y,1)]$ in Section~\ref{s.intro general init condition Brownian regularity} for the limiting weight profile from general initial conditions, and we now formally define the pre-limiting version, $\weight_{n}^f[(*,0)\to (y,1)]$, whose limit will be $\weight^f_{\infty}[(*,0)\to (y,1)]$.

Given a general initial condition $f$, the setting should be understood as taking the highest energy path ending at $(y,1)$, where the initial point is allowed to be $(x,0)$ for any $x\in \R$, but with a reward $f(x)$ associated to each $x$ which is added to the weight of any path starting at $(x,0)$. We define spaces of admissible reward functions and the general initial condition weight profile associated to a reward function from one of these spaces in the next two definitions.

\begin{definition}\label{d.if}
Writing  $\bar\Psi = \big(\Psi_1, \Psi_2, \Psi_3 \big) \in (0,\infty)^3$ for a triple of positive reals, we let $\mc I_{\bar \Psi}$
denote the set of measurable functions $f:\R \to \R \cup \{ - \infty \}$ such that
$f(x) \leq \Psi_1 \big( 1 + |x|\big)$
and $\sup_{x \in [-\Psi_2,\Psi_2]} f(x) > - \Psi_3$.
\end{definition}

\begin{definition}
For $f$ lying in one of the function spaces $\mc I_{\bar\Psi}$, we now define the \emph{$f$-rewarded} line-to-point polymer weight  $\weight^f_{n}[(*,0)\to (y,1)]$ according to  
$$
 \weight^f_{n}[(*,0)\to (y,1)] \,  = \,  \sup \, \Big\{ \,  \weight_{n}[(x,0)\to (y,1)]    + f(x) \ : \  x \in (-\infty,2^{-1}n^{1/3} + y] \, \Big\}  \, .
$$
\end{definition}

Unlike in the narrow-wedge case, for general initial conditions we do not define a corresponding ensemble of non-intersecting curves, as this ensemble does not enjoy the Brownian Gibbs property. However, as previously discussed in Section~\ref{s.intro general init condition Brownian regularity}, we may still consider a form of Brownian regularity possessed by the $f$-rewarded weight profile using the Brownian motion regularity of the narrow-wedge weight profile, which we discuss more fully in Chapter~\ref{ch.patchwork quilt}.


\section{Main result}\label{s.main results}

For $k\in\N$, let $D_k$ be a sequence of constants depending only on $k$, given by
\begin{equation}\label{e.D_k value}
D_k = \max \left\{k^{1 / 3} c_{k}^{-1 / 3}\left(2^{-9 / 2}-2^{-5}\right)^{-1 / 3}, 36\left(k^{2}-1\right), 2\right\}
\end{equation}
for $k\geq 2$, and set $D_1=D_2$; here $c_k$ is as given in \eqref{e.littlec}. This will be the value of $D_k$ for the rest of the article.

Our main result will concern an interval $[K-d, K+d]$ for $K\in\R$ and $d\geq 1$. For such $K$, define the linear function $\ell_{K,d}:\R\to\R$ by
$$\ell_{K,d}(x) = 2^{1/2}K(x- K +d).$$
Our main result is a generalization of Theorem~\ref{T.MAIN THEOREM FOR AIRY} that applies to the $k^\text{th}$ curve (for fixed $k\in\N$) of any regular ensemble with sufficiently many curves.

\begin{theorem}\label{t.airytail.ln}

Suppose that $\mc{L}_n$ is an $n$-curve $(c,C)$ regular ensemble for some $(c,C)\in (0,\infty)^2$. Let $\ipdval \geq 1$ denote a parameter. Let $K \in \R$ satisfy $[K-d,K+ d] \subset \rsc/2 \cdot [-n^{1/9},n^{1/9}]$,  and let $k \in \N$.

Suppose that $n \geq k  \vee  (c/3)^{-18} \vee  6^{36}$.
For any Borel measurable $A \subset \mc{C}_{0,*}\big([K-d,K+d]\big)$, write  $\epsilon = \mc{B}_{0,*}^{[K-d,K+d]}(A)$. Suppose that $\epsilon$ satisfies the $(k, d)$-dependent upper bound 
$\epsilon < e^{-1} \wedge (17)^{-1/k} C_k^{-1/k} \const^{-1} \wedge \exp(-(24)^{6}d^6/D_k^3)$;
 as well as the $n$-dependent lower bound
\begin{equation}\label{e.alowerbound}
\epsilon \geq \exp \Big\{   - \big( \rsc/2 \wedge 2^{1/2} \big) \const^{-1} n^{1/12} \Big\} \, .
\end{equation} 
Then there exists $G<\infty$ such that
\begin{align}
 \P \,  \Big(\L_n\big(k, \cdot\big) - \L_n\big(k, K-d\big) +\ell_{K,d}(\,\cdot\,)\in A \, \Big)
 & \leq  \epsilon \cdot G\cdot\exp \Big\{4932 \cdot d\cdot D_k^{5/2}  \big( \log \epsilon^{-1} \big)^{5/6} \Big\}   \, . \nonumber
\end{align}
%
%
Specifically, this probability is $\epsilon  \cdot \exp \big\{ (\log \epsilon^{-1})^{5/6} O_k(1) \big\}$, where $O_k(1)$ denotes a $k$-dependent term that is independent of $\epsilon$.

\end{theorem}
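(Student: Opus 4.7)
The plan is to combine the Brownian Gibbs property with the jump ensemble machinery of \cite{hammond2017brownian}, in the spirit of the proof of Theorem~\ref{t.old bb comparison}, but augmented by a more refined case analysis over endpoint values. First, I would apply the parabolic symmetry of Lemma~\ref{l.shifted is regular} to translate the interval $[K-d,K+d]$ so that we may essentially treat $K=0$. The linear correction $\ell_{K,d}(\cdot)$ arises naturally in this step from the tangent-line relation \eqref{e.parabola line relation}, since shifting by $y_n = K$ replaces the parabolic centering by $Q(x)=2^{-1/2}x^2$ with the tilted centering $Q(x+K)-l(x+K,K)$, and subtracting the value at the left endpoint converts the tangent line into the increment $\ell_{K,d}$.

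Next, I would invoke the Brownian Gibbs property for the top $k$ curves on $[K-d,K+d]$: conditionally on $\bar{\L}_n(K-d)$, $\bar{\L}_n(K+d)$, and the $(k+1)$-st curve $\L_n(k+1,\cdot)$ on this interval, the law of $\L_n(k,\cdot)$ is (the $k$-th marginal of) a system of $k$ independent Brownian bridges conditioned on mutual non-intersection and on staying above $\L_n(k+1,\cdot)$. The jump ensemble construction of \cite{hammond2017brownian} replaces this intractable conditioning by a tractable one in which the lower curves are replaced by a deterministic discrete set of ``pole'' constraints built from the underlying ensemble, introducing only a manageable Radon-Nikodym cost. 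After this step, the problem reduces to controlling, for a Brownian-bridge-like object with endpoints $\L_n(k, K\pm d)$ and subject to pole avoidance, the probability that the rescaled increment $\L_n(k,\cdot)-\L_n(k,K-d)+\ell_{K,d}(\cdot)$ lies in $A$.

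The main obstacle, and the point at which the present proof departs substantially from that of Theorem~\ref{t.old bb comparison}, is that this increment is \emph{not} independent of the endpoints, in contrast to the affinely shifted bridge $\L^{[-d,d]}$. Consequently one cannot factor out an event-agnostic non-intersection probability, and the endpoint-monotonicity simplification sketched in Figure~\ref{f.monotonicity simplification} is unavailable. I would therefore split on the joint law of $(\L_n(k,K-d),\L_n(k,K+d))$ into several regimes, following the regular-ensemble one-point tail bounds in Definition~\ref{d.regularsequence}: a regime of very low endpoints (handled essentially by Lemma 5.17 of \cite{hammond2017brownian}), a regime of typical endpoints (in which one can directly exploit the Brownian motion structure supplied by the jump ensemble), and moderate/high endpoint regimes that require new bounds on the endpoint density combined with pole-constrained bridge estimates. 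In each case, the target estimate takes the form $\P(A\cap\{\text{endpoint regime}\})\le \epsilon\cdot\exp\{O(d\cdot D_k^{5/2}(\log\epsilon^{-1})^{5/6})\}$, with the $d$-linear factor in the exponent arising from the Brownian comparison on an interval of length $2d$ and the $(\log\epsilon^{-1})^{5/6}$ factor reflecting the sub-polynomial loss already present in Theorem~\ref{t.old bb comparison}.

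Finally, summing over a bounded number of endpoint regimes and absorbing constants into $G$ yields the stated bound; the lower bound \eqref{e.alowerbound} on $\epsilon$ is needed to ensure that the endpoint density bounds are applicable across their full range, as these break down below the $\exp(-cn^{1/12})$ scale coming from the one-point upper tail of a regular ensemble. I expect the dominant difficulty to lie in the high-endpoint regime, since there the jump ensemble poles may fail to provide sufficient separation from the lower curve; this is where the finer control on endpoint densities beyond Lemma 5.17 of \cite{hammond2017brownian} must be developed, and it is here that the factor $D_k^{5/2}$ and the numerical constant $4932$ in the final bound will be produced.
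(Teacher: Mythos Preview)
Your outline is broadly correct and matches the paper's strategy: reduce to $K=0$ via Lemma~\ref{l.shifted is regular}, pass to the jump ensemble $J$ on a favourable $\F$-event, bound $\PF(\pass(J)=1)$ from below by Proposition~\ref{p.T_3(J) bound}, and then perform a case analysis on endpoint-type data to bound $\PF(J(k,\cdot)-J(k,-d)\in A)$. Two points of your plan diverge from what the paper actually does and are worth correcting. First, the case split is not on $(\L_n(k,K-d),\L_n(k,K+d))$ but on $(Y,Z)=(J(p-4d)-\tent(p-4d),\,J(p+4d)-\tent(p+4d))$, i.e.\ on deviations of the jump curve from the Tent map at points placed symmetrically about the (at most one) pole $p\in[-2d,2d]$; this choice is what makes the vault cost $V$ and slope cost $S$ take a clean form and is not a cosmetic detail. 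There is also a separate no-pole case, handled by conditioning on $(J(-d),J(d))$.

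Second, your identification of the dominant difficulty is off. The hardest regime is not ``high endpoints'' but the mixed case $yz<0$, $y+z<0$ (above the Tent on one side, below on the other). The reason is the opposite of what you suggest: here the pole \emph{helps} the jump curve attain the required values, so the density $f_J(y,z)$ is largest and one must still beat the full cost $V\cdot S\approx\exp(y^2/8d+z^2/8d)$. The paper handles this via a ``narrow jump over the pole'' lemma and a local-randomization argument to convert tail bounds into density bounds (Proposition~\ref{p.density one positive one negative}). The cases $y,z<0$ (your ``low endpoints'', via Lemma~5.17 of \cite{hammond2017brownian}) and $y+z>0$ (only the slope cost matters) are comparatively routine.
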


\begin{remark}\label{r.epsilon bounds}
The upper bound on $\epsilon$ is only a technical one and is of no real consequence. The rapid decay in $n$ of the lower bound \eqref{e.alowerbound} means that no difficulty is created in applications, since, roughly put, events whose probabilities have decay that is superpolynomial in $n$ are in practice irrelevant. In the case that $n=\infty$, such as for the parabolic Airy line ensemble, this lower bound becomes the vacuous $\epsilon>0$.
\end{remark}

\begin{remark}
The linear term $\ell_{K,d}$ introduced in the event in the general result is necessary. It arises from the parabolic curvature of regular ensembles, which cannot be ignored when the interval $[K-d,K+d]$ is far from the origin. In fact, we will prove the theorem for $K=0$, and then use the parabolic invariance introduced in Section~\ref{s.notation parabolic invariance} to get the general statement, as we have $\L_{n, K}^{\mathrm{shift}}(k, \cdot\,)-\L_{n, K}^{\mathrm{shift}}(k, -d) = \L_n\big(k, \cdot+K\big) - \L_n\big(k, K-d\big) +\ell_{K,d}(\,\cdot\,+K)$ on $[-d,d]$.
\end{remark}





\chapter{Proof framework}\label{ch.proof framework}

In this chapter we introduce the two frameworks required for our proof: the first is the \emph{jump ensemble}, a general technique introduced in \cite{hammond2017brownian} which allows one to analyse regular Brownian Gibbs line ensembles using a more explicitly Brownian proxy; while the second is specific to our proof of Theorem~\ref{t.airytail.ln} and is a conceptual framework of \emph{costs}. We will also reduce the proof of Theorem~\ref{t.airytail.ln} to a statement, Theorem~\ref{t.prob estimate for J}, about the jump ensemble, and Chapter~\ref{ch.the proof} will be devoted to providing a major part of the proof of this statement using the introduced framework of costs.

\section{The jump ensemble}\label{s.jump ensemble}

We start with a working description of the technical framework in which our proof approach operates, known as the jump ensemble. The jump ensemble should be thought of as a sort of ``half-way house'' between Brownian motion and the line ensemble $\L_n$ that we wish to study. Roughly speaking, what we mean by this is that the jump ensemble conditioned on a certain manageable event has the same distribution as $\L_n$; but since the jump ensemble can be described in terms of Brownian objects, we can estimate probabilities involving the jump ensemble using knowledge about Brownian motion. 

The construction we describe is the same as that in \cite[Chapter 4]{hammond2017brownian}. The reader is referred to that article for a fuller discussion; here we restrict ourselves to providing a complete, though perhaps sometimes not fully motivated, description of the jump ensemble that allows the reader to understand the proofs of the paper. The notation used in this section is largely the same as in \cite{hammond2017brownian} for the convenience of the reader. We stress that some of the proofs underlying the correctness and usefulness of the jump ensemble as given in \cite{hammond2017brownian} are technically involved, and so we choose to not reproduce them here, instead focusing only on illustrating the ideas and statements of the jump ensemble. 

We use only three statements from \cite{hammond2017brownian}, reproduced here as Lemma~\ref{l.corner}, Proposition~\ref{p.T_3(J) bound}, and Lemma~\ref{l.fav complement prob}. We call these three statements \emph{the side interval test}; the \emph{jump ensemble candidate proficiency}; and the \emph{high probability of the favourable event}. The reason for the use of these names will become clearer over the next few subsections.

\subsection{Motivation and main themes}\label{s.jump ensemble heuristic}
Before turning to the details of the jump ensemble, let us bring to focus some of the main themes. Recall that we aim to study the $k$\textsuperscript{th} curve of $\L_n$. To do this, we initially consider the top $k$ curves together. The basic tool we have at our disposal in studying regular line ensembles is the Brownian Gibbs property. To recall it, let $\FBB$ be the $\sigma$-algebra generated by the following collection of random variables (where the $\mathrm{BB}$ subscript stands for ``Brownian bridge''):
\begin{itemize}
	\item all the lower curves $\L_n:\llbracket k+1,n \rrbracket\times [-z_n,\infty)\to \R$;
	\item and the top $k$ curves $\L_n: \intint{k}\times ([-z_n, \ell]\cup [r, \infty))\to \R$ outside $(\ell, r)$.
\end{itemize}
(Though this $\sigma$-algebra's definition clearly depends on $k$, we suppress this dependence in the notation $\FBB$.)

The statement of the Brownian Gibbs property is then that, conditionally on $\FBB$, the top $k$ curves of $\L_n$ on $[\ell,r]$ have the same distribution as a collection of $k$ independent Brownian bridges, the $i^\text{th}$ from $\L_n(i,\ell)$ to $\L_n(i,r)$, with the curves in the collection conditioned on intersecting neither $\L_n(k+1,\cdot\,)$ nor each other on all of $[\ell, r]$.

\subsubsection{Candidate ensembles}
We interpret this description as a \emph{resampling property}, which is to say that, given the data in $\FBB$, the top $k$ curves of $\L_n$ on $[\ell, r]$ are obtained by rejection sampling collections of $k$ independent Brownian bridges with the given endpoints until they fully avoid $\L_n(k+1,\cdot\,)$ and each other on $[\ell, r]$. We call the curves' avoidance of each other on $[\ell,r]$ \emph{internal} non-intersection. 

This resampling interpretation suggests a slightly different viewpoint on the Brownian Gibbs property. Let us call the collection of $k$ independent Brownian bridges with the given endpoints a \emph{candidate} ensemble; it has forgotten all information about the lower curve, as its definition only involves $\L_n$ on $\intint{k}\times\{\ell ,r\}$. Our desire is for the candidate ensemble to gain the correct $\FBB$-conditional distribution of the top $k$ curves of $\L_n$ on $[\ell, r]$. In order for this to happen, the candidate ensemble must \emph{reconstruct} the effect of the forgotten data as well as satisfy the other constraints that $\L_n$ does. 

The basic relation between the $(k+1)^{\text{st}}$ curve of $\L_n$ and the top $k$-curves of $\L_n$ is that the top $k$ curves must not intersect the $(k+1)^{\text{st}}$; beyond this, the additional constraint that the top $k$ curves of $\L_n$ satisfy which the Brownian bridge candidate does not necessarily is of internal non-intersection. The mentioned reconstruction is done by passing a \emph{test of non-intersection}, both with the lower curve and internally. The reinterpretation of the Brownian Gibbs property is that the candidate ensemble, on passing the non-intersection test, gains the target line ensemble's $\FBB$-conditional distribution. In terms of rejection sampling, the rejection sampling probability is exactly the probability of the candidate ensemble passing this test. 

Here, the role of $\FBB$ is to specify the data which the candidate ensemble, on passing the non-intersection test, must conform to. In particular, the distribution attained by the candidate ensemble on passing the non-intersection test is the $\FBB$-conditional distribution of $\L_n$. The data in $\FBB$ should be thought of as the data conditioned on, and so available to the candidate ensemble, some of which it then forgets. The data not in $\FBB$ is, of course, not available to the candidate ensemble at all.

This idea that the candidate ensemble forgets some amount of data available to it is an important one, and one which we will develop further over the next few pages. In particular, we will consider the effects of retaining and forgetting different quantities of data; as we shall see shortly, the example here, of forgetting the entire bottom curve, is too extreme and will not be useful for our purposes.




So the broad theme may be described as follows in a two-step process. First, we condition on a certain selection of data, here represented by $\FBB$; and  second, we consider candidate ensembles which retain some subset of this data and forget the rest. The candidate ensemble recovers the correct conditional distribution, specified by the data first conditioned on, in spite of the forgotten data, by resampling till the appropriate constraint is met, which is that of non-intersection. 

\begin{remark}
The language of ``retaining data'' we are using in this discussion is slightly at odds with the usage in \cite{hammond2017brownian}. There, retained data refers to the data contained in a $\sigma$-algebra such as  $\FBB$, with respect to which the conditional distribution of $\L_n$ is considered. Here, by retained data we mean the data contained in this $\sigma$-algebra which is \emph{further} retained by the candidate ensemble, in the sense that the further retained data is involved in the specification of the candidate ensemble. Thus, by retaining different quantities of data in this sense, we can generate various candidate ensembles which, on passing the respective non-intersection tests, will each have the distribution of $\L_n$ conditionally on the same $\sigma$-algebra.
\end{remark}

\subsubsection{Features of a useful candidate}
So we see that we must consider other candidate ensembles, and the jump ensemble will be one such. What are the features of a useful candidate?

The final aim is to estimate probabilities for the $k^{\text{th}}$ curve of the line ensemble $\L_n$. So the features we need of a candidate ensemble to reach this aim is that we must be able to 
\begin{enumerate}[label=(\roman*)]\label{list.aims of candidate}
	\item estimate probabilities of interest for the candidate ensemble; and
	\item translate them to estimates on probabilities for the line ensemble. 
\end{enumerate}

To successfully estimate probabilities for the candidate ensemble, it must be amenable to the tools at our disposal, which in practice means it must be sufficiently Brownian (this is also imposed by our intention to use the Brownian Gibbs property); while to successfully translate estimates to the line ensemble, it should be intuitively clear that we need the probability of passing the non-intersection test to be not too low.

\subsubsection{The high jump difficulty}\label{s.jump ensemble.high jump}
So let us consider how the Brownian bridge candidate fares in meeting the aims (i) and (ii). Since the candidate is an ensemble of independent Brownian bridges, the point (i) from the previous paragraph is clearly easily met. But on (ii) unfortunately, because of the weak control that we have over $\L_n(k+1,\cdot\,)$ and the intricacies of that random function, it is difficult to obtain sufficiently strong control on the probability of passing the non-intersection test with the lower curve. (Roughly speaking, the Brownian bridge candidate was the one used in \cite{corwin2014brownian} to analyse the absolute continuity of Brownian Gibbs ensembles with respect to Brownian motion, and a large part of that paper was spent obtaining control over exactly this non-intersection test passing probability.)

In these terms, we do not have good control over the test passing probability of pure Brownian bridge, and so this candidate is not directly useful. This points to the need to look for better-suited candidate processes. To understand how a better candidate process should be designed, let us consider what made the Brownian bridge candidate have a low test passing probability.

\begin{figure}[h]
\centering{\epsfig{file=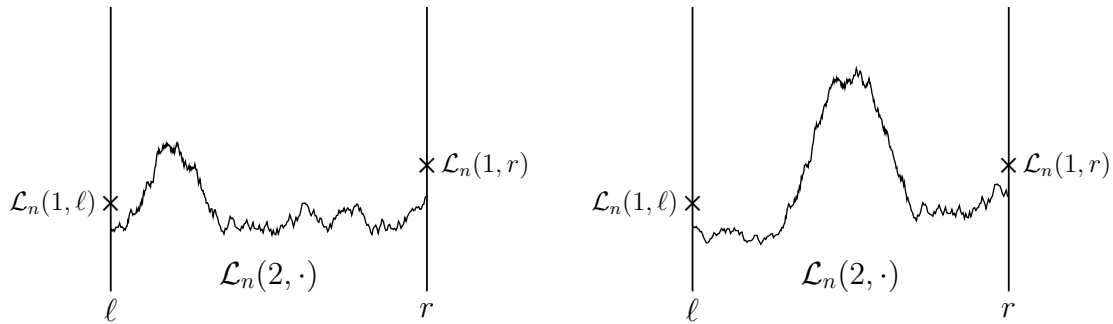,width=0.9\textwidth}}
\caption{Two illustrations in the $k=1$ case of instances of data from the lower curve which are difficult for the Brownian bridge candidate to handle. The black crosses indicate the values of $\L_n(1,\cdot\,)$ at the endpoints, which are the points between which the Brownian bridge candidate must move. In the left panel there is a moderate sized peak very close to the left side of $[\ell, r]$, which causes difficulty because of the immediacy of the jump required by the candidate. In the right panel there is a large peak which causes difficulty because of its height.}\label{f.high jump}
\end{figure}

 In essence, the Brownian bridge candidate ensemble forgot too much of the data in $\FBB$, and is thus too far in nature from $\L_n$, to have a high probability of lower curve non-intersection. In particular, it forgot \emph{all} data of the profile of $\L_n(k+1,\cdot\,)$ that it might have used to increase its probability of avoidance. We shall consider two instances of the lower curve data which is difficult for the Brownian bridge to avoid in order to illustrate two features that our replacement candidate will need. These instances are depicted in Figure~\ref{f.high jump}.

 For the first instance, suppose that the lower curve has a peak inside $[\ell, r]$ which is close to one side of the interval, say $\ell$, as illustrated in the first panel of Figure~\ref{f.high jump}. Then the Brownian bridge candidate, to succeed in the non-intersection test, must execute a jump immediately. The difficulty is that since the space to make the jump is limited, a more extreme jump is needed even if the peak is not very large. The low probability of Brownian bridges making such a jump in turn makes the non-intersection test passing probability of the Brownian bridge candidate ensemble low. This discussion suggests the first feature that will aid a successful candidate: we can provide it extra space to make a \emph{run-up} before any required jump.

Turning to the second panel of Figure~\ref{f.high jump}, the second instance of difficult lower curve data is when it exhibits an extremely large peak somewhere inside $[\ell ,r]$ (which is not necessarily close to either side). It may seem that giving space for a run-up would address this difficulty as well, as a Brownian bridge is clearly more likely to make a bigger jump over a larger interval. However, giving a run-up is in fact not sufficient to handle this sort of data while maintaining a not-too-low non-intersection probability; quantitative reasoning for this conclusion is explored more fully in the beginning of \cite[Chapter 4]{hammond2017brownian} and also briefly, in the context of the jump ensemble, in Remark~\ref{r.run-up not sufficient} ahead. (The discussion in \cite{hammond2017brownian} concerns a setup incorporating a run-up which we will introduce shortly in Subsection~\ref{s.jump ensemble.making space}.) 

Heuristically, the reason for the difficulty of this data is that the Brownian bridge, having forgotten the entirety of the lower curve, does not know when the jump is required. And so, as alluded to earlier, the second feature of assistance a successful candidate should make use of is to retain more information about the lower curve. More formally, by ``using retaining data", we mean that the candidate ensemble will be conditioned to avoid intersection with a curve formed from the retained data (apart from retaining data to specify the values of the candidate ensemble at the endpoints). This will become clearer as our discussion progresses.

(One might wonder about the likelihood of encountering the sort of lower curve data we have been discussing, and whether we cannot exclude such difficult data from $\FBB$ in the analysis. In our final argument we will indeed restrict ourselves to data in a $\sigma$-algebra analogous to $\FBB$ which is favourable and extremely likely. However, even under such a restriction to more favourable data, our control on the lower curve is not strong enough to exclude data such as what has been discussed.)

Next we move to discussing in more detail the two changes we have mentioned: retaining more data and giving a run-up.

\subsubsection{A coarsened lower curve profile} 
We first discuss the second feature we mentioned, namely retaining a selection of data from the lower curve profile. It should be clear that we should not retain all the data, as this would result in the candidate essentially being the same as the top $k$ curves of $\L_n$ itself; it is difficult to estimate the probabilities of such an ensemble. So we must make a careful selection which balances between retaining no information, as in the Brownian bridge ensemble, and retaining full information, as in the pure line ensemble; further, the retained information must provide a rough view of the overall geometry of the lower curve.

In fact, we will have the candidate ensemble retain a \emph{coarsened} version of the lower curve. More precisely, the candidate will be conditioned on avoiding this coarsened version. This coarsened non-avoidance can be thought of as a preliminary test to the full non-intersection test; the candidate, on passing the preliminary test, will naturally have a more suitable overall geometry to pass the final test, and thus will have a higher probability of doing so. The exact form of this coarsening, which we will describe in Section~\ref{s.jump ensemble.coarsening}, is at the heart of the jump ensemble method.



\subsubsection{Making space} \label{s.jump ensemble.making space}
Now let us turn to see how we can provide the first kind of assistance, namely to provide the candidate ensemble with space to make a run-up to more successfully jump over the lower curve. The only way to make space is to step back from the interval $[\ell, r]$. In fact, we will work in an interval $[-2T,2T]$ which contains $[\ell, r]$, with the parameter $T$'s value to be assigned later. Let us label as \emph{side intervals} the intervals $[-2T, \ell]$ and $[r, 2T]$, and as the \emph{middle interval} the interval $[\ell, r]$. 

Working in $[-2T,2T]$ means that the values of the candidate are not pre-determined at $\ell$ and $r$, as in Figure~\ref{f.high jump}, but at $\pm2T$. Of course, simply working on a bigger interval does not gain us anything immediately, since, in our current setup of conditioning on $\FBB$, the non-intersection must now be done on the larger interval.

To deal with this, we change the setup by changing the data we condition on. Instead of conditioning on $\FBB$, we consider the $\sigma$-algebra $\F$ generated by the following collection of random variables:
\begin{itemize}\label{i.F defitinion}
\item all the lower curves $\L_n : \llbracket k+1, n\rrbracket \times [-z_n,\infty)\to \R$;

\item the top $k$ curves $\L_n: \intint{k}\times\left([-z_n, -2T]\cup [2T,\infty)\right)\to \R$ outside $(-2T,2T)$;

\item and the $2k$ standard bridges $\L_n^{[-2T,\ell]}(i,\cdot\,)$ and $\L_n^{[r, 2T]}(i,\cdot\,)$ for $i=1, \ldots, k$.
\end{itemize}

(Recall here the notation $f^{[a,b]}$ introduced in Section~\ref{s.notation general} for the affinely shifted bridge version of a function $f$, though mildly abused here to refer to the bridge version of the $i^\text{th}$ curve of the ensemble and not the $i^\text{th}$ curve of an undefined bridge version of the ensemble.) We again suppress the $k$ dependence of the $\sigma$-algebra in the notation $\F$.

In words, we retain data of the entirety of all the lower curves; the top $k$ curves outside $(-2T,2T)$; and, on $[-2T,\ell]$ and $[r,2T]$, the standard bridge paths of the top $k$ curves on these intervals, which we will call the \emph{side bridges}. Nothing is retained on $[\ell, r]$, and, in particular, the values of the candidate ensemble at $\ell$ and $r$ are not determined. 

\begin{remark}\label{r.bridges not complicated}
The side bridges may appear to be complicated objects to condition upon; in fact, they are easy to handle because of the Brownian Gibbs property and an independence property possessed by the corresponding side bridge decomposition of Brownian bridges. See Lemma~\ref{l.bb independent decomposition property} ahead.
\end{remark}

The $\sigma$-algebra $\F$ and the selection of data included in it is of great importance for the jump ensemble method, and will be used throughout the arguments of Theorem~\ref{t.airytail.ln}. As such, the conditional law $\P(\,\cdot\mid \F)$ will be used extensively, and so we use the notation 
$$\PF(\,\cdot\,) := \P(\,\cdot \mid \F)$$
to denote it. In this notation, our aim is to understand the law of the top $k$ curves of $\L_n$ on $[-2T,2T]$ under~$\PF$.

Why does conditioning on $\F$ help? Our reasoning was that lower curve avoidance on $[\ell, r]$ without a run-up is difficult, and so we need to give a run-up. This was done by expanding the interval to $[-2T,2T]$. However, we then need to enforce lower curve avoidance on all of $[-2T,2T]$, which is more difficult. But by including the side bridges of the top $k$ curves of $\L_n$, we can use that data to help make the non-intersection easier on the side intervals. This is because the geometries of the top $k$ side bridges of $\L_n$ are \emph{already} well suited for lower curve avoidance with $\L_n(k+1,\cdot\,)$, and the candidate ensemble can piggyback on this success. Thus, we get almost the best of both worlds: the lower curve avoidance is made easier in the middle interval of $[\ell, r]$ due to the space for a run-up, while at the same time the lower curve avoidance on the side intervals is manageable using the data of the top $k$ side bridges of $\L_n$.


How do we make use of this data? We will combine the candidate ensemble on $[\ell, r]$ with the data from $\F$ to create a new ensemble on $[-2T,2T]$. The form of this combination is dictated by the Brownian Gibbs property and the linear operation involved in the definition of $f^{[a,b]}$. 

Let the candidate ensemble be denoted by $X:\intint{k}\times[\ell, r]\to \R$; the new ensemble created using $X$ and data from $\F$ will be called the \emph{resampled} ensemble $\L^{\mathrm{re},X} : \intint{k}\times[-2T,2T]\to\R$. Intuitively, the values of the candidate ensemble at $\ell$ and $r$ are used to affinely shift the side bridges; the affinely shifted bridges define the resampled ensemble on $[-2T,\ell]\cup[r, 2T]$, while the candidate ensemble determines the resampled ensemble on $[\ell, r]$. This is illustrated in Figure~\ref{f.mcm-reconstruction}, and the formal definition of $\L^{\mathrm{re}, X}$ is given by the following, for $i=1, \ldots, k$:
\begin{equation}\label{e.reconstruction definition}
\L^{\mathrm{re},X}(i,x) = \begin{cases}
\L_n^{[-2T,\ell]}(i,x)+ \frac{x+2T}{\ell + 2T}\cdot X(i,\ell) + \frac{\ell-x}{\ell + 2T}\cdot \L_n(i,-2T)  & x\in[-2T, \ell]\\
\L_n^{[r, 2T]}(i,x)+ \frac{2T-x}{2T- r}\cdot X(i, r) + \frac{x- r}{2T - r}\cdot \L_n(i,2T)  & x\in[r, 2T]\\
X(i,x) & x\in [\ell, r].
\end{cases}
\end{equation}

\begin{figure}[h]
\centering {\epsfig{file=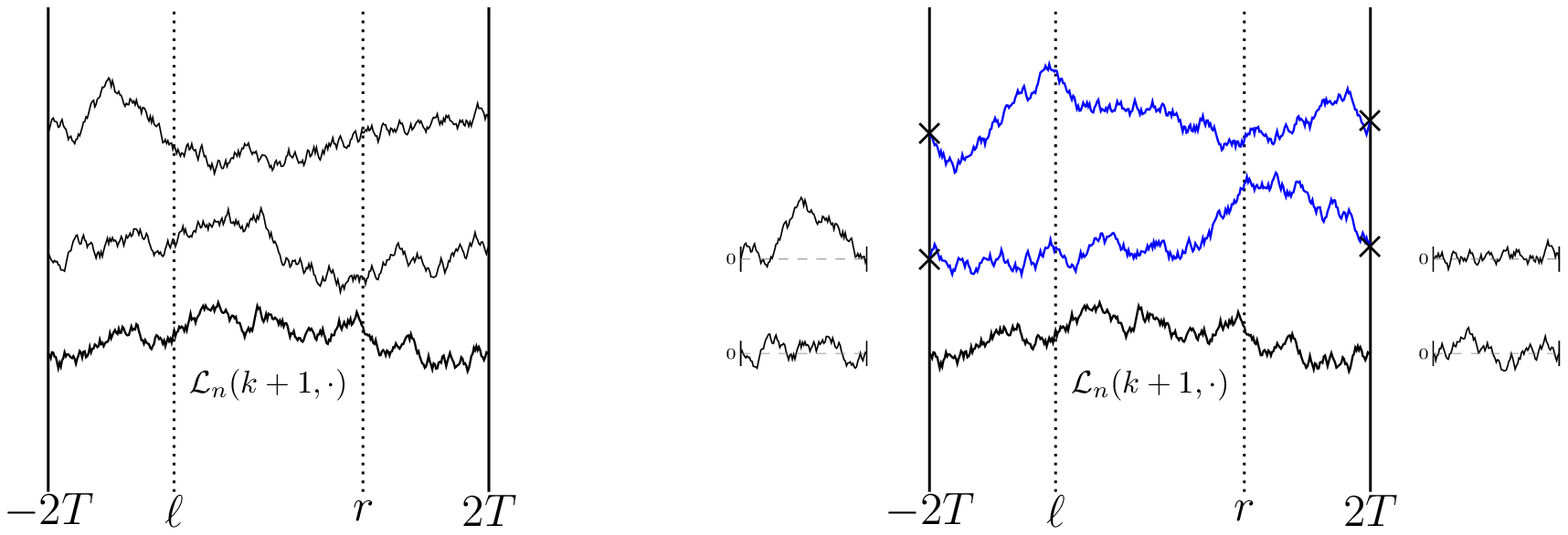, width=0.95\textwidth}\vspace*{1cm}
\epsfig{file=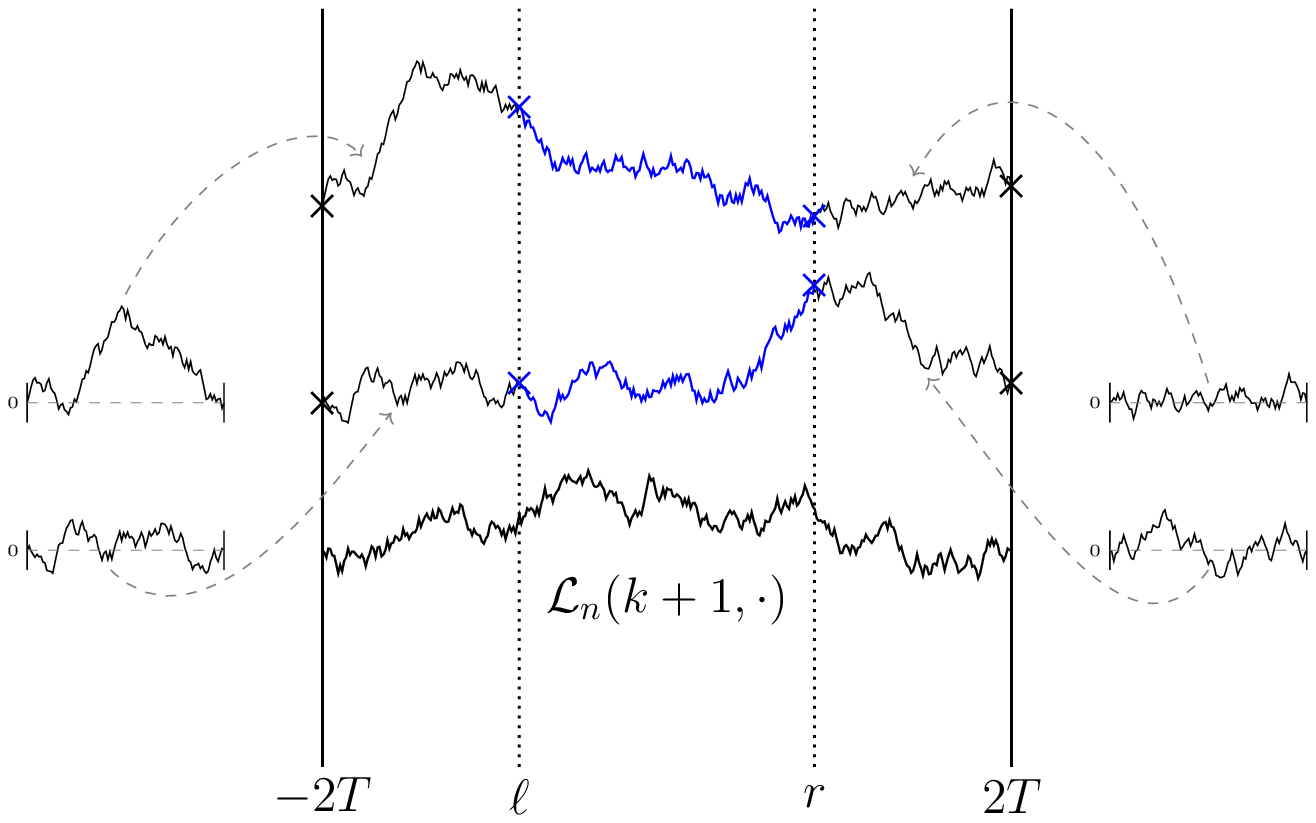, width = 0.75\textwidth}}
\caption{
Constructing $\L^{\mathrm{re}, X}$ from the candidate process $X$ (top two curves in blue on $[-2T,2T]$ in the second figure, and on $[\ell,r]$ in the third) when $k=2$. In the first figure we have the original line ensemble $\mathcal L_n$. In the second figure, the black elements are the data available in $\mathcal F$, namely the entirety of the $(k+1)^\text{st}$ curve, the positions at $\pm 2T$ of the first $k$ curves (denoted by crosses), and the bridges obtained from the side intervals by affine shift. The blue curves comprise the candidate process $X$ (though technically $X$ is restricted to $[\ell, r]$). In the final figure we complete the reconstruction by pasting the side interval bridges according to the positions dictated by $X$ (blue crosses at $\ell$ and $r$) on $[\ell, r]$. Note that in this figure, $\L^{\mathrm{re},X}$ passes both the side interval tests and the middle interval test.}
\label{f.mcm-reconstruction}
\end{figure}

(Implicit in the above discussion is the promise that the resampled ensemble $\L^{\mathrm{re}, X}$ will be able to pass the non-intersection test on the side intervals sufficiently well due to the use of data in $\F$. We discuss and make good on this promise in Section~\ref{s.jump ensemble side interval}.)

As with the earlier discussion of the Brownian bridge ensemble and $\FBB$, the Brownian Gibbs property says that for certain candidate ensembles $X$, the distribution of $\L^{\mathrm{re},X}$, conditioned on passing the non-intersection tests on $[-2T,2T]$, will be the $\F$-conditional law of $\L_n$ on $\intint{k}\times[-2T,2T]$. The candidate ensembles $X$ for which this is true are Brownian bridge ensembles conditioned on avoiding the lower curve on some subset of $[\ell, r]$; the jump ensemble, which will be conditioned on avoiding a coarsened version of the lower curve, will fit this description. Section~\ref{s.jump ensemble.brownian gibbs} is devoted to setting up a precise version of this statement, which is recorded in Lemma~\ref{l.jump ensemble brownian gibbs}.

In summary, we are looking to define a candidate process which has estimable probabilities by virtue of being in some sense Brownian, and which has a not-too-low probability of passing the non-intersection test. To accomplish this, we saw in this subsection and the previous that the candidate process will make use of a coarsened version of the lower curve profile in its definition; and will use extra space for a run-up, for which we work with a more sophisticated selection of data captured by $\F$. This data will be combined with the candidate ensemble to give the resampled ensemble. 

In the next subsection we expand on the idea that including the data of the side bridges in $\F$ makes it easy for the candidate ensemble to pass the non-intersection test on the side intervals.

\subsection{The side intervals test}\label{s.jump ensemble side interval}
We formulate the non-intersection test on the side intervals as \emph{the side intervals test}. The side intervals test has two parts: that $\L^{\mathrm{re},X}(k, \cdot\,)$ does not intersect $\L_n(k+1,\cdot\,)$; and that $\L^{\mathrm{re},X}(i,\cdot\,)$ does not intersect $\L^{\mathrm{re}, X}(i+1,\cdot\,)$ for $i=1, \ldots, k-1$---both of these on $[-2T,\ell] \cup [r, 2T]$. A look at the first two cases in the definition \eqref{e.reconstruction definition} of $\L^{\mathrm{re}, X}$ suggests that whether this test is passed is simply a question of whether $X(i, \ell)$ and $X(i, r)$ are high enough in value, as the remaining quantities are $\F$-measurable and thus not affected by the candidate $X$. This intuition is roughly correct, and a precise version is the content of the next lemma, which we refer to as \emph{the side intervals test criterion}.

\begin{lemma}[Side intervals test criterion, Lemma 3.8 of \cite{hammond2017brownian}]\label{l.corner}
There exist $\F$-measurable random vectors $\gbarxminstd, \gbaryminstd \in \R^k_\geq$ such that $\L^{\mathrm{re}, X}$ passes the side intervals tests if and only if $\bar X(x) - \gbargenmin \in (0,\infty)^k_{>}$ for $x=\ell, r$. 
\end{lemma}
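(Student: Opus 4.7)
The plan is to convert each pointwise non-intersection constraint comprising the side intervals test into an affine inequality in the components of $\bar X(\ell)$ or $\bar X(r)$, and then to package these inequalities into a single vector lower bound. The key observation is that on each side interval the reconstruction formula \eqref{e.reconstruction definition} makes $\L^{\mathrm{re},X}(i, x)$ an affine function of $X(i, \ell)$ (respectively $X(i, r)$), with every other ingredient---the bridge $\L_n^{[-2T, \ell]}(i, \cdot)$, the boundary value $\L_n(i, -2T)$, and the neighbouring curve $\L_n(k+1, \cdot)$---being $\F$-measurable.

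Concentrating on the side interval $[-2T, \ell]$, the lower-curve test $\L^{\mathrm{re},X}(k, x) > \L_n(k+1, x)$ at any $x \in (-2T, \ell]$, once the positive factor $(x+2T)/(\ell+2T)$ is cleared, rearranges to $X(k, \ell) > A_k(x)$ for an $\F$-measurable continuous function $A_k$ of $x$. The strict ordering $\L_n(k+1, -2T) < \L_n(k, -2T)$ forces $A_k(x) \to -\infty$ as $x \to -2T^+$, while at $x = \ell$ the bridge vanishes so $A_k(\ell) = \L_n(k+1, \ell)$ is finite; consequently $A_k^{\max} := \sup_x A_k(x)$ is a finite, $\F$-measurable random variable. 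By the same device, for each $i \in \intint{k-1}$, the internal ordering inequality $\L^{\mathrm{re},X}(i, x) > \L^{\mathrm{re},X}(i+1, x)$ rearranges to $X(i, \ell) - X(i+1, \ell) > D_i(x)$ for a continuous $\F$-measurable function $D_i$ that vanishes at $x = \ell$ (both bridges vanish and the $\L_n(\cdot, -2T)$ terms drop out) and tends to $-\infty$ at $-2T^+$, so $D_i^{\max} := \sup_x D_i(x)$ is finite, $\F$-measurable, and non-negative.

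I would then assemble $\gbarxminstd$ by setting $\gbarxminstd_k := A_k^{\max}$ and, recursively downward, $\gbarxminstd_i := \gbarxminstd_{i+1} + D_i^{\max}$ for $i < k$; since each $D_i^{\max} \geq 0$, this vector is $\F$-measurable and non-increasing, hence lies in $\R^k_\geq$. With this choice the conjunction of all pointwise side-interval constraints on $[-2T, \ell]$ reduces to $X(k, \ell) > \gbarxminstd_k$ together with $X(i, \ell) - X(i+1, \ell) > \gbarxminstd_i - \gbarxminstd_{i+1}$ for each $i < k$, which unfolds precisely to $\bar X(\ell) - \gbarxminstd \in (0, \infty)^k_>$. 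The identical scheme applied to $[r, 2T]$ yields $\gbaryminstd$ with the analogous characterisation at $r$, closing the equivalence. The only genuinely technical step is confirming the finiteness and $\F$-measurability of the suprema $A_k^{\max}$ and $D_i^{\max}$; I expect this to be routine, with finiteness following from joint continuity of the ingredients of \eqref{e.reconstruction definition} on the compact interval together with the $-\infty$ blow-up at the far endpoint, and $\F$-measurability following by restricting the supremum to a countable dense subset of $(-2T, \ell]$.
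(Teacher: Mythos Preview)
Your proposal is correct and is essentially a formalization of the argument the paper sketches via Figure~\ref{f.corner} (the paper itself defers the full proof to \cite{hammond2017brownian}). Your recursive definition $\gxminstd_i = \gxminstd_{i+1} + D_i^{\max}$ is exactly the algebraic counterpart of the figure's geometric procedure of affinely translating curve $i$ downward until it touches the already-translated curve $i+1$; the equivalence with $\bar X(\ell) - \gbarxminstd \in (0,\infty)^k_>$ that you obtain by telescoping is the intended conclusion.
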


\begin{figure}[h]
\centering {\epsfig{file=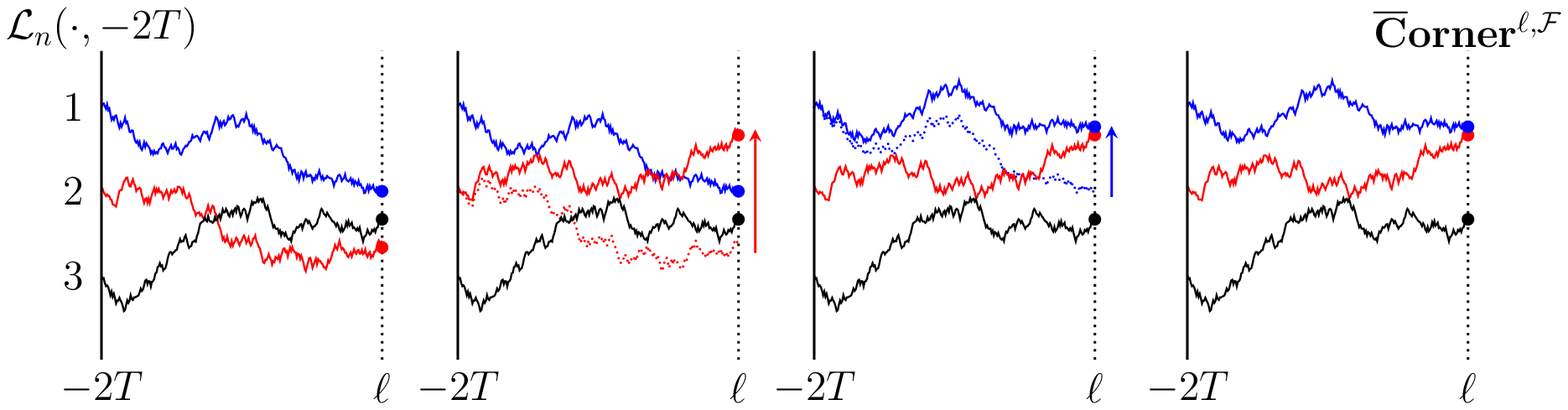, width=0.95\textwidth}}
\caption{Building $\gbarxminstd$ for $k=3$. $\gbarxminstd$ is a vector, the $(i-1)^{\text{st}}$ entry of which is the unique smallest value which $\L_n(i-1,\ell)$ can adopt via affine translation before crossing $\L_n(i,\cdot\,)$ on $[-2T, \ell]$. The line $\L(2,\cdot\,)$ (red) intersects $\L_n(3,\cdot\,)$ (black) on $[-2T, \ell]$ (first panel), so $\L_n(2,\cdot\,)$ (dotted red) is affinely translated until just touching, but not crossing $\L_n(3,\cdot\,)$ (second panel); the translated value of $\L_n(2, \ell)$ is $\gxminstd_2$. This process is then repeated for the line $\L_n(1,\cdot\,)$ (blue). Then $\L_n(1,\cdot\,)$ (dotted blue) intersects the translated curve $\L_n(2,\cdot\,)$ (red), so $\L_n(1,\cdot\,)$ is affinely translated until just touching, but not crossing $\L_n(2,\cdot\,)$ (third panel); the translated value of $\L_n(1,\ell)$ is $\gxminstd_1$. The result of these translations is shown in the fourth panel. 
This procedure depends on the collection of bridges on $[-2T,\ell]$, justifying the dependence of $\gbarxminstd$ on $\F$. }\label{f.corner}
\end{figure}

The proof is given in \cite{hammond2017brownian}, but we include here Figure \ref{f.corner} which captures the essential argument.

The conclusion we draw from Lemma~\ref{l.corner} is that analysing the passing of the side interval tests by the candidate $X$ is very simple in practice: we merely need to consider the event that  $\bar X(\ell)- \gbarxminstd \in (0,\infty)^k_>$ and $\bar X(r) - \gbaryminstd\in(0,\infty)^k_>$.

Next we give a precise description of candidate ensembles $X$ which are such that $\L^{\mathrm{re},X}$, conditioned on the non-intersection tests, has the $\F$-conditional distribution of the top $k$ curves of $\L_n$.

\subsection{Applying Brownian Gibbs to $\L^{\mathrm{re},X}$}\label{s.jump ensemble.brownian gibbs}

The resampled ensemble $\L^{\mathrm{re}, X}$ passing the non-intersection tests on $[-2T,2T]$ requires
\begin{itemize}
	\item $\L^{\mathrm{re},X}(i, x) > \L^{\mathrm{re},X}(i+1, x)$ for $i=1, \ldots, k-1$ and $x\in[-2T,2T]$; and
	\item $\L^{\mathrm{re},X}(k, x) > \L_n(k+1, x)$ for $x\in[-2T,2T]$.
\end{itemize}

We denote by $\pass(X)$ the indicator for the event described by these two bullet points. (In \cite{hammond2017brownian}, an analogous indicator obtained by restricting these two bullet points to $x\in [\ell ,r]$ is denoted $T_3(X)$, where 3 represents the test of non-intersection on the middle interval being the third in a sequence of tests.) 

Now we may describe a class of candidate ensembles which, conditioned on $\{\pass(X) = 1\}$, have the desired $\F$-conditional distribution.
Let $A\subseteq [\ell, r]$ be an $\F$-measurable random closed set. Define the candidate ensemble $X:\intint{k}\times[\ell, r]\to \R$ as a collection of $k$ independent Brownian bridges, the $i^\text{th}$ one from $(-2T,\L_n(i,-2T))$ to $(2T,\L_n(i,2T))$, conditioned on $\notouch^A_{\L_n(k+1,\cdot)}$. (Recall that $\notouch_f^A$ is the event that the bottom curve $X(k,x)$ is larger than $f(x)$ for all $x\in A$.) Also define $X':\intint{k}\times[\ell, r]\to \R$ in the same way, with the additional conditioning that $\bar X'(x) - \gbargenmin \in (0,\infty)^k_>$ for $x\in\{\ell ,r\}$; we introduce this variant candidate ensemble as it is the form that the jump ensemble will take.

Both $X$ and $X'$ have the desired $\F$-conditional distribution on passing the non-intersection tests. To prove this, we first need a fact about decompositions of Brownian bridges. This property of Brownian bridges also explains why there is no difficulty in conditioning on the potentially complicated objects, the side bridges of $\L_n$, as mentioned in Remark~\ref{r.bridges not complicated}. The proof of this fact is a straightforward checking of covariances and is omitted.

\begin{lemma}\label{l.bb independent decomposition property}
Let $T>0$ and $x_1, \ldots, x_m \in[-2T,2T]$ with $x_1 <  \ldots <x_m$, for some $m\in \N$. Let $x_0 = -2T$ and $x_{m+1} = 2T$. Let $B$ be a Brownian bridge (with arbitrary fixed starting and ending point values) on $[-2T,2T]$.  Then, conditionally on $\big(B(x_1), \ldots,  B(x_m)\big)$, the distribution of $\big(B^{[x_i, x_{i+1}]}\big)_{i=0}^m$ is that of $m+1$ independent Brownian bridges, with the $i^{\text{th}}$ one of duration $x_i-x_{i-1}$.
\end{lemma}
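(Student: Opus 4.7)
The plan is to reduce the claim to two well-known ingredients about Brownian bridges and then combine them. The first ingredient is the \emph{Markov property for a Brownian bridge}: if $B$ is any Brownian bridge on $[-2T,2T]$ (with arbitrary deterministic endpoint values) and $-2T = x_0 < x_1 < \cdots < x_m < x_{m+1} = 2T$, then conditionally on the vector $\bigl(B(x_1),\ldots,B(x_m)\bigr)$ the $m+1$ restrictions $B|_{[x_i,x_{i+1}]}$ are independent, and each $B|_{[x_i,x_{i+1}]}$ is a Brownian bridge (of diffusion parameter one) from $(x_i,B(x_i))$ to $(x_{i+1},B(x_{i+1}))$. This follows from the Gaussian Markov property applied to the process $B$. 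The second ingredient is the \emph{affine-decomposition of a Brownian bridge}: for any $a<b$ and any $\alpha,\beta\in\R$, a Brownian bridge $\beta_{a,b}^{\alpha,\beta}$ from $(a,\alpha)$ to $(b,\beta)$ can be written as the affine interpolation $x\mapsto\tfrac{x-a}{b-a}\beta+\tfrac{b-x}{b-a}\alpha$ plus an independent standard Brownian bridge on $[a,b]$ with vanishing endpoints; equivalently, its affinely-shifted version (subtracting the affine interpolation) has the law of a standard zero-endpoint Brownian bridge and is independent of the endpoint data $(\alpha,\beta)$.

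Applying the second ingredient to each segment given by the first, and using the definition of $f^{[a,b]}$ recalled in Section~\ref{s.notation general}, we obtain that $B^{[x_i,x_{i+1}]}$ is, for each $i\in\{0,1,\ldots,m\}$, equal to the affinely-shifted version of $B|_{[x_i,x_{i+1}]}$ and is therefore, conditionally on $\bigl(B(x_1),\ldots,B(x_m)\bigr)$ (and on the fixed endpoint values of $B$ at $\pm 2T$), a standard Brownian bridge on $[x_i,x_{i+1}]$ of duration $x_{i+1}-x_i$ and independent of those endpoint values. Combined with the conditional independence across the $m+1$ segments furnished by the first ingredient, this is exactly the conclusion of the lemma.

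\textbf{Remark on execution and main obstacle.} There is essentially no obstacle beyond bookkeeping. If one prefers the direct covariance route suggested by the authors, the calculation amounts to verifying that for $i\ne j$, indices $s\in[x_i,x_{i+1}]$ and $t\in[x_j,x_{j+1}]$, and any $\ell\in\{1,\ldots,m\}$, one has $\mathrm{Cov}\bigl(B^{[x_i,x_{i+1}]}(s),B^{[x_j,x_{j+1}]}(t)\bigr)=0$ and $\mathrm{Cov}\bigl(B^{[x_i,x_{i+1}]}(s),B(x_\ell)\bigr)=0$; since the joint law is Gaussian, these vanishing covariances upgrade to the asserted independence, and the within-segment covariance reduces to the standard bridge kernel $(s-x_i)(x_{i+1}-t)/(x_{i+1}-x_i)$ for $s\le t$. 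The only mild subtlety is that $B$ is itself a bridge rather than a Brownian motion, so one should either invoke the Markov property for a Brownian bridge at the outset, or carry out the covariance computation starting from the explicit bridge covariance $\mathrm{Cov}(B(s),B(t))=(s+2T)(2T-t)/(4T)-(\text{affine correction})$ for $s\le t$; either way the affine corrections cancel out by the very definition of $f^{[a,b]}$.
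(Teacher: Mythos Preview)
Your proposal is correct. The paper itself omits the proof entirely, stating only that it is ``a straightforward checking of covariances''; your remark on the direct covariance route is precisely that approach, and your alternative via the bridge Markov property plus affine decomposition is an equally valid (and arguably cleaner) way to the same conclusion.
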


\begin{lemma}\label{l.jump ensemble brownian gibbs}
For $X$ as defined above, conditionally on $\F$, the following two laws on $\mc C_{*,*}([-2T, 2T], \R)^k$ are equal:
$$\PF\bigl(\L^{\mathrm{re}, X} \in \cdot \,\mid \pass(X)=1\bigr) \quad \text{and} \quad \PF\bigl(\L_n \in \cdot\,\bigr).$$
The same holds with $X'$ in place of $X$.
\end{lemma}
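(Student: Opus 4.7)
The plan is to apply the Brownian Gibbs property at a coarser $\sigma$-algebra than $\F$ and then re-express the $\F$-conditional law by further conditioning on the side bridges, exploiting the independence given by the bridge decomposition in Lemma~\ref{l.bb independent decomposition property}.

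Let $\mathcal{G}$ denote the $\sigma$-algebra generated by all the curves $\L_n(j,\cdot\,)$ for $j>k$ together with the top $k$ curves on $[-z_n,-2T]\cup[2T,\infty)$; then $\mathcal{G}\subseteq\F$, and $\F$ is the enlargement of $\mathcal{G}$ by the side bridges $\L_n^{[-2T,\ell]}(i,\cdot\,)$ and $\L_n^{[r,2T]}(i,\cdot\,)$ for $i\in\intint{k}$. By the Brownian Gibbs property, the $\mathcal{G}$-conditional law of $\L_n|_{\intint{k}\times[-2T,2T]}$ is $\B^{[-2T,2T]}_{k;\bar\L_n(-2T),\bar\L_n(2T)}\bigl(\,\cdot\mid\notouch_{\L_n(k+1,\cdot)}\bigr)$, i.e., the law of $k$ independent rate-one Brownian bridges between the $\mathcal{G}$-measurable endpoint vectors, conditioned on full non-intersection on $[-2T,2T]$.

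The next step is to enlarge the conditioning from $\mathcal{G}$ to $\F$. Under the unconditioned bridge law $\B^{[-2T,2T]}_{k;\bar\L_n(-2T),\bar\L_n(2T)}$, applying Lemma~\ref{l.bb independent decomposition property} with $m=2$, $x_1=\ell$, $x_2=r$ (once for each of the $k$ independent bridges) shows that the $2k$ side bridges are mutually independent and are independent of the triple $\bigl(\bar\L_n(\ell),\bar\L_n(r),\L_n|_{\intint{k}\times[\ell,r]}\bigr)$. Consequently, once we impose the non-intersection condition and then further condition on the side bridges, the only effect on the joint law of that triple is through the pointwise decomposition of the non-intersection event into its three sub-events on $[-2T,\ell]$, on $[\ell,r]$, and on $[r,2T]$. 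With the side bridges frozen, Lemma~\ref{l.corner} rewrites the $[-2T,\ell]$ and $[r,2T]$ sub-events as the endpoint inequalities $\bar\L_n(\ell)-\gbarxminstd\in(0,\infty)^k_{>}$ and $\bar\L_n(r)-\gbaryminstd\in(0,\infty)^k_{>}$. Taken together, the $\F$-conditional law of $\bigl(\bar\L_n(\ell),\bar\L_n(r),\L_n|_{\intint{k}\times[\ell,r]}\bigr)$ is the law of the corresponding marginals of $k$ independent Brownian bridges between $\bar\L_n(\pm 2T)$, conditioned on the combined event of the two endpoint inequalities together with middle-interval non-intersection of $\L^{\mathrm{re},\cdot}$.

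The third step is to recognise this combined event as $\{\pass(X)=1\}$. By construction $\L^{\mathrm{re},X}$ equals $X$ on $[\ell,r]$ and agrees on the side intervals with the reconstruction~\eqref{e.reconstruction definition} driven by $(\bar X(\ell),\bar X(r))$; Lemma~\ref{l.corner} applied to $\L^{\mathrm{re},X}$ replaces side-interval non-intersection by the same endpoint inequalities on $(\bar X(\ell),\bar X(r))$. Moreover, since $A\subseteq[\ell,r]$ and $\{\pass(X)=1\}$ forces $X(k,\cdot)>\L_n(k+1,\cdot)$ throughout $[\ell,r]$, we have $\{\pass(X)=1\}\subseteq\notouch^A_{\L_n(k+1,\cdot)}$, so the $\notouch^A$ pre-conditioning in the definition of $X$ is absorbed: conditioning $X$ on $\pass(X)=1$ coincides with conditioning $k$ unconditioned independent Brownian bridges from $\bar\L_n(-2T)$ to $\bar\L_n(2T)$ on $\pass(X)=1$. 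Matching the marginals $\bigl(\bar X(\ell),\bar X(r),X\bigr)$ of this law with the $\F$-conditional marginals of $\L_n$ identified above, and using that the reconstruction~\eqref{e.reconstruction definition} on the two side intervals is exactly the inverse of the Brownian-bridge side-bridge decomposition (so that the middle-plus-endpoint marginal determines the whole path via the $\F$-measurable side bridges), I obtain the asserted equality of laws on the full interval $[-2T,2T]$. The $X'$ case is identical: its extra pre-conditioning on the endpoint inequalities is also implied by $\{\pass(X')=1\}$ and hence absorbed.

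The main conceptual obstacle is the careful bookkeeping of the layered conditionings (Brownian Gibbs at $\mathcal{G}$, the unconditional independence obtained from Lemma~\ref{l.bb independent decomposition property}, the decomposition of the non-intersection event into side and middle sub-events, and the absorption of the $\notouch^A$ pre-conditioning), and in verifying that the affine reconstruction~\eqref{e.reconstruction definition} correctly lifts an equality of laws for the middle-plus-endpoint marginal to an equality of laws for the entire reconstructed process on $[-2T,2T]$.
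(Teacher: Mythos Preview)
Your proof is correct and follows essentially the same approach as the paper's: both apply the Brownian Gibbs property, invoke Lemma~\ref{l.bb independent decomposition property} to decouple the side bridges, and then absorb the $\notouch^A$ (and, for $X'$, the side-interval) pre-conditioning into $\{\pass=1\}$ using the inclusion $\{\pass=1\}\subseteq\notouch^A$. The paper compresses your Steps~1--3 into a single sentence, while you make explicit the intermediate $\sigma$-algebra $\mathcal{G}$, the layered conditioning, and the lifting from the middle-plus-endpoints marginal to the full path on $[-2T,2T]$ via the reconstruction~\eqref{e.reconstruction definition}.
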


\begin{proof}
Let $B:\intint{k}\times[\ell, r]\to \R$ be the restriction to $[\ell, r]$ of a collection of $k$ independent Brownian bridges on $[-2T,2T]$, with the $i^{\text{th}}$ having starting and ending points $(-2T, \L_n(i,-2T))$ and $(2T,\L_n(i,2T))$. Lemma~\ref{l.bb independent decomposition property}, combined with the Brownian Gibbs property possessed by $\L_n$, implies that the $\F$-conditional distribution of $\L_n$ restricted to the top $k$ curves on $[\ell, r]$ is that of $\L^{\mathrm{re}, B}$ conditioned on the event $\{\pass(B)=1\}$. It is immediate that this latter distribution is the same as that of $\L^{\mathrm{re}, X}$ conditioned on $\{\pass(X) = 1\}$, as the law of $X$ is just that of  $B$ with an additional conditioning that is consistent with $\{\pass(B)=1\}$. That is, $\{\pass(B)=1\}$ is a subset of $\notouch^A_{\L_n(k+1,\cdot)}$, and the distribution of $B$ conditioned on $\notouch^A_{\L_n(k+1,\cdot\,)}$ is that of $X$.

Since the law of $X'$ is that of $X$ conditioned on passing the side interval tests, and since conditioning on $\{\pass(X)=1\}$ is a stronger one than conditioning on $X$ passing the side intervals test, i.e., the former event is contained in the latter, the argument of the previous paragraph holds for the candidate ensemble $X'$ as well.
\end{proof}

Having completed the general set up and groundwork of candidate ensembles, we may now turn to describing the jump ensemble itself.

\subsection{Parameters of the jump ensemble}
 We start with two parameters, $k\in \N$ and $\epsilon>0$. The first is simply the number of curves of $\L_n$ that we are studying, which will also be the number of curves in the jump ensemble. The second is to be understood as the Brownian probability of the event that we wish to analyse under the law of $\L_n$, but is formally simply a positive parameter. The logic of the jump ensemble is to set the parameters according to the event we wish to study. 

Though in the discussion in the preceding subsections we were working with a deterministic interval $[\ell, r]$, for the jump ensemble we will in fact need to work on a particular random subinterval $[\mf l, \mf r]$ of $[-2T, 2T]$ which will be defined shortly. All the arguments and statements of the previous subsections of Section~\ref{s.jump ensemble} will hold true with $\mf l$ and $\mf r$ in place of $\ell$ and $r$, as can be easily checked, since $\mf l$ and $\mf r$ will be defined in terms of only the lower curve $\L_n(k+1,\cdot\,)$; this data is present in $\F$, and so, conditional on $\F$, $\mf l$ and $\mf r$ can safely be thought of as being deterministic.

In Section~\ref{s.jump ensemble heuristic} we introduced a parameter $T$. For the jump ensemble, the value of $T$ is determined by both parameters $k$ and $\epsilon$, and is given by
\begin{equation}\label{e.T value}
T := D_k(\log \epsilon^{-1})^{1/3},
\end{equation}
where $D_k$ is given by \eqref{e.D_k value}. The value of $T$ given in \eqref{e.T value} will be its fixed value for the remainder of this article.

\begin{remark}\label{r.choice of T}
Though not needed for our arguments, here is a heuristic idea of why this is the form of $T$ we select; a fuller discussion is available in the beginning of \cite[Chapter 4]{hammond2017brownian}. We see that a larger value of $T$ gives more space for a run-up, which helps the candidate pass the non-intersection test on the middle interval. However, as $T$ gets larger we must also grapple with the globally parabolic curvature of $\L_n$, which means that the starting and ending points of the candidate ensemble will fall quadratically. Simply put, there is more space for the run-up before the jump, but the required jump is higher as the starting point is lower. This selection of $T$---in particular the 1/3 exponent of $\log\epsilon^{-1}$---balances these opposing forces and in some sense maximises the non-intersection test probability of the to-be-defined jump ensemble.
\end{remark}

\begin{remark}\label{r.run-up not sufficient}
This same reasoning of balancing these opposing forces of curvature and run-up advantage is what shows that a run-up is not sufficient to handle the second instance of lower curve data discussed in Subsection~\ref{s.jump ensemble.high jump}, as explained in the beginning of \cite[Chapter 4]{hammond2017brownian}. Indeed, what that discussion essentially shows is that even with the well-tuned choice of $T$ made above, there exists data in $\F$ with sufficiently high probability for which the Brownian bridge candidate ensemble (which has forgotten all data about the lower curve) is unable to pass the non-intersection test with sufficiently high probability.
\end{remark}

We now record a certain upper and lower bound that the parameter $\epsilon$ is required to meet for technical reasons; these constraints also previously appeared in the statement of Theorem~\ref{t.airytail.ln}.
\begin{equation}\label{e.epsilon constraint}
\begin{split}
\epsilon &< e^{-1} \wedge (17)^{-1/k} C_k^{-1/k} \const^{-1} \wedge \exp\Big\{-(24)^{6}d^6/D_k^3\Big\} \quad \text{and}\\
\epsilon &> \exp \Big\{- \big( \rsc/2 \wedge 2^{1/2} \big) \const^{-1} n^{1/12} \Big\}.
\end{split}
 \end{equation}
As we noted in Remark~\ref{r.epsilon bounds}, both these bounds do not cause any difficulties in practice. And, in the case that $n=\infty$, the lower bound becomes simply $\epsilon>0$.

With these definitions, we may start making precise the notion introduced earlier of a coarsened version of the underlying curve.

\subsection{Coarsening the lower curve}\label{s.jump ensemble.coarsening}
Let $\mf c_+ : [-T,T]\to \R$ be the least concave majorant of $\L_n(k+1,\cdot\,):[-T,T]\to \R$, and define a random interval $[\mf l, \mf r]$ by
\begin{align*}
\mf l &= \inf\left\{x\in[-T,T] \mid \mf c_+'(x)\leq 4T\right\} \quad \text{and}\\
\mf r &= \sup\left\{x\in[-T,T] \mid \mf c_+'(x)\geq -4T\right\}.
\end{align*}
We can think of $\mf c_+$ as a first coarsening of the lower curve $\L_n(k+1,\cdot\,)$. As indicated earlier, the interval $[\mf l, \mf r]$ will play the role of $[\ell, r]$ in Section~\ref{s.jump ensemble heuristic}. Note that though random, $\mf l$ and $\mf r$ are functions of the curve $\L_n(k+1,\cdot\,)$ and not of the $k$ curves we are attempting to study. The purpose of defining this random interval is that on it we are guaranteed some control over the coarsened lower curve profile (which we will be further coarsening before using in the definition of the jump ensemble); we will not use any data about $\L_n(k+1,\cdot\,)$ outside of $[\mf l, \mf r]$ in defining the jump ensemble, though it is available.

By the concavity of $\mf c_+$, it follows that $\mf l\leq\mf r$, and in fact on a high probability favourable event we will discuss in Section~\ref{s.jump ensemble fav}, $[\mf l,\mf r]$ will be an interval of length at least $T$. We use the $\sigma$-algebra $\F$ defined in Section~\ref{s.jump ensemble heuristic} on page~\pageref{i.F defitinion}, except with $\mf l$ and $\mf r$ in the place of $\ell$ and $r$. Though $\mf l$ and $\mf r$ are random, the definition of $\F$ is adequate as $\mf l$ and $\mf r$ are determined by the lower curve data, which is already present in its entirety in $\F$.

We came to the conclusion in the heuristic discussion in Section~\ref{s.jump ensemble heuristic} that the candidate process $X$ would need to use information about a coarsened version of the lower curve $\L_n(k+1,\cdot\,)$ in its definition, in order to have a high enough probability of passing the middle interval non-intersection test. So far we have defined a preliminary coarsening, the least concave majorant $\mf c_+$, which was used to define the interval $[\mf l, \mf r]$. Now we define a further (and final) coarsening, which we will then use in the next subsection to finally define the jump ensemble $J$.

To precisely describe the final coarsening of $\L_n(k+1,\cdot\,)$ on $[\mf l,\mf r]$, we first define a subset of extreme points of $\mf c_+$. Let $\mrm{xExt}(\mf c_+)$ be the $x$-coordinates of the set of  extreme (or corner) points of the convex set $\{(x,y) : \mf l\leq x\leq \mf r, y\leq \mf c_+(x)\}$. Note that necessarily $\mf l,\mf r\in \mrm{xExt}(\mf c_+)$. Then define the \emph{pole set} $P$ to be a subset of $\mrm{xExt}(\mf c_+)$ such that
\begin{itemize}
	\item $\mf l, \mf r\in P$,
	\item $p_1, p_2\in P, p_1\neq p_2 \implies |p_1-p_2| \geq \dip$, and
	\item if $x\in\mrm{xExt}(\mf c_+)$, then some element $p\in P$ satisfies $|p-x|\leq \dip$.
\end{itemize}
Here $\dip \in [1, \mf r-\mf l]$ is a parameter called the \emph{inter-pole distance}; typically it is set to a unit order quantity independent of $\epsilon$ and $k$, and usually it is comparable to the interval of interest under study. For example, in the proof of our results, we will set it to be a multiple of $d$. The parameter $\dip$ defines the minimum separation between consecutive elements of $P$. These elements of the pole set $P$ will be called \emph{poles.}  These above three properties do not necessarily define $P$ uniquely, and to address this we take $P$ to be the subset satisfying these conditions of maximal cardinality, and then maximal in lexicographic order. 

\begin{remark}
By the definition of $\dip$, it is clear that the size of the pole set, $|P|$, is at most $2T/\dip$. So for constant order values of $\dip$, $|P|$ is potentially rather large. However, in arguments the only poles which must be considered are essentially the ones within or adjacent to the interval under study. In the proof of our main result our arguments will be focused on the single pole contained in the interval $[-2d, 2d]$ (if it is present), and will in one instance make use of the preceding and succeeding poles. We guarantee ourselves this control on the number of poles in $[-d,d]$ by making an appropriate choice of $\dip$.
\end{remark}

The coarsened profile of $\L_n(k+1,\cdot\,)$ that will be used in defining the jump ensemble $J$ is exactly the set $\{(p, \L_n(k+1, p) \mid p\in P)\}$, which $J$ will be conditioned to \emph{jump} over. We next make precise what we mean by $J$ jumping over the poles, and also give the definition of $J$.

\subsection{Defining the jump ensemble}

Conditional on $\F$, let $B:\intint{k}\times[-2T,2T]\to \R$, with $\{B(i,\cdot\,)\}_{i=1}^k$ a collection of $k$ independent Brownian bridges on $[-2T,2T]$ and $B(i,\cdot\,)$ having endpoints $(-2T, \L_n(i,-2T))$ and $(2T,\L_n(i,2T)$ for $i=1, \ldots, k$. Note that the required information about the endpoint values of $\L_n$ is present in $\F$. The jump ensemble $J:\intint{k}\times[\mf l, \mf r]\to\R$ is the restriction to $[\mf l, \mf r]$ of $B$ conditioned on 
\begin{enumerate}[label=(\roman*)]
	\item $\overline B(x) - \gbargenmin \in (0,\infty)_>^k$ for $x\in\{\mf l, \mf r\}$; and
	\item $B(i,p) \geq \L_n(k+1, p)$ for all $p\in P$ and $i=1, \ldots, k$.
\end{enumerate}
As we saw in Lemma~\ref{l.corner}, the conditioning present in point (i) ensures that $J$ passes the side interval tests.
We will refer to the event in point (ii), namely $J(p)\geq \L_n(k+1,p)$ for $p\in P$, as \emph{jumping over the pole $p$}. 

\begin{figure}
\centering {\epsfig{file=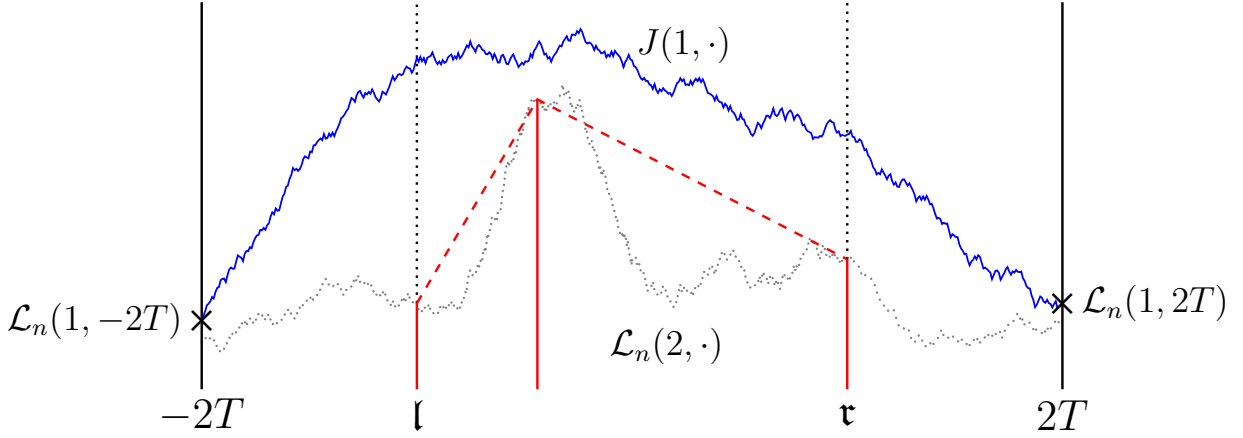, width=\textwidth}}
\caption{The jump ensemble candidate $J$ (in blue) when $k=1$. The lines in red are the \emph{poles}, i.e., the elements of the pole set $P$, which $J$ is conditioned to jump over (recall that necessarily $\mf l,\mf r\in P$). The dashed red piecewise linear function is the $\tent$ map. The lower curve is drawn dotted and in light gray to indicate that $J$ does not have access to this data, only the heights of the solid poles. Making this jump forces the candidate to avoid a coarsened version of the lower curve profile, which makes it more likely that it also avoids the full lower curve, as in this figure. Moving back to $[-2T, 2T]$ from $[\mf l, \mf r]$ gives the candidate space to make the jump. Though we have shown the blue curve on the entirety of $[-2T,2T]$, $J$ is only the restriction to $[\mf l, \mf r]$, and so does \emph{not} need to avoid the lower curve outside $[\mf l, \mf r]$ (though it does in the illustrated instance). The avoidance on the full interval $[-2T,2T]$ is a requirement imposed only on $\L^{\mathrm{re}, J}$, which is made by combining $J$ with the data in $\F$ as in Figure~\ref{f.mcm-reconstruction}.}\label{f.jump ensemble}
\end{figure}

As we noted when we stated its definition, the pole set $P$ represents the coarsened version of the lower curve $\L_n(k+1,\cdot\,)$ that $J$ has access to. By conditioning $J$ to avoid this coarsened version of $\L_n(k+1,\cdot\,)$, we increase the probability of $J$ successfully avoiding all of $\L_n(k+1,\cdot\,)$ on $[\mf l, \mf r]$, compared (heuristically) to a candidate process with no information about the underlying curve. Indeed we will see ahead in Proposition~\ref{p.T_3(J) bound} that this increased probability is high enough to be useful for our intended application.

It will be necessary in our arguments to consider how much $J$ deviates from the shape defined by the poles. To do this, define the $\F$-measurable random piecewise affine function $\tent:[\mf l,\mf r]\to \R$ which linearly interpolates between the points $(p, \L_n(k+1, p))$ for $p\in P$. Note that from the definition of $\mf c_+$ we have that $\tent$ is concave, and from the definition of $\mf l$ and $\mf r$ we have that the slope of every linear segment of $\tent$ lies in $[-4T, 4T]$, which for future reference we will express (with abuse of notation, as $\tent$ is only piecewise linear and not linear) as
\begin{equation}\label{e.tent map slope}
\mrm{slope}(\tent) \in [-4T,4T].
\end{equation}
See Figure~\ref{f.jump ensemble} for an illustration of the jump ensemble and the Tent map.

We also note here that Lemma~\ref{l.jump ensemble brownian gibbs} implies that $\L^{\mathrm{re}, J}$, conditionally on $\F$ and $\{\pass(J) = 1\}$, has the $\F$-conditional distribution of the top $k$ curves of $\L_n$ on $[-2T,2T]$. Since $\L^{\mathrm{re},J} = J$ on $\intint{k}\times[\mf l, \mf r]$, this implies that the distribution of $J$, conditionally on $\F$ and $\{\pass(J) =1\}$, is that of the top $k$ curves of $\L_n$ on $[\mf l,\mf r]$.

\subsection{The probability that $J$ passes the non-intersection test}\label{s.jump ensemble non-intersection test pass}
Now we shall address whether the jump ensemble is in fact able to pass the non-intersection test on the whole interval $[-2T,2T]$ with sufficiently high probability, the task for which we specifically defined the coarsened version of $\L_n(k+1,\cdot\,)$ that $J$ is conditioned to jump over. Recall from Section~\ref{s.jump ensemble side interval} that we have an easy criterion for $J$ passing the non-intersection test on the side intervals of $[-2T, \mf l]$ and $[\mf r, 2T]$, which we called the side-intervals test. This criterion is that $(J(i,x) - \mrm{Corner}_i^{x,\F})_{i=1}^k\in (0,\infty)^k_>$ for $x\in\{\mf l, \mf r\}$, which $J$ is in fact conditioned to satisfy above in point (i) of its definition. Thus all that remains is for $J$ to pass the non-intersection test on the middle interval $[\mf l,\mf r]$, i.e., for $J$ to satisfy
$$J(1,x) > J(2,x) >  \ldots >J(k,x) > \L_n(k+1,x) \quad \forall x\in[\mf l, \mf r].$$
In other words, the indicator of this last event is the same as the indicator $\pass(J)$.
It is important for our approach that the event $\{\pass(J) = 1\}$ being conditioned on does not have too low a probability. Unlike the side intervals test, there is no simple criterion for the middle interval test. In fact, an analysis was undertaken in \cite{hammond2017brownian} to obtain an appropriately strong lower bound on this probability, which holds on a high probability $\F$-measurable favourable event $\fav$ (that we will define shortly). As that argument does not serve out expository purpose, we do not present it here; instead we reproduce the statement from \cite{hammond2017brownian} in the next Proposition \ref{p.T_3(J) bound}. This is the statement we previously referenced as \emph{jump ensemble candidate proficiency}.

\begin{proposition}[Jump ensemble candidate proficiency, Proposition 4.2 of \cite{hammond2017brownian}]\label{p.T_3(J) bound}
We have that
$$
\PF\Big(\pass(J)=1\Big) \geq \exp \left\{-3973 k^{7/2} \dip^{2} D_{k}^{2}\left(\log \epsilon^{-1}\right)^{2 / 3}\right\} \cdot \one_{\mathrm{Fav}}
.$$
\end{proposition}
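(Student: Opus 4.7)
The plan is to decompose the event $\{\pass(J) = 1\}$ using the structure of $J$ as a product of independent Brownian bridges between pole locations, and then estimate the required non-intersection probabilities on each inter-pole interval separately. First, I would identify and exploit the favourable event $\fav$. On $\fav$ one should have: a bound $|P| \leq 2T/\dip + 1$ on the number of poles; the already-recorded slope bound $\mrm{slope}(\tent) \in [-4T, 4T]$ from \eqref{e.tent map slope}; control on the pointwise gap between $\mf c_+$ and $\L_n(k+1, \cdot)$; and control on the endpoint values $\L_n(i, \pm 2T)$ for $i \in \intint{k}$. That $\fav$ has overwhelming probability is the content of the separate Lemma~\ref{l.fav complement prob}.

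Second, I would apply a conditional decomposition. Conditionally on $\F$ and on the pole values $\{J(i,p) : p \in P, i \in \intint{k}\}$, the restriction of each $J(i,\cdot)$ to an inter-pole interval $[p_j, p_{j+1}]$ is an independent Brownian bridge, by Lemma~\ref{l.bb independent decomposition property} applied after unwinding the pole-jumping and corner conditioning. The event $\{\pass(J) = 1\}$ on $[p_j, p_{j+1}]$ asks that these $k$ bridges remain mutually ordered and that the lowest one stays above $\L_n(k+1, \cdot)$. Since on $\fav$ the tent map $\tent$ lies above $\L_n(k+1,\cdot)$ by a controlled amount, it is enough for each $J(i,\cdot)$ to remain above an $i$-dependent upward shift of $\tent$. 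A classical reflection-principle lower bound for Brownian bridge lower tails handles this per-interval avoidance efficiently, because $\tent$ is itself linear on $[p_j, p_{j+1}]$ and the bridges start and end above $\tent$ by the pole-jumping condition.

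Third, one must control the joint law of the pole values themselves. Before pole-jumping conditioning, $\{J(i,p)\}$ is jointly Gaussian, being a finite marginal of $k$ independent Brownian bridges, and the pole-jumping together with the corner condition at $\mf l, \mf r$ reweights this Gaussian by a product of half-space indicators. The dominant cost comes from forcing each $J(i,p)$ to exceed $\L_n(k+1, p)$, where the excess required at a single pole $p$ is of order up to $T\dip$ in view of the slope bound on $\tent$ and the $\fav$-control on the lower curve; the associated Gaussian cost is aggregated across the at most $2T/\dip$ poles and the $k$ curves, with a telescoping of slopes across $[\mf l, \mf r]$ used to replace a naive $T^3$-type bound by the sharper $\dip^2 T^2$ shape that matches the stated exponent. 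The factor $k^{7/2}$ arises from a careful accounting across the $k$ curves and the $k-1$ pairs of consecutive curves that must remain separated.

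The main obstacle is executing the third step quantitatively: one needs a Gaussian estimate, uniform on $\fav$, that simultaneously handles the pole-value constraints for all $k$ curves and all poles, and combines with the per-interval bridge estimates of the second step so that the resulting product telescopes to the claimed exponent $-3973\,k^{7/2}\dip^2 D_k^2(\log\epsilon^{-1})^{2/3}$. A secondary but genuine difficulty is choosing the $i$-dependent buffers above $\tent$ in the second step so that the separation constraints between consecutive curves are met without inflating the per-interval cost beyond what the telescoping can absorb.
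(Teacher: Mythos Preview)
The paper does not prove this proposition. It is quoted verbatim as Proposition~4.2 of \cite{hammond2017brownian} and used as a black box; the surrounding text says explicitly that ``an analysis was undertaken in \cite{hammond2017brownian} to obtain an appropriately strong lower bound on this probability \ldots\ As that argument does not serve our expository purpose, we do not present it here; instead we reproduce the statement.'' So there is no in-paper proof to compare your proposal against.

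Your sketch is a reasonable outline of the kind of argument that appears in \cite{hammond2017brownian}: conditioning on pole values, treating the inter-pole pieces as independent Brownian bridges via Lemma~\ref{l.bb independent decomposition property}, and estimating per-interval non-intersection costs. But since the present paper treats the result as an imported input, the correct ``proof'' here is simply a citation. If your goal is to reproduce the actual argument, you would need to consult \cite{hammond2017brownian} directly; your third step in particular (the Gaussian accounting that yields the precise exponent $3973\,k^{7/2}\dip^2 D_k^2(\log\epsilon^{-1})^{2/3}$) is where the real work lies, and your proposal only gestures at how the telescoping and the $k^{7/2}$ factor emerge without carrying it out.
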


We will now define $\fav$, before returning to discuss the important role of Proposition~\ref{p.T_3(J) bound} in our approach.

\subsection{The definition of $\fav$ \& the role of Proposition~\ref{p.T_3(J) bound}} \label{s.jump ensemble fav}

%
%
%
%
%
The favourable event $\fav$ is defined as the intersection
$$\fav = \mathsf{F}_1\cap\mathsf{F}_2\cap\mathsf{F}_3,$$
where
\begin{align*}
\mathsf{F}_1 &= \left\{\L_n(i, x) \in T^{2}[-2 \sqrt{2}-1,-2 \sqrt{2}+1] \text { for }(i, x) \in \intint{k} \times\{-2 T, 2 T\} \right\}\\
\mathsf{F}_2 &= \left\{-T^2 \leq \L_n(k+1, x) \leq T^2 \text { for } x \in[-T, T]\right\},\\
\mathsf{F}_3 &= \bigcap_{i \in \intint{k}} \left\{\mrm{Corner}_{i}^{\mf{l}, \F} \in\left[-T^{2}, T^{2}\right]\right\} \cap\left\{\mrm{ Corner }_{i}^{\mf{r}, \F} \in\left[-T^{2}, T^{2}\right]\right\}.
\end{align*}
Note that $\fav$ is an $\F$-measurable event. As its name suggests, this event fixes good data in $\F$ on which we have strong enough control to make our arguments. The reader should view this data as being fixed in the arguments involving the jump ensemble, as we will be working only on this event; the bound on $\P(\fav^c)$ just ahead allows us to take this liberty.

The form of the favourable event respects the parabolic curvature possessed by $\L_n$. In particular, since we are working on the interval $[-2T,2T]$, we expect that at the endpoints the location of $\L_n$ will be $O(-T^2)$, which dictates the form of the three subevents $F_1, F_2,$ and $F_3$ above.

It is a simple calculation based on the definition of $\mf l$ and $\mf r$ that, on $\mathsf F_2$, 
$$\mf l \leq -T/2 \quad \text{and} \quad \mf r \geq T/2.$$
We need the knowledge that the favourable event occurs with sufficiently high probability; this is provided to us from \cite{hammond2017brownian}:

\begin{lemma}[High probability of favourable event, Lemma 4.1 of \cite{hammond2017brownian}]\label{l.fav complement prob}
$$\P\left(\fav^{c}\right) \leq \epsilon^{2^{-5} c_k D_k^3}.$$
\end{lemma}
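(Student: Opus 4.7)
I would decompose $\fav^c = \mathsf{F}_1^c \cup \mathsf{F}_2^c \cup \mathsf{F}_3^c$ and bound each piece separately, then combine by union bound. The unifying ingredient is the one-point tail estimate of Definition~\ref{d.regularsequence}, extended to the $i^{\text{th}}$ curve of a regular ensemble for $i \in \intint{k+1}$ via the constants $C_k, c_k$ introduced above~\eqref{e.littlec} (the extension alluded to in the paragraph following Definition~\ref{d.regularsequence}). Since $T = D_k (\log \epsilon^{-1})^{1/3}$, applying this tail at scale $s = \Theta(T^2)$ yields a contribution of order $\exp\{-\Theta(T^3)\} = \epsilon^{\Theta(c_k D_k^3)}$, and the factor $2^{-5}$ in the statement is what remains after the numerical losses below are absorbed.

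\textbf{Endpoint values ($\mathsf{F}_1$).} The parabolic centring $x \mapsto 2^{-1/2}x^2$ takes value $2\sqrt{2}\,T^2$ at $x = \pm 2T$, which is exactly the midpoint of $T^2[-2\sqrt{2}-1,\,-2\sqrt{2}+1]$. Hence $\mathsf{F}_1^c$ is precisely the event that, for some $i \in \intint{k}$ and $x \in \{\pm 2T\}$, the centred quantity $\L_n(i,x) + 2^{-1/2}x^2$ exceeds $T^2$ in absolute value. A union bound over $i$ and the two endpoints, combined with the extended one-point upper and lower tails at $s = T^2$, gives the desired bound. The precondition $|2T| \leq (c/2)\, n^{1/9}$ for the tail's applicability is secured by the lower bound~\eqref{e.alowerbound} on $\epsilon$, and $2T \leq z_n$ follows from the endpoint escape clause of regularity under the same condition.

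\textbf{Lower curve profile ($\mathsf{F}_2$).} Since the parabolic centring lies in $[-2^{-1/2}T^2, 0]$ for $x \in [-T,T]$, the event $\mathsf{F}_2^c$ requires a \emph{centred} deviation of at least $(1-2^{-1/2})T^2$ somewhere on $[-T,T]$. I would discretise by placing a unit mesh on $[-T,T]$ (that is, $O(T)$ points), apply the extended one-point tails to the $(k+1)^{\text{st}}$ curve at each mesh point, and union bound; the linear-in-$T$ mesh factor is negligible against $\exp\{-\Theta(T^3)\}$. Passing from the mesh bound to a pathwise supremum bound requires a local modulus-of-continuity estimate, available for Brownian Gibbs ensembles in a quantitative form whose failure probability is likewise exponentially small in $T^3$. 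The constants in~\eqref{e.D_k value} defining $D_k$ are calibrated so that combining these two estimates still yields the target exponent.

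\textbf{Corner values ($\mathsf{F}_3$) and main obstacle.} The corner values are constructed (Figure~\ref{f.corner}) by an iterative affine shifting of the $k$ side bridges on $[-2T, \mf{l}]$ and $[\mf{r}, 2T]$ starting from the top curve. Unfolding this recursion, $\Corner_i^{\mf{l}, \F}$ is an affine combination of $\{\L_n(j, -2T) : j \in \intint{i}\}$---which are $O(T^2)$ on $\mathsf{F}_1$---and fluctuations of successive side bridges on $[-2T, \mf{l}]$. Conditionally on their endpoints, these side bridges are independent standard Brownian bridges (Lemma~\ref{l.bb independent decomposition property}), so standard Gaussian tail bounds give $|\Corner_i^{\mf{l}, \F}| > T^2$ with probability $\exp\{-\Theta(T^3)\}$ on $\mathsf{F}_1$, and analogously at $\mf{r}$. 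Of the three estimates, I expect the most delicate to be $\mathsf{F}_2$: it simultaneously requires a one-point tail for a non-top curve and a pathwise modulus-of-continuity bound on an interval of length $2T$, while preserving the exponent $2^{-5} c_k D_k^3$, and the precise form of $D_k$ in~\eqref{e.D_k value} seems to have been engineered exactly for this calibration.
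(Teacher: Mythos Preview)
The paper does not prove this lemma; it is quoted verbatim as Lemma~4.1 of \cite{hammond2017brownian} and used as a black box (one of the three statements imported from that work, alongside Lemma~\ref{l.corner} and Proposition~\ref{p.T_3(J) bound}). There is therefore no proof in the present paper to compare against.

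Your sketch is a plausible outline of how the argument in \cite{hammond2017brownian} proceeds: the union-bound decomposition over $\mathsf{F}_1^c, \mathsf{F}_2^c, \mathsf{F}_3^c$ is the natural starting point, and your identification of the extended one-point tail (for curves indexed up to $k+1$, with constants $c_k, C_k$) as the key input is correct. Your observation that $2^{-1/2}(2T)^2 = 2\sqrt{2}\,T^2$ explains the centring in $\mathsf{F}_1$ exactly. For $\mathsf{F}_2$, the discretise-plus-modulus approach is standard, though you should be aware that the modulus-of-continuity input for the $(k{+}1)^{\text{st}}$ curve is itself a nontrivial consequence of regularity proved in \cite{hammond2017brownian}. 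For $\mathsf{F}_3$, your description of the corner recursion is right in spirit, but the actual bound in \cite{hammond2017brownian} also involves controlling the side-bridge fluctuations uniformly over the random location $\mf{l}$ (or $\mf{r}$), which introduces an additional layer. If you want to reconstruct the proof in full, you should consult \cite[Lemma~4.1]{hammond2017brownian} directly, as the calibration of $D_k$ against the numerical constants is done there.
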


In fact for our purposes it will be sufficient to note that $2^{-5}c_k D_k^3 \geq 1$ for all $k$, and so the upper bound above is further bounded by $\epsilon$.

Now we may discuss the central role of Proposition~\ref{p.T_3(J) bound} in our argument. We will use it to reduce the problem of understanding the probability of an event under the $\F$-conditional law of $\L_n$ to understanding the same under the law of $J$. For concreteness, let us illustrate this by attempting to bound the probability that the vertically shifted $k^\text{th}$ curve $\L_n(k,\cdot\,) - \L_n(k,-d)$ lies in a measurable subset $A \subseteq \mc C_{0,*}([-d,d], \R)$ of continuous functions vanishing at $-d$, where $d>0$. Recall again that the $\F$-conditional distribution of $\L_n$ on $[\mf l, \mf r]$ is the same as the distribution of $\L^{\mathrm{re}, J}$ on $[\mf l, \mf r]$ conditioned on the event that $\pass(J)=1$, and also that $\L^{\mathrm{re}, J}(i, \cdot\,) = J(i,\cdot\,)$ on $[\mf l, \mf r]$. We assume that $\epsilon$ is small enough that $[-d,d] \subseteq [-T/2,T/2]\subseteq [\mf l, \mf r]$, the last inclusion on the event $\fav$. (This assumption on $\epsilon$ is implied by the condition that $\epsilon<\exp(-(24)^{6}d^6/D_k^3)$ that is imposed in Theorem~\ref{t.airytail.ln}.) We also have to set the last parameter of the jump ensemble, the inter-pole distance $\dip$, which we set as
\begin{equation}\label{e.dip value}
\dip = 5d,
\end{equation}
which will be its value in our application of the jump ensemble. Then we see that
\begin{equation}
\begin{split}\label{e.ln prob breakup}
\P\Big(\L_n(k,\cdot\,) - \L_n(k, -d)\in A\Big) &= \E\left[\PF(\L_n(k,\cdot\,) - \L_n(k, -d)\in A)\cdot\one_\fav\right] + \P(\fav^c)\\
&= \E\left[\PF\big(J(k,\cdot\,)- J(k, -d) \in A \mid \pass(J)=1 \big)\cdot\one_\fav\right] + \P(\fav^c)\\
&\leq \E\left[\frac{\PF(J(k,\cdot\,) - J(k, -d) \in A)}{\PF\left(\pass(J)=1\right)}\cdot\one_\fav\right] + \P(\fav^c)\\
&\leq \E\left[\PF\big(J(k,\cdot\,)- J(k, -d) \in A\big)\cdot\one_\fav\right]\cdot \exp \left\{O_k(1)\left(\log \epsilon^{-1}\right)^{2 / 3}\right\}\\
&\qquad + \P(\fav^c),
\end{split}
\end{equation}
using Proposition~\ref{p.T_3(J) bound} in the last inequality.

Theorem~\ref{t.airytail.ln} asserts a bound of the form $\epsilon\cdot \exp(O_k(1)(\log\epsilon^{-1})^{5/6})$ on the left-hand side of the first line of the above display, where $\epsilon$ is the probability of $A$ under the law of Brownian motion. So in order to prove Theorem~\ref{t.airytail.ln}, the main step to be effected is to bound the first term after the last inequality by a quantity of the same form. The notational equivalence we have just made between the parameter $\epsilon$ of the jump ensemble and the Brownian motion probability of the event of interest is one we will adopt formally: in the remainder of the proof of Theorem~\ref{t.airytail.ln}, the parameter $\epsilon$ of the jump ensemble will have the value
\begin{equation}
\epsilon := \B_{0,*}^{[-d,d]}\big(A\big).
\end{equation}
(As we will record formally soon, the parabolic invariance Lemma~\ref{l.shifted is regular} allows us to reduce Theorem~\ref{t.airytail.ln} to the case where $K=0$.)
With this choice of $\epsilon$, the importance of Proposition~\ref{p.T_3(J) bound} in achieving the goal mentioned in the last paragraph is now clear, in particular that the exponent of the $\log\epsilon^{-1}$ in the exponent of the statement of Proposition~\ref{p.T_3(J) bound} is $2/3<5/6$. Looking back at \eqref{e.ln prob breakup}, to actually achieve this goal we need two bounds: that $\PF\big(J(k,\cdot\,) - J(k, -d)\in A\big)\cdot\one_\fav$ and $\P(\fav^c)$ are both bounded by $\epsilon \exp(O_k(1)(\log\epsilon^{-1})^{5/6})$. 

The second bound is implied by Lemma~\ref{l.fav complement prob}. Finally, in the following Theorem~\ref{t.prob estimate for J}, we have the first bound, proving which will be the work of the next chapter. After stating this theorem and an immediate corollary, we will end this section by giving a brief summary of the jump ensemble. Recall that $\B_{0,*}^{[-d,d]}$ is the law on $\mc C_{0,*}([-d,d], \R)$ of a Brownian motion started at coordinates $(-d,0)$.

\begin{theorem}\label{t.prob estimate for J}
Let $d\geq 1$ and $A\subseteq \mc C_{0,*}([-d,d], \R)$. Then there exist $\epsilon_0 = \epsilon_0(d, k) = \exp(-(24)^6d^6/D_k^3)$ and absolute constant $G<\infty$ such that, if $\B_{0,*}^{[-d,d]}(A) = \epsilon < \epsilon_0$,
$$\PF\Big(J(k,\cdot\,) - J(k, -d) \in A\Big)\cdot\one_\fav \leq \epsilon\cdot Gd^{\frac{1}{2}}\cdot D_k^{4} (\log \epsilon^{-1})^{\frac{4}{3}}\cdot \exp\left(792\cdot d\cdot D_k^{5/2}\cdot (\log \epsilon^{-1})^{5/6}\right).$$
\end{theorem}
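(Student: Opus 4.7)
The plan is to exploit the jump ensemble's exact Brownian bridge structure, conditional on the values of $J(k,\cdot)$ at $-d$ and $d$, and then integrate out these endpoints using estimates on their joint density. Let $U := J(k,-d)$ and $V := J(k,d)$, and let $\mathcal{G}$ denote the enlargement of $\F$ by $(U,V)$ together with $J(k,p)$ for the at most one pole $p\in P\cap[-d,d]$; our choice $\dip=5d$ guarantees at most one such interior pole, and on $\fav$ we have $[-d,d]\subseteq[\mf l,\mf r]$. Since the corner conditionings at $\mf l,\mf r$ and the pole jumps outside $[-d,d]$ concern $J$ only outside $(-d,d)$, they leave the $\mathcal{G}$-conditional law of $J(k,\cdot)|_{[-d,d]}$ as a standard Brownian bridge (or concatenation of two such bridges, in the interior-pole case) with deterministic endpoint values.

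Assuming no interior pole for concreteness, write
\[
J(k,x)-J(k,-d) \;=\; B^{\mathrm{br}}(x)\,+\,\tfrac{x+d}{2d}(V-U),
\]
where $B^{\mathrm{br}}$ is, under the $\mathcal{G}$-conditional law, an honest standard Brownian bridge vanishing at $\pm d$. Setting $A_w:=\{f\in A:f(d)=w\}$, the event $\{J(k,\cdot)-J(k,-d)\in A\}$ becomes $\{B^{\mathrm{br}}\in A_{V-U}-\tfrac{\cdot+d}{2d}(V-U)\}$, whose conditional probability equals $\B^{[-d,d]}\!\bigl(A_{V-U}-\tfrac{\cdot+d}{2d}(V-U)\bigr)$ exactly. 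Integrating and substituting $w=V-U$ gives
\[
\PF\bigl(J(k,\cdot)-J(k,-d)\in A\bigr) \;=\; \int f_{V-U}(w)\,\B^{[-d,d]}\!\bigl(A_w-\tfrac{\cdot+d}{2d}w\bigr)\,dw,
\]
with $f_{V-U}$ the $\F$-conditional density of $V-U$ on $\fav$. On the other hand, the standard decomposition of Brownian motion on $[-d,d]$ into an independent bridge plus its endpoint value yields $\epsilon=\int g(w)\,\B^{[-d,d]}\!\bigl(A_w-\tfrac{\cdot+d}{2d}w\bigr)\,dw$, where $g(w)=(4\pi d)^{-1/2}e^{-w^{2}/4d}$ is the Gaussian endpoint density. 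The problem therefore reduces to controlling the pointwise ratio $f_{V-U}(w)/g(w)$ across all admissible $w$.

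The main obstacle is that this ratio is \emph{not} uniformly small. While the variance of $V-U$ under the bridge portion of $J$ is indeed approximately $2d$, its mean is shifted by $\tfrac{d}{2T}\bigl(\L_n(k,2T)-\L_n(k,-2T)\bigr)$, which on $\fav$ may be of order $dT$, producing a substantial mismatch between $f_{V-U}$ and $g$ for values of $w$ far from this shifted mean. Compounding this, the pole jump conditioning is a large-deviation event for the unconditioned bridge (the poles sit well above the bridge's typical value under $\fav$) and reshapes $f_{V-U}$ in a way sensitive to the $\F$-random profile of the lower curve. No naive Gaussian comparison can absorb both of these effects.

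The proof of Theorem~\ref{t.prob estimate for J} therefore proceeds via a four-case decomposition of the range of $(U,V)$: a ``low endpoint'' regime controlled by Proposition~\ref{p.negative density}, the direct analogue of \cite[Lemma 5.17]{hammond2017brownian} used in the proof of Theorem~\ref{t.old bb comparison}; and three further regimes (moderate, high, and extreme) that have no counterpart in \cite{hammond2017brownian} and for which new density estimates must be built. Each new estimate invokes the Brownian Gibbs property for the top $k$ curves of $\L_n$ together with the regular-ensemble one-point upper and lower tail bounds from Definition~\ref{d.regularsequence}, and exploits the geometry of $\fav$ to localize the arguments. Within each case, balancing the density cost against the Gaussian benchmark $g$ produces a factor of the form $\exp\bigl(O_{k}(d\,(\log\epsilon^{-1})^{5/6})\bigr)$, with the $5/6$ exponent emerging from the interplay of the jump-ensemble scale $T\sim(\log\epsilon^{-1})^{1/3}$ with a square-root cost in the Gaussian ratio; together these yield the advertised factor $\exp(792\cdot d\cdot D_{k}^{5/2}(\log\epsilon^{-1})^{5/6})$, while the polynomial prefactor $d^{1/2}D_{k}^{4}(\log\epsilon^{-1})^{4/3}$ records the effective volume of $(u,v)$-space over which the integrand contributes.
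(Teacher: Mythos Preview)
Your high-level strategy matches the paper's: condition on a pair of endpoint values to reduce to a Brownian bridge probability, then compare the endpoint density against the Gaussian benchmark that produces $\epsilon$. But several concrete choices differ from the paper's, and your handling of the pole case has a gap.

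When a pole $p$ lies in $[-2d,2d]$, the paper does \emph{not} condition at $\pm d$. It conditions at $p-4d$ and $p+4d$, setting $Y=J(k,p-4d)-\tent(p-4d)$ and $Z=J(k,p+4d)-\tent(p+4d)$; this places the pole at the midpoint and keeps the conditioning points at distance $\geq d$ from any pole. The target inequality becomes $f_J(y,z)\cdot V\cdot S\leq\exp(G(\log\epsilon^{-1})^{5/6})$ on a good region $\mathcal G_R^{(1)}\subset\R^2$ (with a separate argument that $(Y,Z)\notin\mathcal G_R^{(1)}$ has probability $\leq\epsilon$), where $V=[\PP(N(\tfrac{y+z}{2},2d)\geq\tent(p)-\tfrac12(\tent(p-4d)+\tent(p+4d)))]^{-1}$ is a \emph{vault cost} arising from the conditioning $J(k,p)\geq\tent(p)$, and $S=d^{1/2}\exp((y-z)^2/16d)$ (up to leeway factor) is a \emph{slope cost}. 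Your one-variable ratio $f_{V-U}/g$ omits the vault cost entirely; and if you condition at $\pm d$ with an interior pole, you must also integrate over $J(k,p)$, which you acknowledge but do not carry out.

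The casework is three-fold and governed by signs, not a four-fold ``low/moderate/high/extreme'' height taxonomy: (i) $y,z<0$ (easy, via Proposition~\ref{p.negative density}); (ii) $y+z>0$ (vault cost $O(1)$; a sub-Gaussian bound on $f_J$ in terms of $|y-z|$ is obtained via Brownian-bridge stochastic domination plus a \emph{local randomisation} converting tail bounds to density bounds---Proposition~\ref{p.scost density}); (iii) $yz<0$, $y+z<0$ (hardest: one first shows the overshoot $J(k,p)-\tent(p)$ is typically at most $1$, then bounds the conditional density of $Z$ given $Y$ by the same local-randomisation device). The no-pole case is a separate fourth section, and there your conditioning at $\pm d$ is indeed what the paper uses. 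None of this invokes the one-point tail bounds of Definition~\ref{d.regularsequence} directly; those are encoded in $\fav$, and within $\fav$ the work is done by the stochastic dominations of Lemma~\ref{l.J stochastic dominanation} together with explicit Brownian bridge estimates.
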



Given this theorem we may prove the main Theorem~\ref{t.airytail.ln}:

\begin{proof}[Proof of Theorem~\ref{t.airytail.ln}]
	By applying the parabolic invariance Lemma~\ref{l.shifted is regular} with $y_n=-K$, proving Theorem~\ref{t.airytail.ln} reduces to the case $K=0$. The condition that $[K-d,K+d]\subset c/2\cdot[-n^{1/9},n^{1/9}]$ is exactly the one required to apply Lemma~\ref{l.shifted is regular}.

	We have that
	\begin{align*}
	\P\bigl(\L_n(k,\cdot\,)-\L_n(k, -d)\in A\bigr) &= \E\Bigl[\PF\bigl(J(k,\cdot\,)-J(k, -d)\in A \ \bigm|\  \pass(J)=1\bigr)\cdot\one_\fav\Bigr] + \P(\fav^c)\\
	&\leq \E\left[\frac{\PF\bigl(J(k,\cdot\,)-J(k, -d)\in A\bigr)}{\PF\left(\pass(J)=1\right)}\cdot\one_\fav\right] + \P(\fav^c).
	\end{align*}
	By Lemma~\ref{l.fav complement prob}, $\P(\fav^c) < \epsilon$ for the choice of $D_k$ we have assumed. This bound requires \eqref{e.alowerbound}, which we have also assumed. By Proposition~\ref{p.T_3(J) bound} and Theorem \ref{t.prob estimate for J}, we find that the last expression is bounded up to a constant factor by
	\begin{align*}
	\MoveEqLeft[10]
	\epsilon\cdot d^{\frac{1}{2}}\cdot D_k^{4} (\log \epsilon^{-1})^{\frac{4}{3}}\cdot \exp\left(792\cdot d\cdot D_k^{5/2}\cdot (\log \epsilon^{-1})^{5/6} + 3973 k^{7/2} \dip^{2} D_{k}^{2}\left(\log \epsilon^{-1}\right)^{2 / 3}\right) + \epsilon&\\
	&\leq 	2\epsilon\cdot d^{\frac{1}{2}}\cdot D_k^{4} (\log \epsilon^{-1})^{\frac{4}{3}}\cdot \exp\left(4931 \cdot d\cdot k^{7/2}\cdot D_k^{5/2}\cdot (\log \epsilon^{-1})^{5/6}\right);
	\end{align*}
	we have used that $\dip = 5d$ from \eqref{e.dip value}, $d\geq 1$, $D_k^{1/2}(\log\epsilon^{-1})^{1/6}\geq 24d$ from the assumed upper bound on $\epsilon$, and $3973\times 5^2/24 + 792 \leq 4931$. Since $x^{11/5} \leq e^x$ for all $x\geq 1$, we may absorb the factor $d^{\frac{3}{2}} D_k^{\frac{11}{2}} (\log \epsilon^{-1})^{\frac{11}{6}}$ by increasing the coefficient of the exponent by 1.	This proves Theorem~\ref{t.airytail.ln}.
\end{proof}

Hence the remaining task is to prove Theorem~\ref{t.prob estimate for J}. This is accomplished in Chapter~\ref{ch.the proof}. Before proceeding to this, we give in the next subsection an important statement about stochastic domination relations between the jump ensemble and certain Brownian bridges, and finally a concluding subsection giving a brief summary of the jump ensemble which may act as a quick reference for the reader.

\subsection{A stochastic domination property of the jump ensemble}

In our proof of Theorem~\ref{t.prob estimate for J}, at various points we will need to stochastically dominate $J(k,\cdot\,)$ by or have $J(k,\cdot\,)$ stochastically dominate certain Brownian bridges. For this we will make use of the following statement, which will be proven essentially using Lemma~\ref{l.bb independent decomposition property} to reduce stochastic domination of processes to obvious stochastic dominations of point-values.

\begin{lemma}\label{l.J stochastic dominanation}
Fix $n\in\N$ and $k\leq n$. Let $\phistart \geq \L_n(k,-2T)$ and $\phiend \geq \L_n(k, 2T)$, and let $x\in [\mf l, \mf r]$. Then, on $\fav$ and conditionally on $\F$,
\begin{enumerate}[label=(\roman*)]
	\item The law of $J(k,\cdot\,)$ (as a law on $[\mf l, \mf r]$) stochastically dominates the law of a Brownian bridge from $(\mf l, -T^2)$ to $(\mf r,-T^2)$.

	\item Conditionally on $J(k, x)$, the law of $J(k,\cdot\,)$ restricted to $[\mf l, x]$ stochastically dominates the law of a Brownian bridge from $(\mf l, -T^2)$ to $(x, J(k, x))$.
	Under the same conditioning, the law of $J(k,\cdot\,)$ restricted to $[x, \mf r]$ stochastically dominates the law of a Brownian bridge from $(x, J(k, x))$ to $(\mf r, -T^2)$.

	\item The law of $J(k,\cdot\,)$ is stochastically dominated by the law of the restriction to $[\mf l,\mf r]$ of a Brownian bridge from $(-2T, \phistart)$ to $(2T, \phiend)$ which is conditioned to be above all poles, above $(\mf l, \gxmin_k)$, and above $(\mf r,\gymin_k)$.

	\item Conditionally on $J(k,x)$, the law of $J(k,\cdot\,)$, restricted to $[x, \mf r]$, is stochastically dominated by the law of the restriction to $[x,\mf r]$ of a Brownian bridge from $(x, J(k,x))$ to $(2T, \phiend)$ which is conditioned to be above all poles in $[x, \mf r]$ and above $(\mf r,\gymin_k)$.
\end{enumerate}
\end{lemma}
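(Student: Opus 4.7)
The proof rests on three ingredients: Lemma~\ref{l.bb independent decomposition property}, which decomposes a Brownian bridge conditional on its values at finitely many interior points into independent bridges on the induced subintervals; monotonicity of Brownian bridges in their endpoints, via the standard coupling by shared mean-zero bridge noise; and preservation of stochastic order under conditioning on increasing events for log-concave distributions (an FKG-type property valid for Gaussian bridges conditioned on pointwise lower-bound events). All four claims amount to comparing the law of $J(k,\cdot)$, possibly restricted or further conditioned, to a Brownian bridge with specified endpoints and, for (iii) and (iv), extra lower-barrier conditioning at poles and corner values.

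For the lower-bound claims (i) and (ii), the plan is first to condition on the relevant values of $J(k, \cdot)$: the pair $(J(k, \mf l), J(k, \mf r))$ for (i), and $(J(k, \mf l), J(k, x))$ or $(J(k, x), J(k, \mf r))$ for (ii). On $\fav$, the jump ensemble conditioning forces each such value to be a.s.\ at least $-T^2$, since the corner heights $\gbarxminstd, \gbaryminstd$ and the pole heights $\L_n(k+1, p)$ all lie in $[-T^2, T^2]$. Given these endpoint values, Lemma~\ref{l.bb independent decomposition property} identifies the conditional law of $J(k, \cdot)$ on the relevant subinterval as a Brownian bridge between the conditioned endpoints, further conditioned on lying above $\L_n(k+1, \cdot) \geq -T^2$ at interior poles. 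Endpoint monotonicity via the standard BB coupling, combined with the Gaussian FKG inequality (conditioning a Brownian bridge on an increasing event only makes it stochastically larger), then yields that this dominates the reference Brownian bridge from the corresponding $-T^2$-valued endpoints with no further conditioning. Averaging over the conditioned values preserves the domination.

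For the upper-bound claims (iii) and (iv), I proceed in two stages. Stage one: the upper-curve ordering constraints at $\mf l$ and $\mf r$ can only push $J(k, \cdot)$ downward. Concretely, after marginalising out the upper curves $B(1, \cdot), \ldots, B(k-1, \cdot)$, the probability of the full ordering event is a decreasing function of $(B(k, \mf l), B(k, \mf r))$, and a monotone-likelihood-ratio argument gives $J(k, \cdot) \preceq \tilde J(k, \cdot)$, where $\tilde J$ is obtained by dropping the upper-curve coupling altogether; that is, $\tilde J(k, \cdot)$ is the restriction to $[\mf l, \mf r]$ of a Brownian bridge from $(-2T, \L_n(k, -2T))$ to $(2T, \L_n(k, 2T))$ conditioned only on the $B(k, \cdot)$-specific corner and pole inequalities. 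Stage two: compare $\tilde J(k, \cdot)$ to the reference process in (iii). Since $\phistart \geq \L_n(k, -2T)$ and $\phiend \geq \L_n(k, 2T)$ by hypothesis, the two unconditioned Brownian bridges are endpoint-wise ordered, and the conditioning events imposed are the same increasing events (being above the identical poles and corner values). Preservation of stochastic order under conditioning on a common increasing event, for mean-shifted Gaussian bridges, gives that the reference dominates $\tilde J(k, \cdot)$ and hence $J(k, \cdot)$. Claim (iv) follows the same template after an initial application of Lemma~\ref{l.bb independent decomposition property} at the value $J(k, x)$ to reduce to the subinterval $[x, \mf r]$.

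The main obstacle is the monotone-likelihood-ratio step of stage one for (iii) and (iv): the full ordering event involves $k$ coupled inequalities at each of two endpoints, so correctly integrating out the upper-curve values to display the decreasing dependence on $(B(k, \mf l), B(k, \mf r))$ requires some care with the joint Gaussian structure, particularly in accounting for the pole constraints the upper curves themselves must satisfy. A secondary subtlety is to verify cleanly that preservation of stochastic order under increasing-event conditioning applies to the specific mean-shifted Gaussian bridges conditioned on pole and corner inequalities that arise here, which we expect can be handled via a direct density-ratio computation exploiting the non-negativity of the mean shift.
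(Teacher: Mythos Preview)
For parts (i) and (ii), your approach matches the paper's: condition on the relevant endpoint values of $J(k,\cdot)$, note they are at least $-T^2$ on $\fav$ (via the corner bounds in $\mathsf F_3$), apply Lemma~\ref{l.bb independent decomposition property} together with the fact that conditioning a Brownian bridge on pole-clearing events only increases it, and finish with endpoint monotonicity. The paper in fact derives (i) from (ii) by taking $x=\mf r$ and averaging.

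For (iii) and (iv), your plan is correct in spirit but the paper takes a shorter route that avoids the monotone-likelihood-ratio computation you flag as the main obstacle. Rather than integrating out all upper curves $B(1,\cdot),\ldots,B(k-1,\cdot)$ at once and arguing that the resulting weight is decreasing in $(B(k,\mf l),B(k,\mf r))$, the paper simply conditions on the process $J(k-1,\cdot)$. Given $J(k-1,\cdot)$, the ordering constraints at $\mf l$ and $\mf r$ reduce to requiring $B(k,x)$ to lie in a \emph{finite} interval $I_x$ with $\inf I_x=\ggenmin_k$ for $x\in\{\mf l,\mf r\}$; the law of $J(k,\cdot)$ is then a Brownian bridge from $(-2T,\L_n(k,-2T))$ to $(2T,\L_n(k,2T))$ conditioned on the pole constraints and on $B(k,x)\in I_x$. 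Enlarging $I_x$ to the half-line $[\ggenmin_k,\infty)$ can only increase the process, by the same decomposition-plus-monotonicity argument via Lemma~\ref{l.bb independent decomposition property}, and averaging over $J(k-1,\cdot)$ preserves the domination. Raising the endpoints to $\phistart,\phiend$ is then straightforward. Your MLR route would arrive at the same intermediate object (your $\tilde J(k,\cdot)$ is exactly the paper's $\B(I'_{\mf l},I'_{\mf r})$), but the conditioning-on-$J(k-1,\cdot)$ trick replaces the density-ratio analysis, including its interaction with the upper curves' own pole constraints, by a direct interval-enlargement step at two points.
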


\begin{proof}
Lemma~\ref{l.J stochastic dominanation}(i) follows from (ii) by taking $x=\mf r$ and averaging, and noting that, on $\fav$, $J(k,\mf r)\geq \gymin_k \geq -T^2$. So we prove (ii); in fact, we prove the first part of (ii) as the second part is analogous.

Let $B$ be a Brownian bridge from $(\mf l, J(k,\mf l))$ to $(x, J(k,x))$.  Letting $m = |P|$ and $x_1, \ldots, x_{m}$ be the elements of $P$, we may apply Lemma~\ref{l.bb independent decomposition property} to decompose $B$ at the elements of $P$. Since, conditionally on $J(k,\mf l)$ and $J(k,x)$, the distribution of $J(k, \cdot\,)$ is that of $B$ conditioned on the values of $B$ at elements of $P$ being sufficiently high, the decomposition provided by Lemma~\ref{l.bb independent decomposition property} yields that the law of $J(k,\cdot\,)$, conditionally on its values at $\mf l$ and $x$, stochastically dominates that of $B$. Since, on $\fav$, $J(k,\mf l) \geq \gxmin_k \geq -T^2$, it is clear that $B$ stochastically dominates the Brownian bridge described in Lemma~\ref{l.J stochastic dominanation}(i), yielding the claim.

Now we turn to (iii). Given two intervals $I_x$ for $x\in\{\mf l, \mf r\}$, consider the restriction to $[\mf l, \mf r]$ of a Brownian bridge $B$ with starting point $(-2T, \L_n(k,-2T))$ and ending point $(2T, \L_n(k, 2T))$, conditioned on $B(x)\geq \L_n(k+1, x)$ for all $x\in P$, and on $B(x)\in I_x$ for $x\in\{\mf l, \mf r\}$. Call this law $\B(I_{\mf l}, I_{\mf r})$. 

Conditionally on $J(k-1,\cdot\,)$ as a process on $[\mf l, \mf r]$, the law of $J(k,\cdot)$ is $\B(I_{\mf l}, I_{\mf r})$ with $I_x$ a \emph{finite} interval determined by $J(k-1,\cdot\,)$, in such a way that $\inf I_x = \ggenmin_k$, for $x\in\{\mf l, \mf r\}$. Using the same decomposition from Lemma~\ref{l.bb independent decomposition property}, it is clear that $\B(I_{\mf l}, I_{\mf r})$ is stochastically dominated by $\B(I'_{\mf l}, I'_{\mf r})$, where $I'_x = [\ggenmin_k, \infty)$ for $x\in \{\mf l, \mf r\}$. 

Averaging over $J(k-1,\cdot\,)$, we find that the law of $J(k,\cdot\,)$ is stochastically dominated by $\B(I'_{\mf l}, I'_{\mf r})$. On $\fav$, the law of the process described in Lemma~\ref{l.J stochastic dominanation}(iii) is that of the Brownian bridge $B$ in the definition of $\B(I'_{\mf l}, I'_{\mf r})$ with the endpoints shifted vertically upwards, which clearly maintains the described stochastic domination.

The proof of (iv) is along the same lines as (iii).
\end{proof}

\subsection{Summary of the jump ensemble} The definition of the jump ensemble was rather involved, and here we provide a quick summary of the main aspects of its definition which the reader should keep in mind in order to understand the arguments leading to the proof of Theorem~\ref{t.prob estimate for J}.

The jump ensemble has three parameters, $\epsilon>0$, $k\in \N$, and $\dip>0$; and is defined on an interval $[-2T,2T]$, where $T=D_k(\log\epsilon^{-1})^{1/3}$, with $D_k$ given by \eqref{e.D_k value}.  The interval $[-2T,2T]$ contains a random subinterval $[\mf l, \mf r]$. The jump ensemble $J$ is the restriction to $[\mf l,\mf r]$ of a collection of $k$ independent Brownian bridges, the $i^{\text{th}}$ from $(-2T, \L_n(i, -2T))$ to $(2T, \L_n(i,2T))$, conditioned on $\overline J(x) - \gbarxmin \in (0,\infty)^k_>$ for $x\in\{\mf l,\mf r\}$ and on $J(p) > \L_n(k+1, p)$ for all $p$ in the pole set $P$. The elements of the pole set $P$ have a minimum separation of $\dip$, and are a subset of $[\mf l, \mf r]$. This interval is defined in terms of the underlying curve $\L_n(k+1,\cdot\,)$, and the relevant consequence of its definition is that the slope (of each linear segment) of the $\tent$ map (which linearly interpolates the points $(p,\L_n(k+1,p))$) lies in $[-4T,4T]$.

In our application, $\epsilon$ is set according to the Brownian motion probability of the event under consideration, and $\dip = 5d$.

We gain control over several of the random objects present in the above paragraph on a high probability favourable event $\fav$. On this event, we have that $[-T/2, T/2]\subseteq [\mf l,\mf r]$; that $\gbarxmin, \gbarymin \in [-T^2, T^2]^k$; that $\L_n(k+1,x) \in [-T^2, T^2]$ for $x\in[-T,T]$; and that $ \L_n(i, x) \in T^2[-2\sqrt 2-1, -2\sqrt 2+1]$ for $(i,x)\in\intint{k}\times\{-2T,2T\}$.

In the next section we discuss the conceptual framework underlying the proof of Theorem~\ref{t.prob estimate for J}.





\section{A conceptual framework in terms of costs}\label{s.heuristics for proof}

\begin{notation}
We will use the notation $A\lesssim B$ to indicate that there exists a constant $G<\infty$ which is independent of $\epsilon$, $k$, and $d$ such that $A\leq GB$. The value of $G$ may however vary from line to line.
\end{notation}

We now begin discussing the ideas underlying the approach of the proof of Theorem~\ref{t.prob estimate for J}. We fix the curve index $k$ that we are studying, and for the proof of Theorem~\ref{t.prob estimate for J} in this section and in Chapter~\ref{ch.the proof} adopt the abuse of notation 
$$J(\,\cdot\,) = J(k,\cdot\,),$$
which we will refer to as the \emph{jump curve}; none of our arguments will involve the other curves of the full jump ensemble.

 The overarching conceptual framework is one of \emph{costs} that a certain joint density of the jump curve $J$ must satisfy. This section will set up the quantities the proof will be working with, introduce the costs these quantities must interact with, and conclude by showing that the proof of Theorem~\ref{t.prob estimate for J} can be reduced to showing that these costs are met.

We would like to study the probability that the jump curve $J$ lies in some $A\subseteq \mc C_{0,*}([-d,d], \R)$, where the probability of $A$ under the law of Brownian motion is $\epsilon$. As mentioned at the end of the last section, the parameters of the jump ensemble are $\epsilon$ and $k$, with $\dip = 5d$. Our aim is to get a bound of $\epsilon$ on the jump curve probability of $A$, up to a constant multiplicative factor which is subpolynomial in $\epsilon^{-1}$. 

Recall that the jump ensemble involves the notion of a pole set. We are provided control over the minimum separation of consecutive poles, and so by our choice of $\dip = 5d$ we ensure that there is at most one pole in the larger interval $[-2d,2d]$. Thus there are two possibilities: either the interval $[-2d, 2d]$ contains a single pole, or no poles. 

In the case where there are no poles in $[-2d,2d]$, there can of course be poles which are arbitrarily close to $[-2d, 2d]$, and we will see in our analysis that our bounds become too weak if a pole is too close to a point under consideration. This is why we consider the presence of poles in $[-2d, 2d]$ even though the interval of interest is $[-d,d]$: when there is no pole in $[-2d,2d]$ we can focus the analysis at the points $\pm d$, which are then ensured a distance of at least $d$ from the nearest pole. When there \emph{is} a pole, we will adopt a trick that will be described later to allow us to step back from it, again giving us an order $d$ distance from it.

In the scenario where there is no pole in $[-2d,2d]$, the jump curve is essentially just a Brownian bridge on $[-2d,2d]$, and the argument for the probability comparison we claim is much more straightforward. So for the purposes of the exposition in this section we discuss the more difficult case where there is a pole in $[-2d,2d]$. 

By the Brownian Gibbs property, we have a direct way to write the probability of $A$ for the jump curve, given its endpoint values $J(-2d)$ and $J(2d)$, in terms of the Brownian bridge probability of $A$ between those endpoint values. In some sense, our task is to show that this Brownian bridge probability becomes a Brownian motion probability when we take expectations over $J(-2d)$ and $J(2d)$.

\subsection{Translating between Brownian motion and Brownian bridge}
It is instructive to look at how Brownian motion probabilities translate to Brownian bridge probabilities. We remind the reader of a useful property of Brownian motion: if $B$ is a Brownian motion, then the distribution of $B$ on an interval $[x_1,x_2]$, conditionally on $B(x_1)$ and $B(x_2)$, is that of Brownian bridge from $(x_1, B(x_1))$ to $(x_2, B(x_2))$. 

Now suppose we are again working on $[-2d,2d]$, and suppose $B$ is a rate one Brownian motion started from $x$-coordinate $-4d$ according to some probability distribution $\mu$. Right now it is not clear why we need the device of the measure $\mu$, but we will retain it to give ourselves some freedom which, by proper choice at a later point, will make the requirements more lenient. At present it can be thought of as, and indeed will later be defined to be, the uniform measure on a very large interval centred at 0.

On a heuristic level, the probability that $B(-4d) \in [y,y+ \mathrm dy]$ and $B(4d) \in [z, z+\mathrm dz]$ is given by $(16\pi d)^{-\frac{1}{2}}\exp(-(y-z)^2/16d)\,\mathrm d\mu(y)\,\mathrm dz$. By the Markov property, $B(\,\cdot\,) - B(-d)$ is standard Brownian motion started at coordinates $(-d,0)$ when restricted to $[-d,\infty)$. Now, recall that $\B^{[-d,d]}_{0,*}$ is the law of standard Brownian motion on $[-d,d]$, started at coordinates $(-d,0)$, and that $\B^{[-4d,4d]}_{y,z}$ is the law of rate one Brownian bridge from $(-4d,y)$ to $(4d,z)$. Then for $A\subseteq \mc C_{0,*}([-d,d], \R)$, we have the calculation
\begin{align}
\B^{[-d,d]}_{0,*}\big(A\big) &= \P\big(B(\,\cdot\,)- B(-d)\in A\big) \nonumber\\
 &= \E\Big[\P\big(B(\,\cdot\,) - B(-d)\in A \mid B(-4d), B(4d)\big)\Big] \nonumber\\
&= \frac{1}{\sqrt{16\pi d}}\int_{-\infty}^{\infty}\int_{-\infty}^{\infty} \B^{[-4d, 4d]}_{y, z}\big( \tilde A\big)\cdot e^{-(y-z)^2/16d} \, \mathrm d\mu(y)\, \mathrm dz,  \label{e.bm probability expression}
\end{align}
%
where $\tilde A$ is the set of functions $f$ in $\mc C_{*,*}([-4d, 4d], \R)$ such that $f(\,\cdot\,) - f(-d)$, regarded as a function with domain $[-d,d]$, lies in $A$. As we said before, $(16\pi d)^{-\frac{1}{2}}\exp(-(y-z)^2/16d)$ is exactly the conditional density of $B(4d)$ at $z$ given that $B(-4d)$ is $y$. A very similar calculation holds for the jump curve, which points us to what we should try to prove. Let $f_J(y,z)$ be the joint density of 
\begin{align*}
Y^* &:= J(-4d) \quad \text{and} \quad Z^* := J(4d)
\end{align*}
 at the point $(y,z)\in\R^2$; the $*$ is an adornment that will be removed in the final definition of $Y$ and $Z$ that will be used in our actual arguments.

 Suppose now, for simplicity, that the pole $p$ in $[-2d,2d]$ is at zero and has height zero (i.e., $p = 0$ and $\tent(p) = 0$). Then, on $\fav \cap \{P\cap[-2d,2d] \neq \emptyset\}$,
\begin{equation}\label{e.bb to bm probability}
\begin{split}
\PF\big(J(\,\cdot\,) - J(-d) \in A\big) &= \EF\Big[\PF\Big(J(\,\cdot\,) - J(-d) \in A \, \Big|\,  J(-4d), J(4d)\Big)\Big] \\
&= \EF\Big[ \B^{[-4d,4d]}_{J(-4d),J(4d)} \big(\tilde A \mid J(0)\geq 0\big)\Big] \\
&\leq \int_{-\infty}^{\infty}\int_{-\infty}^{\infty} \frac{\B^{[-4d,4d]}_{y, z}\big( \tilde A\big)}{\B^{[-4d,4d]}_{y, z}\big( J(0) \geq 0\big)}\cdot f_J(y,z) \, \mathrm dy\, \mathrm dz. 
\end{split}
\end{equation}
In essence, our aim is to run this calculation forward, and the previous one backwards, in order to get from the jump curve probability of $A$ to the standard Brownian motion probability of $A$. Then by direct comparison of the integrands, in order for the last line of the second calculation to be roughly equal to the last line of the first calculation we would need
\begin{align*}
f_J(y, z)\  \text{``}&=\text{"}\  \B^{[-4d,4d]}_{y, z}\big( J(0) \geq 0\big) \cdot d^{-\frac{1}{2}} \exp\left(-\frac{(y-z)^2}{16d}\right) \cdot \frac{\mathrm d\mu}{dy}(y)\\
& = \P\left(N\left(\frac{y+z}{2}, 2d\right) \geq 0\right) \cdot d^{-\frac{1}{2}} \exp\left(-\frac{(y-z)^2}{16d}\right)  \cdot \frac{\mathrm d\mu}{dy}(y).
\end{align*}
%
%
Of course, we will have to provide ourselves some error margins in order to succeed. This is hiding in the ``$=$" symbol above, which means that the left side is bounded above by the right side, possibly multiplied by a constant of the form $\exp(G(\log\epsilon^{-1})^{5/6})$, which we will often refer to as the \emph{leeway factor}; the notation of ``$=$'', however, we use only in this instance. Since $T = D_k(\log\epsilon^{-1})^{1/3}$, we will also often refer to constants of the form $\exp(GT^{5/2})$ as leeway factors. 

So equivalently, what we require is that
\begin{equation}\label{e.the two costs}
f_J(y, z) \cdot \left[\P\left(N\left(\frac{y+z}{2}, 2d\right) \geq 0\right)\right]^{-1} \cdot d^{\frac{1}{2}}\exp\left(\frac{(y-z)^2}{16d}\right) \leq \exp\left(G(\log\epsilon^{-1})^{5/6}\right).
\end{equation}
Notice that we have temporarily ignored the issue of choosing $\mu$ and the slightly stronger demand (than the above inequality) that will arise from its Radon-Nikodym derivative with respect to Lebesgue measure. We will return to this in a few paragraphs, but only promise here that the cost will be polynomial in $\log\epsilon^{-1}$ instead of exponential, and so will not substantively affect the analysis. 

\begin{remark}\label{r.y z convention}
 This correspondence between the random variables, $Y$ and $Z$ (temporarily with the $*$ adornment added), and the arguments of their joint density, $y$ and $z$, is one that will be maintained throughout the proof of Theorem~\ref{t.prob estimate for J}. Sometimes we will also refer to the marginal densities of $Y$ or $Z$. By an abuse of notation (as $Y$ and $Z$ do not necessarily have the same distribution), we will refer to both densities as $f_J$, distinguishing whether we mean that of $Y$ or $Z$ based on whether the argument is $y$ or $z$. This will not cause any confusion as at no point will we refer to a marginal density at a specific value. Similarly, $f_J(z\mid y)$ will be the conditional density of $Z$ given $Y=y$, evaluated at the point $z$, and so on.
 \end{remark}

\subsection{The vault and slope costs}
With this simplification, let us focus on the two terms multiplying $f_J(y,z)$ on the left-hand side of \eqref{e.the two costs}. They arise from two conceptually distinct sources. The first of these two terms comes from the potential difficulty a Brownian bridge faces in order to jump or \emph{vault} over the pole, a task which $J$ is conditioned to accomplish. Accordingly we refer to this factor as the \emph{vault} cost, and it will be denoted by $\jcost^*$ (the $*$ being again an adornment that will be removed in the final corrected version of $\jcost$ which we will use). The second of the two terms, on the other hand, is the potential difficulty faced by a Brownian motion to attain the slope specified by $y$ and $z$, which may be equivalently be thought of as an increment, across an interval of length~$4d$. We refer to this as the \emph{slope} cost, and denote it by $\scost^*$. Thus,
\begin{equation}\label{e.V* and S*}
\begin{split}
\jcost^* &:=  \left[\P\left(N\left(\frac{y+z}{2}, 2d\right) \geq 0\right)\right]^{-1}\\
\scost^* &:= d^{\frac{1}{2}}\cdot\exp\left(\frac{(y-z)^2}{16d}\right).
\end{split}
\end{equation}
Note that $\jcost^*$ behaves differently depending on the values of $y$ and $z$; for example, if $y+z$ is  positive, the probability is bounded below by a constant, and so $\jcost^*$ is bounded above by a constant and is easily managed by the margin of error we have provided. On the other hand, if $y+z$ is negative, $\jcost^*$ can be seen to be roughly $d^{\frac{1}{2}}\exp((y+z)^2/16d)$, thus posing a much more serious demand. More precisely,
\begin{equation} \label{e.jcost* simplified form}
\jcost^* \lesssim \begin{cases}
1 & y+z > 0\\
d^{\frac{1}{2}}\cdot\exp\left(\frac{(y+z)^2}{16d}\right) & y+z \leq 0,
\end{cases}
\end{equation}
Our analysis will later break into cases based on this fact.

Thus, \eqref{e.the two costs} says that roughly what we need to show, for some $G<\infty$, is that
$$f_J(y,z)\cdot \jcost^*\cdot\scost^*\leq \exp\left(G(\log\epsilon^{-1})^{5/6}\right).$$

The above heuristic description was idealised to highlight the main features of the approach, but is in essence correct. We now discuss which aspects of the description change in the actual approach. There were three simplifying assumptions: that the pole position $p$ equals zero; that the height of the pole is zero; and the postponement of the choice of $\mu$. In addressing each of these simplifications, we will come to the final quantities $\jcost$ and $\scost$, and we will see that $\jcost$ and $\scost$ respectively equal $\jcost^*$ and $\scost^*$ up to the leeway factor.

\subsection{Addressing the simplifications}\label{s.heuristic.addrssing simplifications}
\subsubsection*{Pole position $p=0$.} The first simplifying assumption we made in the heuristic description was that the pole in $[-2d,2d]$ lies at $0$. In general, of course, we have very limited control over the pole position as it is determined by the $(k+1)^{\text{st}}$ curve of the original line ensemble. This point will be addressed essentially by changing our frame of reference horizontally.

Call the pole position $p \in [-2d,2d]$ and suppose $p\neq 0$. The coefficient of $1/2$ for each of $y$ and $z$ in $\jcost^*$ was due to the pole position of 0 being equidistant from $-4d$ and $4d$. One option would be to maintain the random variables $Y^*$ and $Z^*$ to be the values at $\pm4d$, in which case the coefficients would not be $1/2$  but $\lambda$ and $1-\lambda$ for some $\lambda \in[0,1]$. While the subsequent analysis could possibly be adapted for this case, this would introduce an undesirable level of complication to the formulas. So to maintain the symmetric coefficient of $1/2$, we instead have $Y^*$ and $Z^*$ be the values at $p-4d$ and $p+4d$, i.e., $J(p-4d)$ and $J(p+4d)$ respectively. (We will introduce a further modification shortly which will be the final definition of $Y$ and $Z$, without the $*$ adornment.)

While on some level this is merely a trick, it is one which leads to very useful simplifications. At a technical level, though $p$ is a random variable, it is an $\F$-measurable one, and so this trick is sound: the relevant point is that, given the $\F$-data, there is no obstacle to writing the law of the jump curve on $[-2d,2d]$ as the marginal of Brownian bridge on $[p-4d, p+4d]$, between correctly distributed endpoints, which is conditioned to jump over the pole at $p$.

\subsubsection*{Pole height $\text{Tent}\,(p) = 0$.} Now we address the height of the pole. Here also we essentially employ a change in the frame of reference. At this point in our discussion, we have $Y^*$ and $Z^*$ being the values of $J(p-4d)$ and $J(p+4d)$ themselves, i.e., the deviation from $0 = \tent(p)$. When $\tent(p) \neq 0$, the obvious choice is to let $Y$ and $Z$ respectively represent the deviations of $J(p-4d)$ and $J(p+4d)$ from $\tent(p)$. But to make certain derivations slightly simpler in the sequel, we instead let the final definitions of $Y$ and $Z$ be the respective deviations of $J$ from $\tent$ at $p-4d$ and $p+4d$, i.e.,
\begin{equation}\label{e.Y Z defn}
\begin{split}
Y:= J(p-4d) - \tent(p-4d),\\
Z:= J(p+4d) - \tent(p+4d).
\end{split}
\end{equation}
See Figure~\ref{f.y z defn}. This is the final definition of $Y$ and $Z$ which will be maintained for the rest of the argument, at least on the event that $P\cap[-2d,2d]\neq \emptyset$. (On the other event, where there is no pole, they will have conceptually analogous but different definitions.)

\begin{figure}
\centering {\epsfig{file=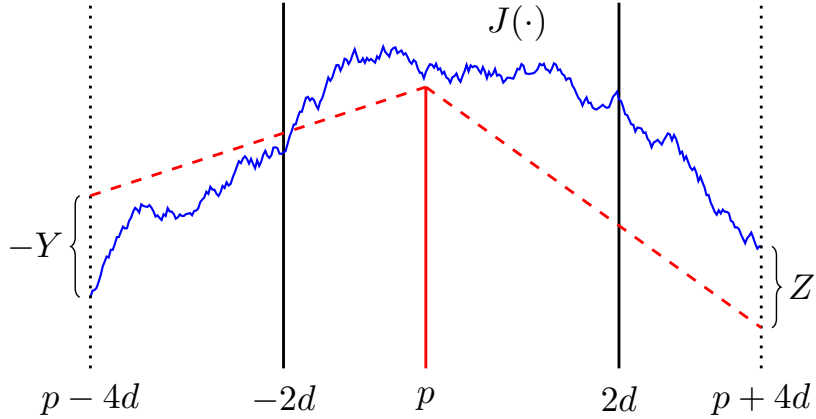, width=0.67\textwidth}}
\caption{Illustrating the definition of $Y$ and $Z$ when $P\cap[-2d,2d]\neq\emptyset$. The $\tent$ map is depicted as the piecewise linear function drawn in dashed red. The single pole $p$ in $[-2d,2d]$ is depicted as a vertical red line. The jump curve is blue. In this figure $Y$ is negative as $J$ is below $\tent$ at $p-4d$, and thus the distance indicated is $-Y$. The quantity $Z$ is positive as $J$ is above $\tent$ at $p+4d$.}\label{f.y z defn}
\end{figure}

\subsubsection*{The choice of $\mu$.} We still have to make a suitable choice for $\mu$. We start by providing some intuition as to the role of $\mu$.

Since our approach to move from jump curve probabilities \eqref{e.bb to bm probability} to Brownian motion probabilities \eqref{e.bm probability expression} is somewhat crude, in the sense that we are trying to directly compare the integrands to conclude that the integrals are comparable, in order to succeed we cannot allow the integrand in \eqref{e.bm probability expression} to be zero when the integrand in \eqref{e.bb to bm probability} is not small. This suggests that the support of $\mu$ should be the support of the law of $Y$, i.e., the support of $f_J(y)$ (recall from Remark~\ref{r.y z convention} that this is the marginal density of $Y$). However, it is reasonable to assume that $f_J(y)>0$ on the entire real line, and we run into a problem when we try to find a full support measure $\mu$ that also satisfies the constraints imposed by the costs $\jcost$ and $\scost$. This will be shown in the discussion in the following paragraphs. Heuristically, the solution will be to let $\mu$ have support only where $f_J(y)$ is not too small in some sense that we will specify.

From the above expressions for $\jcost^*$ and $\scost^*$, we see that, in the worst case, the total cost to be paid is $\exp\left(\frac{y^2}{8d}+\frac{z^2}{8d}\right).$ This tells us that we cannot afford for $\frac{\mathrm d\mu}{\mathrm dy}$ to be too small anywhere, as the additional cost corresponding to the choice of $\mu$ will be the reciprocal of this derivative. The logic behind this inference is as follows: in the worst case, we have (without loss of generality) $y<0$, as well as the condition $J(0)\geq 0$, where for the sake of simplified discussion we have again assumed $p=0$ and $\tent(p)=0$. A Brownian bridge forced to make a large jump (from $y$ to $0$) in unit order time will, because of the difficulty, make the required jump with very little extra margin. So heuristically we may think of the $J(0)\geq 0$ condition as being $J(0) = 0$. But the density of a Brownian motion which is 0 at 0 having value $y$ at $-4d$ and $z$ at $4d$ is exactly $(8\pi d)^{-1}\exp\left(-y^2/8d -z^2/8d\right)$. Thus we expect that this is the best density bound we can hope for $J$ as well, which means any extra requirement imposed by $\frac{\mathrm d\mu}{\mathrm dy}$ must be absorbable in the leeway factor of $\exp\left(G(\log \epsilon^{-1})^{5/6}\right)$.

In other words, we require 
\begin{align}
\frac{\mathrm d\mu}{\mathrm dy}(y) \geq \exp\left(-G(\log \epsilon^{-1})^{5/6}\right). \label{e.mu rn deriv reqd lower bound}
\end{align}
This bound cannot hold for all $y$ for the density of a probability measure. But a closer look at \eqref{e.bb to bm probability} shows a modification we may make to that calculation: we can look for the density bound on $f_J(y,z)$ that we have been discussing for $y$ and $z$ in a \emph{good} region, and find a separate argument for why the contribution of the integral from the bad regions is $O(\epsilon)$. More precisely, call the good region $\mc G_R^{(1)} \subseteq \R^2$ ($R$ is a parameter we will set later, and the $1$ in the superscript refers to the fact that this is the first case, when a pole is present), and let $\tilde y = y+\tent(p-4d)$, $\tilde z = z+\tent(p+4d)$; this is so that $Y=y$ implies $J(p-4d)=\tilde y$, and similarly $Z=z$ implies $J(p+4d)=\tilde z$. Then, on $\fav \cap \{P\cap[-2d,2d] \neq \emptyset\}$,
\begin{align}
\PF\big(J(\,\cdot\,) - &J(-d) \in A\big) \label{e.main calculation}\\
&= \EF\Big[\PF\big(J(\,\cdot\,) - J(-d) \in A \mid J(p-4d), J(p+4d)\big)\Big]\nonumber \\
&\leq \EF\Big[\PF\big(J(\,\cdot\,) - J(-d) \in A \mid J(p-4d), J(p+4d)\big)\cdot \one_{(Y,Z)\in \mc G_R^{(1)}}\Big] + \PF\big((Y,Z) \not \in \mc G_R^{(1)}\big)\nonumber\\
&= \EF\Big[ \B^{[p-4d, p+4d]}_{J(p-4d),J(p+4d)} \big(\widetilde A \mid J(p)\geq \tent(p)\big)\cdot \one_{(Y,Z)\in \mc G_R^{(1)}}\Big] + \PF\big((Y,Z) \not \in \mc G_R^{(1)}\big)\nonumber\\
&\leq \iint_{\mc G_R^{(1)}} \frac{\B^{[p-4d, p+4d]}_{\tilde y, \tilde z}\big(\widetilde A\big)}{\B^{[p-4d, p+4d]}_{\tilde y, \tilde z}\big( J(p) \geq \tent(p)\big)}\cdot f_J(y,z) \, \mathrm dy\, \mathrm dz  + \PF\big((Y,Z) \not \in \mc G_R^{(1)}\big),\nonumber
\end{align}
where, recall, $\tilde A$ is the set of functions $f$ in $\mc C_{*,*}([-4d, 4d], \R)$ such that $f(\,\cdot\,) - f(-d)$, regarded as a function with domain $[-d,d]$, lies in $A$. Thus we see that, if we can show the second term in the final displayed line is $O(\epsilon)$, then we only require the density bounds associated with $\jcost$ and $\scost$ for $(y,z)\in \mc G_R^{(1)}$, and the same holds true for the lower bound on $\frac{\mathrm d\mu}{\mathrm dy}(y)$. It will turn out that including the condition $(y,z) \in [-RT^{3/2}, RT^2]^2$ in the definition of $\mc G_R^{(1)}$ allows us to obtain the $O(\epsilon)$ bound on the second term. (We will remark later in Section~\ref{s.heuristics.final costs} on why we require the lower bound to be $-RT^{3/2}$ and not $-RT^2$.) So we only need to meet the condition \eqref{e.mu rn deriv reqd lower bound} for $y,z\in[-RT^{3/2}, RT^2]$. An easy choice which meets this requirement is the uniform measure on $[-RT^2, RT^2]$, and so we let $\mu$ be this measure. Thus, we set
$$\frac{\mathrm d\mu}{\mathrm dy}(y) = (2RT^2)^{-1},$$
for $y\in[-RT^2,RT^2]$. We will treat separately the corresponding cost, which is equal to $2RT^2$, and not include it in either $\jcost$ or $\scost$.

\subsubsection*{The good regions}
We now give the definition of $\mc G_R^{(1)}$ as a subset of $\R^2$.
\begin{equation}\label{e.G_R^1 defn}
\mc G^{(1)}_R = \left\{(y,z) \in \R^2 \ : \  \parbox[c]{1.3in}{\centering
                       $y \in (-RT^{3/2}, RT^{2})$,\\
                       $z\in (-RT^{3/2}, RT^{2}),$\\
                       $|y-z| < 2RT^{3/2}$}\right\}.
\end{equation}
We have included an extra condition that $|y-z| \leq 2RT^{3/2}$. To have the $O(\epsilon)$ upper bound on the probability of $(Y,Z)\in \mc G_R^{(1)}$ means that the increment of $J$ across an interval of $8d$ must be bounded by $O(T^{3/2}) = O((\log \epsilon^{-1})^{1/2})$ with probability at least $1-\epsilon$. This is plausible, on the basis that the probability that a Brownian motion has an increment of size greater than $O((\log\epsilon^{-1})^{1/2})$ over a unit order interval is polynomial in $\epsilon$. In fact, we have the following bound on the event that $(Y,Z)\in \mc G_R^{(1)}$.
\begin{lemma}\label{l.bad event 2}
We have for $R\geq 3$ and $d\geq 1$
$$\PF\Big((Y,Z)\not\in \mc G^{(1)}_R\Big)\cdot\one_{\fav, P\cap [-2d,2d]\neq \emptyset} \lesssim \left(\epsilon^{R^2D_k^3/8d} + \epsilon^{(R-3)^2D_k^3/2}\right)\cdot\exp\left(13R^2D_k^{5/2}\left(\log\epsilon^{-1}\right)^{5/6}\right).$$
\end{lemma}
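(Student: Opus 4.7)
The plan is to decompose the bad event by a union bound into the five elementary failure modes corresponding to the three constraints in the definition of $\mc G^{(1)}_R$: the upper one-point events $\{Y\geq RT^{2}\}$ and $\{Z\geq RT^{2}\}$, the lower one-point events $\{Y\leq -RT^{3/2}\}$ and $\{Z\leq -RT^{3/2}\}$, and the increment event $\{|Y-Z|\geq 2RT^{3/2}\}$, and to bound each summand separately via the stochastic domination Lemma~\ref{l.J stochastic dominanation} together with the Brownian-bridge decomposition Lemma~\ref{l.bb independent decomposition property} applied at the poles. Since $T=D_k(\log\epsilon^{-1})^{1/3}$, the target exponents become $\exp(-R^2T^3/(8d))$, appropriate for a Gaussian deviation of size $RT^{3/2}$ over an interval of length $4d$, and $\exp(-(R-3)^2T^3/2)$, appropriate for a one-point upper deviation of a Brownian bridge on $[-2T,2T]$ whose endpoints lie in $T^2[-2\sqrt{2}-1,-2\sqrt{2}+1]$ on $\fav$.

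For the upper one-point events I would use Lemma~\ref{l.J stochastic dominanation}(iii) to dominate $J$ above by a Brownian bridge $\widetilde B$ on $[-2T,2T]$ with endpoints in $T^2[-2\sqrt{2}-1,-2\sqrt{2}+1]$ conditioned to lie above every pole and above the corner heights. On $\fav$ one has $\tent(p\pm 4d)\geq -T^2$, so the event forces $\widetilde B(p\pm 4d)\geq(R-1)T^2$. A standard Gaussian tail estimate on the unconditioned bridge (mean bounded below by $-(2\sqrt{2}+1)T^2$, variance at most $T$) produces $\exp(-(R-3)^2T^3/2)$, and the cost of removing the pole/corner conditioning, bounded by the reciprocal of the corresponding probability (which is in turn controlled on $\fav$), fits inside the leeway factor $\exp(O(R^2T^{5/2}))$.

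For the lower one-point events I would condition on $J(p_1)$ and $J(p)$, where $p_1\in P$ is the largest pole with $p_1\leq p-4d$ (existing because $\mf l\in P$ and $\mf l\leq p-4d$ on $\fav$). By Lemma~\ref{l.bb independent decomposition property}, $J$ on $[p_1,p]$ is then a Brownian bridge, and the constraints $J(p_1)\geq\tent(p_1)$ and $J(p)\geq\tent(p)$ together with the linearity of $\tent$ between consecutive poles force the conditional mean of $J(p-4d)$ to exceed $\tent(p-4d)$; the conditional variance is at most $(p-4d-p_1)(p-(p-4d))/(p-p_1)\leq 4d$. A Gaussian tail then yields $\exp(-R^2T^3/(8d))$ uniformly in the conditioning data, and the symmetric argument handles $Z$. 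For the increment event, the slope bound \eqref{e.tent map slope} gives $|\tent(p+4d)-\tent(p-4d)|\leq 32dT\leq RT^{3/2}$ in the stipulated $\epsilon$-regime, reducing the event to $\{|J(p+4d)-J(p-4d)|\geq RT^{3/2}\}$; decomposing this increment at the pole $p$ as $[J(p+4d)-J(p)]+[J(p)-J(p-4d)]$ and conditioning on $J(p_1), J(p), J(p_2)$ expresses it as a sum of two Brownian-bridge increments of variance at most $4d$ apiece, and a Gaussian bound then delivers $\exp(-R^2T^3/(8d))$, with any conditional mean shift absorbed into the leeway.

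The hard part will be the accurate accounting of the Radon--Nikodym cost of stripping off the pole-avoidance and corner conditionings on which $J$ is defined: a naive estimate could be as bad as $\exp(\Omega(T^3))$ in the presence of a single very high pole, which would destroy the target bound. The argument must instead exploit Lemma~\ref{l.bb independent decomposition property} to decouple the pole-conditioning into product form across consecutive poles and combine it with the uniform bounds $|\tent|\leq T^{2}$ on $[-T,T]$ and $|\gbargenmin|\leq T^2$ furnished by $\fav$, so that the total inflation is kept at the intended level $\exp(O(R^2T^{5/2}))=\exp(O(R^2 D_k^{5/2}(\log\epsilon^{-1})^{5/6}))$---which is precisely the leeway factor $\exp(13R^2 D_k^{5/2}(\log\epsilon^{-1})^{5/6})$ appearing in the final bound.
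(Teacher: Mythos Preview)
Your one-point bounds are correct and match the paper's route. For the lower tail you are rediscovering Proposition~\ref{p.negative density}: conditioning on the two neighbouring pole values $J(p^-),J(p)$ makes $J(p-4d)$ Gaussian with mean at least $\tent(p-4d)$ and variance at most $4d$, giving $\exp(-R^2T^3/(8d))$ uniformly. For the upper tail your plan is right, but your ``hard part'' is a non-issue: in Lemma~\ref{l.J stochastic dominanation}(iii) one may take $\phistart=\phiend=2T^2$, and since on $\fav$ every pole height and corner is at most $T^2$, the conditioning event contains $\{\inf \widetilde B > T^2\}$, which has probability $1-e^{-T^3/2}$ by Lemma~\ref{l.brownian bridge inf}. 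The Radon--Nikodym cost is therefore an absolute constant, not $\exp(O(T^{5/2}))$; no decoupling across poles is needed.

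The genuine gap is in your treatment of the increment $\{|Y-Z|\geq 2RT^{3/2}\}$. After conditioning on $J(p_1),J(p),J(p_2)$ the mean of $J(p+4d)-J(p-4d)$ is
\[
\mu=\tfrac{4d}{L_2}\bigl(J(p_2)-J(p)\bigr)+\tfrac{4d}{L_1}\bigl(J(p)-J(p_1)\bigr),
\]
and you have no deterministic control on $J(p_i)-\tent(p_i)\geq 0$. If any of these deviations is of order $T^2$ (which you cannot rule out on $\fav$), then $|\mu|$ can be of order $T^2\gg RT^{3/2}$, and your Gaussian bound yields nothing. Trying to bound $\P(J(p_i)-\tent(p_i)>cRT^{3/2})$ via the upper-dominating bridge also fails: with endpoints $2T^2$ the bridge sits too high to say anything, and with the true endpoints $\L_n(k,\pm 2T)$ the conditioning cost is $\exp(\Omega(T^3))$, exactly the trap you flagged. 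So the phrase ``conditional mean shift absorbed into the leeway'' hides a real obstruction.

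The paper sidesteps this by a different conditioning. It first establishes Lemma~\ref{l.bad event 1} (the weaker region $\mc G_R^{(2)}$), handling the increment via Proposition~\ref{p.scost density}, which in turn rests on Lemma~\ref{l.scost tail bound}: conditioning on \emph{one} endpoint $J(x_2)$ and invoking the lower domination Lemma~\ref{l.J stochastic dominanation}(ii), $J$ on $[\mf l,x_2]$ dominates a bridge from $(\mf l,-T^2)$ to $(x_2,J(x_2))$. The resulting mean shift is only $\sigma^2\cdot\frac{J(x_2)+T^2}{x_2-\mf l}\leq 48RTd$, because the long run-up $x_2-\mf l\geq T/4$ tames the slope. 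A symmetric argument (conditioning on $J(x_1)$) handles the other sign of $s-t$. Lemma~\ref{l.bad event 2} is then deduced from Lemma~\ref{l.bad event 1} together with the lower-tail bound you already have, since $\mc G_R^{(1)}\subset\mc G_R^{(2)}$ and the only extra failure mode is $Y<-RT^{3/2}$ or $Z<-RT^{3/2}$.
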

To prove this, we will actually make use of a similar but weaker statement about a good region $\mc G^{(2)}_R \subseteq \R^2$, the difference being that the bound on the probability that $(Y,Z) \in \mc G_R^{(2)}$ will hold regardless of the presence or absence of a pole in $[-2d,2d]$. We define 
\begin{equation}\label{e.G_R^2 defn}
\mc G^{(2)}_R = \left\{(y,z)\in\R^2 \ : \  \parbox[c]{1.2in}{\centering
                       $y \in (-RT^{2}, RT^{2})$,\\
                       $z\in (-RT^{2}, RT^{2}),$\\
                       $|y-z| < 2RT^{3/2}$}\right\}.
\end{equation}

Our earlier definitions of $Y$ and $Z$ were on the $\F$-measurable event that $P\cap[-2d,2d]\neq \emptyset$ and do not make sense otherwise, as they are deviations of $J$ from $\tent$ at points defined relative to $p$. So on the event that $P\cap[-2d,2d]= \emptyset$, we define
\begin{equation}\label{e.Y Z defn no pole}
\begin{split}
Y &:= J(-d) -\tent(-d)\\
Z &:= J(d) - \tent(d).
\end{split}
\end{equation}
In the following lemma, $Y$ and $Z$ are defined in this case-specific manner.

\begin{lemma}\label{l.bad event 1}
We have that for $R\geq 3$
$$\PF\Big((Y,Z)\not\in \mc G^{(2)}_R\Big)\cdot\one_{\fav} \lesssim \left(\epsilon^{R^2D_k^3/4d} + \epsilon^{(R-3)^2D_k^3/2}\right)\exp\left(13R^2D_k^{5/2}\left(\log\epsilon^{-1}\right)^{5/6}\right).$$
\end{lemma}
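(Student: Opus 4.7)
The plan is to decompose the event $\{(Y,Z) \notin \mc G^{(2)}_R\}$ into three sub-events---lower tails ($Y \leq -RT^2$ or $Z \leq -RT^2$), upper tails ($Y \geq RT^2$ or $Z \geq RT^2$), and large increment ($|Y-Z| \geq 2RT^{3/2}$)---and bound each using the stochastic domination machinery of Lemma~\ref{l.J stochastic dominanation}. Throughout I let $a,b$ denote the two abscissae at which $Y$ and $Z$ are read off, which in either case (pole present or absent) satisfy $a,b \in [-T/2, T/2]$ (on $\fav$, since $[\mf l,\mf r] \supseteq [-T/2,T/2]$ and $|p| \leq 2d \ll T$ for small $\epsilon$) and $|b - a| \leq 8d$. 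On $\fav$ the defining inequalities of $F_1, F_2, F_3$ give uniform control: $|\L_n(k,\pm 2T)| \leq 4T^2$, the lower curve $\L_n(k+1,\cdot)$ and the corner vectors lie in $[-T^2, T^2]$, and hence $|\tent(x)| \leq T^2$ for $x \in [\mf l,\mf r]$.

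For the lower-tail sub-events I apply Lemma~\ref{l.J stochastic dominanation}(i): the law of $J(k,\cdot)$ dominates a Brownian bridge from $(\mf l, -T^2)$ to $(\mf r, -T^2)$ of length at most $4T$. The event $Y \leq -RT^2$ then forces this Brownian bridge to reach height at most $-(R-1)T^2$ at $a$, which is a Gaussian event with variance $\leq T$ after recentering, yielding a probability bound of order $\exp(-(R-1)^2 T^3/8)$; this is absorbed into $\epsilon^{(R-3)^2D_k^3/2}$ (with the gap between $R-1$ and $R-3$ handled by the leeway factor $\exp(13R^2 D_k^{5/2}(\log\epsilon^{-1})^{5/6})$). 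The upper-tail bound is dual: I apply Lemma~\ref{l.J stochastic dominanation}(iii) with $\phistart = \phiend = 4T^2$, so $J(k,\cdot)$ is dominated by a Brownian bridge from $(-2T, 4T^2)$ to $(2T, 4T^2)$ \emph{conditioned} to lie above all poles and above the corner values. The unconditioned bridge lies above $RT^2$ at $a$ with probability $\exp(-(R-O(1))^2 T^3/8)$ by the same Gaussian estimate; the cost of removing the conditioning is handled by a lower bound on its probability of the same form as Proposition~\ref{p.T_3(J) bound}, giving a subdominant correction which fits inside the $\exp(13R^2 D_k^{5/2}(\log\epsilon^{-1})^{5/6})$ leeway.

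For the increment, write
\[
Y - Z = \bigl(J(k,a) - J(k,b)\bigr) + \bigl(\tent(b) - \tent(a)\bigr).
\]
Since $\tent$ is piecewise-affine with slope in $[-4T, 4T]$ by \eqref{e.tent map slope}, the second term is at most $32Td$ in absolute value, which is $o(RT^{3/2})$ once $T \gg d^2$, i.e., once $\epsilon$ is sufficiently small (this is enforced by the upper bound on $\epsilon$ in Theorem~\ref{t.prob estimate for J}). Thus $|Y-Z| \geq 2RT^{3/2}$ implies $|J(k,a) - J(k,b)| \geq (2R - o(1))T^{3/2}$. Using Lemma~\ref{l.J stochastic dominanation}(ii) and (iv) to sandwich $J(k,\cdot)$ between Brownian bridges whose increment across the interval $[a,b]$ of length at most $8d$ is a mean-zero Gaussian of variance at most $8d$, the Gaussian tail gives $\exp(-(2RT^{3/2})^2 / (2 \cdot 8d)) = \exp(-R^2 T^3/(4d)) = \epsilon^{R^2 D_k^3/(4d)}$, the first term in the claimed bound.

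The main obstacle will be the upper-tail estimate, because the conditioning in Lemma~\ref{l.J stochastic dominanation}(iii)---on jumping over every pole and above the corners---is quite elaborate, so one needs a lower bound on the probability of this conditioning event that is itself subdominant in $\epsilon$. Proposition~\ref{p.T_3(J) bound} (or a lighter variant of its proof that only needs passing over the poles and side-interval corner constraints, without the full internal and lower-curve non-intersection on the middle interval) supplies this bound with an exponent of order $(\log\epsilon^{-1})^{2/3} \ll (\log\epsilon^{-1})^{5/6}$, and so can be absorbed into the leeway factor $\exp(13R^2 D_k^{5/2}(\log\epsilon^{-1})^{5/6})$; the arithmetic of combining these three contributions with the slight slack between $R$, $R-1$, and $R-3$ accounts for the factor $13R^2$ in the exponent.
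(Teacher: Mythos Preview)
Your three-way decomposition matches the paper's, and the lower-tail argument via Lemma~\ref{l.J stochastic dominanation}(i) is essentially the same as the paper's. Two genuine differences merit comment.

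\textbf{Upper tail.} You unnecessarily invoke Proposition~\ref{p.T_3(J) bound} to control the conditioning in Lemma~\ref{l.J stochastic dominanation}(iii). The paper instead takes $\phistart=\phiend=2T^2$ and observes that on $\fav$ all poles and corner values lie below $T^2$; since the unconditioned bridge from $(-2T,2T^2)$ to $(2T,2T^2)$ stays above $T^2$ on $[-2T,2T]$ with probability $1-\exp(-T^3/2)$ (Lemma~\ref{l.brownian bridge inf}), the conditioning event has probability bounded below by an absolute constant. Then Lemma~\ref{l.brownian bridge inf} applied to the sup of the bridge gives $\epsilon^{(R-3)^2D_k^3/2}$ directly, with no leeway correction needed.

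\textbf{Increment.} Here your route is genuinely different and more elementary than the paper's. The paper bounds $\PF(|Y-Z|>2RT^{3/2},\,Y,Z\in[-RT^2,RT^2])$ by integrating the \emph{density} bound of Proposition~\ref{p.scost density} (which costs the local-randomization machinery of Section~\ref{s.moderate case}). Your stochastic-domination approach can be made to work and avoids that machinery, but two details need fixing. First, invoking part (iv) is dangerous in the pole case: the dominating bridge in (iv) is conditioned to jump the pole $p\in(a,b)$, and if $J(k,a)$ is low this conditioning forces a large increment, destroying the Gaussian tail. You should use \emph{both halves of (ii)} instead: conditioning on $J(k,b)$ gives $J(k,a)\geq$ a bridge from $(\mf l,-T^2)$ to $(b,J(k,b))$, bounding $J(k,b)-J(k,a)$ from above; conditioning on $J(k,a)$ and using the second half of (ii) gives the other direction. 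Second, the resulting increment is not mean-zero: the mean shift is of order $\tfrac{b-a}{b-\mf l}(|J(k,b)|+T^2)\leq 16d(R+1)T$ on the event $\{|Y|,|Z|\leq RT^2\}$, which is $o(RT^{3/2})$ since $d\leq\sqrt{T}/24$. Expanding the square in the Gaussian tail, the cross term is $O(R^2T^{5/2})$ and fits in the leeway factor; the leading exponent $-(2RT^{3/2})^2/(16d)=-R^2T^3/(4d)$ matches the paper's.
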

While we defer the proofs of these lemmas to Section~\ref{s.easier cases}, let us say a few words about its approach. The statements about $Y$ and $Z$ being bounded above in absolute value by $RT^2$ are proved by stochastically dominating $J$ by an appropriate Brownian bridge on $\fav$, and similarly with the roles of $J$ and the Brownian bridge reversed for the lower bound. The slightly trickier issue is bounding the increment across the interval of length $8d$. We postpone discussing this point.

Based on the form of the statements of Lemmas~\ref{l.bad event 2} and \ref{l.bad event 1}, we set $R$ (somewhat arbitrarily) as
\begin{equation}\label{e.R value}
R= 6\sqrt d,
\end{equation}
so that, for small enough $\epsilon$, and since $d\geq 1$ and $D_k\geq 2$ (see \eqref{e.D_k value}),
$$\PF\Big((Y,Z)\not\in \mc G^{(1)}_R\Big)\cdot\one_{\fav, P\cap [-2d,2d]\neq \emptyset} \leq \epsilon \quad \text{and} \quad \PF\Big((Y,Z)\not\in \mc G^{(2)}_R\Big)\cdot\one_{\fav} \leq \epsilon.$$

\subsection{The final costs}\label{s.heuristics.final costs}
 Having addressed the three simplifications in the heuristic derivation, let us see how the costs have changed. Recall that the random variables $Y$ and $Z$ are respectively the quantities $J(p-4d)-\tent(p-4d)$ and $J(p+4d)-\tent(p+4d)$. As immediately preceding \eqref{e.main calculation}, let $\tilde y = y+\tent(p-4d)$ and $\tilde z = z+\tent(p+4d)$, so that $Y=y$ implies $J(p-4d) = \tilde y$, and similarly $Z=z$ implies $J(p+4d)=\tilde z$. The final definitions of $\jcost$ and $\scost$ are
\begin{align*}
\jcost &:= \B^{[p-4d, p+4d]}_{\tilde y, \tilde z}\Big(J(p) \geq \tent(p)\Big)^{-1}
= \P\left(N\left(\frac{y+z}{2}, 2d\right)\geq \tent(p) - \frac{\tent(p-4d)+\tent(p+4d)}{2}\right)^{-1}\\
\scost &:= d^{\frac{1}{2}}\cdot\exp\left(\frac{1}{16d}(\tilde y-\tilde z)^2\right)
= d^{\frac{1}{2}}\cdot\exp\left(\frac{1}{16d}\left(y-z +\tent(p-4d) - \tent(p+4d)\right)^2\right).
\end{align*}
For $(y,z)\in \mc G_R^{(1)}$, we have that $\scost$ and $\jcost$ differ from $\scost^*$ and $\jcost^*$ by factors which can be absorbed within the leeway factor, which we show now.
We will have, on expanding the second square in the exponent of $\scost$, some extra terms apart from the $(y-z)^2/16d$ present in $\scost^*$. The extra factor is
\begin{equation}\label{e.S vs S* extra factor}
\exp\left(\frac{1}{16d}(\tent(p-4d) - \tent(p+4d))^2 + \frac{1}{8d}(y-z)(\tent(p-4d) - \tent(p+4d))\right).
\end{equation}
In this expression, we see immediately that the first term in the exponent does not cause a problem: since $\tent$ has slope bounded in absolute value by $4T$ (recall \eqref{e.tent map slope}), it follows that $|\tent(p-4d)-\tent(p+4d)| \leq 32Td$, so that 
$$\frac{1}{16d}(\tent(p-4d)-\tent(p+4d))^2\leq 64T^2d,$$
 which is well below $O((\log \epsilon^{-1})^{5/6})$. We need the same bound to hold for the second term in the exponent in \eqref{e.S vs S* extra factor} as well. Again using that $\tent$ has absolute value of slope bounded by $4T$, and that $|y-z| \leq 2RT^{3/2}$ on $\mc G_R^{(1)}$, what we find is
$$(y-z)(\tent(p-4d) - \tent(p+4d))) \leq 2RT^{3/2}\cdot 32Td = 64RT^{5/2}d.$$
This is why we included the bound on $|y-z|$ of order $T^{3/2}$ in the definition of $\mc G_R^{(1)}$. In other words, we have that
\begin{equation}\label{e.S is approx S*}
S\lesssim S^*\cdot \exp\left(64T^2d + 8RT^{5/2}\right) \leq d^{\frac{1}{2}}\cdot\exp\left(\frac{1}{16d}(y-z)^2+10RT^{5/2}\right),
\end{equation}
the last inequality since $d\leq \sqrt{T}/24$, $R\geq 6$ from \eqref{e.R value}, and $64/24\leq 3$.

\begin{remark}
We here used that $d\leq \sqrt{T}/24$, which is equivalent to the assumption $\epsilon \leq \exp\left(-(24)^6 d^6/D_k^3\right)$ made in Theorems~\ref{t.airytail.ln} and \ref{t.prob estimate for J}; we had previously made use of this inequality in the proof of Theorem~\ref{t.airytail.ln}. We will be making use of this inequality many times in the sequel as well, as here to bound expressions of the form $T^2d$ by $T^{5/2}/24$, and also to reduce coefficients when we have a margin to convert a small power of $d$ to $T$.
\end{remark}

Let us now finally analyse $\jcost$ in comparison with $\jcost^*$, which will show us why we included in $\mc G_R^{(1)}$ that $y,z \geq -RT^{3/2}$.

We first focus on the right-hand quantity in the probability expression of $\jcost$, namely
$$\tent(p) - \frac{\tent(p-4d)+\tent(p+4d)}{2}.$$
By the fact that the slope of $\tent$ is bounded in absolute value by $4T$, this quantity is bounded above by 
$$\frac12\times 4d\times 4T + \frac12\times 4d\times 4T = 16Td.$$
Thus $\jcost$ is bounded as
$$\jcost \leq \P\left(N\left(\frac{y+z}{2}, 2d\right)\geq 16Td\right)^{-1}.$$
As in the analysis of $\jcost^*$ in \eqref{e.jcost* simplified form}, we see that this quantity behaves differently depending on the value of $y+z$:
\begin{equation} \label{e.jcost simplified form}
\jcost \lesssim \begin{cases}
\, 1 & y+z > 32Td\\
\, d^{\frac{1}{2}}\cdot\exp\left(\frac{1}{16d}(y+z - 32Td)^2\right) & y+z \leq 32Td.
\end{cases}
\end{equation}
The bound in the worse case, on expanding the exponent and including only the factors which differ from those of $\jcost^*$, is
$$\exp\left(64T^2d - 4(y+z)T\right).$$
Again the first term in the exponent, being $(\log\epsilon^{-1})^{2/3}$, does not cause a problem, and so we turn to the second term. Firstly we see that we have the trivial upper bound of 0 on the term $-4(y+z)T$ when $y+z\geq 0$. When $y+z<0$, we use that $y,z\geq -RT^{3/2}$ from the definition of $\mc G_R^{(1)}$ to see that
$$-(y+z)\cdot T \leq 2RT^{5/2}.$$
We note that any weaker lower bound on $y,z$ would not have been sufficient to obtain an upper bound of order $T^{5/2}$ on the above quantity. From the previous few paragraphs we conclude that, when $y+z\leq 32Td$,
\begin{equation}\label{e.jcost approximate exponential form}
\jcost \lesssim d^{\frac{1}{2}}\cdot \exp\left(\frac{1}{16d}(y+z)^2 +11RT^{5/2}d\right),
\end{equation}
since $d\leq \sqrt T/24$ and $R\geq 1$ imply $64T^2d + 8RT^{5/2} \leq (64/24)T^{5/2} + 8RT^{5/2} \leq 11RT^{5/2}$.

So overall what we have observed is that for $(y,z)\in \mc G_R^{(1)}$, it is true that
\begin{align*}
\jcost &\approx \jcost^*\\
\scost &\approx \scost^*,
\end{align*}
in the sense that the left sides are bounded by the right sides up to multiplication by $\exp\left(G(\log\epsilon^{-1})^{5/6}\right)$. Thus, while we will work with $\jcost$ and $\scost$, the reader is advised to keep in mind the more convenient expressions from \eqref{e.S is approx S*} and \eqref{e.jcost approximate exponential form} after ignoring the leeway factor.

\subsection{The bound to be proved}\label{s.bound to be proved}
With the quantities $\jcost$ and $\scost$ defined, the following is what we will prove for $(y,z)$ in the good region $\mc G_R^{(1)}$ and when $P\cap[-2d,2d]\neq \emptyset$:
\begin{equation}\label{e.bound to prove}
f_J(y,z)\cdot \jcost \cdot\scost \cdot\left(\frac{\mathrm d\mu}{dy}(y)\right)^{-1} 
 \leq G_1\cdot \exp\left(G_2(\log \epsilon^{-1})^{5/6}\right),
\end{equation}
for some finite constants $G_1 = G_1(\epsilon,k, d)$ and $G_2 = G_2(k,d)$, and for $\epsilon < \epsilon_0$ for some $\epsilon_0 >0$. Since the Radon-Nikodym derivative term is $2RT^2$, it is therefore sufficient to prove
\begin{equation}\label{e.sufficient bound to prove}
f_J(y,z)\cdot \jcost \cdot\scost \leq G_1'\cdot \exp\left(G_2'(\log \epsilon^{-1})^{5/6}\right),
\end{equation}
for some $G_1' = G_1'(\epsilon,k, d)<\infty$ and $G_2' = G_2'(k,d)>0$; then \eqref{e.sufficient bound to prove} implies \eqref{e.bound to prove} with $G_1 = G_1'\cdot 2RT^2$ and $G_2=G_2'$. We end this section by showing that if we have \eqref{e.bound to prove}, then we will almost have our main Theorem~\ref{t.prob estimate for J}. More precisely, we have the following lemma, which, along with a proposition about the no-pole case, will allow us to prove Theorem~\ref{t.prob estimate for J} modulo proving these input statements.

\begin{lemma}\label{l.G_R integral term bound}
Suppose for all $(y,z)\in \mc G_R^{(1)}$ we have \eqref{e.bound to prove}. Then with $G_1$ and $G_2$ as in \eqref{e.bound to prove} we have
$$\PF\big(J(\,\cdot\,) - J(-d) \in A, (Y,Z)\in \mc G_R^{(1)}\big)\cdot\one_{\fav, P\cap[-2d,2d] \neq \emptyset} \lesssim G_1\cdot\epsilon\cdot \exp\left(G_2(\log \epsilon^{-1})^{5/6}\right).$$ 
\end{lemma}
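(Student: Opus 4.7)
The plan is to carry out the chain of identities displayed in \eqref{e.main calculation} and combine them with the hypothesis \eqref{e.bound to prove}, matching the resulting integral against the Brownian motion formula \eqref{e.bm probability expression}. Throughout, we restrict to $\fav \cap \{P \cap [-2d,2d] \ne \emptyset\}$, where $p$ and the random variables $Y,Z$ are well-defined. First I would condition on the endpoint pair $(J(p-4d), J(p+4d))$ and apply the Brownian Gibbs property, as in \eqref{e.main calculation}, to identify the $\F$-conditional law of $J$ on $[p-4d, p+4d]$ as a Brownian bridge between these endpoint values conditioned to clear the pole at $p$; expressing the resulting expectation as an integral against the joint density $f_J(y,z)$ and inserting the definition of $\jcost$ yields the upper bound $\iint_{\mc G_R^{(1)}} \B^{[p-4d, p+4d]}_{\tilde y, \tilde z}(\widetilde A) \cdot \jcost \cdot f_J(y,z) \, dy\, dz$ for the left-hand side of the lemma.

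Next, I would apply the hypothesis \eqref{e.bound to prove} pointwise on $\mc G_R^{(1)}$. Using $\scost^{-1} = d^{-1/2}\exp(-(\tilde y - \tilde z)^2/16d)$ and $d\mu/dy(y) = (2RT^2)^{-1}\one_{[-RT^2,RT^2]}(y)$, the hypothesis rearranges to
$$f_J(y,z) \cdot \jcost \le G_1 \exp\bigl(G_2(\log\epsilon^{-1})^{5/6}\bigr) \cdot (2RT^2)^{-1}\one_{[-RT^2, RT^2]}(y) \cdot d^{-1/2} e^{-(\tilde y - \tilde z)^2/16d}.$$
I would substitute this into the displayed integral, enlarge the region from $\mc G_R^{(1)}$ to $\R^2$ using positivity of the integrand, and perform the unit-Jacobian translation $(y,z) \mapsto (\tilde y, \tilde z) = (y + \tent(p-4d),\, z + \tent(p+4d))$; the indicator $\one_{[-RT^2, RT^2]}(y)\, dy$ becomes $\one_{I}(\tilde y)\, d\tilde y = (2RT^2)\, d\mu'(\tilde y)$, where $\mu'$ is the uniform probability measure on the translate $I := [-RT^2 + \tent(p-4d),\, RT^2 + \tent(p-4d)]$, and the two factors of $2RT^2$ cancel.

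Finally, I would identify the remaining integral $\iint \B^{[p-4d, p+4d]}_{\tilde y, \tilde z}(\widetilde A) e^{-(\tilde y - \tilde z)^2/16d} d\mu'(\tilde y) d\tilde z$ as $\sqrt{16\pi d}\cdot \B^{[-d,d]}_{0,*}(A) = \sqrt{16\pi d}\cdot \epsilon$. This is the content of \eqref{e.bm probability expression} applied on the translated interval $[p-4d, p+4d]$: for a Brownian motion $B$ with $B(p-4d) \sim \mu'$, the joint density of $(B(p-4d), B(p+4d))$ factorizes as $(d\mu'/d\tilde y)(\tilde y)$ times the Gaussian transition density $(16\pi d)^{-1/2} e^{-(\tilde y - \tilde z)^2/16d}$, while $\P(B(\cdot) - B(-d) \in A) = \epsilon$ by the spatial homogeneity of Brownian motion, using that $-d$ lies in $[p-4d, p+4d]$ since $p \in [-2d,2d]$. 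Collecting the $d$-factors produces $d^{-1/2}\cdot\sqrt{16\pi d} = \sqrt{16\pi}$, an absolute constant, and the claimed bound $\lesssim G_1 \cdot \epsilon \cdot \exp(G_2(\log\epsilon^{-1})^{5/6})$ follows. No substantial obstacle arises in this lemma; it is the forward counterpart of the heuristic derivation that motivated the definitions of $\jcost$, $\scost$, and $\mu$, and all of the conceptual difficulty is deferred to establishing the hypothesis \eqref{e.bound to prove} itself, which will be the task of Chapter~\ref{ch.the proof}.
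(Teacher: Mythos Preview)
Your proposal is correct and follows essentially the same approach as the paper's own proof. The paper condenses the change of variables you carry out explicitly: it writes the bound directly with $d\mu(y)\,dz$ in place of your translated measure $d\mu'(\tilde y)\,d\tilde z$, relying implicitly on the translation invariance of $\widetilde A$ and of the Gaussian kernel, and then invokes the Markov property of Brownian motion in one line; your treatment makes these steps more transparent but is otherwise identical in content.
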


\begin{proof}
Let $B$ be a Brownian motion begun at $p-4d$ according to the distribution $\mu$. We have on $\fav \cap \{P\cap[-2d,2d] \neq \emptyset\}$,
\begin{align*}
\PF\Big(J(\,\cdot\,) - J(-d) \in A,\, (Y,Z)\in \mc G_R^{(1)}\Big)
&\leq \iint_{\mc G_R^{(1)}} \frac{\B^{[p-4d, p+4d]}_{\tilde y, \tilde z}\big( \tilde A\big)}{\B^{[p-4d, p+4d]}_{\tilde y, \tilde z}\big( J(p) \geq \tent(p)\big)}\cdot f_J(y,z) \, \mathrm dy\, \mathrm dz
\end{align*}
Using \eqref{e.bound to prove}, this integral is bounded by
\begin{align*}
\MoveEqLeft[10] G_1d^{-\frac{1}{2}}\exp\left(G_2(\log\epsilon^{-1})^{5/6}\right)\iint_{\mc G_R^{(1)}} \B^{[p-4d, p+4d]}_{\tilde y, \tilde z}\big( \tilde A\big)e^{-\frac{1}{16d}(\tilde y-\tilde z)^2} \, \mathrm d\mu(y)\, \mathrm dz\\
&\lesssim \P\bigl(B(\,\cdot\,)-B(-d) \in A\bigr)\cdot G_1\cdot\exp\left(G_2(\log\epsilon^{-1})^{5/6}\right)\\
&= \B^{[-d,d]}_{0,*}(A)\cdot G_1\cdot\exp\left(G_2(\log\epsilon^{-1})^{5/6}\right)
= G_1\cdot \epsilon\cdot \exp\left(G_2(\log\epsilon^{-1})^{5/6}\right).
\end{align*}
The second-to-last equality follows from the Markov property of Brownian motion.
\end{proof}

In fact, we will establish \eqref{e.bound to prove} with $G_2=792\cdot d\cdot D_k^{5/2}$ and $G_1= G'RT^{4}$, where $G'$ is an absolute constant independent of $d,k,\epsilon$. This is the content of Lemmas \ref{l.y<0, z<0 case}, \ref{l.y+z>0 case}, and \ref{l.yz<0 case} ahead, as $792d\geq \max(22R^2, 41R)$ from $R$'s value set in \eqref{e.R value}, and the observation above that to go from \eqref{e.sufficient bound to prove} to \eqref{e.bound to prove} we must multiply $G_1'$ by $2RT^2$. We will also establish the following proposition in the no-pole case:

\begin{proposition}\label{p.prob estimate no pole}
There exists a positive constant $\epsilon_0 = \epsilon_0(d)>0$ such that if $\epsilon<\epsilon_0$, then, on $\fav \cap \{P\cap[-2d,2d]=\emptyset\}$,
$$\PF\!\Big(J(\,\cdot\,) - J(-d) \in A,\, (Y, Z) \in \mc G^{(2)}_R\Big) \lesssim \epsilon \cdot\exp\left(756\cdot d\cdot D_k^{5/2}\cdot(\log \epsilon^{-1})^{5/6}\right).$$
\end{proposition}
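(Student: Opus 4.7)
My plan is to exploit that, in the no-pole case, the jump curve $J$ on $[-d, d]$ is literally a Brownian bridge conditional on its endpoints, so the argument parallels that for the pole case handled by Lemma~\ref{l.G_R integral term bound} but without the vault cost $\jcost$. First, let $p_L \in [\mf l, -2d]$ and $p_R \in [2d, \mf r]$ denote the nearest structural points (poles, or $\mf l$, $\mf r$ if none) bracketing $[-d, d]$. Two successive applications of Lemma~\ref{l.bb independent decomposition property} show that, conditioning on $J(-d)$, $J(d)$ together with all further structural data, the process $J|_{[-d, d]}$ is a standard Brownian bridge from $(-d, J(-d))$ to $(d, J(d))$. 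Setting $W := J(d) - J(-d)$ and using vertical-shift invariance of $A$, this yields
\begin{equation*}
\PF\bigl(J(\cdot) - J(-d) \in A,\,(Y,Z) \in \mc G_R^{(2)}\bigr) = \EF\bigl[\B^{[-d,d]}_{0,W}(A)\,\one_{(Y,Z) \in \mc G_R^{(2)}}\bigr].
\end{equation*}
Since $\epsilon = \B^{[-d,d]}_{0,*}(A) = \int \B^{[-d,d]}_{0,w}(A)\,\phi(w)\,\mathrm{d}w$ with $\phi(w) := (4\pi d)^{-1/2}\exp(-w^2/(4d))$ the Gaussian density at time $d$ for Brownian motion started at zero at time $-d$, what remains is to bound the density $f^{\mc G}_W$ of $W$ on the event $(Y,Z)\in\mc G_R^{(2)}$ by $\exp(756\,d\,D_k^{5/2}(\log\epsilon^{-1})^{5/6})\cdot\phi(w)$ pointwise in $w$; this is the no-vault-cost analogue of \eqref{e.bound to prove}.

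To control $f^{\mc G}_W$, I would further condition on $(J(p_L), J(p_R)) = (a, b)$: one more application of Lemma~\ref{l.bb independent decomposition property} makes $J|_{[p_L, p_R]}$ a Brownian bridge, so $W \mid a, b$ is Gaussian with mean $\mu_{a,b} = 2d(b - a)/L$ and variance $\sigma_{a,b}^2 = 2d(1 - 2d/L) \in [6d/5,\,2d]$, where $L = p_R - p_L \geq \dip = 5d$ by the no-pole hypothesis. A direct comparison of Gaussian densities (using $L \geq 5d$ to bound the linear-in-$w$ drift and the favorable sign of the quadratic-in-$w$ coefficient) then gives
\begin{equation*}
\frac{f_{W\mid a,b}(w)}{\phi(w)} \leq \sqrt{\tfrac{5}{3}}\,\exp\!\Bigl(\tfrac{|w|\cdot|b-a|}{3d}\Bigr),
\end{equation*}
so that $f^{\mc G}_W(w)/\phi(w) \leq \sqrt{5/3}\cdot\EF\bigl[\exp(|w|\cdot|J(p_R) - J(p_L)|/(3d))\,\one_{(Y,Z)\in\mc G_R^{(2)}}\bigr]$.

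The main obstacle is obtaining a tight enough bound on this expected exponential. On the relevant range $|w| \leq 2RT^{3/2} + 8Td = O(T^{3/2}\sqrt{d})$ forced by $(Y, Z) \in \mc G_R^{(2)}$, one needs $|J(p_R) - J(p_L)|$ to enjoy concentration sharp enough for the expected exponential to be $\exp(O(d\,T^{5/2}))$. For this I would combine the two-sided envelopes of Lemma~\ref{l.J stochastic dominanation}---part (i) provides a pathwise lower envelope by a Brownian bridge between $(-T^2, -T^2)$, and part (iii) provides a pathwise upper envelope by a Brownian bridge on $[-2T, 2T]$ conditioned above the poles and corners, whose endpoint values on $\fav$ lie in $[-T^2, T^2]$---with standard Brownian-bridge tail bounds and the constraint $d \leq T^{1/2}/24$ (from the hypothesized upper bound on $\epsilon$). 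A layered decomposition on the magnitude of $|J(p_R) - J(p_L)|$, together with the observation that the event $(Y, Z) \in \mc G_R^{(2)}$ already confines $J(\pm d)$ to within $O(T^2)$ of $\tent(\pm d)$, should close the estimate. Qualitatively this is simpler than the corresponding pole-case density bounds because no vault cost contributes and no subcase split on the sign of $y + z$ is needed.
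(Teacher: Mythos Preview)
Your overall shape—condition on $J(\pm d)$, recognise the interior as a Brownian bridge, and compare the density of $W=J(d)-J(-d)$ to the free Gaussian $\phi$—is sound, and the conditional computation of $f_{W\mid a,b}$ is correct. The gap is in the displayed density-ratio bound
\[
\frac{f_{W\mid a,b}(w)}{\phi(w)} \;\le\; \sqrt{5/3}\,\exp\!\Bigl(\tfrac{|w|\,|b-a|}{3d}\Bigr),
\]
which comes from replacing $L=p_R-p_L$ by its minimum $5d$. But $L$ can be as large as $\mf r-\mf l\ge T$ on $\fav$ (for instance when $P=\{\mf l,\mf r\}$), and on such $\F$-data the thresholds $\max(\mrm{Corner}_k^{\mf l,\F},\tent(\mf l))$ and $\max(\mrm{Corner}_k^{\mf r,\F},\tent(\mf r))$ may differ by order $T^2$ (both lie in $[-T^2,T^2]$ independently, by $\mathsf F_2,\mathsf F_3$). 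Since $J(\mf l),J(\mf r)$ are pinned near these thresholds, $|b-a|=|J(p_R)-J(p_L)|$ is then of order $T^2$ with probability bounded away from zero under $\PF$. With $|w|\sim RT^{3/2}\sim\sqrt d\,T^{3/2}$, your exponent becomes $\sim T^{7/2}/\sqrt d$, which exceeds the target $dT^{5/2}$ by a factor $T/d^{3/2}\ge 24^{3/2}T^{1/4}\to\infty$ (using $d\le T^{1/2}/24$). No layered decomposition on $|b-a|$ alone can absorb this, because the problematic contribution is the \emph{typical} one for that $\F$-data, not a tail.

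The paper sidesteps this by never conditioning on $J(p_L),J(p_R)$: it applies Proposition~\ref{p.scost density} directly with $x_1=-d$, $x_2=d$, $\sigma^2=2d$ to bound the joint density $f_J(y,z)$ of $(Y,Z)$—that proposition was proved via local randomization in Section~\ref{s.moderate case} and deliberately stated with no pole hypothesis on $[x_1,x_2]$ so that it could be reused here. Once $f_J(y,z)\cdot\scost\le\exp(21R^2T^{5/2})$ is in hand, the remaining integral is recognised as $\epsilon$ exactly as in Lemma~\ref{l.G_R integral term bound}. Your route could be salvaged by keeping $L$ in the bound, i.e.\ working with $\exp\!\bigl(5|w||b-a|/(3L)\bigr)$, and then controlling the \emph{slope} $|J(p_R)-J(p_L)|/L$ as $O(T)$ with sub-Gaussian tails (morally because $\tent$ has slope $\le 4T$ and the excesses $J(p_\cdot)-\tent(p_\cdot)\ge 0$ are tightly concentrated); but that is additional work you have not done, and is in effect a reproof of Proposition~\ref{p.scost density} by other means.
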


Admitting these statements for now, namely Lemmas~\ref{l.bad event 2}, \ref{l.bad event 1}, Proposition~\ref{p.prob estimate no pole}, and that we have \eqref{e.bound to prove} with $G_1 = G'RT^4$ and $G_2 = 792\cdot d\cdot D_k^{5/2}$, we may complete the proof of Theorem~\ref{t.prob estimate for J}.

\begin{proof}[Proof of Theorem \ref{t.prob estimate for J}]
The quantity $\P\left(J(\,\cdot\,) - J(-d)\in A\right)\cdot\one_\fav$ satisfies the following upper bound:
\begin{equation}\label{e.final prob split up}
\begin{split}
\MoveEqLeft[6]
\P\left(J(\,\cdot\,) - J(-d)\in A\right)\cdot\one_\fav\\
&\leq \left[\PF\Big(J(\,\cdot\,) - J(-d)\in A,\, (Y,Z)\in \mc G^{(1)}_R\Big) + \PF\Big((Y,Z)\not\in \mc G^{(1)}_R\Big)\right]\one_{\fav, P\cap[-2d,2d]\neq \emptyset}\\
&\quad+\left[\PF\Big(J(\,\cdot\,) - J(-d)\in A,\, (Y,Z)\in \mc G^{(2)}_R\Big)
+ \PF\Big((Y,Z)\not\in \mc G^{(2)}_R\Big)\right]\one_{\fav,P\cap[-2d,2d]= \emptyset}.
\end{split}
\end{equation}
Focus on the first term after the inequality of \eqref{e.final prob split up}. By Lemma \ref{l.bad event 2} and our choice of $R = 6\sqrt d$ from \eqref{e.R value}, 
$$\PF\Big((Y,Z)\not\in \mc G^{(1)}_R\Big)\cdot\one_{\fav,P\cap[-2d,2d]\neq \emptyset} \lesssim \epsilon \cdot\exp\left(468\cdot d\cdot D_k^{5/2}(\log\epsilon^{-1})^{5/6}\right).$$
From our assumption that we have \eqref{e.bound to prove} with $G_2=792\cdot d\cdot D_k^{5/2}$ and $G_1 = G'RT^{4}$ for all $(y,z)\in \mc G_R^{(1)}$, we get from Lemma \ref{l.G_R integral term bound}
\begin{equation}
\begin{split}\label{e.bound on G1}
\MoveEqLeft[5]
\PF\Big(J(\,\cdot\,) - J(-d)\in A,\, (Y,Z)\in \mc G^{(1)}_R\Big)\cdot \one_{\fav, P\cap[-2d,2d]\neq \emptyset}\\
&\lesssim \epsilon\cdot d^{\frac{1}{2}}\cdot D_k^{4} (\log \epsilon^{-1})^{\frac{4}{3}}\cdot \exp\left(792\cdot d\cdot D_k^{5/2}(\log \epsilon^{-1})^{5/6}\right)\\
\end{split}
\end{equation}
Now we turn to the second line of \eqref{e.final prob split up}. From Lemma \ref{l.bad event 1} and our choice of $R=6\sqrt d$ in \eqref{e.R value}, 
$$\PF\Big((Y,Z)\not\in \mc G^{(2)}_R\Big)\cdot\one_{\fav,P\cap[-2d,2d]= \emptyset} \lesssim \epsilon\cdot\exp\left(468\cdot d\cdot D_k^{5/2}(\log\epsilon^{-1})^{5/6}\right).$$
Finally from Proposition \ref{p.prob estimate no pole} we have
\begin{equation*}\label{e.bound on G2}
\PF\Big(J(\,\cdot\,) - J(-d) \in A,\, (Y, Z) \in \mc G^{(2)}_R\Big) \lesssim \epsilon\cdot\exp\left(756\cdot d\cdot D_k^{5/2}\cdot(\log \epsilon^{-1})^{5/6}\right).
\end{equation*}
Substituting these bounds into \eqref{e.final prob split up} gives
$$\PF\Big(J(\,\cdot\,) - J(-d) \in A\Big) \lesssim \epsilon\cdot d^{\frac{1}{2}}\cdot D_k^{4} (\log \epsilon^{-1})^{\frac{4}{3}}\cdot \exp\left(792\cdot d\cdot D_k^{5/2}\cdot (\log \epsilon^{-1})^{5/6}\right),$$
completing the proof.
\end{proof}

At this point we pause to review our progress. We have defined good regions $\mc G_R^{(1)}$ and $\mc G_R^{(2)}$ to which we can restrict our analysis. We have also defined the costs that need to be met on $\mc G_R^{(1)}$ in order to prove Theorem~\ref{t.prob estimate for J}, and indeed we have given the proof of Theorem~\ref{t.prob estimate for J} modulo these costs being met and a few additional statements being proven. Concretely, our task is now to establish \eqref{e.sufficient bound to prove} with the claimed values of $G_1'$ and $G_2'$; to prove Lemmas~\ref{l.bad event 2} and \ref{l.bad event 1}; and to prove Proposition~\ref{p.prob estimate no pole}. 

Establishing \eqref{e.sufficient bound to prove} will break into separate cases that will each require different arguments which will all be handled in the next Chapter~\ref{ch.the proof}. An easy and a moderate case, respectively when $y,z<0$ or $y+z>0$, will also supply us the bounds we need to prove Lemmas~\ref{l.bad event 2} and \ref{l.bad event 1}. Chapter~\ref{ch.the proof} will also address the case where a pole is not present in $[-2d,2d]$, i.e., Proposition~\ref{p.prob estimate no pole}.

\chapter{Proving the density bounds}\label{ch.the proof}

This chapter proves the statements of Chapter~\ref{ch.proof framework} that are needed in the proof of Theorem~\ref{t.prob estimate for J} as given in Section~\ref{s.bound to be proved}; in particular, we prove here Lemmas~\ref{l.bad event 2} and \ref{l.bad event 1}, Proposition~\ref{p.prob estimate no pole}, and that equation \eqref{e.bound to prove} holds with the claimed constants. In essence, these all follow from the last item, i.e., \eqref{e.bound to prove}, which is a bound on the density $f_J(y,z)$. 

The proof of this density bound is broken up into four sections. The first three are when a pole is present in $[-2d,2d]$ and are distinguished by the values of $Y= J(p-4d)-\tent(p-4d)$ and $Z=J(p+4d)-\tent(p+4d)$, roughly corresponding to an easy case of being below the pole on both sides (Section~\ref{s.easier cases}); a moderate case of being above the pole on both sides (Section~\ref{s.moderate case}); and a difficult case of being above and below the pole on either side (Section~\ref{s.difficult case}). Lemmas~\ref{l.bad event 2} and \ref{l.bad event 1} are proved at the end of the moderate case, Section~\ref{s.moderate case}. The last section, Section~\ref{s.no pole}, addresses when there is no pole in $[-2d, 2d]$, i.e., Proposition~\ref{p.prob estimate no pole}.

\section{The easy case: Below the pole on both sides}\label{s.easier cases}

Having set up the problem and identified what bounds \eqref{e.sufficient bound to prove} we require on $f_J(y,z)$, we now prove such a bound in a simple case. This case is when $y<0$ and $z<0$, which implies, from \eqref{e.S is approx S*} and \eqref{e.jcost approximate exponential form}, that 
$$\jcost\cdot\scost \lesssim \exp\left(\frac{y^2}{8d}+\frac{z^2}{8d}+21RT^{5/2}\right).$$
So, it is sufficient to prove
\begin{equation} \label{e.bound in both negative case}
f_J(y,z) \lesssim d^{-1}\cdot\exp\left(-\frac{y^2}{8d} - \frac{z^2}{8d}\right).
\end{equation}
In this case we are aided by the presence of the pole. Essentially, the desired density of $J$ at $(y,z)$ is bounded by the density of a particular pair of independent Brownian bridges at $(y,z)$. More precisely, on the event $\fav\cap\{P\cap[-2d,2d]\neq \emptyset\}$, let $p^-$ and $p^+$ be the elements of $P$ immediately preceding and succeeding $p$; and define
\begin{align*}
\sigma_{-4d}^2 &= 4d\cdot\frac{(p-p^--4d)}{p-p^-}\\
\sigma_{4d}^2 &= 4d\cdot\frac{(p^+-p-4d)}{p^+-p}.
\end{align*}
The first quantity is the variance of a Brownian bridge defined on the interval $[p^-, p]$ at the point $p-4d$, while the second is the same for a Brownian bridge defined on $[p, p^+]$ at the point $p+4d$. 

The following proposition is exactly the case which arose in \cite{hammond2017brownian} in the analysis of the Brownian bridge regularity of regular ensembles, where it was Proposition 5.17. The proof is fairly straightforward and we will shortly reproduce it here for completeness and because it aids our exposition.

\begin{proposition} \label{p.negative density}
We have that for $y,z <0$,
\begin{enumerate}
\item Joint bound
$$f_J(y,z)\cdot\one_{\fav,P\cap[-2d,2d]\neq\emptyset} \lesssim d^{-1}\cdot\exp\left(-\frac{1}{2\sigma_{-4d}^2}y^2-\frac{1}{2\sigma_{4d}^2}z^2\right) \cdot \one_{\fav, P\cap[-2d,2d]\neq\emptyset}.$$

\item Marginal bounds
\begin{align*}
f_J(y)\cdot\one_{\fav, P\cap[-2d,2d]\neq\emptyset} &\lesssim d^{-\frac{1}{2}}\cdot\exp\left(-\frac{1}{2\sigma_{-4d}^2}y^2\right) \cdot \one_{\fav, P\cap[-2d,2d]\neq\emptyset}\\
f_J(z)\cdot \one_{\fav,P\cap[-2d,2d]\neq\emptyset} &\lesssim d^{-\frac{1}{2}}\cdot\exp\left(-\frac{1}{2\sigma_{4d}^2}z^2\right) \cdot \one_{\fav, P\cap[-2d,2d]\neq\emptyset}.
\end{align*}

\end{enumerate}
\end{proposition}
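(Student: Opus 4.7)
The overall plan is a direct Gaussian density bound enabled by the Brownian-bridge decomposition of $J$ between consecutive poles. The key structural point is that every conditioning in the definition of $J$ is supported on the pole set $P$ (either at the endpoints $\mf l, \mf r$, which lie in $P$, or at points of $P$ in the interior). So by Lemma~\ref{l.bb independent decomposition property}, applied to the defining Brownian bridge on $[-2T,2T]$ at the successive pole points, the conditional law of $J$ restricted to the interval between any two consecutive poles, given its values at those poles, is just that of an unconstrained Brownian bridge between the two pinned endpoints. First I would verify the geometric setup on $\fav\cap\{P\cap[-2d,2d]\neq\emptyset\}$: the standing assumption $\epsilon\leq \exp(-(24)^6 d^6/D_k^3)$ forces $T\geq 24d$, and on $\fav$ we have $\mf l\leq -T/2 <-2d$ and $\mf r\geq T/2>2d$, so the unique pole $p\in[-2d,2d]$ is strictly interior to $P$ and the adjacent poles $p^-, p^+$ both exist. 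The pole separation $\dip=5d$ then places $p-4d\in(p^-,p)$ and $p+4d\in(p,p^+)$, with no intermediate poles.

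Next I would write down the conditional density. Given $(J(p^-),J(p),J(p^+))=(a,b,c)$, the conditional joint density of $(J(p-4d),J(p+4d))$ at $(\tilde y,\tilde z):=(y+\tent(p-4d),\,z+\tent(p+4d))$ is the product of two independent Gaussian densities with variances $\sigma_{-4d}^2,\sigma_{4d}^2$ and means $m_1(a,b), m_2(b,c)$, where $m_1(a,b)$ is the linear interpolation of $a,b$ at $p-4d$ and $m_2(b,c)$ that of $b,c$ at $p+4d$. I would then exploit the defining jump condition: $a\geq \tent(p^-)$, $b\geq \tent(p)$, $c\geq \tent(p^+)$. Since $\tent$ is itself the piecewise linear interpolation of the heights $\L_n(k+1,\cdot)$ at the poles, monotonicity of linear interpolation gives $m_1(a,b)\geq \tent(p-4d)$ and $m_2(b,c)\geq \tent(p+4d)$. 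When $y,z<0$, this forces $\tilde y<\tent(p-4d)\leq m_1(a,b)$ and $\tilde z<\tent(p+4d)\leq m_2(b,c)$, so $(\tilde y-m_1)^2\geq y^2$ and $(\tilde z-m_2)^2\geq z^2$; the Gaussian exponents are therefore dominated by $-y^2/(2\sigma_{-4d}^2)-z^2/(2\sigma_{4d}^2)$.

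Substituting these Gaussian bounds and integrating over $(a,b,c)$ against the (sub-probability) joint law of $(J(p^-),J(p),J(p^+))$ yields the joint bound (1). The normalising prefactor $(2\pi)^{-1}(\sigma_{-4d}^2\sigma_{4d}^2)^{-1/2}$ is of order $d^{-1}$ because the pole separation $p-p^-\geq 5d$ gives $\sigma_{-4d}^2=4d(p-p^--4d)/(p-p^-)\in[4d/5,4d]$, and similarly for $\sigma_{4d}^2$. The marginal bounds in (2) then follow either by integrating out one coordinate in the joint bound or by running the same one-variable argument separately, using just the pair $(J(p^-),J(p))$ or $(J(p),J(p^+))$. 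I do not expect a substantive obstacle: the proof is essentially a Gaussian density computation, and the only non-routine input is the monotonicity-of-interpolation observation that produces the bound $m_j\geq \tent(\cdot)$ and thereby places $(\tilde y,\tilde z)$ on the unfavorable side of both Gaussian means whenever $y,z<0$.
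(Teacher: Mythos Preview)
Your proposal is correct and follows essentially the same approach as the paper's proof: condition on $(J(p^-),J(p),J(p^+))$ so that $J$ is an unconstrained Brownian bridge on each of $[p^-,p]$ and $[p,p^+]$, observe that the pole constraints together with the affine nature of $\tent$ between consecutive poles force the conditional Gaussian means to lie at or above $\tent(p\mp 4d)$, and then use that $y,z<0$ places the evaluation points on the tail side of both means. The paper records exactly this argument, averaging over the augmented $\sigma$-algebra $\mathcal{F}[p^-,p,p^+]$ and deducing (2) from (1) by integrating out a variable.
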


We note that we have the following simple bounds on the variances:

\begin{lemma}\label{l.variance bounds}
On $\fav\cap\{P\cap[-2d,2d]\neq \emptyset\}$, we have
$\sigma_{4d}^2, \sigma_{-4d}^2 \in [\frac{4}{5}d,4d].$
\end{lemma}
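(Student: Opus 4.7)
The plan is to read the bounds directly off the definitions of $\sigma_{\pm 4d}^2$ once the inter-pole separation is inserted. Writing $\sigma_{-4d}^2 = 4d\bigl(1 - \tfrac{4d}{p-p^-}\bigr)$ and $\sigma_{4d}^2 = 4d\bigl(1 - \tfrac{4d}{p^+-p}\bigr)$, the assertion $\sigma_{\pm 4d}^2 \in [\tfrac{4}{5}d, 4d]$ is equivalent to the two inequalities $p - p^- \geq 5d$ and $p^+ - p \geq 5d$. But $5d$ is precisely the value of the inter-pole distance $\dip$ chosen in \eqref{e.dip value}, and by the definition of the pole set $P$ any two distinct elements of $P$ are separated by at least $\dip$. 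So the content of the lemma reduces to the existence of the neighbouring poles $p^-$ and $p^+$.

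First I would check that $p^-$ and $p^+$ are well-defined elements of $P$ on the event under consideration. Since $\mf l, \mf r \in P$ and, on $\fav$, $\mf l \leq -T/2$ and $\mf r \geq T/2$, while $p \in [-2d, 2d]$, the hypothesis $\epsilon < \exp(-(24)^6 d^6/D_k^3)$ (which, as noted in Section~\ref{s.heuristic.addrssing simplifications} and elsewhere, gives $T \geq (24)^2 d^2 \geq 4d$ for $d \geq 1$) implies $\mf l < p < \mf r$, so $p$ has both a predecessor and a successor in $P$.

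Second, from the minimum-separation property of $P$ we get $p - p^- \geq 5d$ and $p^+ - p \geq 5d$, hence $\tfrac{4d}{p-p^-}, \tfrac{4d}{p^+ - p} \in (0, 4/5]$. Substituting into the formulas for $\sigma_{\pm 4d}^2$ yields both the upper bound $4d$ (using positivity of the ratio) and the lower bound $\tfrac{4}{5}d$ (using the $\leq 4/5$ bound). No step poses a real obstacle; the only mild subtlety is checking the existence of $p^{\pm}$, and this is immediate from the placement of $[\mf l, \mf r]$ provided by $\fav$ together with the upper bound on $\epsilon$ that forces $T$ to dominate $d$.
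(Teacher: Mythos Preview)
Your proof is correct and follows essentially the same approach as the paper: rewrite $\sigma_{\pm 4d}^2 = 4d\bigl(1 - \tfrac{4d}{\text{gap}}\bigr)$ and invoke the inter-pole separation $\dip = 5d$ for the lower bound, with the upper bound immediate. Your explicit verification that $p^-$ and $p^+$ exist (via $\mf l < p < \mf r$ on $\fav$) is a small elaboration the paper leaves implicit.
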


\begin{proof}
The upper bound is obvious from the defining expressions. For the lower bound, we have
\begin{align*}
\sigma_{-4d}^2 = 4d\left(1-\frac{4d}{p-p^-}\right) \geq 4d\left(1-\frac{4d}{5d}\right) = \frac{4}{5}d,
\end{align*}
since $p-p^- \geq \dip = 5d$. A similar argument proves the corresponding bound for $\sigma^2_{4d}$.
\end{proof}

With these variance bounds and the density bounds of Proposition~\ref{p.negative density}, the sufficient bound \eqref{e.sufficient bound to prove} is immediate:

\begin{lemma}\label{l.y<0, z<0 case}
When $y<0$ and $z<0$, we have \eqref{e.sufficient bound to prove} with $G_2'=21RD_k^{5/2}$ and with $G_1'$ independent of $\epsilon, k,$ and $d$.
\end{lemma}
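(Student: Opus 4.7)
The plan is to combine three ingredients that have already been made available: the simplified product bound on $\jcost \cdot \scost$ displayed in the prose just above the lemma, Proposition~\ref{p.negative density}(1), and the variance bounds of Lemma~\ref{l.variance bounds}. Since the regime is $y<0$ and $z<0$, the quantity $y+z$ is in particular $\leq 32Td$, so the worse branch of \eqref{e.jcost simplified form} applies and gives, via \eqref{e.jcost approximate exponential form} and \eqref{e.S is approx S*}, the bound
\begin{equation*}
\jcost \cdot \scost \;\lesssim\; d \cdot \exp\!\left(\tfrac{(y+z)^2}{16d} + \tfrac{(y-z)^2}{16d} + 21 R T^{5/2}\right) \;=\; d \cdot \exp\!\left(\tfrac{y^2}{8d} + \tfrac{z^2}{8d} + 21 R T^{5/2}\right),
\end{equation*}
where in the equality I would use the polarization identity $(y+z)^2 + (y-z)^2 = 2(y^2+z^2)$.

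Next I would invoke Proposition~\ref{p.negative density}(1) (applicable precisely because $y<0$ and $z<0$), which on $\fav \cap \{P \cap [-2d,2d] \neq \emptyset\}$ yields
\begin{equation*}
f_J(y,z) \;\lesssim\; d^{-1} \cdot \exp\!\left(-\tfrac{y^2}{2\sigma_{-4d}^2} - \tfrac{z^2}{2\sigma_{4d}^2}\right).
\end{equation*}
Lemma~\ref{l.variance bounds} bounds each of $\sigma_{\pm 4d}^2$ above by $4d$, so $\tfrac{1}{2\sigma_{\pm 4d}^2} \geq \tfrac{1}{8d}$, which upgrades the display to
\begin{equation*}
f_J(y,z) \;\lesssim\; d^{-1} \cdot \exp\!\left(-\tfrac{y^2}{8d} - \tfrac{z^2}{8d}\right).
\end{equation*}

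Finally, I would multiply the two bounds. The Gaussian factors cancel exactly, as do the factors $d$ and $d^{-1}$, leaving
\begin{equation*}
f_J(y,z)\cdot \jcost \cdot \scost \;\lesssim\; \exp\!\left(21 R T^{5/2}\right) \;=\; \exp\!\left(21 R D_k^{5/2} (\log \epsilon^{-1})^{5/6}\right),
\end{equation*}
which is the sufficient bound \eqref{e.sufficient bound to prove} with $G_2' = 21 R D_k^{5/2}$ and with $G_1'$ equal to the absolute constant hidden in the $\lesssim$ relations, in particular independent of $\epsilon$, $k$, and $d$. In this easy case there is no real obstacle: essentially all the work has been done in Proposition~\ref{p.negative density} (the crucial input inherited from \cite{hammond2017brownian}) and in assembling $\jcost\cdot\scost$; the lemma is just a bookkeeping exercise. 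The genuine difficulties of proving \eqref{e.bound to prove} must instead arise in the remaining regimes $y+z>0$ and $yz<0$ treated in the upcoming sections, where Proposition~\ref{p.negative density} is unavailable and the Gaussian decay of $f_J$ needs to be established by different means.
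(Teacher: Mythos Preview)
Your proof is correct and follows essentially the same approach as the paper's own proof: combine the cost bound $\jcost\cdot\scost \lesssim d\cdot\exp\bigl(\tfrac{y^2}{8d}+\tfrac{z^2}{8d}+21RT^{5/2}\bigr)$ from \eqref{e.S is approx S*} and \eqref{e.jcost approximate exponential form} with the density bound $f_J(y,z)\lesssim d^{-1}\exp\bigl(-\tfrac{y^2}{8d}-\tfrac{z^2}{8d}\bigr)$ from Proposition~\ref{p.negative density} and Lemma~\ref{l.variance bounds}. You spell out the polarization identity and the variance-comparison step that the paper leaves implicit, but the argument is the same.
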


\begin{proof}
From \eqref{e.S is approx S*} and \eqref{e.jcost approximate exponential form}, the total cost is bounded above as
$$\jcost\cdot\scost \lesssim d\cdot \exp\left(\frac{y^2}{8d}+\frac{z^2}{8d} + 21RT^{5/2}\right)$$
 Proposition~\ref{p.negative density} combined with Lemma~\ref{l.variance bounds} says that
$f_J(y,z) \lesssim d^{-1}\exp\left(-\frac{y^2}{8d}-\frac{z^2}{8d}\right)$,
so we are done.
\end{proof}

\begin{proof}[Proof of Proposition \ref{p.negative density}]
	The second statement follows from the first by integrating out one of the variables, so we prove only the first statement on the joint density bound.
	We are in the situation where $P\cap[-2d,2d]\neq \emptyset$, and $p$ is the unique element in this intersection. Let $p^-$ and $p^+$ be the adjacent elements of $\pole$. Let $\mathcal{F}[p^-,p,p^+]$
	denote the $\sigma$-algebra generated by $\mathcal{F}$ and the random variables $J(x)$ for $x \in \{ p^-,p,p^+ \}$. (These random variables provide extra information only when $\pole\cap[-2d,2d] \neq \emptyset$.) The density $f_J(y,z)$ has a counterpart $f_J^{\F[p^-, p, p^+]}$ under the augmented $\sigma$-algebra, and it is enough to show that
	$$
	 f^{\mathcal{F}[p^-,p,p^+]}_J(y,z) \cdot {\one}_{\pole \cap [-2d,2d] \neq \emptyset } \, \lesssim \, \sigma_{-4d}^{-1}\cdot\sigma_{4d}^{-1}\cdot \exp\left(-\frac{y^2}{2\sigma_{-4d}^2} - \frac{z^2}{2\sigma_{4d}^2}\right) \, ,
	$$
	since then Proposition~\ref{p.negative density}(1) will arise by averaging.

	Under the law $\P_{\mathcal{F}[p^-,p,p^+]}$, the processes $J(\,\cdot\,)$ on $[p^-,p]$ and $[p,p^+]$ are conditionally independent. Since the data in $\mathcal{F}[p^-,p,p^+]$ causes $\{ \pole \cap [-2d,2d] \neq \emptyset \}$ to occur, it is thus enough to argue that 
	\begin{itemize}
	\item the conditional density of $Y$ at $s \leq 0$ is at most a constant multiple of $\sigma_{-4d}^{-1} \exp\left(-s^2/2\sigma_{-4d}^2\right)$;
	\item and 
	the conditional density of $Z$ at $t \leq 0$ is at most $\sigma_{4d}^{-1} \exp\left(-t^2/2\sigma_{4d}^2\right)$.
	\end{itemize}
	These statements are straightforward to verify.
	Indeed, the conditional law under $\mathcal{F}[p^-,p,p^+]$ of $J(p-4d)$ is normal with mean
	$\left(1-\tfrac{4d}{p - p^-}\right) J(p) + \tfrac{4d}{p - p^-} J(p^-)$ and variance $\sigma_{-4d}^2$. Note that $J(p^-) \geq \tent(p^-)$ and $J(p) \geq \tent(p)$ since $p^-, p \in \pole$, and that $\tent$ is affine on the interval between consecutive pole set elements $p^-$ and $p$; thus, we see that this mean is at least $\tent(p-4d)$. The first bullet point statement follows from the form of the normal density since we have shown that $\E_{\F[p^-,p,p^+]}[Y] \geq 0$, and we are concerned with the density only on $(-\infty, 0]$. The second bullet point is proved in the same fashion. This proves Proposition~\ref{p.negative density}(1).
\end{proof}

\section{The moderate case: Above the pole on both sides}\label{s.moderate case}
In this section, we address the case of bounding $f_J(y,z)$ when $y+z>0$. Here is the main proposition to be proved.

\begin{proposition}[Density bound on increment]\label{p.scost density}
Let $\mf l/2<x_1 < x_2 < \mf r/2$ be $\F$-measurable, $\sigma^2 = x_2-x_1\geq d$, $R$ be as in \eqref{e.R value}, and suppose that $\sigma\leq T^2\cdot \sqrt{d/2}$. Suppose also that $[x_1-d, x_1+d]\cap P  = \emptyset$ and $[x_2-d, x_2+d]\cap P = \emptyset$. Let $f_J^{x_1,x_2}(s,t)$ be the joint density of $(J(x_1) - \tent(x_1), J(x_2)-\tent(x_2))$ at $(s,t)$. If $s, t \in [-RT^2, RT^2]$ and $|s-t|>6RT\sigma^2$, then
$$f^{x_1,x_2}_J(s, t) \cdot \one_{\fav} \lesssim \sigma^{-1}\cdot d^{-\frac{1}{2}} T^2\cdot \exp\left(-\frac{1}{2\sigma^2}\bigl(|s-t|- 6RT\sigma^2\bigr)^2 + \frac{4R^2T^2}{\sigma^2}+36R^2\sigma^2\right).$$
We also have that $f_J^{x_1,x_2}(s, t) \lesssim d^{-1}$ for all $s, t\in\R$. 
\end{proposition}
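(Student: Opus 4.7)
The plan is to establish the two bounds separately by exploiting the pole-free neighborhoods of $x_1, x_2$ via the Brownian Gibbs property possessed by the jump curve $J$, working throughout on the event $\fav$.

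For the uniform bound, I would condition on $\F$, on the other $k-1$ curves of the jump ensemble, and on the values $J(x_i - d), J(x_i + d)$ for $i = 1, 2$; by the Brownian Gibbs property, the restriction of $J$ to each of the pole-free intervals $[x_i - d, x_i + d]$ then becomes an independent unconditioned Brownian bridge. Consequently $J(x_i)$ is Gaussian with variance $d/2$ under the enhanced conditioning, so each conditional marginal density is at most $(\pi d)^{-1/2}$, which yields $f_J^{x_1,x_2}(s,t) \lesssim d^{-1}$ after averaging.

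For the Gaussian-decay bound, the approach is to condition on the $\sigma$-algebra $\mc F^*$ generated by $\F$ together with $J$'s values at $\mf l$, $\mf r$, and at all elements of $P$. Under $\mc F^*$, by the Brownian Gibbs property, $J$ is a concatenation of independent unconditioned Brownian bridges on the consecutive inter-pole intervals. Let $p^-, p^+ \in P \cup \{\mf l, \mf r\}$ be the consecutive elements bracketing $x_1$; the no-pole hypothesis yields $x_1 - p^- \geq d$ and $p^+ - x_1 \geq d$. In the case $x_2 \in (p^-, p^+)$ as well (so $p^+ - x_2 \geq d$; the case in which a pole separates $x_1$ and $x_2$ is handled analogously via the conditional independence of $J(x_1)$ and $J(x_2)$ under $\mc F^*$), the Markov property expresses the $\mc F^*$-conditional joint density at $(\tilde s, \tilde t) = (s + \tent(x_1), t + \tent(x_2))$ as
\begin{equation*}
\frac{p_{x_1 - p^-}(J(p^-), \tilde s) \, p_{\sigma^2}(\tilde s, \tilde t) \, p_{p^+ - x_2}(\tilde t, J(p^+))}{p_{p^+ - p^-}(J(p^-), J(p^+))},
\end{equation*}
where $p_u(a, b) = (2\pi u)^{-1/2} \exp(-(b - a)^2/(2u))$. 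The middle factor directly contributes $\sigma^{-1} \exp(-(\tilde t - \tilde s)^2/(2\sigma^2))$, and the slope bound $|\mathrm{slope}(\tent)| \leq 4T$ from \eqref{e.tent map slope}, together with $R \geq 1$, gives $|\tilde t - \tilde s| \geq |s-t| - 6RT\sigma^2$, converting the middle factor into the asserted form. The lateral factors each contribute at most $(2\pi d)^{-1/2}$, supplying the $d^{-1/2}$ prefactor.

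The remaining step is to average over $\mc F^*$, which must account for the $T^2$ prefactor and the exponential correction $\exp(4R^2T^2/\sigma^2 + 36R^2\sigma^2)$. On $\fav$ we have $J(p^\pm) \geq \tent(p^\pm) \geq -T^2$, and the upper tails of $J(p^\pm)$ are controlled using Lemma~\ref{l.J stochastic dominanation}(iii), effectively confining each endpoint to a range of length $O(T^2)$ with negligible contribution from the complement. The reciprocal normalization $p_{p^+ - p^-}(J(p^-), J(p^+))^{-1}$ then produces the $\exp(4R^2 T^2/\sigma^2)$ correction through the maximal value of $(J(p^+) - J(p^-))^2/(2(p^+ - p^-))$ over the effective range, while the $\exp(36R^2 \sigma^2)$ arises from expanding $(\tilde t - \tilde s)^2 = ((s-t) + (\tent(x_2) - \tent(x_1)))^2$ and isolating the slope-cross and $\Delta^2$ terms, parallel to the computation in \eqref{e.S is approx S*} of Section~\ref{s.heuristics.final costs}. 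The main obstacle is matching these constants precisely: since the stochastic-domination input yields only a soft upper bound on $J(p^\pm)$, a dyadic decomposition of the endpoint range --- paying polynomial-in-$\epsilon^{-1}$ prefactors absorbable into the leeway --- is needed to extract the stated coefficients.
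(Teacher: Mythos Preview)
Your uniform bound $f_J^{x_1,x_2}\lesssim d^{-1}$ is essentially the paper's argument (Lemma~\ref{l.density is bounded} plus the final assertion of Lemma~\ref{l.density bound tool lower}), so that part is fine.

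The Gaussian-decay bound, however, has a genuine gap. After conditioning on $\mc F^*$ and bounding the lateral heat kernels by $(2\pi d)^{-1/2}$, the reciprocal normalization contributes
\[
p_{p^+-p^-}\bigl(J(p^-),J(p^+)\bigr)^{-1}\;\asymp\;\sqrt{p^+-p^-}\,\exp\Bigl(\tfrac{(J(p^+)-J(p^-))^2}{2(p^+-p^-)}\Bigr).
\]
The only control available on the pole values, even on $\fav$ with the stochastic-domination upper tail from Lemma~\ref{l.J stochastic dominanation}(iii), is $|J(p^\pm)|\le O(T^2)$. Hence the exponent above is $O(T^4/(p^+-p^-))$, not the $4R^2T^2/\sigma^2$ you claim. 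In the application ($\sigma^2=8d$, $p^+-p^-\ge 5d$) this is of order $T^4/d=(\log\epsilon^{-1})^{4/3}/d$, which dwarfs the leeway $T^{5/2}=(\log\epsilon^{-1})^{5/6}$ and destroys the bound. Keeping the lateral Gaussian factors instead and optimizing over $(J(p^-),J(p^+))$ does not help either: the supremum of the bridge density over all endpoint pairs is just $(2\pi)^{-1}\sqrt{L/(abc)}$, with \emph{no} decay in $|s-t|$, since one can always choose $(u,v)$ so the bridge mean hits $(\tilde s,\tilde t)$.

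The paper avoids this by never conditioning on nearby pole values. It instead uses Lemma~\ref{l.J stochastic dominanation}(ii): given $J(x_2)$, the curve on $[\mf l,x_2]$ stochastically dominates a Brownian bridge from $(\mf l,-T^2)$ to $(x_2,J(x_2))$. Because $x_2-\mf l\ge T/4$, the slope of this dominating bridge is $O(T)$ (Lemma~\ref{l.scost tail bound}), which is precisely what produces the $6RT\sigma^2$ offset rather than an $O(T^2)$ one. This yields only a one-sided \emph{tail} bound on $\tfrac12 J(x_1-2\eta)+\tfrac12 J(x_1+2\eta)$, which the paper then upgrades to a density bound via the local-randomization tool Lemma~\ref{l.density bound tool lower}, choosing $\eta^{1/2}=T^{-2}\sigma$ so that the pseudo-variance inflation $(\sigma_1+\sigma_2)^2-\sigma^2$ stays within the leeway; this is the source of both the $T^2$ prefactor and the $4R^2T^2/\sigma^2+36R^2\sigma^2$ correction.
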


\begin{figure}
\centering {\epsfig{file=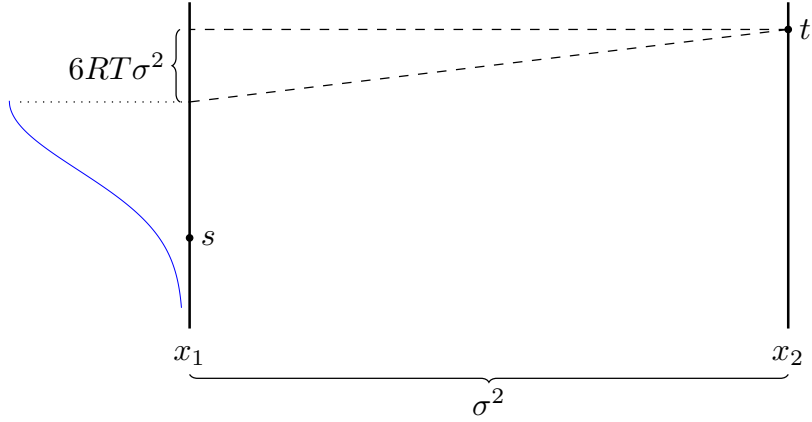, width=0.67\textwidth}}
\caption{Illustrating Proposition~\ref{p.scost density} when $s<t$. The requirement that $|s-t|\geq 6RT\sigma^2$ comes from an error in an estimate of a certain slope; the blue curve on the left shows the Gaussian-like density centred at $t-6RT\sigma^2$ that dominates the joint density $f_J^{x_1,x_2}(s,t).$ Note that the proposition does not require or use the presence of a pole in $[x_1,x_2]$.}\label{f.moderate case}
\end{figure}

\begin{remark}
The condition that $|s-t|>6RT\sigma^2$ arises in the proof of Proposition~\ref{p.scost density} from the error of an estimate on certain slopes; see Figure~\ref{f.moderate case}. For our purposes it does not cause any difficulty, as when $|y-z|<6RT\sigma^2$ the costs will be always absorbable in the leeway factor. For example, this is seen in the proof of Lemma~\ref{l.y+z>0 case} below.
\end{remark}

It may not be immediately clear what is the relation of this proposition to the case where $y+z>0$. In fact, this proposition has been carefully stated to apply to a more general situation than just the case of this section. For example, unlike Proposition~\ref{p.negative density}, this proposition does not require a pole to be present, and we will make use of it in the no-pole case addressed in Section~\ref{s.no pole} as well. We will also use Proposition~\ref{p.scost density} to prove Lemmas~\ref{l.bad event 2} and \ref{l.bad event 1} near the end of the section, as it provides a density bound in terms of the \emph{increment} $|y-z|$, and so can be easily used to bound the increment of $J$ across an interval, a requirement which was briefly discussed in Section~\ref{s.heuristics for proof}.

Before proving Proposition~\ref{p.scost density}, we apply it to show that it yields the sufficient bound \eqref{e.sufficient bound to prove} in the case that $y+z>0$. To see that Proposition~\ref{p.scost density} is sufficient for this purpose, note that by \eqref{e.jcost simplified form}, when $y+z>0$, $\jcost$ can be absorbed in the leeway factor, and so we essentially only need to consider $\scost$. This cost, being $d^{\frac{1}{2}}\exp((y+z)^2/16d)$ up to the leeway factor, is in essence met by the density bound provided by Proposition~\ref{p.scost density} by taking $x_1 = p-4d$ and $x_2=p+4d$; this is consistent with the ideas that Proposition~\ref{p.scost density} controls the increment and that the slope cost $\scost$ is a cost associated with large increments.

\begin{lemma}\label{l.y+z>0 case}
When $y+z>0$ and $(y,z)\in \mc G_R^{(1)}$, we have \eqref{e.sufficient bound to prove} with $G_2'=22R^2D_k^{5/2}$ and $G_1'=G'T^2$, where $G'$ is a constant independent of $\epsilon, k,$ and $d$.
\end{lemma}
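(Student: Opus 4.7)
The plan is to apply Proposition~\ref{p.scost density} at the natural endpoints $x_1 = p - 4d$ and $x_2 = p + 4d$, so that $(J(x_1) - \tent(x_1), J(x_2) - \tent(x_2)) = (Y,Z)$ and the variance $\sigma^2 = x_2 - x_1 = 8d$ is precisely the Brownian variance over the $8d$ gap that appears in $\scost$. The inter-pole separation $\dip = 5d$ ensures that the neighbourhoods $[x_i - d, x_i + d]$ contain no pole other than $p$ itself, and $|Y|, |Z| \leq RT^2$ on $\mc G_R^{(1)}$, so the hypotheses of Proposition~\ref{p.scost density} are met. The argument then splits on whether $|y-z|$ exceeds the slope-error threshold $6RT\sigma^2 = 48RTd$ of that proposition.

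In the main case $|y-z| > 48RTd$, the bound of Proposition~\ref{p.scost density} gives
\begin{equation*}
f_J(y,z) \lesssim d^{-1} T^2 \exp\!\left(-\tfrac{1}{16d}(|y-z|-48RTd)^2 + \tfrac{R^2T^2}{2d} + 288R^2 d\right),
\end{equation*}
while $\scost \lesssim d^{1/2}\exp((y-z)^2/16d + 10RT^{5/2})$ by \eqref{e.S is approx S*}. Multiplying, the quadratic contributions in $|y-z|$ cancel exactly, leaving only a linear remainder $6RT|y-z|$, which is bounded by $6RT \cdot 2RT^{3/2} = 12 R^2 T^{5/2}$ thanks to the constraint $|y-z| \leq 2RT^{3/2}$ on $\mc G_R^{(1)}$. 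The residual fixed terms $R^2T^2/(2d)$, $288R^2 d$, the $144R^2 T^2 d$ from expanding the square in the density exponent, and $10 R T^{5/2}$ are each absorbed into a constant multiple of $R^2 T^{5/2}$ via $d \geq 1$ and $d \leq T^{1/2}/24$ (the latter following from the assumed upper bound on $\epsilon$). In the complementary case $|y-z| \leq 48RTd$, the universal bound $f_J \lesssim d^{-1}$ of Proposition~\ref{p.scost density} is used, and $\scost \lesssim d^{1/2}\exp((48RTd)^2/16d + 10RT^{5/2})$ is itself bounded by $d^{1/2}\exp(C R^2 T^{5/2})$ by the same arithmetic $T^2 d \leq T^{5/2}/24$.

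In either case, the hypothesis $y + z > 0$ renders $\jcost$ inoffensive: by \eqref{e.jcost approximate exponential form}, either $y + z > 32Td$ and $\jcost \lesssim 1$, or $0 < y + z \leq 32 Td$ and $(y+z)^2/(16d) \leq 64 T^2 d \leq \tfrac{8}{3} T^{5/2}$, so in every sub-case $\jcost \lesssim d^{1/2}\exp(c R T^{5/2})$ for an absolute constant $c$. Collecting the three factors and using $T^{5/2} = D_k^{5/2}(\log \epsilon^{-1})^{5/6}$, one obtains a bound of the form $G' T^2 \exp(C R^2 D_k^{5/2}(\log \epsilon^{-1})^{5/6})$; tracking constants confirms $C \leq 22$ is sufficient, which gives \eqref{e.sufficient bound to prove} with the claimed $G_1'$ and $G_2'$.

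The only delicate point is the exact quadratic cancellation between the density's Gaussian factor and $\scost$; this is not a surprise but rather the engineered effect of choosing $\sigma^2 = 8d$ so that the density decay rate in $|y-z|$ matches the diffusive scale built into $\scost$. Everything else is elementary bookkeeping using the bounds on $|Y|, |Z|$ and $|y-z|$ provided by $\mc G_R^{(1)}$ and the relation $d \leq T^{1/2}/24$.
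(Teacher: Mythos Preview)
Your proposal is correct and follows essentially the same approach as the paper's proof: apply Proposition~\ref{p.scost density} with $x_1 = p-4d$, $x_2 = p+4d$ (so $\sigma^2 = 8d$), split on whether $|y-z|$ exceeds $6RT\sigma^2 = 48RTd$, and observe that in the regime $y+z>0$ the vault cost $\jcost$ contributes only a leeway factor via \eqref{e.jcost simplified form}. One minor remark on your bookkeeping: the term $144R^2T^2d$ arising from expanding $(|y-z|-48RTd)^2$ carries a \emph{negative} sign, and the paper exploits this (together with $d\geq 1$, $d\leq T^{1/2}/24$) to drop it along with $R^2T^2/2$, $288R^2d$, and $64T^2d$ collectively; this is what makes the final constant come out at exactly $22$ rather than something larger.
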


\begin{proof}
	In this case, we see that
	$$\jcost \lesssim \begin{cases}
	\, 1 & y+z > 32Td\\
	\, d^{\frac{1}{2}}\cdot\exp\left(64T^2d\right) & 0< y+z < 32Td,
	\end{cases}$$
	i.e., in this case $\jcost$ is bounded by the leeway factor $d^{\frac{1}{2}}\cdot\exp\left(G_2T^{5/6}\right)$ with $G_2=8$, since $d\leq \sqrt T/24$ by the assumption $\epsilon \leq \exp(-(24)^{6}d^6/D_k^3)$ (note that $8 < 22R^2$ since $R\geq 1$ from \eqref{e.R value}). So we merely need to handle $\scost$, which from \eqref{e.S is approx S*} is bounded above as
	$$S\lesssim d^{\frac{1}{2}}\cdot \exp\left(\frac{1}{16d}(y-z)^2+ 10RT^{5/2}\right).$$
	Note that if we set $x_1 = p-4d$ and $x_2 = p+4d$ in Proposition~\ref{p.scost density}, then $\sigma^2 = 8d$. So with these parameters, we see from the second part of Proposition~\ref{p.scost density} that, when $|y-z|< 48RTd$,
	\begin{align*}
	f_J(y, z) \cdot \jcost \cdot \scost \lesssim \exp\left(\frac{1}{16d}|y-z|^2 + 64T^2d + 10RT^{5/2}\right) &\leq \exp\left((48\times3R^2+64)T^2d + 10RT^{5/2}\right)\\
	&\leq \exp\left(19R^2 T^{5/2}\right),
	\end{align*}
	the last inequality since $R\geq 1$ and again using that $d\leq \sqrt T/24$; we have also used that $64/24 \leq 3$.

	When $|y-z|>48RTd$, we again make use of Proposition~\ref{p.scost density} with $x_2 = p+4d$ and $x_1=p-4d$. The assumptions of Proposition~\ref{p.scost density} are satisfied since $\dip=5d$. So we obtain the bound
	\begin{align*}
	f_J(y, z)\cdot\one_\fav 
	&\lesssim d^{-1}T^2\exp\left(-\frac{1}{16d}(|y-z| - 48RTd)^2 + \frac{4R^2T^2}{8d} + 36R^2\times 8 d \right)
	\end{align*}
	when $|y-z| > 48RTd$. So, for $|y-z|>48RTd$, 
	\begin{align*}
	f_J(y, z) \cdot \jcost\cdot \scost &\lesssim T^2\cdot\exp\left(6RT|y-z| - 48\times3R^2T^2d+ \frac{1}{2}R^2T^2 +36R^2\times8d+ 64T^2d+10RT^{5/2}\right) \\
	&\leq T^2\cdot\exp\left(12R^2T^{5/2}+10RT^{5/2}\right)
	= T^2\cdot\exp\left(22R^2T^{5/2}\right),
	\end{align*}
	the last inequality since we have $|y-z| \leq 2RT^{3/2}$ on $\mc G_R^{(1)}$ and using that $1\leq d\leq \sqrt T/24$ and $R\geq 1$ from \eqref{e.R value} to see that $36R^2\times 8 d \leq 12R^2T^{1/2}$, allowing us to drop the middle four terms. This verifies \eqref{e.sufficient bound to prove} with the claimed values.
\end{proof}

We may now turn to discussing the proof strategy of Proposition~\ref{p.scost density}. In the proof of Proposition~\ref{p.negative density}, we were greatly aided by the presence of the pole and the \emph{difficulty} that a Brownian bridge faces in making large jumps while remaining negative on either side. An interesting feature of that argument is that no extra reasoning was needed to obtain a density bound from a comparison with a Brownian object. Typically, such comparisons easily yield bounds on tail probabilities, but these do not immediately imply a pointwise density bound. 

In the proof of Proposition~\ref{p.scost density}, as well as in the case addressed in Section~\ref{s.difficult case}, both of these features will be missing. Firstly, when $J$ (still thought of as essentially a Brownian bridge) is allowed to be positive on one side, the pole \emph{assists} its attainment of the values on either side, and so it is not clear why the Brownian density should bound that of the jump curve. This is especially true for the case analysed in the next Section ~\ref{s.difficult case}, where essentially the same bound as that proved in Section~\ref{s.easier cases} must hold. Secondly, we will not be able to access density bounds directly, but will need to make further technical arguments to move from tail probability bounds to density bounds. To accomplish the latter, we will make use of a technique of \emph{local randomization}, in both the case of this section as well as that of Section~\ref{s.difficult case}.

A short description of what we mean by local randomization is the following: Suppose that we wish to obtain a bound on the density of $J(x)$ for some $x$. We first obtain bounds on the tail probabilities of $J$ at certain points, say $x -\eta$ and $x+\eta$ for some $\eta>0$, with no pole contained in $[x-\eta, x+\eta]$. To convert this to a density bound at $x$, we use that $J$, conditionally on its values at $x-\eta$ and $x+\eta$, is a Brownian bridge on $[x-\eta,x+\eta]$. Then the distribution of $J$ at $x$ can be written as a convolution of the distributions at $x-\eta$ and $x+\eta$ with a normal random variable, which, when combined with the tail bounds, can be used to give a density bound. Heuristically, the tail bound is being propagated and smoothed by the Brownian bridge to a density bound.

At the level of this description, no importance is given to the exact value of $\eta$ and we have not explained what we mean by ``local" in local randomization. To aid our discussion, let us say that a random variable $X$ has a \emph{pseudo-variance at most} $\sigma^2$ if we have a tail bound of the form $\P(X<t) \leq \exp(-t^2/2\sigma^2)$. As the discussion of $\jcost$ and $\scost$ in Section~\ref{s.heuristics for proof} showed, we require the pseudo-variance we obtain in sub-Gaussian density bounds to be essentially optimal. And indeed, if we knew that the distributions of $J(x-\eta)$ and $J(x+\eta)$ were actually Gaussian, then the density bound arising from the convolution mentioned would be precisely the correct one. However, when all we have is a sub-Gaussian tail bound and not an actual Gaussian distribution, there is some extra gain in the pseudo-variance we obtain for the final density bound. This is captured in the following lemma, whose proof will be given at the end of the section.

\begin{lemma}\label{l.density bound tool lower}
Let $X$ be a random variable such that $\P(X<x)\leq A\exp(-\frac{1}{2\sigma_2^2}(x-x_0)^2)$ for $x<x_0$, and let $N$ be a normal random variable with mean 0 and variance $\sigma_1^2$ which is independent of $X$. Then the density $f$ of $X+N$ satisfies
\begin{align}\label{e.density bound tool}
f(x) \leq \frac{A+1}{\sqrt{2\pi}\sigma_1}\cdot\exp\left(-\frac{(x-x_0)^2}{2(\sigma_1+\sigma_2)^2}\right)
\end{align}
for $x<x_0$, and is bounded by $1/\sqrt{2\pi}\sigma_1$ for all $x\in\R$.
\end{lemma}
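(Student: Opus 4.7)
The plan is to represent the density of $X+N$ as $f(x) = \E[f_N(x-X)]$, where $f_N(t) = (2\pi\sigma_1^2)^{-1/2}\exp(-t^2/(2\sigma_1^2))$, and to exploit the pointwise bound $f_N(t) \leq f_N(0) = 1/(\sqrt{2\pi}\sigma_1)$. This already yields the universal bound $f(x) \leq 1/(\sqrt{2\pi}\sigma_1)$ by taking expectations. The sub-Gaussian bound for $x < x_0$ will be obtained by a careful partition of this expectation.

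The key step is to introduce the split point
\[
y_1 \;:=\; (1-\alpha)x + \alpha x_0 , \qquad \alpha \;:=\; \frac{\sigma_1}{\sigma_1+\sigma_2},
\]
which lies in $(x, x_0)$, and to decompose $f(x) = \E[f_N(x-X)\one_{X\leq y_1}] + \E[f_N(x-X)\one_{X>y_1}]$. The motivation for this specific $\alpha$ is the pair of elementary identities
\[
 \frac{(y_1 - x_0)^2}{\sigma_2^2} \;=\; \frac{(x-x_0)^2}{(\sigma_1+\sigma_2)^2} \;=\; \frac{(x-y_1)^2}{\sigma_1^2},
\]
so that the tail decay at rate $\sigma_2^{-2}$ measured from $x_0$ and the Gaussian density decay at rate $\sigma_1^{-2}$ measured from $x$ collapse to a common effective rate $(\sigma_1+\sigma_2)^{-2}$ acting on the full distance $|x-x_0|$.

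To bound the two pieces, I would first use on $\{X\leq y_1\}$ the universal bound $f_N(x-X)\leq 1/(\sqrt{2\pi}\sigma_1)$ together with the hypothesis, applied at $y_1 < x_0$, to obtain $\P(X\leq y_1)\leq A\exp(-(y_1-x_0)^2/(2\sigma_2^2))$ (a right-limit passage handles any possible atom at $y_1$). Substituting the first identity above yields a contribution of at most $A\cdot(\sqrt{2\pi}\sigma_1)^{-1}\exp(-(x-x_0)^2/(2(\sigma_1+\sigma_2)^2))$. On $\{X > y_1\}$, since $y_1 > x$, the inequality $|x-X| \geq |x-y_1|$ combined with the monotonicity of $f_N$ away from zero gives $f_N(x-X) \leq f_N(x-y_1)$; together with the trivial bound $\P(X>y_1)\leq 1$ and the second identity, this contributes at most $(\sqrt{2\pi}\sigma_1)^{-1}\exp(-(x-x_0)^2/(2(\sigma_1+\sigma_2)^2))$. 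Adding the two contributions produces the factor $A+1$ and the claimed Gaussian decay. The only nontrivial point is the selection of $y_1$; once it is in hand the remainder is a straightforward algebraic verification, so there is no genuine obstacle beyond this observation.
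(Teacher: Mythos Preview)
Your proof is correct and is essentially the same as the paper's: the paper writes the density as a convolution integral and splits the domain at $x\pm\tilde x$ with $\tilde x=\delta(x_0-x)$, $\delta=\sigma_1/(\sigma_1+\sigma_2)$, so that the upper split point $x+\tilde x$ is exactly your $y_1$, and the two estimates (tail bound on the near part, Gaussian decay on the far part) match yours term by term. Your one-sided split at $y_1$ is a slight streamlining of the paper's symmetric interval, but the key choice of $\alpha=\sigma_1/(\sigma_1+\sigma_2)$ and the resulting identities are identical.
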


The point we were expressing is seen in this formula by the fact that the pseudo-variance guaranteed by this bound is $(\sigma_1+\sigma_2)^2$, which is greater than $\sigma_1^2+\sigma_2^2$ as it would be had we known, in the notation of the lemma, that $X$ has Gaussian distribution with variance $\sigma_2^2$.

For $\eta$ a positive constant, this gain in pseudo-variance gives a density bound that is too weak for our purposes; indeed, the bound is weaker than that claimed in Proposition~\ref{p.scost density}. The solution is, roughly, to take $\eta\to 0$. In the language of the lemma, if $\sigma_2(\eta) \to \sigma$ and $\sigma_1(\eta) \to 0$ as $\eta\to0$, then
$$\lim_{\eta\to 0} (\sigma_1 +\sigma_2)^2 = \sigma^2 = \lim_{\eta\to 0} (\sigma_1^2+\sigma_2^2)\, ;$$
i.e., there is no gain in pseudo-variance in the limit. However, taking $\eta\to 0$ leads to a blow up in the constant in front of the exponential in \eqref{e.density bound tool}, and so we actually take $\eta$ to be a small $\epsilon$-dependent quantity, small enough that the gain in pseudo-variance is manageable. This is the argument of Proposition~\ref{p.scost density}, which we turn to next. We will then give the pending proofs of Lemmas~\ref{l.bad event 2} and \ref{l.bad event 1}, and finish the section by proving the technical tool Lemma~\ref{l.density bound tool lower}.

%
%
%

The proof of Proposition~\ref{p.scost density} will actually obtain the claimed bound on the conditional density $f_J^{x_1,x_2}(s\mid t)$, so we also need that the marginal density $f_J^{x_2}(t)$ is bounded.

\begin{lemma}[Marginal density is bounded]\label{l.density is bounded}
For any $\F$-measurable $x\in[-2T,2T]$ such that $[x-d, x+d]\cap P = \emptyset$, let $f_J^x$ be the density of $J(x)-\tent(x)$ conditionally on $\F$. Then we have on $\fav$ that $f_J^x(s) \leq \pi^{-\frac{1}{2}}d^{-\frac{1}{2}}$ for all $s\in \R$.
\end{lemma}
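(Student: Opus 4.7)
The plan is to exploit the absence of poles in $[x-d, x+d]$ to reduce the density bound to the standard density bound for a Brownian bridge at its midpoint. By the construction of the jump ensemble together with Lemma~\ref{l.bb independent decomposition property}, once we condition on sufficiently many values of $J(k,\cdot)$ we will obtain an \emph{unconditioned} Brownian bridge on $[x-d,x+d]$, for which explicit Gaussian densities are available.

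First I would check that, on $\fav$ and under the hypothesis that $[x-d,x+d] \cap P = \emptyset$ with $x \in [-2T,2T]$, the interval $[x-d,x+d]$ is contained in $(\mf l, \mf r)$. This uses that $\mf l, \mf r \in P$ together with the fact that $x \in [\mf l,\mf r]$ (for $J(k,x)$ to be defined at all), so $\mf l < x-d$ and $x+d < \mf r$. Then let $p^-$ and $p^+$ denote the elements of $P \cup \{\mf l, \mf r\}$ immediately preceding and succeeding $x$; by hypothesis $p^- \le x-d$ and $p^+ \ge x+d$.

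Next, enumerate the points of $\{\mf l\} \cup P \cup \{\mf r\}$ as $\mf l = q_0 < q_1 < \cdots < q_{m} = \mf r$ and, to these, adjoin the two extra points $x-d$ and $x+d$. Under $\PF$, the jump ensemble $J$ arises as $k$ independent Brownian bridges on $[-2T,2T]$, conditioned on the side intervals test (which depends only on the values at $\mf l, \mf r$) and on jumping over the poles (which depends only on the values at the $q_j$'s). Applying Lemma~\ref{l.bb independent decomposition property} to the $k$-th curve $B(k,\cdot)$ with the ordered point set $\{q_0,\ldots,q_m\} \cup \{x-d,x+d\}$, I infer that, conditionally on $\F$ and on the values of $J(k,\cdot)$ at all of these points, the restriction of $J(k,\cdot)$ to $[x-d,x+d]$ is distributed as a Brownian bridge between its two endpoint values, with no further conditioning (since no poles lie in $(x-d,x+d)$, the indicator of the conditioning event is determined by the fixed values and thus plays no role inside this interval).

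Finally, for a Brownian bridge on $[x-d,x+d]$ between arbitrary endpoints, the value at the midpoint $x$ is Gaussian with variance $(d)(d)/(2d) = d/2$, so its density at every point is bounded by $(2\pi \cdot d/2)^{-1/2} = \pi^{-1/2} d^{-1/2}$. Integrating this pointwise bound against the (conditional) joint law of $(J(k,x-d), J(k,x+d))$ given $\F$ yields the same bound on $f^x_J(s + \tent(x))$ for every $s$; since $\tent(x)$ is $\F$-measurable, subtracting it does not change the conditional density, giving the claim. There is no genuine obstacle here: the only subtlety is the careful bookkeeping to ensure that, after conditioning on all the pole- and boundary-values, the interior behavior on $[x-d,x+d]$ is truly that of an unconstrained Brownian bridge, which is exactly what Lemma~\ref{l.bb independent decomposition property} supplies.
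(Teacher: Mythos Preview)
Your proposal is correct and follows essentially the same approach as the paper: both use that, since $[x-d,x+d]$ contains no pole, $J(k,\cdot)$ on this interval is a Brownian bridge conditionally on its endpoint values, so $J(k,x)$ is Gaussian with variance $d/2$ given those endpoints, yielding the uniform density bound $(\pi d)^{-1/2}$ after averaging. Your write-up is somewhat more explicit than the paper's (which simply states the convolution formula and concludes), in particular in checking $[x-d,x+d]\subset(\mf l,\mf r)$ and in invoking Lemma~\ref{l.bb independent decomposition property} to justify the Brownian bridge structure.
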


\begin{proof}
	By assumption there is no pole in $[x-d, x+d]$. So $J(x)$, conditionally on $J(x-d)$ and $J(x+d)$, is given by
	$$J(x) = \frac12 J(x-d)+\frac12J(x+d) + N\left(0,\frac12d\right).$$
	Thus a formula for $f_J^x$ is
	$$f_J^x(s) = \frac{1}{\sqrt{\pi d}}\int_{-\infty}^\infty \exp\left(-\frac{1}{d}(t-s)^2\right)\, \mathrm d\nu(t),$$
	where $\nu$ is the law of $\frac12 J(x-d)+\frac12J(x+d)$. 
	From this formula the claim follows.
\end{proof}

We next cite a standard bound on normal probabilities before turning to the proof of Proposition~\ref{p.scost density}.

\begin{lemma}[Normal bounds] \label{l.normal bound}
Let $\sigma^2>0$. If $t>\sigma$ for the first inequality and $t>0$ for the second,
$$\frac{\sigma}{2\sqrt{2\pi}t}\exp\left(-\frac{t^2}{2\sigma^2}\right)\leq \P\Big(N(0,\sigma^2) > t\Big) \leq \exp\left(-\frac{t^2}{2\sigma^2}\right).$$
\end{lemma}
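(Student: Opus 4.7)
By scaling ($N(0,\sigma^2) \stackrel{d}{=} \sigma N(0,1)$), both inequalities reduce immediately to the case $\sigma = 1$, so the task is to show $\P(N(0,1) > t) \leq e^{-t^2/2}$ for $t > 0$ and $\P(N(0,1) > t) \geq \frac{1}{2\sqrt{2\pi}\, t}\, e^{-t^2/2}$ for $t > 1$. For the upper bound I would apply the standard Chernoff/MGF argument: Markov's inequality gives $\P(N(0,1) > t) \leq e^{-\lambda t}\,\E[e^{\lambda N}] = e^{-\lambda t + \lambda^2/2}$ for every $\lambda > 0$, and optimizing at $\lambda = t$ yields $e^{-t^2/2}$. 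No casework or use of $t > \sigma$ is needed.

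For the lower bound the natural tool is a Mills-ratio estimate obtained by iterated integration by parts. Writing $\int_t^\infty e^{-u^2/2}\,\mathrm du = \int_t^\infty u^{-1}(u\, e^{-u^2/2})\,\mathrm du$ and integrating by parts produces $\tfrac{1}{t} e^{-t^2/2} - \int_t^\infty u^{-2} e^{-u^2/2}\,\mathrm du$; a second integration by parts on the remaining integral shows it is at most $t^{-3} e^{-t^2/2}$. Dividing by $\sqrt{2\pi}$ yields the standard lower estimate
\[
\P\bigl(N(0,1) > t\bigr) \;\geq\; \frac{1}{\sqrt{2\pi}\, t}\bigl(1 - t^{-2}\bigr) e^{-t^2/2}.
\]
The factor $1 - t^{-2}$ exceeds $1/2$ as soon as $t \geq \sqrt{2}$, so for such $t$ the desired bound is immediate.

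The only fiddly point is the narrow remaining window $1 < t < \sqrt{2}$, where the $(1 - t^{-2})$ factor is too small. This I would dispatch either by a direct check (the continuous function $t \mapsto 2\sqrt{2\pi}\, t\, e^{t^2/2} \P(N(0,1) > t)$ is readily verified to exceed $1$ throughout $[1,\sqrt{2}]$), or more cleanly by invoking the Komatsu-type refinement $\P(N(0,1) > t) \geq \frac{2}{(\sqrt{t^2+4} + t)\sqrt{2\pi}}\, e^{-t^2/2}$, whose right side is at least $\frac{\sqrt{5}-1}{2 t \sqrt{2\pi}}\, e^{-t^2/2} > \frac{1}{2 t \sqrt{2\pi}}\, e^{-t^2/2}$ for every $t \geq 1$ and thus covers both sub-ranges uniformly. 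The lemma is entirely standard, so I do not anticipate any genuine obstacle; the proof writes itself once the two estimation techniques (Chernoff and integration by parts) are in place.
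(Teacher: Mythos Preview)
Your proof is correct and follows essentially the same approach as the paper: reduce to $\sigma=1$ by scaling, use the Chernoff bound for the upper tail, and use a Mills-ratio-type lower bound for the lower tail. The only difference is that the paper cites the form $\P(N(0,1)>t)\ge (2\pi)^{-1/2}\tfrac{t}{t^2+1}e^{-t^2/2}$, and since $\tfrac{t}{t^2+1}\ge \tfrac{1}{2t}$ holds for all $t\ge 1$, no casework on $1<t<\sqrt{2}$ is needed; your integration-by-parts bound $\tfrac{1}{t}(1-t^{-2})$ forces that extra step, though your Komatsu alternative also closes it uniformly.
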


\begin{proof}
Replacing $t$ by $\sigma t$, it suffices to take $\sigma = 1$. The standard lower bound
$$
 \P(N(0,1) > t) \geq (2 \pi)^{-1 / 2} \frac{t}{t^{2}+1}\exp \left(-t^{2} / 2\right)
$$
for $t\geq 0$ may be found in \cite[Section 14.8]{williams1991probability}. Note that $\frac{t}{t^2+1}\geq (2t)^{-1}$ for $t\geq 1$.
The upper bound is simply the Chernoff bound.
\end{proof}



The proof of Proposition~\ref{p.scost density} has two steps, as described in the earlier discussion. The first is a tail bound on a quantity close to $J(x_1)$, conditionally on $J(x_2)$ (and with the roles of $x_1$ and $x_2$ reversed); the second is to convert this tail bound into a density bound using Lemma~\ref{l.density bound tool lower}. The first step is isolated in the next lemma, while the second step is performed in the immediately following proof of Proposition~\ref{p.scost density}.

\begin{lemma}\label{l.scost tail bound}
Let $\mf l/2<x_1 < x_2 < \mf r/2$ be $\F$-measurable, $\sigma^2 = x_2-x_1\geq d$, and $R$ be as in \eqref{e.R value}. Suppose also that $[x_1-d, x_1+d]\cap P  = [x_2-d, x_2+d]\cap P = \emptyset$. Then on $\fav$, for $r<t+\tent(x_2) - 4\sigma^2(R+2)T$ and any $\eta < d/2$,
\begin{align*}
\MoveEqLeft[15]
\PF\Big(\tfrac12 J(x_1+2\eta) + \tfrac12 J(x_1-2\eta) < r \ \big|\  J(x_2)+\tent(x_2) = t\Big)&\\
&\leq \exp\left(-\frac{1}{2\sigma_\eta^2}\big(r-t-\tent(x_2)+ 4\sigma^2(R+2)T\big)^2\right),
\end{align*}
where $\sigma_\eta^2 = \sigma^2\cdot\frac{x_1-\mf l}{x_2-\mf l} -\eta$.

Similarly, for $r<t+\tent(x_1)-4\sigma^2(R+2)T$ and on $\fav$,
\begin{align*}
\MoveEqLeft[15]
\PF\Big(\tfrac12 J(x_2+2\eta) + \tfrac12 J(x_2-2\eta) < r \ \big|\  J(x_1)+\tent(x_1) = t\Big)&\\
&\leq \exp\left(-\frac{1}{2\tilde\sigma_\eta^2}\big(r-t-\tent(x_1)+ 4\sigma^2(R+2)T\big)^2\right),
\end{align*}
where $\tilde\sigma_\eta^2 = \sigma^2\cdot\frac{\mf r-x_2}{\mf r- x_1}-\eta$.
\end{lemma}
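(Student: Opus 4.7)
The strategy is to stochastically dominate $J$ from below by an explicit Brownian bridge and then invoke a Gaussian lower tail bound. The two claimed inequalities are symmetric under the exchange $x_1 \leftrightarrow x_2$ and $[\mf l, x_2] \leftrightarrow [x_1, \mf r]$, so I shall describe only the first, treating the conditioning as $J(x_2) - \tent(x_2) = t$ (i.e.\ $t$ is the $\tent$-deviation of $J$ at $x_2$, consistent with the notation of Proposition~\ref{p.scost density}).

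First, I would apply Lemma~\ref{l.J stochastic dominanation}(ii): conditionally on $\F$ and on $J(x_2) = t + \tent(x_2)$, the restriction of $J$ to $[\mf l, x_2]$ stochastically dominates a Brownian bridge $B$ from $(\mf l, -T^2)$ to $(x_2, t + \tent(x_2))$, with the hypothesis $J(x_2) \geq -T^2$ valid on $\fav$ for the relevant range of $t$. Because the functional $w \mapsto \tfrac12 w(x_1+2\eta) + \tfrac12 w(x_1-2\eta)$ is coordinatewise non-decreasing in the path $w$, the domination transfers directly to lower tails, reducing the task to a Gaussian tail estimate for
\begin{align*}
W := \tfrac12 B(x_1+2\eta) + \tfrac12 B(x_1-2\eta).
\end{align*}

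Next, I would compute the parameters of $W$. By the symmetry of $x_1 \pm 2\eta$ about $x_1$ and the linearity of the bridge's mean, $\E[W]$ equals the value at $x_1$ of the linear interpolant of $B$ between its endpoints. From the bridge covariance $\mrm{Cov}(B(s),B(u)) = (s - \mf l)(x_2 - u)/(x_2-\mf l)$ for $\mf l \leq s \leq u \leq x_2$, the variance simplifies after expanding and cancelling to $\mrm{Var}(W) = \sigma^2 \cdot (x_1 - \mf l)/(x_2 - \mf l) - \eta = \sigma_\eta^2$, precisely as required; positivity of $\sigma_\eta^2$ and the containment $[x_1 - 2\eta, x_1 + 2\eta] \subset (\mf l, x_2)$ follow from $\eta < d/2$, $\sigma^2 \geq d$, and $\mf l/2 < x_1$.

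Finally, I would lower-bound $\E[W]$. Setting $L := x_2 - \mf l$, a short rearrangement yields $\E[W] = (t + \tent(x_2)) - (\sigma^2/L)(T^2 + t + \tent(x_2))$. On $\fav$ we have $\mf l \leq -T/2$, hence $L \geq T/2$, and $|\tent(x_2)| \leq T^2$; combined with the range $|t| \leq RT^2$ that arises in the applications within Proposition~\ref{p.scost density}, this gives $T^2 + t + \tent(x_2) \leq 2(R+2)TL$, equivalently $\E[W] \geq t + \tent(x_2) - 4\sigma^2(R+2)T$. For $r$ below this threshold, the Gaussian upper tail (Lemma~\ref{l.normal bound}) yields the stated exponential decay with pseudo-variance $\sigma_\eta^2$. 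The main obstacle I anticipate is not conceptual but bookkeeping: confirming that the implicit range of $t$ is what makes the bound useful in the subsequent application, and carefully tracking the elementary but tedious variance identity.
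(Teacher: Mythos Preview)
Your proposal is correct and follows essentially the same route as the paper's proof: dominate $J$ from below on $[\mf l,x_2]$ by the Brownian bridge from $(\mf l,-T^2)$ to $(x_2,t+\tent(x_2))$ via Lemma~\ref{l.J stochastic dominanation}(ii), compute the mean and variance of $\tfrac12 B(x_1+2\eta)+\tfrac12 B(x_1-2\eta)$, bound the slope of the interpolating line using $t\leq RT^2$ and $\tent(x_2)\leq T^2$, and finish with the Gaussian tail bound of Lemma~\ref{l.normal bound}. One bookkeeping slip: from $x_2>\mf l/2$ and $\mf l\leq -T/2$ you only get $L=x_2-\mf l>-\mf l/2\geq T/4$, not $T/2$; with $L\geq T/4$ the bound $T^2+t+\tent(x_2)\leq (R+2)T^2\leq 4(R+2)TL$ yields exactly the $4\sigma^2(R+2)T$ in the statement, so your final inequality is right even though the intermediate constant was off.
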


\begin{proof}
We will only prove the first bound as the second bound is analogous, by repeating the below argument with the roles of $x_1$ and $x_2$ switched, and $\mf r$ in place of $\mf l$.

	By assumption, there is no pole in $[x_1 -2\eta, x_1+2\eta]$ for all $\eta \leq d/2$. For every such $\eta$, the distribution of $J(x_1)$ given $J(x_2)$, $J(x_1-2\eta)$, and $J(x_1+2\eta)$ depends on only $J(x_1-2\eta)$ and $J(x_1 +2\eta)$, and is given by
	\begin{equation}
	J(x_1) = \frac{1}{2}J(x_1-2\eta) + \frac{1}{2}J(x_1+2\eta) + N\left(0, \eta\right). \label{e.dist of lower point}
	\end{equation}
	Given $J(x_2)=t+\tent(x_2)$ and on $\fav$, Lemma~\ref{l.J stochastic dominanation}(ii) implies that $J$, restricted to $[\mf l, x_2]$, stochastically dominates the Brownian bridge with endpoints ${(\mf l, -T^2)}$ and $(x_2, t+\tent(x_2))$. We call this Brownian bridge $B$. Then the slope of the line connecting these two points is
	$$m:=\frac{t+\tent(x_2)+T^2}{x_2-\mf l},$$
	and so $\E[B(x_1+r)] = t+\tent(x_2)-(x_2-x_1-r)m$ for any $r$ such that $x_1+r\in[\mf l,x_2]$.
	Now, conditionally on $J(x_2)$, we have a coupling such that
	\begin{align}\label{e.scost tail bound stoch dom}
	\frac12 J(x_1 + 2\eta) + \frac12 J(x_1-2\eta) \geq \frac12 B(x_1+2\eta) + \frac12 B(x_1-2\eta).
	\end{align}
	%
	%
	%
	Since the covariance of $B$ is given for $r_1\leq r_2$ by
	$$\Cov(B(r_1),B(r_2)) = \frac{(r_1-\mf l)(x_2-r_2)}{x_2 -\mf l},$$
	it follows after some algebraic simplification that the variance of the right hand side of \eqref{e.scost tail bound stoch dom} is $\sigma_\eta^2$. 
	%
	The mean of $\frac12 B(x_1+2\eta) + \frac12 B(x_1-2\eta)$ is
	$$t+\tent(x_2)-\tfrac12(x_2-x_1-2\eta)m - \tfrac12(x_2-x_1+2\eta)m = t+\tent(x_2) - \sigma^2m.$$
	Thus we have that, for $r<t+\tent(x_2)-\sigma^2 m$, on $\fav\cap\{[x_1-d,x_1+d]\cap P = \emptyset\}$,
	\begin{align}
	\PF\Big(\tfrac12 J(x_1+2\eta) &+ \tfrac12 J(x_1-2\eta) < r \ \big|\  J(x_2)+\tent(x_2) = t\Big)\nonumber\\
		&\leq \PF\Big(N\big(t+\tent(x_2)-\sigma^2m, \sigma_\eta^2\big )< r\Big)
		%
		%
		\leq \exp\left(-\frac{1}{2\sigma_\eta^2}\big(r-t-\tent(x_2)+ \sigma^2 m\big)^2\right),\label{e.scost first tail bound}
	\end{align}
	the last inequality obtained for $r< t+\tent(x_2)-\sigma^2m$ via the upper bound from Lemma~\ref{l.normal bound}. 

	Now returning to the definition of $m$, on $\fav$,
	\begin{align*}
	m &= \frac{t+\tent(x_2)+T^2}{x_2-\mf l} \leq \frac{(R+2)T^2}{T/4} = 4\left(R+2\right)T,
	\end{align*}
	since we have assumed that $t\leq RT^2$; that $x_2 \geq \mf l/2$; and since, on $\fav$, $\mf l\leq -T/2$ and $\tent(x_2)\leq T^2$. 

	Using this bound on $m$ in \eqref{e.scost first tail bound} completes the proof of Lemma~\ref{l.scost tail bound}.
\end{proof}

\begin{proof}[Proof of Proposition~\ref{p.scost density}]
	We prove only the case of $s<t$; the other case is analogous, making use of the second inequality of Lemma~\ref{l.scost tail bound} instead of the first as we do in the case of $s<t$.

	We first note that
	\begin{align}\label{e.sigma_eta}
	\sigma_\eta^2 = \frac{(x_2-x_1)(x_1-\mf l)}{x_2-\mf l} - \eta \leq \sigma^2 - \eta.
	\end{align}
	We will apply Lemma~\ref{l.density bound tool lower} to Lemma~\ref{l.scost tail bound} using \eqref{e.dist of lower point}. The parameters of Lemma~\ref{l.density bound tool lower} are set as follows (the formal notational conflict between $\sigma_1$ or $\sigma_2$ and $\sigma_\eta$ should not cause confusion): $X=\frac{1}{2}J(x_1-2\eta)+\frac{1}{2}J(x_1+2\eta)-\tent(x_1)$, $\sigma_1^2 = \eta$, $\sigma_2^2 = \sigma_\eta^2$, $x_0 = t+\tent(x_2)-\tent(x_1) - \sigma^2m$, and $A$ specified by the constant represented by $\lesssim$ in the first inequality of Lemma~\ref{l.scost tail bound}. 

	This yields, on the event $\fav\cap\{[x_1-d,x_1+d]\cap P = \emptyset\}$, for each $\eta < d/2$, the following bound on the conditional density of $J(x_1)-\tent(x_1)$, conditionally on $J(x_2)-\tent(x_2) = t$:
	\begin{align*}
	f^{x_1,x_2}_J(s\mid t)  &\lesssim \eta^{-\frac{1}{2}}\cdot \exp\left(-\frac{1}{2\left(\sigma_\eta+ \eta^{1/2}\right)^2}\left(s-t+\tent(x_1)-\tent(x_2)+4\sigma^2(R+2)T\right)^2\right)\\
	&\leq \eta^{-\frac{1}{2}}\cdot \exp\left(-\frac{1}{2\left(\sigma^2 + 2\eta^{1/2}\sigma\right)}\left(s-t+\tent(x_1)-\tent(x_2)+4\sigma^2(R+2)T\right)^2\right)
	\end{align*}
	for $s<t+\tent(x_2)-\tent(x_1)-4\sigma^2(R+2)T$. We used \eqref{e.sigma_eta} when expanding the square in the denominator of the exponent in the last inequality. 
	Note that 
	$$\Big|\tent(x_1)-\tent(x_2)\Big| \leq 4T(x_2-x_1) = 4T\sigma^2.$$
	Using the previous equation, we obtain, on $\fav\cap\{[x_1-d,x_1+d]\cap P = \emptyset\}$, that
	\begin{align*}
	f^{x_1,x_2}_J(s\mid t) &\lesssim \eta^{-\frac{1}{2}}\cdot \exp\left(-\frac{1}{2\left(\sigma^2 + 2\eta^{1/2}\sigma\right)}\left(s-t+MT\sigma^2\right)^2\right),
	\end{align*}
	for $s\leq t-MT\sigma^2$, where $M= 4+ 4(R+2) = 4(R+3).$ Now using the inequality $(1+x)^{-1}\geq 1-x$ for $x = 2\eta^{1/2}\sigma^{-1}$, we find
	\begin{equation}\label{e.scost density before expanding exponent}
	f^{x_1,x_2}_J(s\mid t) \lesssim \eta^{-\frac{1}{2}}\cdot \exp\left(-\frac{1}{2\sigma^2}\big(s-t+MT\sigma^2\big)^2 +\frac{\eta^{1/2}}{ \sigma^3}\big(s-t+MT\sigma^2\big)^2\right).
	\end{equation}
	Let us focus on bounding the second term in the exponent. We expand the square and drop the cross-term, since $s-t\leq 0$, to get that the second term is bounded above by
	$$\frac{\eta^{1/2}}{\sigma^3}\bigl((s-t)^2 + M^2T^2\sigma^4\bigr)\leq \frac{\eta^{1/2}}{\sigma^3}\bigl(4R^2T^4 + M^2T^2\sigma^4\bigr),$$
	the last inequality since $s-t\in[-2RT^2, 0]$. We now use this bound in \eqref{e.scost density before expanding exponent} and set $\eta^{1/2} = T^{-2}\sigma$ (which satisfies $\eta < d/2$ by assumption), to obtain
	$$f^{x_1,x_2}_J(s\mid t) \lesssim \frac{T^2}{\sigma} \exp\left(-\frac{1}{2\sigma^2}(s-t+MT\sigma^2)^2 +\frac{4R^2T^2}{\sigma^2} + M^2\sigma^2\right).$$
	The argument is complete by noting that $f^{x_2}_J(t)\lesssim d^{-\frac{1}{2}}$ by Lemma~\ref{l.density is bounded} and that $M\leq 6R$, since $M=4R+12\leq 4R+2\times 6\sqrt d = 6R$ from \eqref{e.R value} and $d\geq 1$.
%
%
%
%

	The final statement in Proposition~\ref{p.scost density} of a constant bound on $f_J^{x_1,x_2}(s,t)$ for all values of $s$ and $t$ follows immediately from \eqref{e.dist of lower point} and the latter assertion of Lemma~\ref{l.density bound tool lower} with the parameters $\sigma_1^2 = \eta = d/4$, and again using that the marginal density satisfies $f^{x_2}_J(t)\lesssim d^{-\frac{1}{2}}$ from Lemma~\ref{l.density is bounded}.
\end{proof}

We will now move towards the proofs of Lemmas~\ref{l.bad event 2} and \ref{l.bad event 1}, which use Propositions \ref{p.negative density} and \ref{p.scost density}. Then we will conclude this section and this part of the argument by proving Lemma~\ref{l.density bound tool lower}.

For the proofs of Lemmas~\ref{l.bad event 2} and \ref{l.bad event 1} we will need two further statements, the first bounding certain Gaussian integrals, and the other a standard tail bound on the supremum of a Brownian bridge. These are the next two lemmas. We will make frequent use of Lemma~\ref{l.integral bound} in the next section as well.

\begin{lemma}\label{l.integral bound}
For $a>0$,
$$\int_0^\infty \exp\left(-a x^2 + bx\right)\, \mathrm dx \lesssim \begin{cases}
a^{-\frac{1}{2}} & b\leq 0\\
a^{-\frac{1}{2}} \exp\left(\frac{b^2}{4a}\right) & b \in \R.

\end{cases}$$
\end{lemma}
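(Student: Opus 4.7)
The plan is to treat the two cases separately, both via elementary manipulations of the Gaussian integral, so this is quick.

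For the first case $b \le 0$, the strategy is simply to drop the linear term: since $bx \le 0$ for all $x \ge 0$, the integrand is pointwise bounded by $e^{-ax^2}$, and so
\[
  \int_0^\infty e^{-ax^2 + bx}\,\mathrm dx \;\le\; \int_0^\infty e^{-ax^2}\,\mathrm dx \;=\; \tfrac{1}{2}\sqrt{\pi/a},
\]
which is the claimed $a^{-1/2}$ bound up to an absolute constant.

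For the second case (arbitrary $b \in \mathbb R$), the plan is to complete the square, writing
\[
  -ax^2 + bx \;=\; -a\bigl(x - \tfrac{b}{2a}\bigr)^2 + \tfrac{b^2}{4a},
\]
pulling out the $e^{b^2/(4a)}$ factor, and then substituting $u = \sqrt{a}\bigl(x - b/(2a)\bigr)$. This gives
\[
  \int_0^\infty e^{-ax^2 + bx}\,\mathrm dx \;=\; \frac{e^{b^2/(4a)}}{\sqrt a}\int_{-b/(2\sqrt a)}^\infty e^{-u^2}\,\mathrm du \;\le\; \frac{e^{b^2/(4a)}}{\sqrt a}\int_{-\infty}^\infty e^{-u^2}\,\mathrm du \;=\; \sqrt{\pi}\,a^{-1/2}\, e^{b^2/(4a)},
\]
matching the second bound.

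There is no real obstacle here: the only thing to keep track of is that when $b \le 0$ we want the sharper bound without the $e^{b^2/(4a)}$ factor, and this is automatic because in that regime the linear term already works in our favour on the half-line $[0,\infty)$. The generic completion-of-the-square bound handles all other values of $b$ uniformly.
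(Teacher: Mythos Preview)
Your proof is correct. Both you and the paper complete the square for the general case; the only difference is that for $b\le 0$ you simply drop the nonpositive term $bx$, whereas the paper completes the square in both cases and then applies the Chernoff bound to the resulting normal tail probability to cancel the $e^{b^2/(4a)}$ factor. Your handling of Case~1 is slightly more direct, but the two arguments are essentially equivalent.
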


\begin{proof}
	Completing the square, we find
	\begin{align*}
	\int_0^\infty \exp\left(-ax^2 + bx\right)\, \mathrm dx &= \int_0^\infty \exp\left(-a\left(x^2 - \frac{bx}{a}+\frac{b^2}{4a^2}\right) +\frac{b^2}{4a}\right)\, \mathrm dx\\
	&=\sqrt{\frac{\pi}{a}}\exp\left(\frac{b^2}{4a}\right)\P\left(N\left(\frac{b}{2a}, \frac{1}{2a}\right)>0\right)\\
	&=\sqrt{\frac{\pi}{a}}\exp\left(\frac{b^2}{4a}\right)\P\left(N(0,1)> -\frac{1}{\sqrt{2a}}b\right).
	\end{align*}
	For all $b\in\R$ this probability factor is bounded by a constant, which yields Case 2 of the statement. If $b<0$ we may use the Chernoff bound for normal random variables to obtain Case 1:
	\begin{equation*}
	\sqrt{\frac{\pi}{a}}\exp\left(\frac{b^2}{4a}\right)\P\left(N(0,1)> -\frac{1}{\sqrt{2a}}b\right)\leq \sqrt{\frac{\pi}{a}}\exp\left(\frac{b^2}{4a} -\frac{1}{2}\cdot \frac{b^2}{2a}\right)=\sqrt{\frac{\pi}{a}}.\qedhere
	\end{equation*}
\end{proof}

\begin{lemma}\label{l.brownian bridge inf}
Let $B$ be a Brownian bridge of length $T$ from $(0,0)$ to $(T,0)$. Then we have
$$\P\left(\sup_{[0,T]} B(x) \geq r\right)=\P\left(\inf_{[0,T]} B(x) \leq -r\right) = e^{-2r^2/T}.$$
\end{lemma}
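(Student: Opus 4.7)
The statement is a classical identity, so the plan is to present the well-known reflection principle argument cleanly rather than invent anything new.

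First, I would reduce the two equalities in the display to a single computation. The process $-B$ is again a Brownian bridge from $(0,0)$ to $(T,0)$, and $\sup_{[0,T]}(-B) = -\inf_{[0,T]} B$, so the tail probabilities of $\sup B$ and $-\inf B$ coincide; hence it suffices to prove
$$\P\Big(\sup_{[0,T]} B(x) \geq r\Big) = e^{-2r^2/T}$$
for $r \geq 0$ (the case $r < 0$ being trivial).

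Next, I would realize $B$ as a standard Brownian motion conditioned on its terminal value. Let $W$ be a standard Brownian motion started at $0$ under some measure $\P$, and let $p_T(a) = (2\pi T)^{-1/2} e^{-a^2/(2T)}$ denote the density of $W(T)$. A standard disintegration shows that the regular conditional law of $(W(s))_{s \in [0,T]}$ given $W(T) = 0$ coincides with the law of $B$, so
$$\P\Big(\sup_{[0,T]} B(x) \geq r\Big) = \frac{1}{p_T(0)}\cdot \frac{\mathrm d}{\mathrm da}\Big|_{a = 0^{-}} \P\Big(\sup_{[0,T]} W(x) \geq r,\; W(T) \leq a\Big).$$

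The main computation is then an application of the reflection principle. Let $\tau_r = \inf\{t \geq 0 : W(t) = r\}$. On $\{\tau_r \leq T\}$ the strong Markov property at $\tau_r$ implies that the reflected path $\widetilde W(t) := W(t)\mathbbm{1}_{t \leq \tau_r} + (2r - W(t))\mathbbm{1}_{t > \tau_r}$ has the same law on $[0,T]$ as $W$, with $\widetilde W(T) = 2r - W(T)$. Therefore, for every $a < r$,
$$\P\Big(\sup_{[0,T]} W(x) \geq r,\; W(T) \leq a\Big) = \P\big(W(T) \geq 2r - a\big),$$
whose density in the $a$-variable is $p_T(2r-a)$. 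Evaluating at $a = 0$ and dividing by $p_T(0)$ yields
$$\P\Big(\sup_{[0,T]} B(x) \geq r\Big) = \frac{p_T(2r)}{p_T(0)} = \exp\Big(-\frac{2r^2}{T}\Big),$$
which is the desired identity. There is no real obstacle here; the only point requiring care is the density-differentiation step, which is standard and can alternatively be replaced by the customary argument of conditioning on $W(T) \in (-\delta, \delta)$ and letting $\delta \downarrow 0$.
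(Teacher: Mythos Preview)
Your proof is correct. The paper's own argument is even more abbreviated: it notes the symmetry $B \mapsto -B$ for the first equality, rescales to $T=1$, and then simply cites equation~(3.40) of Chapter~4 in Karatzas--Shreve. Your reflection-principle derivation is exactly the classical computation that underlies that cited formula, so the approaches are essentially the same---yours is just self-contained rather than deferring to a reference.
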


\begin{proof}
The equality of the two quantities follows from Brownian symmetry. By Brownian scaling the statement reduces to when $T=1$, which is given by equation (3.40) in \cite[Chapter 4]{karatzas1998brownian}.
\end{proof}

\begin{proof}[Proof of Lemma~\ref{l.bad event 1}]
	By Lemma~\ref{l.J stochastic dominanation}(iii) we have that $J$ is stochastically dominated by the restriction to $[\mf l, \mf r]$ of a Brownian bridge from $(-2T,2T^2)$ to $(2T, 2T^2)$ conditioned to jump over all the poles as well as $\Corner^{\mathfrak l, \F}_k$ and $\Corner^{\mathfrak r, \F}_k$. This event being conditioned on has a constant probability since, on $\fav$, the value of $\tent$ at the poles, $\gxmin_k$, and $\gymin_k$ are all below $T^2$ and $T\geq 1$. We also have on $\fav$ that $-T^2\leq \tent\leq T^2$.
	Thus, for both the pole and no-pole cases of the definition of $(Y,Z)$ from \eqref{e.Y Z defn} and \eqref{e.Y Z defn no pole}, on $\fav$,
	\begin{align*}
	\PF\Big(\max\big\{Y,Z\big\} >  RT^2\Big)
	&\lesssim \B_{2T^2, 2T^2}^{[-2T,2T]}\left(\sup_{t\in[-2T,2T]} B(t) > (R-1)T^2\right)\\
	&= \exp\left(-\frac{1}{2}(R-3)^2T^3\right) = \epsilon^{(R-3)^2 D_k^3/2},
	\end{align*}
	using Lemma~\ref{l.brownian bridge inf} in the second inequality. Similarly for the lower side, we have by Lemma~\ref{l.J stochastic dominanation}(i) that, on $\fav$, $J$ stochastically dominates a Brownian bridge from $(\mf l,-T^2)$ to $(\mf r,-T^2)$. Thus again using Lemma~\ref{l.brownian bridge inf} and that $-T^2\leq \tent\leq T^2$, and for both cases of the definition of $(Y,Z)$,
	\begin{align*}
	\PF\Big(\min\big\{Y, Z\big\} < -RT^{2}\Big)\cdot\one_\fav 
	&\leq \B_{-T^2,-T^2}^{[\mf l, \mf r]}\left(\inf_{t\in[\mf l, \mf r]} B(t) <-(R-1)T^2\right)\\
	&= \exp\left(-\frac{2}{(\mf r-\mf l)}(R-2)^2T^4\right) \leq \epsilon^{(R-2)^2D_k^3},
	\end{align*}
	since $|\mf r|, |\mf l| \leq T$.
	We note that for our range of $R$, $\epsilon^{(R-2)^2D_k^3}\leq \epsilon^{(R-3)^2D_k^3/2}$.

	We are left with bounding 
	$$\PF\left(|Y-Z| > 2RT^{3/2}, -RT^{2}<Y, Z<RT^{2}\right)$$
	on $\fav$. Since the definition of $Y$ and $Z$ depends on whether $P\cap[-2d,2d] = \emptyset$ or not (see \eqref{e.Y Z defn} and \eqref{e.Y Z defn no pole}), the bound we can obtain on the above probability depends on the same as well. However, the bound in the case where a pole is present (which is the one claimed in the statement of Lemma~\ref{l.bad event 1}) actually holds for both cases; this is because the distance between the points where $Y$ and $Z$ measure the deviation of $J$ from $\tent$ is $8d$ when a pole is present, larger than the $2d$ it is when the pole is absent in $[-2d,2d]$. So we will present the case where $P\cap[-2d,2d]\neq \emptyset$, but exactly the same argument works in the other case as well, where it yields a slightly stronger bound corresponding to $2d$ in place of $8d$. Let us define
	$$\mc G = \left\{(y,z): -RT^{2}<y, z< RT^2, |y-z|>2RT^{3/2}\right\},$$
	so that
	$$\PF\left(|Y-Z| > 2RT^{3/2}, -RT^{2}<Y, Z<RT^{2}\right) = \iint_{\mc G} f_J(y,z)\, \mathrm dy\,\mathrm dz \cdot \one_\fav.$$
	To bound this integral, we will use Proposition \ref{p.scost density} to bound the density and make the change of variables $(u,v) = (y-z,y)$. Note that the range of $y$ and $z$ satisfies the hypotheses of Proposition~\ref{p.scost density} with $x_1 = p-4d, x_2 = p+4d$, so that $\sigma^2 = 8d$. These parameter choices satisfy the hypotheses of Proposition \ref{p.scost density} since $\dip = 5d$; in particular, $|y-z|\geq 48RTd$ for all $(y,z)\in \mc G$ as $2RT^{3/2}\geq 48RTd$ since $d\leq \sqrt T/24$. Note that $36\sigma^2=36\times 8d = 288d$. So, on $\fav$,
	\begin{align*}
	\iint_{\mc G} f_J(y,z)&\,\mathrm dy\, \mathrm dz\\
	&\lesssim d^{-1}T^2\iint_{\mc G} \exp\left(-\frac{1}{16d}(|y-z| - 48RTd)^2 + \frac{R^2T^2}{2d} + 288R^2d\right)\, \mathrm dy\,\mathrm dz\\
	&\leq 2d^{-1}T^2\int_{-RT^{2}}^{RT^2}\int_{2RT^{3/2}}^{\infty} \exp\left(-\frac{1}{16d}(u - 48RTd)^2 + \frac{R^2T^2}{2d}+288R^2d\right)\, \mathrm du\,\mathrm dv\\
	&= 2d^{-1} T^2\int_{-RT^{2}}^{RT^2}\int_{0}^{\infty} \exp\left(-\frac{1}{16d}(u+2RT^{3/2}-48RTd)^2 + \frac{R^2T^2}{2d}+288R^2d\right)\, \mathrm du\,\mathrm dv\\
	&\lesssim RT^4d^{-1}\exp\left(\frac{R^2T^2}{2d}+288R^2d\right)\\
	&\quad\times\int_{0}^{\infty}\!\! \exp\left(-\frac{1}{16d}\big(u^2+2u(2RT^{3/2}-48RTd) + 4R^2T^{3} - 192R^2T^{5/2}d + 48^2 R^2T^2d^2\big)\right)\, \mathrm du\\
	&\lesssim RT^4d^{-\frac{1}{2}} \exp\left(-\frac{1}{4d}R^2T^3 + 12R^2T^{5/2} -48\times3R^2T^2d  + \frac{R^2T^2}{2d}+288R^2d\right)\\
	&\leq RT^{4}d^{-\frac{1}{2}}\exp\left(12R^2T^{5/2}\right)\epsilon^{R^2D_k^3/4d}.
	\end{align*}
	We have used Case 1 of Lemma~\ref{l.integral bound} with $a=1/(16d)$ and $b=-2(2RT^{3/2}-48RTd)$ for the integral in the second-to-last line, since $d\leq \sqrt T/24$ implies that $2RT^{3/2} - 48RTd\geq 0$, and thus that $b\leq 0$. In the last line, since $1\leq d \leq \sqrt T/24$, we see that $288R^2d \leq 12R^2T^{1/2}$, and thus the sum of the last three terms in the exponent of the penultimate line is negative and may be dropped. Finally, since $x\leq \exp(x^{5/8})$ for $x\geq 1$, and since $1\leq d\leq \sqrt{T}/24$ and $R\geq 1$ from \eqref{e.R value}, we have that $RT^4d^{-\frac{1}{2}}\leq \exp(R^2T^{5/2})$. This completes the proof of Lemma~\ref{l.bad event 1}.
\end{proof}

\begin{proof}[Proof of Lemma~\ref{l.bad event 2}]
	Using Lemma~\ref{l.bad event 1}, it is enough to show that, on $\fav$ and when $P\cap[-2d,2d]\neq \emptyset$, $\P\left(Y < -T^{3/2}\right) + \PF\left(Z<-T^{3/2}\right)$ is bounded by the right-hand side in the Lemma~\ref{l.bad event 2}'s statement.

	Since we are considering the situation where $Y$ and $Z$ are negative, we may use Proposition~\ref{p.negative density}. So from Proposition~\ref{p.negative density}(2) and Lemma~\ref{l.variance bounds} we have that, on  $\fav \cap \{P\cap [-2d,2d]\neq \emptyset\}$,
	\begin{align*}
	\PF\big(Y < -RT^{3/2}\big) \lesssim d^{-\frac{1}{2}}\int^{\infty}_{RT^{3/2}} \exp\left(-\frac{y^2}{8d}\right)\, \mathrm dy
	 &\lesssim \exp\left(-\frac{1}{8d} R^2T^3\right)
	 = \epsilon^{R^2D_k^3/8d},
	\end{align*}
	where we have performed the change of variables $y\mapsto y+RT^{3/2}$ and applied Case 1 of Lemma~\ref{l.integral bound} with $a=1/(8d)$ and $b=-RT^{3/2}/(4d)$ in the second inequality. Similarly, we have
	$$\PF\big(J(p+4d)-\tent(p+4d) < -RT^{3/2}\big) \lesssim \epsilon^{R^2D_k^3/8d}. \eqno\qedhere$$
\end{proof}

We conclude the section by providing the proof of the technical tool Lemma~\ref{l.density bound tool lower}.

\begin{proof}[Proof of Lemma~\ref{l.density bound tool lower}]
	Let $\nu$ be the law of $X$. For $x<x_0$ and $0<\delta<1$, let $\tilde x = \delta(x_0-x)$. Since $X$ and $N$ are independent, we have
	\begin{align*}
	f(x) &= \frac{1}{\sqrt{2\pi}\sigma_1}\int_{-\infty}^\infty e^{-(x-y)^2/2\sigma_1^2}\, \mathrm d\nu(y)= \frac{1}{\sqrt{2\pi}\sigma_1}\left[\int_{[x-\tilde x, x+\tilde x]} + \int_{[x-\tilde x, x+\tilde x]^c} e^{-(x-y)^2/2\sigma_1^2}\, \mathrm d\nu(y)\right].
	\end{align*}
	From the first equality we see that the density is bounded by $(\sqrt{2\pi}\sigma_1)^{-1}$ for all $x$. For the stronger bound for small enough $x$, note using the hypothesis on $\nu$ that the first integral in the right-hand side is bounded by
	\begin{align*}
	\int_{[x-\tilde x, x+\tilde x]}\,\mathrm d\nu(y) \leq \nu\big((-\infty, x+\tilde x)\big) &\leq A\cdot\exp\left(-\frac{(x+\tilde x-x_0)^2}{2\sigma_2^2}\right) = A\cdot\exp\left(-\frac{(1-\delta)^2(x-x_0)^2}{2\sigma_2^2}\right),
	\end{align*}
	where we have used that $x+\tilde x$ is less than $x_0$; this is due to $\delta<1$ and $x<x_0$.

	The second integral is bounded by
	$\exp\left(-\frac{\tilde x^2}{2\sigma_1^2}\right) = \exp\left(-\frac{\delta^2 (x-x_0)^2}{2\sigma_1^2}\right).$
	These inequalities hold for all $0<\delta<1$, and so if we set $\delta = \sigma_1/(\sigma_1+\sigma_2)$, we obtain our result.
\end{proof}


\section{The difficult case: Above and below the pole on either side}\label{s.difficult case}

At this stage we have proved the required bound on $f_J(y,z)$ in the two cases where $y,z <0$ or $y+z>0$. This leaves the case where $y<0$, $z>0$, and $y+z<0$ (the case where $y>0$ and $z<0$ is clearly symmetric). Perhaps surprisingly, this turns out to be the most difficult case. However, we now give a heuristic reason why we should expect the density $f_J(y,z)$ to be largest in this case, as a proxy for why this case is most difficult.

Recall that $f_J(y,z)$ is the density of $(Y,Z)$, which, from \eqref{e.Y Z defn}, are respectively the deviations of $J$ from $\tent$ at $p-4d$ and $p+4d$. So, the size of the density $f_J(y,z)$ essentially represents a comparison of the probability that $J$ takes the values $y + \tent(p-4d)$ and $z+\tent(p+4d)$ respectively at $p-4d$ and $p+4d$ to the probability of the same for Brownian motion. A larger value of this density is obtained if $J$ finds it easier to adopt the specified values than a Brownian motion does. This is precisely what happens when $y<0$, $z>0$, and $y+z<0$, as $J$ has the pole at $p$ which pushes it up and helps it attain the value of $z$ at $p+4d$; a Brownian motion has no such assistance. Thus the density should be highest for this case. Considering the situation in the two cases we have already analysed in Sections~\ref{s.easier cases} and \ref{s.moderate case} should convince the reader that in Section~\ref{s.easier cases} the pole actually makes $J$'s task more difficult than $B$'s, which has no pole, while in Section~\ref{s.moderate case}, the pole has essentially no effect. 

In this section, since $y+z<0$, the vault cost $\jcost$ cannot be ignored. Thus we need a stronger bound on $f_J(y,z)$ than was required in Section~\ref{s.moderate case}; in fact, we need a bound of the same basic form as that proved in Section~\ref{s.easier cases}. This is why the previous paragraph's conclusion that the density is highest in this case indicates that the required argument will be more delicate.
 
 Let $f_J(z\mid y)$ be the conditional density of $Z$ at $z$ given $Y=y$. Our aim will be the following proposition.

\begin{proposition} \label{p.density one positive one negative}
Let $R$ be as in \eqref{e.R value}. If $40Td \leq z \leq 2RT^{3/2}$ and $y+z< 0$,
$$f_J(z\mid y) \cdot \one_{\fav, P\cap[-2d,2d]\neq\emptyset} \lesssim d^{-\frac{1}{2}}\cdot \exp\left(-\frac{z^2}{8d} + 20RT^{5/2}\right).$$
Further, $f_J(z\mid y) \lesssim d^{-\frac{1}{2}}$ for all $y,z\in \R$ with $y+z<0$.
\end{proposition}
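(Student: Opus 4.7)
I plan to adopt the local-randomization template of Proposition~\ref{p.scost density}: using the decomposition
\[
J(p+4d) = \tfrac12 J(p+4d-2\eta) + \tfrac12 J(p+4d+2\eta) + N(0,\eta),
\]
valid for any $\eta < d/2$ (since $\dip = 5d$ rules out poles in $[p+4d-2\eta, p+4d+2\eta]$), Lemma~\ref{l.density bound tool lower} reduces the desired density bound on $Z$ at $z$ to a suitable upper tail bound, given $Y=y$, on $X_\eta := \tfrac12 J(p+4d-2\eta) + \tfrac12 J(p+4d+2\eta) - \tent(p+4d)$. The universal bound $f_J(z\mid y) \lesssim d^{-1/2}$ is immediate from Lemma~\ref{l.density is bounded} combined with the blanket clause of Lemma~\ref{l.density bound tool lower} applied to the same decomposition.

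The sharp tail bound is materially more delicate than in Proposition~\ref{p.scost density}, because here the pole at $p$ \emph{assists} $J$ in reaching the positive value $z+\tent(p+4d)$ at $p+4d$: a direct comparison with the Brownian bridge on $[p-4d, p+4d]$ (of length $8d$) yields only $\exp(-z^2/(16d))$ decay, off by a factor of two in the variance. To obtain decay with variance of order $\sigma_{4d}^2 = 4d(p^+-p-4d)/(p^+-p) \leq 4d$, I plan to exploit the shorter pole-to-pole bridge structure of $J$ on $[p, p^+]$, whose length is at least $\dip = 5d$.

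Concretely, I would first condition on $J(p) = u$ with $u \geq \tent(p)$; Lemma~\ref{l.J stochastic dominanation}(iv) applied at $x = p$ then stochastically dominates $J|_{[p,\mf r]}$ by a Brownian bridge $B$ from $(p,u)$ to $(2T,\phiend)$ conditioned on pole avoidance in $(p, \mf r]$. When $u$ is comparable to $\tent(p)$, a standard bridge-over-barrier estimate on $\fav$ shows that this conditioning event has probability bounded below by an acceptable quantity, so the upper tail of $B$ at $p+4d\pm 2\eta$ is essentially Gaussian with mean $u + O(Td)$ and variance $\sigma_{4d}^2 + O(\eta)$. Running this through Lemma~\ref{l.density bound tool lower} yields a conditional density bound for $Z$ at $z$ given $Y=y$ and $J(p)=u$, of the shape $d^{-1/2}\exp\bigl(-(z-\lambda_+(u-\tent(p))-O(Td))^2/(2\sigma^2_{\mathrm{eff}})\bigr)$ with $\sigma^2_{\mathrm{eff}}$ close to $\sigma_{4d}^2$. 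I would then take expectation over $u$ using a matching upper tail bound on $J(p)-\tent(p)$ conditional on $Y=y$, obtained by a mirror-image stochastic-domination argument on $[\mf l, p]$ via Lemma~\ref{l.J stochastic dominanation}(iv) applied at $x = p-4d$.

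The principal obstacle is the interaction between the two conditionings in the last step: for very negative $y$, the bridge structure on $[\mf l, p]$ pulls the conditional mean of $J(p)$ downward, while the pole constraint truncates it from below at $\tent(p)$. The resulting truncated-and-shifted distribution of $u$ must be controlled precisely enough that the variance inflation from integrating over $u$ remains within the leeway factor $\exp(20R T^{5/2})$, which for $z \leq 2RT^{3/2}$ can absorb only rather modest relative corrections to the effective variance. The hypotheses $y+z<0$ and $z \geq 40Td$ delimit exactly the regime in which this delicate argument is both necessary (the easier cases being covered by Sections~\ref{s.easier cases} and~\ref{s.moderate case}) and tractable via these two coupled stochastic-domination arguments.
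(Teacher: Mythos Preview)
Your overall architecture is close to the paper's---local randomization plus conditioning on $U = J(p)-\tent(p)$---and your treatment of the universal bound $f_J(z\mid y)\lesssim d^{-1/2}$ via the blanket clause of Lemma~\ref{l.density bound tool lower} is correct. But for the sharp bound you are missing the key device, and the gap is precisely the obstacle you flag but do not resolve.

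The paper does \emph{not} attempt to control the full law of $U$ given $Y=y$ and then integrate $f_J(z\mid u,y)$ against it. Instead it proves a \emph{narrow jump} lemma (Lemma~\ref{l.limit u}): conditionally on $Y=y$ \emph{and} $Z=z$ with $y+z<0$, the overshoot $U$ lies in $[0,1]$ with probability at least a constant times $d^{-1/2}$. This follows from the explicit truncated-Gaussian form of $U\mid(Y,Z)$ together with a monotonicity statement for Gaussian overshoots (Lemma~\ref{l.monotonicity of conditional gaussian prob}). Rearranging gives
\[
f_J(z\mid y)\ \lesssim\ d^{1/2}\int_0^1 f_J(z\mid u,y)\,\mathrm d\nu_p(u\mid y)\ \le\ d^{1/2}\sup_{u\in[0,1]} f_J(z\mid u,y),
\]
so one only needs $f_J(z\mid u,y)$ for $u\in[0,1]$, obtained from a one-sided tail bound on $W_\eta = J(p+4d+\eta)-\tent(p+4d+\eta)$ given $U=u$ (Lemma~\ref{l.p+ tail bound}) via the asymmetric decomposition $Z=\lambda U+(1-\lambda)W_\eta+N(0,4\lambda d)$.

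Your plan---bound $\P(U>u\mid Y=y)$ by a mirror stochastic-domination argument and integrate---is problematic. First, Lemma~\ref{l.J stochastic dominanation}(iv) at $x=p-4d$ dominates $J$ on $[p-4d,\mf r]$ by a bridge conditioned to clear \emph{all} poles to the right, including those beyond $p$; these later constraints push $J(p)$ upward in a way you would have to quantify. Second, and more seriously, even if $U\mid Y=y$ had pseudo-variance $\sigma_U^2$ of order $d$ (which is the natural scale when $|y|$ is only moderately large compared to $\sqrt d$), convolving $\exp(-(z-u)^2/(8d))$ against such a law inflates the effective variance by an amount of order $d$. For $z$ as large as $2RT^{3/2}$ this produces a discrepancy of order $\exp(R^2T^3/d)$, which is \emph{not} absorbable in the leeway $\exp(O(T^{5/2}))$. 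The narrow-jump trick sidesteps this entirely by pinning $u$ to $[0,1]$ without ever estimating the marginal law of $U$ given $Y$.
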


First we show that Proposition~\ref{p.density one positive one negative} implies the sufficient bound \eqref{e.sufficient bound to prove} when $y < 0, z > 0,$ and $y + z < 0$ (and symmetrically when $y > 0$ and $z < 0$).

\begin{lemma}\label{l.yz<0 case}
When $yz<0$, $y+z<0$, and $(y,z)\in \mc G_R^{(1)}$, we have \eqref{e.sufficient bound to prove} with $G_2'=41RD_k^{5/2}$ and $G_1' = G'$, where $G'$ is a constant independent of $\epsilon, k,$ and $d$. 
\end{lemma}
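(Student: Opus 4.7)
The strategy will be to factor the joint density as $f_J(y,z) = f_J(y)\cdot f_J(z\mid y)$ and bound each factor using the two principal density estimates already proved in the chapter: Proposition~\ref{p.negative density}(2) for the marginal on the negative side, and Proposition~\ref{p.density one positive one negative} for the conditional density on the positive side. Assume without loss of generality that $y<0<z$; the complementary case $z<0<y$ follows by the obvious left/right symmetric analogues of these propositions. Note that since $(y,z)\in\mc G_R^{(1)}$ and $y<0$, the constraint $|y-z|<2RT^{3/2}$ forces $0<z<2RT^{3/2}$, so the hypotheses on $z$ in Proposition~\ref{p.density one positive one negative} are within reach.

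The total cost to be matched is
\begin{equation*}
\jcost\cdot\scost \lesssim d\cdot\exp\!\left(\tfrac{(y+z)^2}{16d}+\tfrac{(y-z)^2}{16d}+21RT^{5/2}\right)=d\cdot\exp\!\left(\tfrac{y^2+z^2}{8d}+21RT^{5/2}\right),
\end{equation*}
by the parallelogram identity, where we used \eqref{e.jcost approximate exponential form} (applicable since $y+z<0<32Td$) and \eqref{e.S is approx S*}. Proposition~\ref{p.negative density}(2) together with Lemma~\ref{l.variance bounds} gives, since $y<0$,
\begin{equation*}
f_J(y)\cdot\one_{\fav,P\cap[-2d,2d]\neq\emptyset}\lesssim d^{-1/2}\exp\!\left(-\tfrac{y^2}{8d}\right).
\end{equation*}

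The argument then splits according to the size of $z$. When $z\geq 40Td$, Proposition~\ref{p.density one positive one negative} applies directly and yields $f_J(z\mid y)\lesssim d^{-1/2}\exp(-z^2/(8d)+20RT^{5/2})$. Multiplying the three bounds, the $y^2/(8d)$ and $z^2/(8d)$ terms exactly cancel against the corresponding positive terms coming from $\jcost\cdot\scost$, and what remains is a factor of $\exp(20RT^{5/2}+21RT^{5/2})=\exp(41RT^{5/2})$, together with an absolute constant from $d^{-1}\cdot d$. When $0<z<40Td$ instead, we fall back on the trivial conditional bound $f_J(z\mid y)\lesssim d^{-1/2}$ from the second half of Proposition~\ref{p.density one positive one negative}, giving $f_J(y,z)\lesssim d^{-1}\exp(-y^2/(8d))$. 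The product with $\jcost\cdot\scost$ then leaves an uncompensated $z^2/(8d)$, but since $z<40Td$ and $d\leq \sqrt{T}/24$ one has $z^2/(8d)\leq 200T^2 d\leq 9T^{5/2}\leq 9RT^{5/2}$ (using $R\geq 1$), giving a strictly smaller bound $\exp(30RT^{5/2})$ in this regime.

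Converting $T^{5/2}=D_k^{5/2}(\log\epsilon^{-1})^{5/6}$ yields \eqref{e.sufficient bound to prove} with $G_2'=41RD_k^{5/2}$ and an absolute constant $G_1'$. There is no real obstacle here: the genuine difficulty was isolated into Proposition~\ref{p.density one positive one negative}, whose conclusion is essentially tight in the sense that the exponential decay $-z^2/(8d)$ it supplies is precisely what is needed to absorb the $z^2$-part of $\scost$ when it is not already absorbed by the marginal bound on $f_J(y)$. The only mildly delicate point in executing the plan is checking that the constants in the two regimes align, which works because the worse regime (Case 1) dictates the exponent $41R$ while the safer regime (Case 2) is strictly better.
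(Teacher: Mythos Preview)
Your proof is correct and follows essentially the same approach as the paper: factor $f_J(y,z)=f_J(y)f_J(z\mid y)$, use Proposition~\ref{p.negative density}(2) with Lemma~\ref{l.variance bounds} for the negative marginal, split into the two regimes $z\gtrless 40Td$, and apply the two parts of Proposition~\ref{p.density one positive one negative} respectively. The constants and the arithmetic match the paper's exactly.
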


\begin{proof}
	We give the proof for when $y<0$ and $z>0$. From \eqref{e.S is approx S*} and \eqref{e.jcost approximate exponential form}, we see that in this case
	\begin{equation}\label{e.total cost in hard case}
	\jcost\cdot\scost \lesssim d\cdot\exp\left(\frac{y^2}{8d}+\frac{z^2}{8d} + 21RT^{5/2}\right).
	\end{equation}
	So it suffices to prove, for some $G<\infty$,
	$$f_J(y,z) \lesssim d^{-1}\cdot\exp\left(-\frac{y^2}{8d}-\frac{z^2}{8d} + GT^{5/2}\right).$$
	There are two cases to consider. If $z \leq 40Td$, then, from \eqref{e.total cost in hard case}, $\jcost\cdot\scost$ is bounded by
	\begin{equation*}
	d\cdot\exp\left(\frac{y^2}{8d}+ \frac{40^2T^{2}d^2}{8d} + 21RT^{5/2}\right) \leq d\cdot\exp\left(\frac{y^2}{8d}+30RT^{5/2}\right)
	\end{equation*}
	since $d\leq \sqrt T/24$ and $40^2/(8\times 24) \leq 9$. Noting that $30 < 41$, it suffices to prove
	$$f_J(y,z) \lesssim d^{-1}\cdot\exp\left(-\frac{y^2}{8d}\right).$$
	This is provided by Proposition~\ref{p.negative density} and Lemma~\ref{l.variance bounds} since $y<0$, and  by Proposition~\ref{p.density one positive one negative}'s latter statement that $f_J(z\mid y) \lesssim d^{-\frac{1}{2}}$. 

	Now suppose $z>40Td$. Note that we also have $z\leq 2RT^{3/2}$, since $y<0$ and $(y,z)\in \mc G_R^{(1)}$ implies that $|y-z|\leq 2RT^{3/2}$. So from Proposition~\ref{p.negative density}, Lemma~\ref{l.variance bounds}, and Proposition~\ref{p.density one positive one negative}, we obtain
	\begin{align*}
	f_J(y,z) &\lesssim d^{-1} \cdot \exp\left(-\frac{y^2}{8d}-\frac{z^2}{8d}\right)\cdot \exp\left(20RT^{5/2}\right).
	\end{align*}
	This completes the proof, after taking into account the extra factor of $\exp\left(21RT^{5/2}\right)$ which arises from the expressions for $\jcost$ and $\scost$ as in \eqref{e.total cost in hard case}.
\end{proof}

We next turn to discussing the proof ideas of Proposition~\ref{p.density one positive one negative}. The claim of the proposition may be a surprising one at first glance, for, in a slight abuse of the language of pseudo-variance used in Section~\ref{s.moderate case}, Proposition~\ref{p.density one positive one negative} says that $J(p+4d)-\tent(p+4d)$, conditionally on $J(p-4d)-\tent(p-4d)$ being negative, has pseudo-variance at most $4d$; in contrast, a Brownian motion, conditionally on its value at $p-4d$, would have a much higher variance of $8d$ at the position $p+4d$. So we must crucially use both that $J(p-4d)-\tent(p-4d)<0$ (i.e., $y<0$) and that $J$ must satisfy $J(p)\geq \tent(p)$. 

Heuristically, because $J$ is jumping over $\tent(p)$ from a negative value at $p-4d$, it will make the jump with a very low margin. Thus the variance at $p$ is not $4d$ as it would be for a Brownian motion, but essentially~0. This explains how we can get a pseudo-variance of at most $4d$ at $p+4d$ for $J$. This intuition is captured in Lemma~\ref{l.limit u}, which says that we may safely restrict our analysis to the case where $J$ jumps over the pole at $p$ by at most 1. To prove Lemma~\ref{l.limit u}, we need a technical lemma about the monotonicity of conditional probabilities of Gaussians. The result is identical to \cite[Lemma 2.21]{hammond2017brownian}, but we include its short proof here for completeness.

\begin{lemma}\label{l.monotonicity of conditional gaussian prob}
Fix $r>0$, $m \in \R$, and $\sigma^2 >0$, and let $X$ be distributed as $N(m, \sigma^2)$. Then the quantity $\P(X\geq r+s \mid X\geq s)$ is a strictly decreasing function of $s\in \R$.
\end{lemma}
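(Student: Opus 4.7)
The plan is to reduce to the standard normal case and then invoke the monotonicity of the Gaussian hazard rate (equivalently, the strict log-concavity of the Gaussian survival function). First, by writing $X = m + \sigma Y$ with $Y \sim N(0,1)$, setting $t = (s-m)/\sigma$, and $\rho = r/\sigma > 0$, the map $s \mapsto t$ is a strictly increasing affine bijection of $\R$, and
\[
\P\big(X \geq r+s \,\big|\, X \geq s\big) \;=\; \frac{\bar\Phi(t+\rho)}{\bar\Phi(t)},
\]
where $\bar\Phi = 1 - \Phi$ denotes the standard normal survival function. Thus it suffices to prove that $G(t) := \bar\Phi(t+\rho)/\bar\Phi(t)$ is strictly decreasing in $t \in \R$ for any fixed $\rho > 0$.

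Second, differentiating and writing $\phi$ for the standard normal density, I would compute
\[
G'(t) \;=\; \frac{-\phi(t+\rho)\bar\Phi(t) + \bar\Phi(t+\rho)\phi(t)}{\bar\Phi(t)^2},
\]
so $G'(t) < 0$ is equivalent to the strict inequality $\phi(t+\rho)/\bar\Phi(t+\rho) > \phi(t)/\bar\Phi(t)$, i.e., the standard normal hazard rate $\lambda(t) := \phi(t)/\bar\Phi(t)$ is strictly increasing. I would deduce the latter from the identity
\[
\lambda'(t) \;=\; \lambda(t)\bigl(\lambda(t) - t\bigr),
\]
which follows from $\phi'(t) = -t\phi(t)$, together with the classical Mills' ratio bound $\lambda(t) > t$ valid for all $t \in \R$ (trivial for $t \leq 0$ since $\lambda > 0$, and for $t > 0$ immediate from the estimate $\bar\Phi(t) < \phi(t)/t$ obtained by integrating $\phi(u) < (u/t)\phi(u)$ over $u \geq t$).

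Assembling these pieces, $\lambda$ is strictly increasing on all of $\R$, hence $G$ is strictly decreasing, hence the original conditional probability is a strictly decreasing function of $s$. The main obstacle is merely bookkeeping; no subtle analytic input is required beyond the elementary Mills' ratio estimate.
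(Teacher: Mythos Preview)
Your proof is correct. Both your argument and the paper's compute the derivative of the conditional probability (or its logarithm) and reduce to showing a sign inequality; the difference lies in how that inequality is dispatched. You recast it as strict monotonicity of the Gaussian hazard rate $\lambda(t)=\phi(t)/\bar\Phi(t)$ and invoke the identity $\lambda'=\lambda(\lambda-t)$ together with the Mills' ratio bound $\lambda(t)>t$. The paper instead works directly with the integrals: after writing the derivative of the log as a ratio whose denominator is manifestly positive, it applies the change of variables $x\mapsto x+r$ in one of the numerator integrals and combines the two into a single integral whose integrand is strictly negative pointwise. Your route is more conceptual and connects to a standard fact from reliability theory; the paper's route is a short self-contained computation that avoids naming the hazard rate or appealing to Mills' ratio. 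Either approach is equally elementary and complete.
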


\begin{proof}
Note that
$$
\log \P(X \geq s+r \mid X \geq s)=\log \int_{s+r}^{\infty} \exp \left\{-\frac{(x-m)^{2}}{2 \sigma^{2}}\right\} \mathrm{d} x-\log \int_{s}^{\infty} \exp \left\{-\frac{(x-m)^{2}}{2 \sigma^{2}}\right\} \mathrm{d} x
$$
has derivative in $s$ given by
$$\frac{\exp\left\{-\frac{(s-m)^2}{2\sigma^2}\right\}\int_{s+r}^\infty \exp \left\{-\tfrac{(x-m)^2}{2\sigma^2} \right\} \,\mathrm dx - \exp\left\{-\frac{(s+r-m)^2}{2\sigma^2}\right\}\int_{s}^\infty \exp \left\{-\tfrac{(x-m)^2}{2\sigma^2} \right\} \,\mathrm dx}{\int_{s + r}^\infty \exp \left\{- \tfrac{(x-m)^2}{2\sigma^2} \right\} \mathrm{d}x \,\cdot\, \int_{s}^\infty \exp\left\{- \tfrac{(x-m)^2}{2\sigma^2} \right\} \mathrm dx}.$$
The denominator is clearly positive. Performing the change of variable $x\mapsto x+r$ in the first integral of the numerator and manipulating the exponents show that the numerator equals
$$
 \int_{s}^\infty \exp \left\{-\tfrac{(x-m)^2+(s-m)^2+r^2}{2\sigma^2} \right\} \left(\exp \left\{ -\tfrac{(x-m)r}{\sigma^2} \right\}  -  \exp \left\{- \tfrac{(s-m)r}{\sigma^2} \right\} \right) \mathrm{d}x\,.
%
$$
The proof is complete by noting that this integrand is strictly negative for all $x>s$.
\end{proof}

In order to state Lemma~\ref{l.limit u}, we define the random variable $U$ to be the deviation of the jump ensemble from the $\tent$ map at the pole $p$. For later use, we also take this opportunity to define $W_\eta$ to be the same at $p+4d+\eta$ for $\eta<d$. So, we define
\begin{equation}\label{e.U V definition}
\begin{split}
U &:= J(p) - \tent(p)\\
W_{\eta} &:= J(p+4d+\eta) - \tent(p+4d+\eta).
\end{split}
\end{equation}
The parameter $\eta$, as in the previous section, will be set to a specific small value in a local randomization argument later. Recall also from \eqref{e.Y Z defn} that $Y$ and $Z$ are respectively the deviation of $J$ from $\tent$ at $p-4d$ and $p+4d$. See Figure~\ref{f.u w y z defn}. We now turn to our assertion that $J$ typically makes a narrow jump over $p$.

\bigskip

\begin{figure}[h]
\centering {\epsfig{file=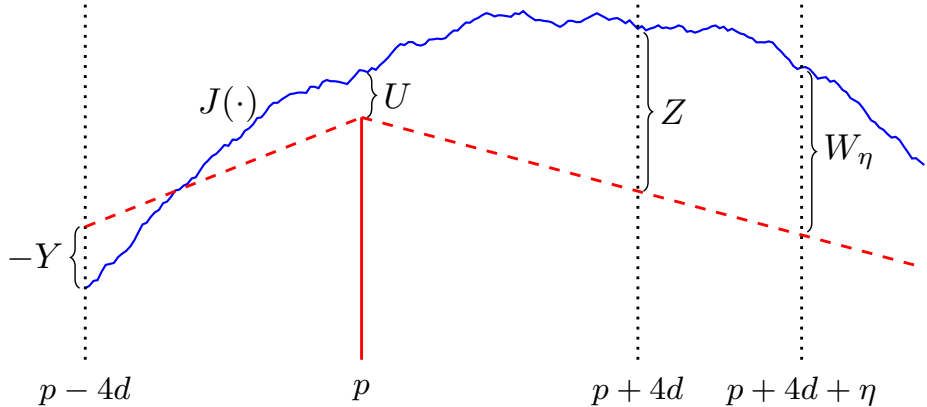, width=0.75\textwidth}}
\caption{Illustrating the definitions of $U$, $W$, $Y$ and $Z$ in the subcase being addressed in this section; $Y$ is negative, and so the length being shown is $-Y$. The blue curve is $J$, while the red dashed function is $\tent$. The red vertical line emphasises the height of the pole at $p$.}\label{f.u w y z defn}
\end{figure}

\begin{lemma}[Narrow jump over $p$]\label{l.limit u}
Let $f_J(z,u \mid y)$ be the joint conditional density of $Z$ and $U$ given $Y$. For $y,z\in \R$ such that $y+z<0$,
$$f_J(z\mid y) \lesssim d^{\frac{1}{2}}\int_0^1 f_J(z,u\mid y) \, \mathrm du.$$
\end{lemma}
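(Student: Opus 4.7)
The plan is to reduce the inequality, via Bayes, to a one-sided lower bound on the conditional distribution of $U$ given $Y = y$ and $Z = z$. Dividing the claimed bound by $f_J(z \mid y)$, it becomes $\int_0^1 f_J(u \mid y, z)\,\mathrm{d}u \geq c\, d^{-1/2}$ for some absolute $c > 0$; equivalently, $\PF\bigl(U \in [0,1] \bigm| Y = y, Z = z\bigr) \geq c\, d^{-1/2}$.

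To identify this conditional law, I would exploit that $p$ is the only pole contained in $[p-4d, p+4d]$, since consecutive poles in $P$ lie at distance at least $\dip = 5d$. Hence the only constraint imposed on the restriction of $J(k, \cdot)$ to $[p-4d, p+4d]$ by the definition of $J$ is the single pole condition $J(k, p) \geq \tent(p)$. Conditioning on $\F$ and on the endpoint values $J(k, p-4d) = y + \tent(p-4d)$ and $J(k, p+4d) = z + \tent(p+4d)$, Lemma~\ref{l.bb independent decomposition property} yields that the process on $[p-4d, p+4d]$ is a Brownian bridge between these endpoints, further conditioned to exceed $\tent(p)$ at $p$. Consequently $U$ is distributed as an $N(m, 2d)$ random variable truncated to $[0, \infty)$, with mean
$$m \,=\, \frac{y + z}{2} + \frac{\tent(p-4d) + \tent(p+4d)}{2} - \tent(p).$$
The concavity of $\tent$ (inherited from that of the least concave majorant $\mf c_+$, whose extreme points contain $P$) combined with the hypothesis $y + z < 0$ gives $m \leq (y+z)/2 < 0$.

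The remaining task is then the elementary Gaussian estimate
$$\frac{\P\bigl(N(m, 2d) \in [0, 1]\bigr)}{\P\bigl(N(m, 2d) \geq 0\bigr)} \,\geq\, c\, d^{-1/2}$$
for all $m \leq 0$ and $d \geq 1$. The key observation is that for $m \leq 0$, the density $u \mapsto e^{-(u - m)^2/(4d)}$ is non-increasing on $[0, \infty)$. After the substitution $u = v + k$ in the integral $\P(N(m, 2d) \in [k, k+1]) = \int_0^1 (4\pi d)^{-1/2} e^{-(v + k - m)^2/(4d)}\,\mathrm{d}v$, expanding $(v + k - m)^2 = (v - m)^2 + 2k(v - m) + k^2$ and using $v - m \geq 0$ on $[0, 1]$ gives
$$\P\bigl(N(m, 2d) \in [k, k+1]\bigr) \,\leq\, e^{-k^2/(4d)} \cdot \P\bigl(N(m, 2d) \in [0, 1]\bigr)$$
for every integer $k \geq 0$. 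Summing over $k$ and bounding $\sum_{k \geq 0} e^{-k^2/(4d)} \leq 1 + \int_0^\infty e^{-x^2/(4d)}\,\mathrm{d}x = 1 + \sqrt{\pi d} \leq 3\sqrt{d}$ for $d \geq 1$ delivers the required ratio estimate.

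I expect no serious obstacle. Identifying the truncated-normal law of $U$ is a direct consequence of the local Brownian bridge structure already encoded in Lemma~\ref{l.bb independent decomposition property}, and the Gaussian comparison above is an elementary one-line argument once the monotonicity of the density on $[0, \infty)$ is exploited. The $d^{1/2}$ loss in the statement is sharp at the level of the method: when $m$ is of order $-\sqrt{d}$, the truncated distribution spreads over an interval of length $\Theta(\sqrt{d})$, so only a fraction $\Theta(d^{-1/2})$ of its mass lies in $[0, 1]$.
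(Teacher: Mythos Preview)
Your proposal is correct. Both proofs make the same Bayes reduction to $\PF(U\in[0,1]\mid Y=y,Z=z)\gtrsim d^{-1/2}$ and identify the same truncated-Gaussian conditional law $U\sim N(m,2d)\mid N(m,2d)\geq 0$ with $m\leq 0$. The difference is only in how the Gaussian lower bound is established: the paper invokes a separate monotonicity lemma (Lemma~\ref{l.monotonicity of conditional gaussian prob}) stating that $\P(X\geq r+s\mid X\geq s)$ is strictly decreasing in $s$, which lets it reduce to the single case $m=0$ and read off $\P(|N(0,2d)|\leq 1)\gtrsim d^{-1/2}$. You instead exploit directly that the density $u\mapsto e^{-(u-m)^2/(4d)}$ is non-increasing on $[0,\infty)$ whenever $m\leq 0$, via the slice comparison $\P(N(m,2d)\in[k,k+1])\leq e^{-k^2/(4d)}\P(N(m,2d)\in[0,1])$ summed over $k$. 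Your route is more self-contained (no auxiliary lemma needed) and handles all $m\leq 0$ at once; the paper's route isolates the conditional-Gaussian monotonicity as a standalone tool that could be reused elsewhere.
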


\begin{proof}
	We have
	\begin{align}\label{e.narrow jump}
	\int_0^1 f_J(z,u \mid y) \, \mathrm du &= f_J(z\mid y) \int_0^1 f_J(u\mid y,z)\, \mathrm du.
	\end{align}
	Let $X$ be distributed as $N(0, 2d)$, which is the distribution of a Brownian bridge of duration $8d$ from 0 to 0 at its midpoint, and let $\Delta = \tfrac{1}{2}(\tent(p-4d)+\tent(p+4d)) -\tent(p)$. Note that $\Delta\leq 0$ by the concavity of the $\tent$ map. Then the second factor on the right hand side is 
	\begin{align*}
	\PF\bigl( U \in [0,1] \ \big|\  Y=y,Z=z\big) &= \P\left(X + \frac{y+z}{2} + \Delta \in [0,1] \Bigm| X + \frac{y+z}{2} + \Delta\geq 0\right)\\
	&= 1-\P\left(X + \frac{y+z}{2} + \Delta\geq 1 \Bigm| X+\frac{y+z}{2} + \Delta\geq 0\right).
	\end{align*}
	Now by Lemma~\ref{l.monotonicity of conditional gaussian prob} with $r=1$ and $s=-(y+z)/2-\Delta$, we have that
	\begin{align*}
	\P\left(X \geq 1-\frac{y+z}{2}-\Delta \Bigm| X\geq -\frac{y+z}{2}-\Delta\right)
	\end{align*}
	is an increasing function of $y+z$, i.e., $\PF(U\in[0,1]\mid Y=y, Z=z)$ is a decreasing function of $y+z$. So for $y+z<0$, we obtain that
	$$\PF\big(U \in [0,1] \mid Y=y,Z=z\big) \geq \P\bigl(X\in[0,1]\mid X\geq 0\bigr) = \P\bigl(|X| \leq 1\bigr);$$
	the second quantity is the value of the first at $y+z = -2\Delta\geq 0$. 
	An easy bound using the standard normal density gives that $\P(|X|\leq 1)^{-1} \lesssim d^{\frac{1}{2}}$ for $d\geq 1$; tracing back the relations and using this last bound in \eqref{e.narrow jump} yields
	\begin{equation*}
	f_J(z\mid y) \lesssim d^{\frac{1}{2}}\int_0^1 f_J(z,u \mid y) \, \mathrm du,
	\end{equation*}
	completing the proof of Lemma~\ref{l.limit u}.
\end{proof}

 In the previous case in Section~\ref{s.easier cases} we made use of the tool Lemma~\ref{l.density bound tool lower} to convert a tail bound to a density bound, and we will make use of essentially the same tool in this section; it is simply restated in a form involving the upper tail. 

\begin{lemma}\label{l.density bound tool upper}
Let $X$ be a random variable such that $\P(X>x)\leq A\exp(-\frac{1}{2\sigma_2^2}(x-x_0)^2)$ for $x>x_0$, and let $N$ be a normal random variable with mean 0 and variance $\sigma_1^2$ which is independent of $X$. Then the density $f$ of $X+N$ satisfies
\begin{align*}
f(x) \leq \frac{A+1}{\sqrt{2\pi}\sigma_1}\cdot\exp\left(-\frac{(x-x_0)^2}{2(\sigma_1+\sigma_2)^2}\right)
\end{align*}
for $x>x_0$, and is bounded by $1/\sqrt{2\pi}\sigma_1$ for all $x\in\R$.
\end{lemma}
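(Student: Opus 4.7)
My plan is to observe that Lemma~\ref{l.density bound tool upper} is the mirror image of Lemma~\ref{l.density bound tool lower}, and to deduce it by a one-line reflection argument rather than redoing the calculation. Concretely, set $\tilde X = -X$ and $\tilde N = -N$. Then $\tilde N \sim N(0,\sigma_1^2)$, $\tilde N$ is independent of $\tilde X$, and the upper tail hypothesis on $X$ translates, under the substitution $y = -x$, into the lower tail hypothesis
$$\P(\tilde X < y) \,=\, \P(X > -y) \,\leq\, A\exp\!\left(-\frac{(y-(-x_0))^2}{2\sigma_2^2}\right) \quad \text{for } y < -x_0,$$
which is precisely the hypothesis of Lemma~\ref{l.density bound tool lower} with center $-x_0$ in place of $x_0$.

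Next I would invoke Lemma~\ref{l.density bound tool lower} for $\tilde X + \tilde N$ to obtain, writing $\tilde f$ for its density,
$$\tilde f(y) \,\leq\, \frac{A+1}{\sqrt{2\pi}\sigma_1}\exp\!\left(-\frac{(y-(-x_0))^2}{2(\sigma_1+\sigma_2)^2}\right) \quad \text{for } y < -x_0,$$
and $\tilde f(y)\leq 1/(\sqrt{2\pi}\sigma_1)$ for all $y$. Since the density $f$ of $X+N$ satisfies $f(x)=\tilde f(-x)$, substituting $y=-x$ (so $x>x_0$ corresponds to $y<-x_0$) and using $(-x-(-x_0))^2=(x-x_0)^2$ yields the displayed bound of Lemma~\ref{l.density bound tool upper} for $x>x_0$. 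The global bound $f(x)\leq 1/(\sqrt{2\pi}\sigma_1)$ is likewise inherited, and also follows directly from the convolution formula $f(x)=(\sqrt{2\pi}\sigma_1)^{-1}\int e^{-(x-y)^2/2\sigma_1^2}\,\mathrm d\nu(y)$, where $\nu$ is the law of $X$, since the integrand is at most $1$.

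I do not anticipate a genuine obstacle: the statement is manifestly the symmetric counterpart of Lemma~\ref{l.density bound tool lower}, and the only thing to check carefully is that the Gaussian law of $N$ is invariant under negation (so that $\tilde N$ has the same distribution as $N$) and that independence is preserved under the sign flip, both of which are immediate. If one preferred a self-contained proof, one could copy the argument of Lemma~\ref{l.density bound tool lower} verbatim, splitting the convolution integral over $[x-\tilde x, x+\tilde x]$ and its complement with $\tilde x=\delta(x-x_0)$ and $\delta=\sigma_1/(\sigma_1+\sigma_2)$, and bounding the first piece using the upper tail hypothesis (so that $\nu((x-\tilde x,\infty))\leq A\exp(-(1-\delta)^2(x-x_0)^2/2\sigma_2^2)$) and the second piece using the Gaussian tail of $N$; but the reflection approach makes this unnecessary.
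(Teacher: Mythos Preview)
Your proposal is correct and takes essentially the same approach as the paper: the paper's proof is a single sentence applying Lemma~\ref{l.density bound tool lower} to $\tilde X := -X$ via the symmetry of mean-zero normal distributions, which is exactly your reflection argument.
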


\begin{proof}
This follows by applying Lemma~\ref{l.density bound tool lower} to $\tilde X := -X$, since mean zero normal distributions are symmetric, and where $X$ is as in Lemma~\ref{l.density bound tool upper}.
\end{proof}

The broad idea of the proof of Proposition~\ref{p.density one positive one negative} is to write the distribution of $Z$, conditionally on $U = J(p) -\tent(p)$ and $W_\eta = J(p+4d+\eta)-\tent(p+4d+\eta)$, in terms of a Brownian bridge using the definition of $J$, and then use this Brownian structure to obtain a bound on the conditional density of $Z$ given $Y$ via Lemma~\ref{l.density bound tool upper} and Lemma~\ref{l.limit u}. This is the same strategy of local randomization that was used in Section~\ref{s.moderate case} to prove Proposition~\ref{p.scost density}.

So, as before, we will need tail probabilities on the distribution of $W_{\eta}$ given $U$ and $Y$. By the Markov property of Brownian bridges, $W_{\eta}$ is conditionally independent of $Y$ given $U$, and so in fact we need a tail probability for $W_{\eta}$ given only $U$. Such a tail bound is the content of Lemma~\ref{l.p+ tail bound}, whose argument is essentially again stochastically dominating $J$ by a Brownian bridge.

\begin{lemma}\label{l.p+ tail bound}
We have for $0<\eta\leq d$, $u\geq 0$, $w>u + 9T(4d+\eta)$ and on the event $\fav \cap \{P\cap[-2d,2d]\neq\emptyset\}$,
\begin{align*}
\PF\Big(W_\eta > w \ \big|\  U = u&\Big) \lesssim \exp\left(-\frac{(w-u - 9T(4d+\eta))^2}{2(4d+\eta)}+7T\right).
\end{align*}
\end{lemma}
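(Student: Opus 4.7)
The plan is to exploit the stochastic domination from Lemma~\ref{l.J stochastic dominanation}(iv), applied with $x = p$: conditionally on $U = u$ (equivalently, on $J(k,p) = u + \tent(p)$), the value $J(k, p + 4d + \eta)$ is stochastically dominated by $B(p+4d+\eta)$, where $B$ is a Brownian bridge from $(p, u+\tent(p))$ to $(2T, \phiend)$ conditioned on lying above every pole in $P \cap [p, \mf r]$ and above $\gymin_k$ at $\mf r$. The parameter $\phiend$ is constrained only by $\phiend \geq \L_n(k, 2T)$, which on $\fav$ is satisfied for any $\phiend \geq -(2\sqrt 2 + 1)T^2$; I would pick $\phiend$ of order $T^2$ to balance the sharpness of the Gaussian tail against the probability of the conditioning event. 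Letting $A$ denote the conditioning event and $B'$ the unconditioned Brownian bridge, the standard identity for conditional probability yields
\begin{equation*}
\PF(W_\eta > w \mid U = u) \;\leq\; \P\bigl(B(p+4d+\eta) > w + \tent(p+4d+\eta)\bigr) \;\leq\; \frac{\P\bigl(B'(p+4d+\eta) > w + \tent(p+4d+\eta)\bigr)}{\P(A)}.
\end{equation*}

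First I would control the unconditional Gaussian tail. The variable $B'(p+4d+\eta)$ is normal with mean $\mu = (1-\lambda)(u+\tent(p)) + \lambda\phiend$, where $\lambda = (4d+\eta)/(2T - p)$, and variance at most $4d + \eta$. Using $u \geq 0$, the on-$\fav$ bounds $|\phiend|, |\tent(p)| = O(T^2)$, the slope bound $|\tent(p+4d+\eta) - \tent(p)| \leq 4T(4d+\eta)$ from~\eqref{e.tent map slope}, and $|p| \leq 2d \leq T$ (so $2T - p \geq T$ and hence $\lambda \leq (4d+\eta)/T$), a direct computation shows the threshold gap $w + \tent(p+4d+\eta) - \mu$ is at least $w - u - 9T(4d+\eta)$; the numerical constant $9$ decomposes as $4T(4d+\eta)$ from the tent-slope correction plus at most $5T(4d+\eta)$ from the bridge mean shift $\lambda(\phiend - u - \tent(p))$ under the chosen $\phiend$. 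The Chernoff bound from Lemma~\ref{l.normal bound} then gives the numerator bound
\begin{equation*}
\P\bigl(B'(p+4d+\eta) > w + \tent(p+4d+\eta)\bigr) \;\leq\; \exp\!\left(-\frac{(w-u-9T(4d+\eta))^2}{2(4d+\eta)}\right).
\end{equation*}

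The main obstacle is establishing the uniform lower bound $\P(A) \geq e^{-7T}$ on $\fav$, which supplies the additive $+7T$ in the exponent. The event $A$ demands that the unconditioned bridge $B'$, spanning an interval of length at most $4T$ with endpoints of magnitude $O(T^2)$, simultaneously lies above each of the at most $|P| \leq 2T/\dip = O(T)$ pole points in $[p, \mf r]$ (all at heights $\leq T^2$ on $\fav$) and above $\gymin_k \leq T^2$ at $\mf r$. The required estimate can be obtained by first using the choice $\phiend$ to lift the bridge's linear baseline above the pole heights, then invoking the FKG positive-association property of Brownian bridges to decouple the simultaneous avoidance event into a product of marginal Gaussian probabilities at the pole points and at $\mf r$; a careful accounting---exploiting that the variance of $B'$ at each interior point is $O(T)$ and that the pole spacing $\dip = 5d$ limits the number of constraints---shows that each per-pole contribution to the logarithm is $O(1)$, summing to at most $7T$ across the $O(T)$ poles. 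Combining this with the Gaussian tail estimate above yields the claim.
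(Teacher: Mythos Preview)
Your approach is essentially the paper's: apply Lemma~\ref{l.J stochastic dominanation}(iv) at $x=p$ with a well-chosen $\phiend$, then separately bound the unconditional Gaussian tail and the conditioning probability $\P(A)$. The paper takes the specific choice $\phiend = u + \tent(p) + 5T(2T-p)$, so that the dominating bridge has mean line of slope exactly $5T$ through the starting point; the numerator computation is then identical to yours, with the $9T$ splitting as $5T$ from the bridge's mean drift plus $4T$ from the tent-slope correction via~\eqref{e.tent map slope}.

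The one substantive difference is the lower bound on $\P(A)$. The paper invokes its dedicated Lemma~\ref{l.prob bb above poles}, which via a direct Brownian-increment construction shows that a zero-endpoint bridge exceeds $0$ at $N$ given points with probability $\gtrsim N^{-1/2}e^{-3N}$; taking $N = |P| \leq 2T$ gives $\P(A) \gtrsim T^{-1/2}e^{-6T} \geq e^{-7T}$. Your FKG route is a valid alternative and in fact slightly cleaner here: because the slope-$5T$ mean line lies above every pole height (as $\tent$ has slope $\leq 4T$) and above $\gymin_k \leq T^2$ at $\mf r$ on $\fav$, each marginal constraint already has probability at least $\tfrac12$, so positive association of the Brownian bridge gives $\P(A) \geq 2^{-O(T)}$ directly. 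Either method secures the $+7T$ term. One small correction: on $\fav$ the sufficient condition for $\phiend \geq \L_n(k,2T)$ is $\phiend \geq (-2\sqrt 2 + 1)T^2$, the upper endpoint of the $\mathsf F_1$ interval, not $-(2\sqrt 2 + 1)T^2$; the paper's choice handles this automatically.
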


To prove Lemma~\ref{l.p+ tail bound} we will need a lower bound on the probability of a Brownian bridge jumping over poles. This is recorded in the next lemma, whose straightforward proof is deferred to the end of the section to permit the flow of the overall argument.

\begin{lemma}\label{l.prob bb above poles}
Let $N\geq 1$, $x_0 \in [\mathfrak l, \mathfrak r]$, and $x_N = \mathfrak r$. Let $x_1<\ldots <x_{N-1}$ be $\F$-measurable points in $(x_0, x_N)$, and $B$ be a Brownian bridge with law $\B^{[x_0, 2T]}_{0,0}$. Then there exists a $G<\infty$ such that
$$\PF\Big( B(x_i) > 0, i=1,\ldots, N-1\Big) > G^{-1}N^{-1/2}\exp(-3N).$$
\end{lemma}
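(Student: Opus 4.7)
The plan is to apply Slepian's inequality to the centered Gaussian vector $\bigl(B(x_1), \ldots, B(x_{N-1})\bigr)$, exploiting the positive correlation of Brownian-bridge values at interior points. Since $x_0$ and the $x_i$ are $\F$-measurable and the law $\B^{[x_0,2T]}_{0,0}$ does not depend on any other randomness in $\F$, the $\F$-conditional probability in question is just a standard Brownian-bridge probability with deterministic parameters; this lets me treat the $x_i$ as fixed in what follows.

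First I would compute the covariance: for $i \leq j$,
\[
\Cov\bigl(B(x_i),B(x_j)\bigr) \;=\; \frac{(x_i-x_0)(2T-x_j)}{2T-x_0} \;\geq\; 0,
\]
because $x_0 < x_i \leq x_j < \mathfrak{r} \leq 2T$ by hypothesis. Hence $\bigl(B(x_1), \ldots, B(x_{N-1})\bigr)$ is a centered Gaussian vector with pairwise nonnegative covariances, and each marginal $B(x_i)$ is a symmetric centered Gaussian, so $\PF(B(x_i) > 0) = 1/2$.

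The key step is then Slepian's inequality. Comparing $\bigl(B(x_1), \ldots, B(x_{N-1})\bigr)$ with the Gaussian vector having the same variances but zero off-diagonal covariances (i.e.\ independent coordinates), Slepian's inequality applied to the symmetric vector $-B$ (or equivalently, the standard positive-association property of Gaussian vectors with nonnegative covariances) yields
\[
\PF\Bigl(B(x_i) > 0 \text{ for all } i=1,\ldots,N-1\Bigr) \;\geq\; \prod_{i=1}^{N-1}\PF\bigl(B(x_i) > 0\bigr) \;=\; 2^{-(N-1)}.
\]

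Finally, to match the stated form of the bound, I would note that $2^{-(N-1)} = 2\, e^{-N\log 2}$ and, since $\log 2 < 3$, the ratio
\[
\frac{2^{-(N-1)}}{N^{-1/2}e^{-3N}} \;=\; 2\,N^{1/2}\exp\bigl((3-\log 2)N\bigr)
\]
is bounded below by a positive absolute constant for all $N \geq 1$, giving the claim with some absolute $G < \infty$. I do not anticipate any real obstacle: the required exponential rate $e^{-3N}$ is very generous compared with what Slepian delivers, and the $N^{-1/2}$ factor is harmlessly absorbed. The main (and only) substantive input is the positivity of the Brownian-bridge covariance at interior points, which is immediate from the explicit formula.
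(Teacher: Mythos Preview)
Your proof is correct and takes a genuinely different route from the paper. The paper does not invoke Slepian at all: it stochastically dominates the bridge $B$ from below by a Brownian motion $B'$ on $[x_0,2T]$ conditioned to be negative at $2T$, and then lower-bounds the joint event by forcing each increment $B'(x_i)-B'(x_{i-1})$ into the window $[0,\sqrt{x_i-x_{i-1}}]$ and the final increment to be sufficiently negative; independence of Brownian increments and a normal tail bound then produce the $N^{-1/2}e^{-3N}$ factor. Your approach is both shorter and sharper: the positive-association of a centered Gaussian with nonnegative covariances (equivalently Slepian against independent coordinates) gives $2^{-(N-1)}$ directly, which comfortably beats the paper's bound. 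The paper's method has the minor advantage of being completely elementary (no Gaussian-comparison machinery), but for the purpose at hand---a crude lower bound to be fed into Lemma~\ref{l.p+ tail bound}---your argument is the cleaner one.
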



\begin{proof}[Proof of Lemma~\ref{l.p+ tail bound}]
	Define the coordinate $\phiend$ by
	$$\phiend := u+\tent(p)+5T(2T-p),$$
	which is the $y$-coordinate at $x=2T$ of a line with slope $5T$ started at $(p, u+\tent(p))$. Since on $\fav$ we have $|\mathrm{slope}(\tent)| \leq 4T$, it follows that $\phiend > \L(2T)$. Lemma~\ref{l.J stochastic dominanation}(iv) tells us that the Brownian bridge $B$ from $(p, u+\tent(p))$ to $(2T, \phiend)$  conditioned to jump over all poles in $[p,2T]$ and to be above $\Corner^{\mathfrak r, \F}_k$ stochastically dominates $J$ on $[p,2T]$. Let $A$ be the conditioning event just mentioned. Then we have, on $\fav\cap\{P\cap[-2d,2d]\neq \emptyset\}$,
	$$\PF\Bigl(W_\eta > w \ \bigm|\  U = u\Bigr)
	\leq \B^{[p,2T]}_{u+\tent(p),\phiend} \Bigl(B(p+4d+\eta) > w+\tent(p+4d+\eta) \ \bigm|\  A\Bigr).$$
	We need to lower bound $\PF(A)$ on $\fav$. Note that on $\fav$ we have $\Corner_k^{\mathfrak r, \F} \leq T^2$, while $B(\mathfrak r)$ has mean bounded below by
	$$J(p) + 5T\cdot(\mathfrak r-p) \geq -T^2 + 5T\cdot(T/2-d) = \frac32T^2 - 5Td;$$
  here we used $J(p)\geq \tent(p) = \L_n(k+1,p)\geq -T^2$ on $\fav$. These bounds, along with the concavity of $\tent$ and Lemma \ref{l.prob bb above poles} (with $N=|P| \leq 2T$, as $\dip\geq 1$ implies this bound), implies that $\PF(A)\cdot \one_\fav$ is bounded below, up to an absolute constant, by $T^{-\frac{1}{2}}\exp(-6T) \geq \exp(-7T)$ as $T\geq 1$. So, on $\fav \cap \{P\cap[-2d,2d]\neq\emptyset\}$,
	\begin{align*}
	\PF\Big(W_\eta > w \ \bigm|\  U = u\Big)
	&\lesssim \exp(7T)\cdot\B^{[p,2T]}_{u+\tent(p),\phiend} \Big(B(p+4d+\eta) > w+\tent(p+4d+\eta)\Big).
	\end{align*}
	Let $\rho = \frac{4d+\eta}{2T-p}$. Then $\EF[B(p+4d+\eta)] = u+\tent(p) + 5T(4d+\eta)$ and $\Var_\F(B(p+4d+\eta)) = (1-\rho)(4d+\eta)$. So, for $w>u + 9T(4d+\eta)$ and on $\fav$,
	\begin{align*}
	\MoveEqLeft[4]
	\B^{[p,2T]}_{u+\tent(p), \phiend} \Big(B(p+4d+\eta) > w+\tent(p+4d+\eta)\Big)\\
	&= \PF\Big(N(u+\tent(p)+5T(4d+\eta), (1-\rho)(4d+\eta))> w+\tent(p+4d+\eta)\Big)\\
	&\leq \PF\Big(N(0, (1-\rho)(4d+\eta))> w- u - 9T(4d+\eta)\Big)\\
	&\leq \exp\left(-\frac{(w-u - 9T(4d+\eta))^2}{2(4d+\eta)}\right)
	\end{align*}
	The third line uses that $\tent(p+4d+\eta) - \tent(p) \geq -4T(4d+\eta)$ from \eqref{e.tent map slope}; the final inequality uses the upper bound of Lemma~\ref{l.normal bound} with $\sigma^2 = (1-\rho)(4d+\eta)$ and $t = w-u-9T(4d+\eta)$.
\end{proof}

With this tail bound we may turn to the proof of Proposition~\ref{p.density one positive one negative}.

\begin{proof}[Proof of Proposition~\ref{p.density one positive one negative}]
	Let $\lambda$ be defined as
	$$\lambda = \frac{\eta}{4d+\eta}$$
	so that, conditionally on $Y$, $U$, and $W_\eta$, the distribution of $Z$ is
	\begin{equation} \label{e.J(p+2d) expression}
	Z = \lambda U + (1-\lambda)W_\eta + N(0,4\lambda d).
	\end{equation}
	This relation holds because, by the definition of $J$, the distribution of $J$ on $[p,p+4d+\eta]$, conditionally on $J(p)$ and $J(p+4d+\eta)$, is a Brownian bridge with the prescribed endpoint values, and because $\tent$ is affine on $[p,p+4d+\eta]$ for $\eta<d$ as $\dip = 5d$. Note that the variance of this Brownian bridge at $p+4d$ is $\frac{4d\eta}{4d+\eta} = 4\lambda d$.  

	Let $\nu_p(\,\cdot \mid y)$ be the conditional law of $U$ given that $Y = y$.
	From the narrow jump Lemma~\ref{l.limit u} we see
	\begin{align}\label{e.u can be limited}
	f_J(z\mid y) &\lesssim d^{\frac{1}{2}}\cdot\int_0^1 f_J(z,u\mid y)\, \mathrm du
	= d^{\frac{1}{2}}\cdot\int_0^1 f_J(z\mid u,y) \, \mathrm d\nu_p(u\mid y),
	\end{align}
	where $f_J(z\mid u, y)$ is the conditional density of $Z$ given $U$ and $Y$. So our task is to bound $f_J(z\mid u, y)$ when $u\in[0,1]$. By the Markov property, this object does not depend on $y$. We will obtain this bound by converting the conditional tail bound of Lemma~\ref{l.p+ tail bound} to a conditional density bound using Lemma~\ref{l.density bound tool upper}.

  We have from Lemma~\ref{l.p+ tail bound}, by taking $w = \frac{t-\lambda u}{1-\lambda}$ and simplifying the resulting expression, that, for $t>u+36Td$,
  \begin{equation}\label{e.difficult case conditional tail bound}
  \PF\Bigl(\lambda U + (1-\lambda)W_{\eta}>t \mid U=u, Y=y\Bigr) \lesssim \exp\left(-\frac{1}{8d(1-\lambda)}\left(t-u-36Td\right)^2 + 7T\right).
  \end{equation}
  While simplifying we used that $(4d+\eta)(1-\lambda) = 4d$. 
  Now using \eqref{e.J(p+2d) expression} and \eqref{e.difficult case conditional tail bound}, we apply Lemma~\ref{l.density bound tool upper}. The parameters of this application are $\sigma_1^2 = 4\lambda d$, $\sigma_2^2 = 4(1-\lambda)d$, $x_0 = u+36Td$, and $A$ equal to the constant specified by $\lesssim$ in \eqref{e.difficult case conditional tail bound} multiplied by $\exp(7T)$. Thus we obtain, for every $\eta<d$ and $z>u+36Td$,
  $$f_J(z\mid u, y) \lesssim \eta^{-\frac{1}{2}}\cdot\exp\left(-\frac{1}{2\sigma^2}\left(z-u-36Td\right)^2 + 7T\right),$$
  where $\sigma^2 = 4d(\lambda^{1/2}+(1-\lambda)^{1/2})^2 \leq 4d(1+2\lambda^{1/2})\leq 4d(1+\eta^{1/2})$ since $0\leq\lambda\leq \eta/4$. Setting $\eta^{1/2} = 8T^{-2}d$ (which satisfies $\eta<d$ as $d\leq \sqrt T/24$ and $T\geq 1$) and using that $(1+x)^{-1}\geq 1-x$ for $x=\eta^{1/2}$ yields
  %
  %
  %
  $$f_J(z\mid u, y) \lesssim d^{-1}T^2\cdot\exp\left(-\frac{1}{8d}\left(z-u-36Td\right)^2+ T^{-2}\left(z-u-36Td\right)^2 + 7T\right).$$
  Using this in \eqref{e.u can be limited} and bounding the integral by a trivial bound on the integrand then shows
  $$f_J(z\mid y) \lesssim d^{-\frac{1}{2}}T^2\cdot\exp\left(-\frac{1}{8d}\left(z-1-36Td\right)^2+ T^{-2}z^2 + 7T\right)$$
  for $z>36Td + 1$. Since $4Td>1$, and since $T^2\leq e^T$ as $T\geq 1$, it also holds that 
  $$f_J(z\mid y) \lesssim d^{-\frac{1}{2}}\cdot \exp\left(-\frac{1}{8d}\left(z-40Td\right)^2+ T^{-2}z^2 + 8T\right)$$
  for $z>40Td$. Expanding the expression in the exponent gives the expression
  $$-\frac{z^2}{8d} +\frac{z(40Td)}{4d}-\frac{40^2T^2d^2}{8d}+z^2T^{-2}+8T \leq -\frac{z^2}{8d}+20RT^{5/2} -200T^2d+4R^2T+8T,$$
  using that $z\leq 2RT^{3/2}$. Since $R=6\sqrt d$ from \eqref{e.R value} and since $T,d\geq 1$, the last three terms in the previous display are collectively negative and may be dropped. This proves the first part of Proposition~\ref{p.density one positive one negative}.

  Now we turn to the latter claim of Proposition~\ref{p.density one positive one negative}. The tail bound \eqref{e.difficult case conditional tail bound}, combined with the latter part of Lemma~\ref{l.density bound tool upper}, says that $f_J(z\mid y,u) \lesssim d^{-\frac{1}{2}}$. Then \eqref{e.u can be limited} gives the latter claim of Proposition~\ref{p.density one positive one negative}. This completes the proof of Proposition~\ref{p.density one positive one negative}.
\end{proof}

Finally, we provide the last piece of the proof of Proposition~\ref{p.density one positive one negative} by proving Lemma~\ref{l.prob bb above poles}.

\begin{proof}[Proof of Lemma~\ref{l.prob bb above poles}]
Let $A$ denote the event that a Brownian motion $B'$ with law $\B^{[x_0,2T]}_{0,*}$ is negative at $2T$; write $\widetilde B$ for the process $B'$ conditioned on $A$. Since Brownian motion on $[x_0, 2T]$ conditioned on its value at $2T$ is a Brownian bridge on $[x_0, 2T]$ with appropriate endpoints, Lemma~\ref{l.bb independent decomposition property} implies that the Brownian bridge $B$ in the statement of Lemma~\ref{l.prob bb above poles} stochastically dominates $\widetilde B$. Thus,
\begin{align}
\PF\bigg( \bigcap_{i=1}^{N-1}\{B(x_i) > 0\}\bigg) \geq \P\Big(\bigcap_{i=1}^{N-1}\left\{\widetilde B(x_i) > 0\right\} \ \big|\  A\Big)\nonumber
&= \frac{\P\Big(\bigcap_{i=1}^{N}\left\{\widetilde B(x_i) > 0\right\}, \widetilde B(2T) < 0\Big)}{\P\Big(\widetilde B(2T)<0\Big)}.\nonumber
\end{align}
The denominator is equal to $\frac{1}{2}$. We may lower bound the numerator by
\begin{align}
\P\left(\bigcap_{i=1}^{N}\left\{\widetilde B(x_i)-\widetilde B(x_{i-1})\in[0, \sqrt{x_{i}-x_{i-1}}]\right\}, \widetilde B(2T)- \widetilde B(x_N) < -\sum_{i=1}^N \sqrt{x_i-x_{i-1}}\right)\nonumber\\
=p^N\cdot\P\left(N(0,2T-x_N) < -\sum_{i=1}^N \sqrt{x_i-x_{i-1}}\right),\label{e.bb above poles prob 1}
\end{align}
using the independence of Brownian motion increments, where $p = \P\big(N(0,1)\in [0,1]\big)>(2\pi)^{-1/2}e^{-1/2}>e^{-2}$; the first inequality is by lower bounding the standard normal density on $[0,1]$ by $(2\pi)^{-1/2}e^{-1/2}$, and the last inequality is by numerical calculation. Now,
\begin{align*}
\P\left(N(0,2T-x_N) < -\sum_{i=1}^N \sqrt{x_i-x_{i-1}}\right)\geq \P\left(N(0,1) > \frac{\sqrt N \sqrt x_N}{\sqrt{2T-x_N}}\right)\geq \P\left(N(0,1) > \sqrt N\right)
&\gtrsim \frac{e^{-N/2}}{\sqrt N}.
\end{align*}
We have used the Cauchy-Schwarz inequality in the first inequality; that $x_N = \mathfrak r \leq T$ in the second; and Lemma \ref{l.normal bound} in the last. Combining with \eqref{e.bb above poles prob 1} and that $\P\bigl(\tilde B(2T)<0\bigr) = \frac{1}{2}$ completes the proof.
\end{proof}

\section{When no pole is present}\label{s.no pole}
The aim of this section is to perform the final task in the proof of Theorem~\ref{t.prob estimate for J}, namely to provide the proof of Proposition~\ref{p.prob estimate no pole}. The analysis follows similar lines to the case where a pole was present, but here we are aided by the fact that there is no vault cost $\jcost$. Because there is no pole, we cannot make the choice of stepping back to $p-4d$ and $p+4d$ in decomposing the jump curve probability. Since we know only that there is no pole in $[-2d,2d]$, and we need to give ourselves some distance from the nearest pole, we make the choice to not step back and instead decompose at $-d$ and $d$. More precisely we recall, as mentioned before Lemma~\ref{l.bad event 1}, that $Y$ and $Z$ are defined on the event $P\cap [-2d,2d] = \emptyset$ as
\begin{align*}
Y &:= J(-d) -\tent(-d)\\
Z &:= J(d) - \tent(d).
\end{align*}
Correspondingly, on $P\cap [-2d,2d] = \emptyset$, $f_J(y,z)$ is the joint density of $(Y,Z)$ under this definition.

As there is no vault cost, we only need to consider the slope cost $\scost$. In this context, since the interval $[-d,d]$ has length $2d$, $\scost$ is 
$$S=d^{\frac{1}{2}}\cdot \exp\left(-\frac{1}{4d}(\tilde y-\tilde z)^2\right);$$
here, $\tilde y = y + \tent(-d)$ and $\tilde z=z+\tent(d)$, so that $Y=y$ implies $J(-d) = \tilde y$ and similarly for $Z=z$ and $J(d) = \tilde z$; thus the tilde plays the same role as it did in Section~\ref{s.heuristics for proof} of going from the value of $Y$ or $Z$ to the value of $J$ at the corresponding points.

The bound we will obtain from Proposition~\ref{p.scost density} is in terms of $y$ and $z$, and here we see the exact form of the leeway factor needed to write $S$ in terms of $y$ and $z$, as was done in \eqref{e.S is approx S*} in the pole case; this bound will hold for $(y,z)\in\mc G_R^{(2)}$, where we recall the definition of $\mc G_R^{(2)}$ from \eqref{e.G_R^2 defn}. Using from \eqref{e.tent map slope} that the slope of the Tent map is bounded in absolute value by $4T$, so that $|\tent(d) - \tent(-d)|\leq 8Td$, we see that, for $(y,z)\in\mc G_R^{(2)}$,
\begin{equation}\label{e.y to tilde y}
\begin{split}
S=d^{\frac{1}{2}}\cdot \exp\left(\frac{1}{4d}(\tilde y - \tilde z)^2\right) 
&\leq d^{\frac{1}{2}}\cdot \exp\left(\frac{1}{4d}(y - z)^2+ 16T^2d + \frac{1}{2d}(y-z)(\tent(d) - \tent(-d)) \right)\\
&\leq d^{\frac{1}{2}}\cdot \exp\left(-\frac{1}{4d}(y - z)^2 +T^{5/2}+ 8RT^{5/2}\right)\\
&\leq d^{\frac{1}{2}}\cdot \exp\left(-\frac{1}{4d}(y - z)^2 +9RT^{5/2}\right);
\end{split}
\end{equation}
the first inequality using the mentioned bound on $|\tent(d)-\tent(-d)|$; the second since $(y,z)\in \mc G_R^{(2)}$ implies $|y-z|\leq 2RT^{3/2}$, and since $d\leq \sqrt T/24$ and $16/24\leq 1$; and third since $R\geq 1$ from \eqref{e.R value}.

Apart from this, we will also need to make a suitable choice for the distribution $\mu$, discussed in Section~\ref{s.heuristic.addrssing simplifications}, which is here the distribution of the Brownian motion we start at $-d$ to which we compare $J$. We select $\mu$ to be uniform on $[-RT^2, RT^2]$ as this interval contains all values $y$ may take when $(y,z)\in \mc G_R^{(2)}$. As before, the cost for this choice of $\mu$ is only polynomial in~$T$, and so does not affect the bound we need to prove on $f_J(y,z)$.

So overall our aim is to get a bound on the joint density of the form
$$f_J(y,z) \leq \exp\left(-\frac{1}{4d}(y-z)^2\right)\cdot \exp\left(GT^{5/2}\right)$$
for some $G<\infty$. This of course is essentially immediate from Proposition~\ref{p.scost density} when $|y-z|$ is sufficiently large; recall that Proposition~\ref{p.scost density} was stated carefully to not assume the presence of a pole in the interval of consideration.

\begin{proof}[Proof of Proposition \ref{p.prob estimate no pole}]
	On $\fav \cap \{P\cap[-2d,2d]=\emptyset\}$,
	\begin{align*}
	\PF\!\Big(J(\,\cdot\,) - J(-d)\in A, (Y, Z) \in \mc G^{(2)}_R\Big)
	&= \EF\!\!\left[\PF\Big(J(\,\cdot\,) - J(-d)\in A\ \bigm|\  J(d), J(-d)\Big)\one_{(Y, Z) \in \mc G^{(2)}_R}\right]\\
	&= \EF\left[\B^{[-d,d]}_{J(-d),J(d)}\big(\tilde A\big)\one_{(Y, Z) \in \mc G^{(2)}_R}\right],
	\end{align*}
	where $\tilde A$ is the set of functions $f$ in $\mc C_{*,*}([-d,d], \R)$ such that $f(\,\cdot\,) - f(-d) \in A$. 
	Recall the notation $\tilde y = y + \tent(-d)$ and $\tilde z = z + \tent(d)$. We may write the last displayed expression as an integral on the event $\fav \cap \{P\cap[-2d,2d]=\emptyset\}$:
	\begin{align}
	\EF\left[\B^{[-d,d]}_{J(-d),J(d)}\big(\tilde A\big)\one_{(Y, Z) \in \mc G^{(2)}_R}\right]
	&= \iint_{\mc G^{(2)}_R} \B^{[-d,d]}_{\tilde y, \tilde z}\big(\tilde A\big) f_J(y,z)\,\mathrm dy\,\mathrm dz\nonumber\\ 
	&=d^{-\frac{1}{2}}\iint_{\mc G^{(2)}_R} \B^{[-d,d]}_{\tilde y, \tilde z} \big(\tilde A\big)e^{-\frac{1}{4d}(\tilde y-\tilde z)^2}\cdot \bigl(f_J(y,z)\cdot S\bigr)\,\mathrm dy\,\mathrm dz. 
	\label{e.no pole integral}
	\end{align}
	From Proposition~\ref{p.scost density} with parameters $x_1=-d$ and $x_2=d$, we know that on $\fav$ and when $|y-z|>12RTd$ and $(y,z)\in \mc G_R^{(2)}$,
	\begin{align*}
	f_J(y,z) &\lesssim d^{-1}T^2\cdot \exp\left(-\frac{1}{4d}(|y-z| - 12RTd)^2 + \frac{4R^2T^2}{2d}+72R^2 d\right)\\
	 &= d^{-1}T^2\cdot \exp\left(-\frac{1}{4d}\left((y-z)^2  - 24RTd|y-z|+ 144R^2T^2d^2\right) + \frac{4R^2T^2}{2d}+72R^2 d\right)\\
	 &\leq d^{-1}\cdot \exp\left(-\frac{1}{4d}(y-z)^2 + 6RT|y-z| -36R^2T^2d+ \frac{4R^2T^2}{2d}+ 72R^2 d+T^2\right)\\
	 &\leq d^{-1} 
  \cdot\exp\left(-\frac{1}{4d}(y-z)^2 + 12R^2T^{5/2}\right),
	\end{align*}
	the penultimate inequality since $T^2\leq \exp(T^2)$ as $T\geq 1$; and the last inequality since on $\mc G^{(2)}_R$ we have $|y-z|<2RT^{3/2}$ and since $1\leq d\leq \sqrt T/24$, which implies that the sum of the last four terms in the exponent in the penultimate line is negative and may be dropped.

	For when $|y-z|<12RTd$, we also use Proposition~\ref{p.scost density} to say that $f_J(y,z) \lesssim d^{-1}$; and so we see that
	\begin{align*}
	f_J(y,z) \lesssim d^{-1}\cdot\exp\left(-\frac{(y-z)^2}{4d} + \frac{(y-z)^2}{4d}\right) &\leq d^{-1}\cdot\exp\left(-\frac{1}{4d}(y-z)^2 + \frac{144R^2T^2d^2}{4d}\right)\\
	&\leq d^{-1}\cdot\exp\left(-\frac{1}{4d}(y-z)^2 + 2R^2T^{5/2}\right),
	\end{align*}
	the last inequality since $d\leq \sqrt T/24$ and $144/(4\times24)\leq 2$.

	So we see from \eqref{e.y to tilde y} and the above two bounds on $f_J(y,z)$ depending on the size of $|y-z|$, that
	$$f_J(y,z)\cdot S \leq \exp\left(21R^2T^{5/2}\right),$$
	using that $R\leq R^2$ and $d\geq 1$.

	 Thus, we obtain that \eqref{e.no pole integral} is bounded up to an absolute multiplicative constant by
	\begin{align}\label{e.no pole simplification}
	 \exp\left(21R^2T^{5/2}\right)\cdot \frac{1}{\sqrt{4\pi d}}\iint_{\R^2} \B^{[-d,d]}_{\tilde y, \tilde z}\Big(\tilde A\Big)e^{-\frac{1}{4d}(\tilde y-\tilde z)^2}\,\mathrm d\mu(y)\,\mathrm dz.
	 \end{align}
	Let $B$ be a Brownian motion started with distribution $\mu$ at $-d$. Focusing on the integral,
	\begin{align*}
	 \frac{1}{\sqrt{4\pi d}}\iint_{\R^2} \B^{[-d,d]}_{\tilde y, \tilde z}\Big(\tilde A\Big)e^{-\frac{1}{4d}(\tilde y-\tilde z)^2}\,\mathrm d\mu(y)\,\mathrm dz 
	&= \P\Big(B(\,\cdot\,) - B(-d)\in A\Big)
	= \B^{[-d,d]}_{0,*}\big(A\big) =\epsilon,
	\end{align*}
	using the Markov property of Brownian motion for the penultimate equality.
	
	Combining this with the ignored factor in \eqref{e.no pole simplification} and using that $R^2 = 36 d$ from \eqref{e.R value} gives that \eqref{e.no pole integral} is bounded above, up to an absolute multiplicative constant, by 
	$$\epsilon \cdot\exp\left(756\cdot D_k^{5/2}\cdot d\cdot(\log\epsilon^{-1})^{5/6}\right),$$
	since $21\times 36 = 756$.
\end{proof}

\chapter{The patchwork quilt resewn}\label{ch.patchwork quilt}

In this chapter we make precise the notion of Brownian regularity for weight profiles from general initial condition discussed in Section~\ref{s.intro general init condition Brownian regularity}. Then we prove that the general pre-limiting weight profiles in Brownian LPP enjoy this regularity; and, using the results of \cite{dauvergne2018directed}, that this inference passes over to the limiting profiles. Finally we state and prove in Corollary~\ref{c.precise increment moment bound} the precise version of Theorem~\ref{t.weight profile increment moment bound}, giving a bound, for any $\eta>0$, on the $(2-\eta)$ moment of the increment of the limiting weight profile.

\section{Brownian motion regularity of weight profile with general initial condition}\label{s.applications brownain motion}

In \cite{hammond2017patchwork}, a certain form of Brownian \emph{bridge} regularity is proved for the weight profile started from a very general initial condition.  With our Theorem~\ref{t.airytail.ln}, we are able to upgrade this result to be a form of Brownian \emph{motion} regularity, and here we establish the notation to state this result.

We need to introduce the notion of a \emph{quilt}, which is constructed from a \emph{fabric sequence} and a set of \emph{stitch points}. These are the same as the definitions from \cite{hammond2017patchwork}.

\begin{definition}
Let $\overline F = \{F_i \in \mc C_{*,*}([a,b], \R) : i\in \N\}$ be a sequence of random continuous functions defined on $[a,b]$, which we will call the \emph{fabric sequence}. Also let $S = \{s_1 < s_2< \ldots < s_N \}\subseteq [a,b]$ be an almost surely finite set, called the \emph{stitch point set}.

The \emph{quilt} constructed from $\overline F$ and $S$ is denoted ${\rm Quilt}[\overline F, S]$, and is a random continuous function on $[a,b]$ defined by
$${\rm Quilt}[\overline F, S](x) = F_i(x) + v_i, \qquad x\in[s_{i-1},s_i],$$
for $i=1, \ldots, N+1$, where $s_0 =a$ and $s_{N+1} = b$. We set $v_0=0$, and specify $v_i$ for $i\geq 1$ so that ${\rm Quilt}[\overline F, S]$ is continuous.
\end{definition}

We modify the definition of a collection of stochastic processes $\{X_{n,\alpha}\}$ being Brownian patchwork quiltable from \cite{hammond2017patchwork} so that the reference process is Brownian motion rather than Brownian bridge.

\begin{definition}\label{d.bm comparison}
Let $\beta \in (1,\infty)$. A random continuous function $X:[a,b]\to \R$ is said to \emph{withstand $L^{\beta-}$-comparison with Brownian motion} if, for any $\eta\in(0,1-\beta^{-1})$ and any measurable $A\subseteq \mc C_{0,*}([a,b], \R)$,
$$\P\bigl(X(\,\cdot\,) - X(a) \in A\bigr) \leq C_0\cdot \B_{0,*}^{[a,b]}(A)^{1-\beta^{-1}-\eta},$$
where $C_0$ is a finite constant which may depend upon $\eta$.


Suppose that $\mc I$ is an arbitrary index set and that we have a collection of random continuous functions indexed by $\N\times \mc I$. For a function $h:\N\to (0,1]$, the collection is said to \emph{uniformly} withstand $L^{\beta-}$-comparison to Brownian motion \emph{above scale $h$} if
$$\P\bigl(X_{n,\alpha}(\,\cdot\,) - X_{n,\alpha}(a) \in A\bigr) \leq C_0\cdot \B_{0,*}^{[a,b]}(A)^{1-\beta^{-1}-\eta},$$
for all $(n,\alpha)\in \N\times \mc I$ and all measurable $A\subseteq \mc C_{0,*}([a,b], \R)$ with $\B_{0,*}^{[a,b]}(A) \geq h(n)$; here, $C_0$ is a constant that may depend upon $\eta$ but not on $n$ nor on $\alpha$.
\end{definition}

\begin{definition}\label{d.patchwork quilt}
Let $\mc I$ be an arbitrary index set. 
Suppose we are given a collection of random continuous functions $X_{n,\alpha}:[a,b] \to \R$, indexed by $(n,\alpha) \in \N \times \mc I$, defined under the law~$\PP$. 
Let $\beta_1 > 0$, $\beta_2 \geq 1$, $\beta_3 > 0$ and $\beta_4 > 0$. This collection is said to be uniformly Brownian motion patchwork $(\beta_1,\beta_2,\beta_3,\beta_4)$-quiltable if there exist
 \begin{itemize}
 \item sequences $p,q :\N \to[0,1]$ verifying $p_j \leq j^{-\beta_1 + \e}$ and $q_j \leq j^{-\beta_3 + \e}$ for each $\e > 0$ and all $j$ sufficiently high; and

\item a constant $g > 0$  
 \end{itemize}
such that, for each $(n,\alpha) \in \N \times \mc I$, we may construct under the law $\PP$,
\begin{enumerate}
\item an error event $E_{n,\alpha}$ that satisfies $\PP \big( E_{n,\alpha} \big) \leq q_n$;
\item a {\em fabric} sequence $\overline{F}_{n,\alpha} = \big\{ F_{n,\alpha;i} : i \in \N \big\}$ (consisting of continuous random functions on $[a,b]$), where the 
collection $\big\{ F_{n,\alpha;i} : \big(n,(\alpha,i) \big) \in \N \times \mc{K} \big\}$, with $\mc{K} = \mc I \times \N$,   uniformly withstands  $L^{\beta_2-}$-comparison to Brownian motion above scale $\exp \big\{ - g n^{\beta_4} \big\}$; 
\item a {\em stitch points} set $S_{n,\alpha} \subset [a,b]$ whose cardinality verifies
$\PP \big( \vert S_{n,\alpha} \vert \geq \ell \big) \leq p_\ell$ for each $\ell \in \N$; and
\item all this in such a way that, for every $(n,\alpha) \in \N \times \mc I$, the random function  $X_{n,\alpha}$ is equal to the patchwork quilt ${\rm Quilt}[\overline{F}_{n,\alpha},S_{n,\alpha}]$ throughout the interval $[a,b]$, whenever the error event $E_{n,\alpha}$ does {\em not} occur.
\end{enumerate}
\end{definition}

We may now state the theorem establishing this form of Brownian motion regularity for the pre-limiting weight
 profiles from general initial conditions.

\begin{theorem}\label{t.patchwork quilt}
Let $\bar\Psi \in(0,\infty)^3$ satisfy $\Psi_2\geq 1$. The collection of random continuous functions 
$$[-1,1] \to \R : y \mapsto \weight_{n}^f[(*,0)\to (y,1)]$$
indexed by $(n,f)\in \N\times \mc I_{\bar \Psi}$ is uniformly Brownian motion patchwork $(2,3,1/252,1/12)$-quiltable.
\end{theorem}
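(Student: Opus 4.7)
The plan is to closely follow the construction used to prove the analogous Brownian bridge patchwork quiltability in \cite{hammond2017patchwork}, making the single but decisive substitution of our Theorem~\ref{t.airytail.ln} in place of the Brownian bridge comparison \cite[Theorem 1.10 / generalization]{hammond2017brownian} wherever that latter result controls the regularity of fabric pieces. Concretely, given the reward function $f \in \mc I_{\bar\Psi}$, one first identifies the forest of $f$-rewarded polymers ending at points of $[-1,1]$ and groups them by starting location into trees, whose canopies determine a partition of $[-1,1]$ into macroscopic patches. For technical reasons inherited from \cite{hammond2017patchwork}, the final stitch-point set $S_{n,f}$ is a refinement of this canopy partition, and the error event $E_{n,f}$ absorbs the (polynomially unlikely) possibility that $f$-rewarded polymers leave a prescribed window, or that their starting points concentrate poorly. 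The tail estimate for $|S_{n,f}|$ with exponent $\beta_1 = 2$ and for $E_{n,f}$ with exponent $\beta_3 = 1/252$, together with the scale threshold $\beta_4 = 1/12$, are obtained exactly as in \cite{hammond2017patchwork}; these steps do not interact with the choice of reference process (bridge vs.\ motion).

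The novelty is at the level of the fabric pieces $F_{n,f;i}$. On each patch, the $f$-rewarded weight profile coincides with the narrow wedge weight profile from a suitable effective starting point, up to the additive constant $f$-value at that point. By the parabolic-shift Lemma~\ref{l.shifted is regular} and Proposition~\ref{p.lereg}, this narrow-wedge weight profile is the top curve of a $(c/2,C)$-regular Brownian Gibbs ensemble on a relevant interval. We then apply Theorem~\ref{t.airytail.ln} (with $k=1$ and $K$ the canopy's effective starting point) to each candidate fabric piece: this yields, for any measurable $A$ with $\mc B_{0,*}^{[-1,1]}(A) = \epsilon$ at least $\exp(-gn^{1/12})$, the bound
\begin{equation*}
\P\bigl(F_{n,f;i}(\cdot) - F_{n,f;i}(-1) \in A\bigr) \leq \epsilon \cdot \exp\!\bigl(G \,(\log \epsilon^{-1})^{5/6}\bigr),
\end{equation*}
which is stronger than $L^{3-}$ comparison in the sense of Definition~\ref{d.bm comparison} (indeed it is $L^{\infty-}$ for each single piece). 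The $L^{3-}$ uniformity claimed in the statement is there only to accommodate a loss of one factor of roughly $L^{3/2-}$ that arises when the fabric piece must be extended across auxiliary events and when effective starting points are averaged against suitable densities, exactly as in the bridge version of the argument in \cite{hammond2017patchwork}; since $(1 - 1/\infty)(1 - 1/3) > 1 - 1/3$, this loss is safely absorbed.

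The main obstacle I expect is the bookkeeping required to verify that the parameters $(\beta_1,\beta_2,\beta_3,\beta_4)=(2,3,1/252,1/12)$ are preserved, and in particular that the uniform-in-$n$ scale threshold $\exp(-gn^{1/12})$ in the definition of $L^{\beta_2-}$-comparison above scale $h$ genuinely matches the lower-bound hypothesis \eqref{e.alowerbound} of Theorem~\ref{t.airytail.ln}. This is where the value $\beta_4 = 1/12$ arises: it is precisely the exponent in \eqref{e.alowerbound}. One must further check that the $K$-dependence of the exponent in Theorem~\ref{t.airytail.ln} (which only enters the constant $G$ multiplicatively through $d$, because $\ell_{K,d}$ simply shifts the event and is accommodated by passing from $A$ to its vertically and affinely translated variant) does not inflate as the effective starting point of a canopy ranges across the allowed scale $[-\rsc n^{1/9}/2, \rsc n^{1/9}/2]$. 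This is ensured by the polynomial tail on large canopy starting points that is built into the error event $E_{n,f}$, inherited from \cite{hammond2017patchwork}. Once these verifications are assembled, the Brownian motion patchwork description follows by the same final patchwork construction as in \cite{hammond2017patchwork}.
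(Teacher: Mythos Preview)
Your proposal is correct and takes essentially the same approach as the paper: follow the construction of \cite{hammond2017patchwork} verbatim and substitute Theorem~\ref{t.airytail.ln} for the Brownian bridge comparison (\cite[Theorem~4.3]{hammond2017patchwork}) at the single place it is invoked. The paper's own proof is in fact much terser than yours---it simply notes that one replaces the affinely shifted processes $Z_{n,j}^{[-1,1]}$, $Y_{n,i}^{[-1,1]}$ by the vertically shifted versions $Z_{n,j}(\cdot)-Z_{n,j}(-1)$, $Y_{n,i}(\cdot)-Y_{n,i}(-1)$, and that the bridge comparison is used only once (on page~59 of \cite{hammond2017patchwork}), so the substitution is mechanical.
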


\cite[Theorem 1.2]{hammond2017patchwork} is exactly the same as Theorem~\ref{t.patchwork quilt}, except with Brownian bridge comparison, with the obvious change in Definition~\ref{d.bm comparison}.

\begin{proof}[Proof of Theorem~\ref{t.patchwork quilt}]
Our aim is to replace the use of \cite[Theorem 4.3]{hammond2017patchwork} in the proof of \cite[Theorem 1.2]{hammond2017patchwork} with Theorem~\ref{t.airytail.ln}. To begin, we simply change $A$ from being a subset of $\mc C_{0,0}([-1,1], \R)$ to be a subset of $\mc C_{0,*}([-1,1], \R)$, and replace all the occurrences of affinely shifted processes $Z_{n,j}^{[-1,1]}$ and $Y_{n,i}^{[-1,1]}$ with vertically shifted processes $Z_{n,j}(\,\cdot\,) - Z_{n,j}(-1)$ and $Y_{n,i}(\,\cdot\,) - Y_{n,i}(-1)$. Theorem 4.3 of \cite{hammond2017patchwork} is used only on page~59 of \cite{hammond2017patchwork} in the last display, and the proof works with our definition of uniformly Brownian motion patchwork quiltable by replacing this usage of \cite[Theorem 4.3]{hammond2017patchwork} with Theorem~\ref{t.airytail.ln}.
\end{proof}

\section{The limiting patchwork quilt}\label{s.applications limit of patchwork quilt}

In the recent work of Dauvergne, Ortmann, and Vir\'ag \cite{dauvergne2018directed}, the existence of the space-time Airy sheet is established. With some straightforward arguments that we will attend to shortly, this establishes the unambiguous existence of the random function $y\mapsto \weight_{\infty}^f[(*,0)\to (y,1)]$, and it is easy to believe, as remarked in \cite{hammond2017patchwork}, that a theorem analogous to Theorem~\ref{t.patchwork quilt} should hold for the limiting process with a suitable analysis of weak convergence of the relevant objects. This is indeed the case, and in this subsection we carry out the needed task in Proposition~\ref{p.limit is quiltable}.

The following is the result from \cite{dauvergne2018directed} establishing the joint convergence of the weight profiles, stated in our notation:

\begin{theorem}[Theorem 1.3 of \cite{dauvergne2018directed}]\label{t.dov}
The process $(x,y)\mapsto \weight_{n}[(x,0)\to(y,1)]$ for $(x,y)\in\R^2$ has a unique limit in distribution as $n\to\infty$, denoted by $\weight_{\infty}[(x,0)\to(y,1)]$. The convergence is on the space of continuous functions $f: \R^2\to \R$ endowed with the topology of locally uniform convergence.
\end{theorem}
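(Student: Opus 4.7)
The plan is to follow the two-step strategy that, based on the excerpt's description of \cite{dauvergne2018directed}, underlies the DOV proof: first establish tightness of the family $\bigl\{(x,y)\mapsto \weight_n[(x,0)\to(y,1)]\bigr\}_{n\geq 1}$ on compact subsets of $\R^2$, and then identify a unique subsequential limit via an extended Robinson--Schensted--Knuth (RSK) correspondence that expresses $\weight_n$ as a metric composition through the scaled Brownian LPP line ensemble $\mathcal L_n^{\scal}$.

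For tightness, I would exploit the fact (Proposition~\ref{p.lereg}) that $\mathcal L_n^{\scal}$ is $(c,C)$-regular with constants independent of $n$. This gives uniform GUE Tracy--Widom style one-point upper and lower tail bounds for $\weight_n[(0,0)\to(y,1)]$, which propagate to two-point tail bounds (and hence a uniform modulus of continuity in $y$) via the Brownian Gibbs property and the techniques of \cite{hammond2017modulus}. To upgrade this one-parameter tightness to tightness in both $x$ and $y$, one uses the translation invariance of Brownian LPP and the metric composition identity
\[
\weight_n[(x,0)\to (y,1)] \;=\; \sup_{z\in\R}\Bigl(\weight_n[(x,0)\to(z,w)] + \weight_n[(z,w)\to (y,1)]\Bigr),
\]
together with uniform control of transversal fluctuations (which restrict the effective range of $z$ to a compact set with high probability), to reduce a two-parameter modulus of continuity to a one-parameter one. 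This yields tightness in the locally uniform topology on $C(\R^2,\R)$.

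For identification of the limit, the key input of \cite{dauvergne2018directed} is the extended RSK (or ``Pitman-type'') correspondence, which re-expresses $\weight_n[(x,0)\to(y,1)]$ as a last-passage value in a new environment built from the melon ensemble $\mathcal L_n^{\scal}$ itself. Concretely, $\weight_n[(x,0)\to(y,1)]$ equals a supremum over $k\in\N$ and over admissible jump configurations between consecutive curves of $\mathcal L_n^{\scal}$, of a functional that is continuous in the locally uniform topology on the ensemble. Since $\mathcal L_n^{\scal}$ converges weakly (in the locally uniform topology) to the parabolic Airy line ensemble, and since the metric-composition functional is continuous in the ensemble, every subsequential limit of $\weight_n[(x,0)\to(y,1)]$ admits the same explicit representation in terms of the parabolic Airy line ensemble. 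Because the parabolic Airy line ensemble is a well-defined random object with unique law (characterized e.g.\ by its Brownian Gibbs property and determinantal finite-dimensional distributions), this pins down the subsequential limit uniquely, upgrading the tightness to convergence in distribution.

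The hard part will be the continuity of the extended RSK functional at infinite $n$. The supremum defining the metric composition ranges over all levels $k\in\N$ of the ensemble, and one must show that in the limit only finitely many curves contribute in probability, so that the functional depends continuously on finitely many curves restricted to a compact set. This requires quantitative one-point lower-tail estimates for deep curves (available e.g.\ from \cite[Proposition~2.7]{hammond2017brownian}) showing that curves of index $k$ are pushed down like $-k^{2/3}$, together with uniform-in-$n$ estimates to justify interchanging $n\to\infty$ and the supremum. A secondary obstacle is arguing that the pre-limit extended RSK identity, which is formally an equality of last-passage values in two different environments, passes to the limit with enough regularity that the identification of subsequential limits is pointwise in $(x,y)$ and not merely in some weaker sense.
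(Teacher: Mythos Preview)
This theorem is not proved in the paper at all: it is quoted verbatim as Theorem~1.3 of \cite{dauvergne2018directed} and used as a black-box input to establish Proposition~\ref{p.limit of general weight profile}. So there is no ``paper's own proof'' to compare against; your proposal is really a sketch of the argument in \cite{dauvergne2018directed}, not of anything the present paper does.

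That said, your outline of the DOV strategy is broadly accurate at the level of a summary---tightness via uniform regularity estimates, and identification of the limit via an extended RSK/melon description that expresses $\weight_n[(x,0)\to(y,1)]$ as a last-passage functional of the line ensemble $\mathcal L_n^{\scal}$, which then converges to the parabolic Airy line ensemble. You have also correctly flagged the genuinely hard step: the continuity of this functional in the locally uniform topology, which requires showing that only finitely many curves of the ensemble contribute to the supremum. In \cite{dauvergne2018directed} this is handled by the quantitative bulk estimates developed in the companion paper \cite{dauvergne2018basic}, not merely by the one-point lower-tail bounds you mention; the analysis of how deep curves behave and how the RSK functional localises is substantially more delicate than your sketch suggests. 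But for the purposes of this paper, none of that machinery is reproduced---the theorem is simply cited.
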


This theorem establishes the existence of the limiting weight profile started from the narrow-wedge initial condition (i.e., $f(x) = 0$ and $f(z) = -\infty$ for $z\neq x$) jointly as $x$ varies, and first we must perform the straightforward task of using this to obtain the existence of limiting weight profiles for general initial conditions. 

\begin{proposition}\label{p.limit of general weight profile}
Let $f\in \mc I_{\bar \Psi}$. Then the process $y\mapsto \weight^f_{n}[(*,0)\to (y,1)]$ for $y\in [-1,1]$ has a unique limit in distribution---to be denoted by $\weight_{\infty}^f[(*,0)\to (y,1)]$---on $\mc C_{*,*}([-1,1], \R)$ endowed with the topology of uniform convergence as $n\to\infty$.
\end{proposition}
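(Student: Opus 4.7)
The plan is to deduce the result from the joint convergence of the narrow-wedge weight field supplied by Theorem~\ref{t.dov} by a localize-and-pass-to-the-limit argument: first, I would truncate the supremum in the definition of $\weight^f_n$ to $x$ ranging over a fixed compact window $[-K,K]$; then apply the continuous mapping theorem on that compact window; and finally let $K \to \infty$ after showing that the truncation is harmless uniformly in $n$. Concretely, for $K \geq 1$ set
$$
\weight^{f,K}_n[(*,0)\to(y,1)] := \sup\{\weight_n[(x,0)\to(y,1)] + f(x) : x \in [-K,K] \cap (-\infty,2^{-1}n^{1/3}+y]\},
$$
and, once the localization is established, the limit will be identified as $y \mapsto \sup_{x \in \R}(\weight_\infty[(x,0)\to(y,1)] + f(x))$.

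The heart of the argument is a uniform tail bound that forces the supremum to be attained on a compact $x$-window with high probability, independently of $n$. By stationarity of the underlying Brownian environment, $\weight_n[(x,0)\to(y,1)]$ is equidistributed with $\weight_n[(0,0)\to(y-x,1)]$ for each individual $(x,y)$; applied together with the one-point upper tail of Definition~\ref{d.regularsequence}(3) (valid for the top curve of the regular ensemble furnished by Proposition~\ref{p.lereg}), this gives, for $|y-x| \leq c n^{1/9}$ and $s \geq 1$,
$$
\P\bigl(\weight_n[(x,0)\to(y,1)] + 2^{-1/2}(y-x)^2 \geq s\bigr) \leq C\exp(-cs^{3/2}).
$$
Combined with the bound $f(x) \leq \Psi_1(1+|x|)$ in the definition of $\mc I_{\bar\Psi}$, the parabolic $-(y-x)^2$ term dominates the linear reward $f(x)$ for $|x|$ large, so a covering of $[-K,K]^c \times [-1,1]$ by an $O(1)$-grid plus a union bound and a standard chaining/modulus estimate for $x \mapsto \weight_n[(x,0)\to(y,1)]$ (which, by the same stationarity, inherits the regularity proved for narrow-wedge profiles in \cite{hammond2017modulus}) yields
$$
\P\Big(\weight^f_n[(*,0)\to(y,1)] = \weight^{f,K}_n[(*,0)\to(y,1)] \text{ for every } y \in [-1,1]\Big) \geq 1 - \eta(K),
$$
with $\eta(K) \to 0$ as $K \to \infty$ uniformly in $n$. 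The same estimates, applied in the limit, show that the analogous truncation is also harmless for $\weight_\infty$ on an event of probability at least $1 - \eta(K)$.

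For the limit passage, observe that on $\mc C([-K,K]\times[-1,1],\R)$ equipped with the uniform topology, the map $g \mapsto \bigl(y \mapsto \sup_{x \in [-K,K]}(g(x,y) + f(x))\bigr)$ is continuous into $\mc C_{*,*}([-1,1],\R)$, since $f$ is bounded on $[-K,K]$ (if the essential supremum of $f$ on $[-K,K]$ is $-\infty$ the map is trivially constant equal to $-\infty$, but for $K \geq \Psi_2$ the hypothesis $\sup_{[-\Psi_2,\Psi_2]} f > -\Psi_3$ prevents this). Hence by Theorem~\ref{t.dov} and the continuous mapping theorem, $\weight^{f,K}_n[(*,0)\to(\cdot,1)]$ converges in distribution on $\mc C_{*,*}([-1,1],\R)$ to $\sup_{x \in [-K,K]}(\weight_\infty[(x,0)\to(\cdot,1)] + f(x))$. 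A standard three-epsilon argument, combining this convergence with the uniform-in-$n$ truncation estimate of the previous paragraph and its counterpart for $\weight_\infty$, delivers convergence of the full profile $\weight^f_n[(*,0)\to(\cdot,1)]$ to the limit $\weight^f_\infty[(*,0)\to(\cdot,1)] := \sup_{x \in \R}(\weight_\infty[(x,0)\to(\cdot,1)] + f(x))$, with uniqueness of the limit being automatic from the construction.

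The main obstacle will be the uniform tail bound controlling the truncation error. Pointwise tails are immediate from stationarity and the regular-ensemble one-point estimate, but promoting these to a supremum over the continuous range $x \in [-K,K]^c$, $y \in [-1,1]$ requires either a modulus-of-continuity input (such as the Hölder-$\frac12^-$ estimate of \cite{hammond2017modulus}) that discretizes the sup to an $O(1)$-net, or a direct monotonicity-based reduction to a sparse grid; care is needed in merging this with the $n$-dependent upper constraint $x \leq 2^{-1}n^{1/3}+y$ so that the estimate does not degrade with $n$. Once this uniform tail bound is in hand, the rest of the argument is a routine application of the continuous mapping theorem.
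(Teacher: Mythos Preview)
Your approach is correct but takes a genuinely different route from the paper's. The paper passes to almost sure convergence via Skorokhod representation, then invokes two ready-made polymer-geometry inputs: tightness of the maximisers $x^n_y$ at the endpoints $y=\pm 1$ (Lemma~\ref{l.maximiser unique}, quoted from \cite{hammond2017patchwork}) together with the sandwiching monotonicity $y\mapsto x^n_y$ (Lemma~\ref{l.sandwich}) to trap all maximisers for $y\in[-1,1]$ inside a single \emph{random} compact interval along a subsequence; uniform convergence on that random interval then yields the result, and a sub-subsequence argument upgrades this to full convergence. Your argument instead works entirely at the level of weak convergence: you truncate to a \emph{deterministic} window $[-K,K]$, apply the continuous mapping theorem (the map $g\mapsto\sup_{x\in[-K,K]}(g(x,\cdot)+f(x))$ being $1$-Lipschitz from $\mc C([-K,K]\times[-1,1])$ to $\mc C_{*,*}([-1,1])$), and close with a three-$\epsilon$ approximation.

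What each buys: the paper's proof is shorter because the localisation step is outsourced to cited lemmas and the sandwiching reduces the uniformity-in-$y$ to control at just two points; your proof is more self-contained and avoids Skorokhod, but the price is that you must establish the uniform-in-$n$ truncation estimate from scratch. You correctly identify this as the main obstacle, and your proposed route (one-point parabolic tail from Definition~\ref{d.regularsequence}(3) plus stationarity in $x$, then a net/modulus argument to pass to the continuum) will work---though note that the modulus input you invoke from \cite{hammond2017modulus} is stated for the endpoint variable, so you are implicitly using a time-reversal or shear symmetry of Brownian LPP to transfer it to the starting-point profile; this is fine but should be made explicit. Alternatively, since Lemma~\ref{l.maximiser unique} is available in the paper, you could simply cite it to obtain the truncation bound directly and bypass the chaining entirely.
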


Before giving the proof, we record some statements from \cite{hammond2017patchwork} which give us some control over the maximiser  in the definition of $\weight_{n}^f[(*,0)\to (y,1)]$. Let $x^n_y = \mathrm{argmax}\left\{\weight_{n}[(x,0)\to(y,1)] + f(x) \ :\  x\in \R\right\}$ for $y\in\{-1,1\}$ and for a fixed $f\in \mc I_{\bar \Psi}$. Note that $x^n_1$ and $x^n_{-1}$ are well defined since this maximiser is unique a.s. for every fixed $y\in[-1,1]$.

\begin{lemma}[Lemma 4.10 of \cite{hammond2017patchwork}]\label{l.maximiser unique}
Fix $f\in \mc I_{\bar \Psi}$, and let $x^n_y$ be as above. Then $\{x^n_1, x^n_{-1}\}_{n\in\N}$ is a tight sequence of random variables. 
\end{lemma}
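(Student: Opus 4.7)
The plan is to exploit the parabolic curvature of the Brownian LPP weight profile against the at-most-linear growth of any $f \in \mc I_{\bar\Psi}$. The argmax $x^n_y$ is the maximiser of $x \mapsto \weight_n[(x,0)\to(y,1)] + f(x)$. Under the scaling built into $\weight_n$, the first term behaves at large $|x|$ like a parabola with curvature of unit order (reflecting the globally parabolic shape of the point-to-point weight profile visible, e.g., in the parabolic shift appearing in the definition of the regular ensemble $\L_n^{\scal}$). Since $f(x) \leq \Psi_1(1+|x|)$, the linear reward cannot outweigh this quadratic decay for large $|x|$, and the argmax must therefore be confined to a bounded window with high probability uniform in $n$.

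First I would obtain a uniform upper-tail estimate of the form
\begin{equation*}
\P\Big( \weight_n[(x,0)\to(y,1)] + 2^{-1/2}(x-y)^2 \geq s \Big) \leq C\exp(-cs^{3/2})
\end{equation*}
valid for $y \in \{-1,1\}$ and for $x$ in a wide range (say $|x|\leq c n^{1/9}$), drawing this from Proposition~\ref{p.lereg} together with the one-point upper-tail clause of Definition~\ref{d.regularsequence}, applied to the top curve of the shifted ensemble $\L_{n,x}^{\mathrm{shift}}$ furnished by Lemma~\ref{l.shifted is regular}. Secondly, I would obtain a matching uniform \emph{lower} bound on the supremum $\sup_{|x|\leq \Psi_2}\bigl(\weight_n[(x,0)\to(y,1)] + f(x)\bigr)$: choose a point $x^* \in [-\Psi_2,\Psi_2]$ with $f(x^*) > -\Psi_3$ (guaranteed by the definition of $\mc I_{\bar\Psi}$), and use the one-point lower-tail clause of Definition~\ref{d.regularsequence} at $x^*$ to show that this supremum is at least $-M_0$ with probability at least $1-\delta/2$, for some $M_0 = M_0(\delta,\bar\Psi)$ independent of $n$.

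Next I would combine the two estimates. For $|x| \geq M$ with $M$ large, the parabolic upper bound gives $\weight_n[(x,0)\to(y,1)] \leq -\tfrac{1}{2}(x-y)^2 + s$ on an event of probability $1-Ce^{-cs^{3/2}}$. Summing over a union bound in $x$ (say over a unit-spaced grid, combined with a modulus-of-continuity estimate for the weight profile in $x$, which also comes from Brownian Gibbs regularity), and absorbing the linear growth of $f$ using $\tfrac12(x-y)^2 - \Psi_1(1+|x|) \to \infty$, I would conclude that on an event of probability at least $1-\delta/2$, the supremum over $|x|\geq M$ of the $f$-rewarded profile is strictly less than $-M_0$, hence cannot equal the global supremum. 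The argmax is then forced into $[-M,M]$ with probability at least $1-\delta$ uniformly in $n$, giving tightness.

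The main obstacle will be controlling the weight profile uniformly over the large-$x$ range $|x| \in [M, cn^{1/9}]$, and then extending to $|x|> cn^{1/9}$ separately using a coarser bound (e.g., the maximal path from $(x,0)$ to $(y,n)$ cannot exceed the free supremum of the underlying Brownian environment on a suitable strip). The union bound over a unit-spaced grid of $x$-values will blow up only polynomially in the grid size, while the parabolic term provides exponential-in-$x^2$ decay, so this is tractable but requires a careful modulus-of-continuity input for the weight in the starting point; the sharp H\"older-$\tfrac12{-}$ control from \cite{hammond2017modulus} together with the mentioned one-point tail bound provides this input.
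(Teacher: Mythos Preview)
The paper does not prove this lemma; it is imported verbatim as Lemma~4.10 of \cite{hammond2017patchwork}, so there is no proof in the present article to compare against.

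Your strategy is the standard one and matches what \cite{hammond2017patchwork} does: play the parabolic curvature of the point-to-point weight against the at-most-linear reward to confine the argmax. One correction is needed in how you invoke the regular-ensemble machinery. The ensemble $\L_n^{\scal}$ has top curve $y\mapsto\weight_n[(0,0)\to(y,1)]$, i.e.\ the weight as a function of the \emph{ending} point with starting point fixed at the origin; Lemma~\ref{l.shifted is regular} translates in that same ending-point variable. But you need one-point tail bounds for $x\mapsto\weight_n[(x,0)\to(y,1)]$, the weight as a function of the \emph{starting} point with $y$ fixed. The bridge is the reflection/time-reversal symmetry of Brownian LPP: the environment $\{B(k,\cdot)\}_{k=0}^n$ has the same law under $(k,t)\mapsto(n-k,-t)$ composed with negation, which yields that $(x,y)\mapsto\weight_n[(x,0)\to(y,1)]$ has the same law as $(x,y)\mapsto\weight_n[(-y,0)\to(-x,1)]$. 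Combined with stationarity of Brownian increments, this transfers the one-point upper and lower tails from Definition~\ref{d.regularsequence} to the starting-point profile. With that symmetry in hand, the rest of your plan---the lower bound via $x^*\in[-\Psi_2,\Psi_2]$ with $f(x^*)>-\Psi_3$, the union bound over a grid of $x$-values, and the separate crude treatment of $|x|>cn^{1/9}$---goes through and is indeed how the argument in \cite{hammond2017patchwork} runs.
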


\begin{lemma}\label{l.sandwich}
In the notation of Lemma~\ref{l.maximiser unique}, $y\mapsto x^n_y$ is a.s. a  non-decreasing function for each $n$.
\end{lemma}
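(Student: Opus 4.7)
\medskip

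The plan is to deduce the monotonicity from a standard supermodularity (or ``quadrangle'') inequality for the weight function $\weight_n[(x,0)\to(y,1)]$, combined with the pointwise almost sure uniqueness of the maximiser invoked in the statement preceding the lemma. Concretely, I will first show that for $a\le b$ and $c\le d$,
$$
\weight_n[(a,0)\to(c,1)] + \weight_n[(b,0)\to(d,1)] \;\ge\; \weight_n[(a,0)\to(d,1)] + \weight_n[(b,0)\to(c,1)],
$$
and then read off monotonicity of the $f$-rewarded maximiser by a short Topkis-style contradiction.

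For the quadrangle inequality, I would take the geodesics $\pi_1$ from $(a,0)$ to $(d,1)$ and $\pi_2$ from $(b,0)$ to $(c,1)$, thought of as continuous curves in the strip $\R\times[0,1]$ (after the scaling map). Since $a\le b$ but $c\le d$, their start and end points are ordered in opposite senses, so $\pi_1$ and $\pi_2$ must intersect. Choosing any crossing time and swapping initial segments produces a new pair of paths, one from $(a,0)$ to $(c,1)$ and one from $(b,0)$ to $(d,1)$, whose total energy equals that of $\pi_1\cup\pi_2$ because the environment is summed along vertices and each vertex is visited the same total number of times. Comparing this sum to the maxima over all paths between the new endpoint pairs gives the displayed inequality. (The same swap also underlies the RSK-type arguments cited in Section~\ref{s.intro kpz fixed point}, so this is a very standard manipulation in Brownian LPP.)

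For the monotonicity, set $F_y(x) := \weight_n[(x,0)\to(y,1)] + f(x)$ and rewrite the quadrangle inequality as
$$
F_{y_2}(b) - F_{y_2}(a) \;\ge\; F_{y_1}(b) - F_{y_1}(a) \qquad \text{whenever } a\le b,\ y_1\le y_2,
$$
i.e.\ $F$ has increasing differences. Suppose, for a contradiction, that for some $y_1<y_2$ we have $x^n_{y_1}>x^n_{y_2}$, and write $b=x^n_{y_1}$, $a=x^n_{y_2}$. Optimality gives $F_{y_1}(b)-F_{y_1}(a)\ge 0$ and $F_{y_2}(b)-F_{y_2}(a)\le 0$, which together with the supermodularity inequality force both differences to equal zero. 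Thus $a$ is also a maximiser of $F_{y_1}$ and $b$ a maximiser of $F_{y_2}$. Applying this to each pair of rationals $y_1<y_2$ in $[-1,1]$ and using the a.s.\ uniqueness of the maximiser at each fixed $y$ (stated before Lemma~\ref{l.maximiser unique}), we obtain a single almost sure event on which $y\mapsto x^n_y$ is non-decreasing along the rationals. The upper semicontinuity of the argmax correspondence in $y$, which follows from the joint continuity of $(x,y)\mapsto F_y(x)$ and the a.s.\ uniqueness of each pointwise maximiser, then extends the monotonicity to all $y\in[-1,1]$.

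The only real subtlety is promoting the pointwise a.s.\ uniqueness to a statement that covers all $y$ simultaneously; the resolution is precisely to argue through a countable dense set and then to use continuity of the rewarded weight together with the tightness of $\{x^n_y\}$ supplied (at the endpoints) by Lemma~\ref{l.maximiser unique}. The path-swap step is routine and the Topkis-style contradiction is short, so I expect no serious technical obstacle.
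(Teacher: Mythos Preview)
Your argument is correct. The paper's proof is a one-line citation to \cite[Lemma~4.4]{hammond2017patchwork}, a ``sandwiching property of polymers'' (the geometric statement that $f$-rewarded polymers with ordered upper endpoints are themselves spatially ordered), from which monotonicity of the starting point $x^n_y$ is immediate. You instead prove the algebraic dual---the quadrangle (supermodularity) inequality---via the same path-swapping idea, and then read off monotonicity by a Topkis-style contradiction using pointwise uniqueness.

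The two routes are essentially the same mechanism packaged differently. One small comparative remark: the polymer-ordering formulation is a statement about geodesics in a fixed realisation of the environment and so holds simultaneously for all $y$ without further work, which is exactly what the application in Proposition~\ref{p.limit of general weight profile} needs. Your route gets the conclusion for each pair of $y$'s almost surely and then must pass through a countable dense set and a semicontinuity argument to upgrade; this is fine (and you correctly flag it), but the geometric version is slightly more direct here. Conversely, your supermodularity formulation is more portable to settings where one has the weight function but no explicit geodesic picture.
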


\begin{proof}
This is an immediate consequence of a simple sandwiching property of polymers recorded in \cite[Lemma 4.4]{hammond2017patchwork}. 
\end{proof}

This sandwiching property just used is a common tool in arguments involving polymer geometry in other LPP models as well, and has also been called ``polymer ordering''.

\begin{proof}[Proof of Proposition \ref{p.limit of general weight profile}]
By Theorem \ref{t.dov} and the Skorokhod representation theorem, we have on a common probability space that
$\weight_{n}[(x,0)\to(y,1)] \to \weight_{\infty}[(x,0)\to(y,1)]$
uniformly on compact sets almost surely. Define 
$$\weight^f_{\infty}[(*,0)\to (y,1)] := \sup\Big\{\weight_{\infty}[(x,0)\to(y,1)] + f(x) \ :\  x\in(-\infty, \infty)\Big\}.$$
As in Lemma~\ref{l.maximiser unique}, let 
\begin{align*}
x_1^{n} &= \mathrm{argmax}\Bigl\{\weight_{n}[(x,0)\to(1,1)] + f(x) \ :\  x\in \R\Bigr\} \quad \text{and}\\
x_{-1}^n &=\mathrm{argmax}\Bigl\{\weight_{n}[(x,0)\to(-1,1)] + f(x) \ :\  x\in \R\Bigr\}.
\end{align*}
Since $\{x_1^n\}_{n\in\N}$ and $\{x_{-1}^n\}_{n\in\N}$ are tight, let $\{x_{1}^{n_k}\}_{k\in\N}$ and $\{x_{-1}^{n_k}\}_{k\in\N}$ be convergent subsequences.

Let $K$ be a random compact interval such that, for all $k$, $x^{n_k}_{1}, x^{n_k}_{-1}\in K$; such a $K$ exists because the sequences are convergent. Note that Lemma~\ref{l.sandwich} implies that $x^{n_k}_y\in K$ for all $y\in[-1,1]$ and large enough $k$ almost surely. From uniform convergence on compact sets, it follows that $x^{n_k}_1 \to x^{\infty}_1\in K$, where $x^\infty_1$ is a maximiser of the supremum in the definition of $\weight^f_{\infty}[(*,0)\to (1,1)]$, and similarly for $x^{n_k}_{-1}$. Again by uniform convergence on compact sets, we obtain
\begin{align*}
\MoveEqLeft
\left|\,\weight^f_{n_k}[(*,0)\to (y,1)] - \weight^f_{\infty}[(*,0)\to (y,1)]\right| \\
&\leq \sup_{(x,y)\in K\times[-1,1]} \Big|\,\weight_{n_k}[(x,0)\to(y,1)] - \weight_{\infty}[(x,0)\to(y,1)]\Big| \to 0,
\end{align*}
as $k\to\infty$. This was for the subsequence $n_k$ along which $x_{1}^{n_k}$ and $x_{-1}^{n_k}$ converge, but observe that given any subsequence of $\N$, we can find a further subsequence $n_k$ along which $x_1^{n_k}$ and $x_{-1}^{n_k}$ converge, and the argument goes through. It is standard that this implies the claim of Proposition~\ref{p.limit of general weight profile}, completing the proof.
\end{proof}

Now we turn to the result that the patchwork quilt description is preserved in the limit. Note that, when considering the Brownian motion patchwork quiltability of a limiting family of processes $\{X_{\infty,\alpha}\}_{\alpha\in \mc I}$ for some index set $\mc I$, i.e., $n=\infty$, the constants $\beta_3$ and $\beta_4$ do not play any role; this is because $\beta_3$ controls the probability of the error event $E_n$, which is zero when $n=\infty$ for any $\beta_3>0$, while $\beta_4$ controls the scale above which the comparison to Brownian motion is made, which is also zero when $n=\infty$ for any $\beta_4>0$. So as a formal device, we may set both the parameters $\beta_3$ and $\beta_4$ to $\infty$ for the limiting family of processes, regardless of their values in the prelimiting family.

\begin{proposition}\label{p.limit is quiltable}
Suppose the collection $\{X_{n,\alpha}:[a,b]\to\R : (n,\alpha)\in\N\times \mc I\}$ is uniformly Brownian motion patchwork $(\beta_1,\beta_2,\beta_3,\beta_4)$-quiltable, and that $\{X_{n,\alpha}\}_{n\in \N}$ is a tight sequence of random functions for each $\alpha\in\mc I$. Let $\{X_{\infty,\alpha}\}_{\alpha\in\mc I}$ be a collection of weak limit points of this collection. Then $\{X_{\infty,\alpha}\}_{\alpha\in\mc I}$ is uniformly Brownian motion patchwork $(\beta_1,\beta_2,\infty, \infty)$-quiltable.
\end{proposition}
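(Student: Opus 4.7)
The plan is to use the Skorokhod representation theorem together with tightness of each component of the patchwork quilt structure (error event, stitch points set, fabric sequence) to extract a limiting decomposition, and then verify that the limit inherits the relevant regularity properties. Fix $\alpha \in \mc I$ and a weakly convergent subsequence $X_{n_k, \alpha} \Rightarrow X_{\infty,\alpha}$. For each $n_k$ we have the tuple $(E_{n_k,\alpha}, S_{n_k,\alpha}, \overline{F}_{n_k,\alpha})$ satisfying the four conditions of Definition~\ref{d.patchwork quilt}. The first step is to establish joint tightness of this entire tuple together with $X_{n_k,\alpha}$: the indicator of $E_{n_k,\alpha}$ is trivially tight, and its probability $q_{n_k}$ tends to zero, so any subsequential limit is a.s.\ absent; the cardinality $|S_{n_k,\alpha}|$ has a uniform polynomial tail (condition 3), so the random sets $S_{n_k,\alpha}$ are tight in the space of finite subsets of $[a,b]$ (metrized for instance by Hausdorff distance once cardinality is controlled); and each fabric curve $F_{n_k,\alpha;i}$ uniformly withstands $L^{\beta_2-}$-comparison to Brownian motion above a scale that tends to zero, from which tightness in $\mc C_{*,*}([a,b],\R)$ follows via standard modulus-of-continuity estimates for Brownian motion.

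Having established joint tightness, I invoke Skorokhod to realize a further subsequence on a common probability space where $(E_{n_k,\alpha}, S_{n_k,\alpha}, \overline{F}_{n_k,\alpha}, X_{n_k,\alpha})$ converges almost surely to $(\emptyset, S_{\infty,\alpha}, \overline{F}_{\infty,\alpha}, X_{\infty,\alpha})$. By continuity of the quilt construction at tuples whose stitch points are all distinct (which happens almost surely, since the limiting stitch points have a law derived from continuous fabrics), the identity $X_{n_k,\alpha} = \mathrm{Quilt}[\overline{F}_{n_k,\alpha}, S_{n_k,\alpha}]$ holding off $E_{n_k,\alpha}$ passes to the limit to give $X_{\infty,\alpha} = \mathrm{Quilt}[\overline{F}_{\infty,\alpha}, S_{\infty,\alpha}]$ almost surely.

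The remaining step is to verify that the limiting stitch point set and fabric sequence inherit the correct tail bounds. The bound $\P(|S_{\infty,\alpha}| \geq \ell) \leq p_\ell$ follows from the Portmanteau theorem applied to the open event $\{|S| \geq \ell\}$ (or more carefully to $\{|S| > \ell - 1/2\}$ after approximation), since $\P(|S_{n_k,\alpha}| \geq \ell) \leq p_\ell$ uniformly in $k$. For the Brownian comparison property of the limiting fabric, fix any Borel $A \subset \mc C_{0,*}([a,b],\R)$ with $\B_{0,*}^{[a,b]}(A) = \epsilon > 0$; by regularity of Borel measures we may approximate $A$ by an open set (or use a standard approximation by open sets whose Brownian motion probability is close to $\epsilon$), and then apply Portmanteau to the open set $\tilde A := \{f : f(\cdot) - f(a) \in A^\circ\}$ to conclude
\begin{equation*}
\P\bigl(F_{\infty,\alpha;i}(\cdot) - F_{\infty,\alpha;i}(a) \in A^\circ\bigr) \leq \liminf_{k\to\infty} \P\bigl(F_{n_k,\alpha;i}(\cdot) - F_{n_k,\alpha;i}(a) \in A^\circ\bigr) \leq C_0 \, \epsilon^{1 - \beta_2^{-1} - \eta},
\end{equation*}
where the second inequality uses uniformity of the prelimit comparison above the scale $\exp(-gn_k^{\beta_4})$, which is eventually below $\epsilon$ for large $k$. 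Since $\epsilon > 0$ was arbitrary, the limiting fabric withstands $L^{\beta_2-}$-comparison to Brownian motion at every positive scale, which is exactly the assertion that $\beta_4 = \infty$ works; and since the error event is null, we may take $\beta_3 = \infty$ as well. Finally, by appealing to tightness of all subsequential limits sharing the same law, the construction can be carried out uniformly in $\alpha \in \mc I$ with the same constants $C_0$, since these come from the uniform comparison in the prelimit.

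The main obstacle will be the continuity of the quilt map at tuples whose stitch point sets have coincident points or points coinciding with the endpoints $a,b$, and showing that such degenerate configurations occur with probability zero in the limit. This should follow because each limiting fabric curve is a.s.\ continuous and non-constant in any neighborhood, so the values $v_i$ arising in the quilt construction depend continuously on the data off a null set; but if a subtlety arises, one can replace the direct Skorokhod argument by a truncation that first fixes a large $\ell$ with $p_\ell$ small, works on $\{|S_{n_k,\alpha}| \leq \ell\}$, and then sends $\ell \to \infty$.
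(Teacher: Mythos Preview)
Your overall strategy matches the paper's: establish joint tightness of $(X_{n,\alpha}, \overline{F}_{n,\alpha}, S_{n,\alpha})$, pass to a subsequential limit via Skorokhod, and verify the limiting data satisfy the quilt axioms using Portmanteau and outer regularity. The Portmanteau/outer-regularity verification of the $L^{\beta_2-}$-comparison is essentially the paper's argument (though your displayed inequality should use an open $O \supseteq A$ coming from outer regularity, not $A^\circ$, which approximates from the wrong side).

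There is, however, a genuine gap in your handling of stitch-point coincidences. You assert that the limiting stitch points are almost surely distinct ``since the limiting stitch points have a law derived from continuous fabrics'', but this is not justified, and your fallback about non-constancy of fabric curves does not touch the issue: nothing prevents two prelimiting stitch points $s_i^{(n)} < s_{i+1}^{(n)}$ from converging to the same point $s$. When this happens the fabric piece $F_{n,\alpha;i+1}$ between them is squeezed onto an interval of length zero, and the quilt map is genuinely discontinuous there. Metrizing $S_{n,\alpha}$ by Hausdorff distance compounds the problem, since Hausdorff convergence discards multiplicities; your truncation to $\{|S|\le\ell\}$ does not resolve it either.

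The paper deals with this by regarding each $S_{n,\alpha}$ as a simple point process (an integer-valued random measure), so that its weak limit $S_{\infty,\alpha}$ retains multiplicity information. It then defines $S_\alpha$ to be the \emph{support} of $S_{\infty,\alpha}$ and, crucially, \emph{deletes} from the limiting fabric sequence those pieces corresponding to intervals that have collapsed to length zero: if $s_i\in S_\alpha$ has multiplicity $m(s_i)$, one retains only $F_{\infty,\alpha;r(i)}$ with $r(i)=\sum_{j\le i} m(s_j)$. After this re-indexing the identity $X_\infty = \mathrm{Quilt}[\overline{F}_\alpha, S_\alpha]$ follows from almost-sure convergence. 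This bookkeeping step is the missing ingredient in your argument. A smaller omission: tightness of $F_{n,\alpha;1}$ also requires controlling $F_{n,\alpha;1}(a)$, since this first piece cannot be normalized to vanish at $a$; the paper invokes tightness of $X_{n,\alpha}(a)$ and the vanishing of $\P(E_{n,\alpha})$ for this.
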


As an immediate implication of this proposition and Theorem~\ref{t.patchwork quilt}, we have the following theorem.

\begin{theorem}\label{t.limiting weight profile is quiltable}
Let $\bar\Psi \in(0,\infty)^3$ satisfy $\Psi_2\geq 1$. The collection of random continuous functions 
$$[-1,1] \to \R : y \mapsto \weight_{\infty}^f[(*,0)\to (y,1)]$$
indexed by $f\in \mc I_{\bar \Psi}$ is uniformly Brownian motion patchwork $(2,3,\infty, \infty)$-quiltable.
\end{theorem}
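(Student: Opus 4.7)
The plan is to assemble Theorem~\ref{t.limiting weight profile is quiltable} directly from the three pieces already in hand: the prelimiting quiltability given by Theorem~\ref{t.patchwork quilt}, the existence of the limiting weight profile from Proposition~\ref{p.limit of general weight profile}, and the abstract transfer statement given by Proposition~\ref{p.limit is quiltable}. Index the prelimiting collection by $(n,f) \in \N \times \mc I_{\bar\Psi}$ with $X_{n,f}(y) := \weight_n^f[(*,0)\to(y,1)]$ regarded as a random element of $\mc C_{*,*}([-1,1],\R)$, and set $X_{\infty,f}(y) := \weight_\infty^f[(*,0)\to(y,1)]$.

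First, Theorem~\ref{t.patchwork quilt} asserts that $\{X_{n,f} : (n,f) \in \N \times \mc I_{\bar\Psi}\}$ is uniformly Brownian motion patchwork $(2,3,1/252,1/12)$-quiltable, which supplies all four parameters required as hypotheses for Proposition~\ref{p.limit is quiltable}. Second, for each fixed $f \in \mc I_{\bar\Psi}$, Proposition~\ref{p.limit of general weight profile} shows that $X_{n,f} \Rightarrow X_{\infty,f}$ on $\mc C_{*,*}([-1,1],\R)$ equipped with uniform convergence; in particular, the sequence $\{X_{n,f}\}_{n \in \N}$ is tight for every $f$, which is the remaining hypothesis of Proposition~\ref{p.limit is quiltable}. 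Third, the collection $\{X_{\infty,f}\}_{f \in \mc I_{\bar\Psi}}$ is then (up to distributional equivalence) a family of weak limit points of the prelimiting collection, so Proposition~\ref{p.limit is quiltable} applies and yields that $\{X_{\infty,f}\}_{f \in \mc I_{\bar\Psi}}$ is uniformly Brownian motion patchwork $(2,3,\infty,\infty)$-quiltable, which is exactly the conclusion of Theorem~\ref{t.limiting weight profile is quiltable}.

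There is essentially no hard step in this deduction: the theorem is a bookkeeping corollary. The only points that require a sentence of care are (i) verifying that the hypotheses of Proposition~\ref{p.limit is quiltable} are genuinely met—uniform quiltability at finite $n$ is supplied by Theorem~\ref{t.patchwork quilt}, and tightness of $\{X_{n,f}\}_{n}$ for each $f$ follows from Proposition~\ref{p.limit of general weight profile}'s weak convergence statement—and (ii) recalling the remark preceding Proposition~\ref{p.limit is quiltable} that when $n=\infty$ the parameters $\beta_3$ and $\beta_4$ play no role, so the values $1/252$ and $1/12$ from the prelimiting statement are formally replaced by $\infty$ in the limit without loss.

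The main conceptual obstacle—namely the upgrading of Brownian bridge comparison in \cite{hammond2017patchwork} to a Brownian motion comparison for the prelimiting weight profiles—has already been addressed in the proof of Theorem~\ref{t.patchwork quilt}, where Theorem~\ref{t.airytail.ln} is substituted for \cite[Theorem~4.3]{hammond2017patchwork} and the affinely shifted processes $Z_{n,j}^{[-1,1]}$ are replaced by the vertically shifted processes $Z_{n,j}(\,\cdot\,) - Z_{n,j}(-1)$. Consequently, for Theorem~\ref{t.limiting weight profile is quiltable} itself there is no further work beyond invoking the three ingredients above in sequence, so the proof reduces to a short paragraph of citation.
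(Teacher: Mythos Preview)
Your proposal is correct and matches the paper's approach exactly: the paper presents Theorem~\ref{t.limiting weight profile is quiltable} as an immediate implication of Proposition~\ref{p.limit is quiltable} and Theorem~\ref{t.patchwork quilt}, and you have simply made explicit the one additional hypothesis check (tightness of $\{X_{n,f}\}_n$ via Proposition~\ref{p.limit of general weight profile}) that the paper leaves implicit.
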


\begin{proof}[Proof of Proposition~\ref{p.limit is quiltable}]
Looking back at the definition of a uniformly Brownian motion patchwork $(\beta_1, \beta_2, \infty,\infty)$-quiltable collection when $n=\infty$, to prove Proposition~\ref{p.limit is quiltable} we must find, for each $\alpha\in\mc I$, a collection of random continuous functions $\overline F_{\alpha} = \{F_{\alpha;i} : i\in \N\}$ defined on $[a,b]$ and a stitch point set $S_\alpha$ such that
\begin{itemize}
	\item the collection $\{F_{\alpha;i} : (\alpha,i)\in \mc I\times \N\}$ uniformly withstands $L^{\beta_2-}$-comparison to Brownian motion above scale zero;
	\item it holds that $\P(|S_{\alpha}| \geq \ell) \leq p_{\ell}$ for each $\ell\in \N$, with $p_{j} \leq j^{-\beta_1+\epsilon}$ for each $\epsilon>0$ and $j$ sufficiently large; and
	\item in such a way that $X_{\infty, \alpha} = {\rm Quilt}[\overline F_{\alpha}, S_{\alpha}]$ on $[a,b]$ almost surely.
\end{itemize}
To do this we take appropriate weak limits of the collections $\overline F_{n,\alpha}$ and sets $S_{n,\alpha}$ for every $\alpha$, which we now fix for the remainder of the proof. We assume without loss of generality that $X_n\stackrel{d}{\to} X_\infty$, focusing our attention, if needed, on the subsequence along which this convergence occurs.

We start by fixing $\alpha$ and showing that there exists a subsequence of $n$ along which $F_{n,\alpha, i} \stackrel{d}{\to} F_{\infty,\alpha,i}$ for all $i$ simultaneously, where $\{F_{\infty,\alpha,i} : i\in \N\}$ is some collection of random continuous functions. Since the construction of the patchwork quilt entails a vertical shift of all $F_{n, \alpha, i}$ for $i\geq 2$, we may assume that $F_{n,\alpha, i}(a) = 0$ for all $i\geq 2$ and all $n\in \N$. The tightness of this collection follows from a standard diagonalisation argument if we can show the subsequential convergence for each individual $i$, i.e., we must show tightness of $\{F_{n,\alpha, i} : n\in\N\}$ for each fixed $i\in\N$. This is a consequence of the uniform $L^{\beta_2-}$ comparison to Brownian motion that $\overline F_{n,\alpha}$ enjoys. So let $K\subseteq \mc C_{0,*}([a,b], \R)$ be a compact set such that $\B_{0,*}^{[a,b]}(K^c) \leq \epsilon$. Then using the $L^{\beta_2-}$-comparison with $\eta = (1-\beta_2^{-1})/2$, we get, for $i\geq 2$,
$$\P\bigl(F_{n, \alpha, i} \not\in K\bigr) \leq C_0\cdot \B_{0,*}^{[a,b]}\bigl(K^c\bigr)^{(1-\beta_2^{-1})/2} \leq C_0\cdot \epsilon^{(1-\beta_2^{-1})/2}.$$
Since $\epsilon>0$ is in our control, this establishes tightness of $\{F_{n,\alpha,i} : n\in \N\}$ for each $i\geq 2$. 

For $i=1$ we cannot assume $F_{n,\alpha,1}(a) = 0$, and so we must use the tightness of $\{X_{n,\alpha}\}_{n\in\N}$ as well. Recall that to establish tightness of $\{F_{n,\alpha,1} : n\in \N\}$, it is sufficient to show that $\{F_{n,\alpha,1}(a) : n\in \N\}$ is tight, and to show a uniform modulus of continuity; i.e., that, given $\rho>0$ and $\epsilon>0$, we can find $r$ and $n_0$ such that for $n>n_0$ we have
$$\P\big(\omega(F_{n,\alpha,1}, r) \leq \rho\big) \geq 1-\epsilon,$$
where for a continuous function $f:[a,b]\to\R$,
$$\omega(f,r) = \sup_{\substack{|x-y|<r\\ x,y\in[a,b]}} |f(x) - f(y)|.$$
That $\{F_{n,\alpha,1}(a) : n\in \N\}$ is tight follows from the fact that $F_{n,\alpha,1}(a) = X_n(a)$ on the event $E_{n,\alpha}^c$, which has probability less than $q_n$, and the assumed tightness of $\{X_n\}_{n\in\N}$. The uniform modulus of continuity follows again from $L^{\beta_2-}$-comparison with Brownian motion, and the fact that the modulus of continuity event does not change by considering $F_{n,\alpha,1}(\,\cdot\,) - F_{n,\alpha,1}(a)$ instead of $F_{n,\alpha,1}(\,\cdot\,)$:
$$\P\big(\omega(F_{n,\alpha,1}, r) \geq \rho\big) \leq C_0\cdot \P\big(\omega(B, r) \geq \rho\big)^{(1-\beta^{-1})/2},$$
where $B$ is a Brownian motion on $[a,b]$ and $C_0$ is a constant independent of $n$ and $\alpha$. Since this right-hand side can be made less than $\epsilon$ by taking $r$ sufficiently small, and since this inequality holds for all large enough~$n$, the tightness of $\{F_{n,\alpha,1} : n\in\N\}$ follows.

Having established convergence of $F_{n,\alpha,i}$ along a subsequence simultaneously for all $i$, we may assume without loss of generality that this subsequence is the entire sequence. We now turn to the collection $\{S_{n,\alpha}\}$. This is a collection of random finite sets, and regarding them as $\{0,1\}$-valued random measures (i.e., as simple point processes), we see that $\{S_{n,\alpha}\}_{n\in\N}$ is a tight sequence of random measures, as these laws are all defined on the compact set $[a,b]$.

At this point we have assumed $X_{n,\alpha} \stackrel{d}{\to} X_{\infty,\alpha}$ as $n\to \infty$ and have established tightness of $\{S_{n,\alpha} : n\in \N\}$ as well as that $\overline F_{n,\alpha} \stackrel{d}{\to} \overline F_{\infty,\alpha}$ as $n\to\infty$. It is an immediate consequence that there exists a subsequence $n_k$ such that $(X_{n_k,\alpha}, \overline F_{n_k,\alpha}, S_{n_k,\alpha}) \stackrel{d}{\to} (X_{\infty,\alpha}, \overline F_{\infty,\alpha}, S_{\infty,\alpha})$, where $S_{\infty,\alpha}$ is some integer-valued random measure; we make no statement about the joint distribution of the limiting triple. We have to establish the properties of these objects contained in the three bullet points at the beginning of the proof, but to do so we will need to slightly modify $\overline F_{\infty,\alpha,i}$ and $S_{\infty,\alpha}$.

Note that it may not be the case that we can identify $S_{\infty,\alpha}$ with a finite set, as $S_{\infty,\alpha}$ may assign integer measure greater than 1 to singletons. So we define $S_\alpha$ to be the finite set support of $S_{\infty,\alpha}$. Now since $|S_{\alpha}|$ and $|S_{\infty,\alpha}|$ are integer-valued random variables (where the $|S_{\infty,\alpha}|$ is the measure assigned to $[a,b]$ by the associated integer-valued measure), we have, by the Portmanteau theorem,
\begin{equation}\label{e.S_alpha bound}
\P\left(|S_\alpha| \geq \ell\right) \leq \P\left(|S_{\infty, \alpha}| \geq \ell\right) =\lim_{n\to\infty} \P\left(|S_{n, \alpha}| \geq \ell\right) \leq p_{\ell},
\end{equation}
thus verifying the second bullet point from the beginning of the proof.


Because of the possibility that points in $S_{\infty,\alpha}$ have multiplicity greater than 1, we will later remove some entries of $\overline F_{\infty,\alpha}$ to obtain the final $\overline F_\alpha$.

Now we verify the first bullet point, namely that $\overline F_{\infty, \alpha}$ is uniformly $L^{\beta_2-}$-comparable to Brownian motion. Fix $i\in\N$, $\eta \in (0, 1-\beta_2^{-1})$, and let $A$ be a given measurable subset of $\mc C_{0,*}([a,b], \R)$ such that $\B_{0,*}^{[a,b]}(A) > 0$. Since all Borel probability measures on Polish spaces are regular, in particular outer regular, we have that
$$ \P\left(F_{\infty, \alpha, i} \in A\right) = \inf\big\{\P\left(F_{\infty, \alpha, i} \in O\right) : A\subseteq O \text{ open}\big\}.$$
Since the $O$ are open, we have by the Portmanteau theorem
\begin{align}
\inf\Big\{\P\left(F_{\infty, \alpha, i} \in O\right) : A\subseteq O \text{ open}\Big\} 
&\leq \inf\left\{\lim_{n\to\infty}\P\left(F_{n, \alpha, i} \in O\right) : A\subseteq O \text{ open}\right\} \nonumber\\
&\leq \inf\left\{C_0\cdot\left(\B_{0,*}^{[a,b]}\left(O\right)\right)^{1-\beta_2^{-1}-\eta} : A\subseteq O \text{ open}\right\} \nonumber\\
&= C_0\cdot\left(\inf\left\{\left(\B_{0,*}^{[a,b]}\left(O\right)\right) : A\subseteq O \text{ open}\right\}\right)^{1-\beta_2^{-1}-\eta} \nonumber\\
&= C_0\cdot\left(\B_{0,*}^{[a,b]}\left(A\right)\right)^{1-\beta_2^{-1}-\eta}. \label{e.uniform brownian comparison}
\end{align}
Here $C_0$ is an $\eta$-dependent constant independent of $n,\alpha, i$ which comes (for large enough $n$) from the uniform $L^{\beta_2-}$-comparison to Brownian motion enjoyed by the collection $\{F_{n,\alpha,i} : n\in \N, \alpha\in \mathcal I, i\in \N\}$. In the last equality we have used that $\B_{0,*}^{[a,b]}$ is an outer regular measure. This establishes the uniform $L^{\beta_2-}$-comparison of $F_{n,\alpha,i}$ to Brownian motion above scale zero for every $i$.

Finally we define $\overline F_{\alpha}$ so that the third bullet point, i.e., $X_{\infty, \alpha} = {\rm Quilt}[\overline F_\alpha, S_\alpha]$ on $[a,b]$, holds almost surely. Label the points in $S_\alpha$ as $\{s_1, \ldots, s_N\}$; let $m(s_i)$ be the multiplicity of $s_i$ in $S_{\infty, \alpha}$; and let $r(i) = \sum_{1\leq j\leq i} m(s_i)$. Then we let
$$F_{\alpha, i} = F_{\infty, \alpha, r(i)}$$
for $i=1, \ldots, N$. For $i>N$, $F_{\alpha, i} = F_{\infty,\alpha,i}$. In words, if the multiplicity of $s_i$ is greater than 1, this means that a corresponding number of entries of $F_{\infty, \alpha, i}$ were squished together into an interval of size zero in the stitching, and so must be removed from the fabric collection.

As we have $(X_{n_k,\alpha}, \overline F_{n_k,\alpha}, S_{n_k,\alpha}) \stackrel{d}{\to} (X_{\infty,\alpha}, \overline F_{\infty,\alpha}, S_{\infty,\alpha})$, by the Skorokhod representation theorem we may assume we are working on a probability space where this convergence happens almost surely. Then it is immediate from the fact that $X_n = {\rm Quilt}[\overline F_{n,\alpha}, S_{n,\alpha}]$ on $E_n^c$ and from our definitions of $\overline F_{\alpha}$ and $S_{\alpha}$ that $X_{\infty}={\rm Quilt}[\overline F_{\alpha}, S_\alpha]$ almost surely.
 Since the right-hand sides of equations \eqref{e.S_alpha bound} and \eqref{e.uniform brownian comparison}  are independent of $\alpha\in \mc I$, this establishes Proposition~\ref{p.limit is quiltable}.
\end{proof}

With the existence of the limiting weight profile and the knowledge that it enjoys a description as a Brownian motion patchwork quilt, we may prove the next corollary. It is a stronger version of Theorem~\ref{t.weight profile increment moment bound}, stated with the constants that are uniform once the space of admissible initial conditions has been fixed.

\begin{corollary}\label{c.precise increment moment bound}
Let $f\in \mc I_{\bar \Psi}$ and $0<\eta<\frac{1}{2}$. Then there exist constants $G = G(\bar\Psi, \eta)<\infty$ and $y_0>0$ such that, for $|y|<y_0$,
$$\E\left[\left|\weight^f_{\infty}[(*,0)\to (y,1)] - \weight^f_{\infty}[(*,0)\to (y,1)]\right|^{2-\eta}\right] \leq G |y|^{1-\eta/2}.$$
\end{corollary}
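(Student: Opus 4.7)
The plan is to leverage the Brownian motion patchwork quilt representation of the limiting weight profile furnished by Theorem~\ref{t.limiting weight profile is quiltable}. That theorem tells us that, uniformly in $f\in\mc I_{\bar\Psi}$, the process $X(\cdot):=\weight^f_\infty[(*,0)\to(\cdot,1)]$ on $[-1,1]$ coincides almost surely with $\mathrm{Quilt}[\overline F_f, S_f]$, where the fabric collection $\{F_{f;i}\}_{(f,i)\in\mc I_{\bar\Psi}\times\N}$ uniformly withstands $L^{3-}$-comparison with Brownian motion above scale zero (as $n=\infty$), and where the stitch set satisfies $\P(|S_f|\geq\ell)\leq C\ell^{-(2-\epsilon')}$ for every $\epsilon'>0$ with a constant independent of $f$.

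Assume without loss of generality that $y>0$. Let $K=K_y$ denote the number of patches of the quilt intersecting $[0,y]$ (so $K\leq|S_f|+1$) and define
$$M = M_y := \max_{i:\,\mrm{patch}_i\cap[0,y]\neq\emptyset}\ \sup_{s,t\in[0,y]}\bigl|F_{f;i}(s)-F_{f;i}(t)\bigr|.$$
Telescoping $X(y)-X(0)$ over the at most $K$ consecutive patches that meet $[0,y]$, and using that the stitch shifts $v_i$ are chosen so as to keep the quilt continuous, one obtains the deterministic bound $|X(y)-X(0)|\leq K\cdot M$. H\"older's inequality with conjugate exponents $p,q\in(1,\infty)$ then gives
$$\E\bigl[|X(y)-X(0)|^{2-\eta}\bigr]\leq \E\bigl[K^{(2-\eta)p}\bigr]^{1/p}\cdot \E\bigl[M^{(2-\eta)q}\bigr]^{1/q}.$$

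Now choose $p$ slightly less than $2/(2-\eta)$ so that $(2-\eta)p<2-\epsilon'$ for a small $\epsilon'>0$; then $\E[K^{(2-\eta)p}]\leq\E[(|S_f|+1)^{(2-\eta)p}]$ is a finite $y$-independent constant by the polynomial stitch-count tail. For the second factor, applying the uniform $L^{3-}$ Brownian comparison to the event $A_x=\{g:\sup_{s,t\in[0,y]}|g(s)-g(t)|>x\}$ together with the Gaussian bound $\B_{0,*}^{[-1,1]}(A_x)\lesssim\exp(-x^2/(2y))$ for $x\gtrsim y^{1/2}$ yields $\P(\mrm{osc}_{[0,y]}(F_{f;i})>x)\lesssim\exp(-cx^2/y)$, uniformly in $i$ and $f$, whence $\E[\mrm{osc}_{[0,y]}(F_{f;i})^r]\lesssim y^{r/2}$ for every $r\geq 1$. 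Bounding $M^{(2-\eta)q}\leq\sum_i\mrm{osc}_{[0,y]}(F_{f;i})^{(2-\eta)q}\one_{i\leq|S_f|+1}$ and applying Cauchy--Schwarz to split each summand (pairing the oscillation moment against $\P(|S_f|+1\geq i)$, with an exponent $b<2-\epsilon'$ chosen to ensure summability of $\sum_i i^{-(2-\epsilon')/b}$) produces $\E[M^{(2-\eta)q}]\lesssim y^{(2-\eta)q/2}$. Multiplying the two factors in the H\"older bound yields the desired $G|y|^{1-\eta/2}$, with $G$ depending only on $\eta$ and $\bar\Psi$.

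The principal technical obstacle is the correlation between $K$ and $M$: one cannot treat the fabric pieces as independent of the stitch locations, so a direct sum-and-take-expectation argument fails. H\"older circumvents this, but at the cost of requiring a high-order moment of $M$ of order $(2-\eta)q\sim 4/\eta-2$, which is affordable only because each fabric piece is $L^{3-}$-Brownian and therefore enjoys Gaussian-type oscillation tails of all orders with constants uniform in $i$ and $f$. The restriction to the $(2-\eta)$ moment, rather than to a genuine second moment, traces directly to the $\ell^{-(2-\epsilon')}$ (rather than $\ell^{-(2+\epsilon')}$) decay of the stitch-count tail provided by Theorem~\ref{t.limiting weight profile is quiltable}, exactly as flagged in the discussion immediately following Theorem~\ref{t.weight profile increment moment bound}.
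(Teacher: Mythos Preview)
Your proof is correct and follows the same overall architecture as the paper's: invoke the quilt representation from Theorem~\ref{t.limiting weight profile is quiltable}, separate the stitch-count and fabric-oscillation contributions via H\"older, and extract the $|y|^{1-\eta/2}$ scaling from Brownian moment bounds on the fabric pieces. The technical implementation differs in two respects worth noting.

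First, the paper telescopes $X(y)-X(0)$ exactly as $\sum_{i=1}^N\bigl(F_i(s_i)-F_i(s_{i-1})\bigr)$, applies the convexity inequality $(\sum a_i)^{2-\eta}\le N^{1-\eta}\sum a_i^{2-\eta}$, and then uses a \emph{three}-way generalized H\"older on each summand with the specific exponents $p=2+2\eta$, $q=2-\eta$, and $r$ the residual conjugate; this yields factors $\E[N^{(1-\eta)p}]^{1/p}$, $\P(N\ge i)^{1/q}$, and a fabric moment of order $(2-\eta)r$. Your cruder pointwise bound $|X(y)-X(0)|\le K\cdot M$ followed by a two-way H\"older forces a fabric moment of much higher order $(2-\eta)q\sim 4/\eta-2$, but this is harmless because the $L^{3-}$ comparison transmits the Gaussian oscillation tail (with exponent reduced by the factor $2/3-\eta'$) and hence gives all moments with the correct $y^{r/2}$ scaling. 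Second, the paper handles the fabric moment by invoking the Radon--Nikodym derivative $Z\in L^{3-}$ explicitly and pairing it against a pure Brownian moment via H\"older with conjugates $3$ and $3/2$; you instead work directly with tail bounds on oscillation events, which is arguably more elementary and sidesteps any question about existence of $Z$. Both routes lead to the same conclusion with constants depending only on $\eta$ and~$\bar\Psi$.
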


In the case where $f\equiv 0$, corresponding to flat initial conditions, $\weight^f_{\infty}[(*,0)\to (y,1)]$ is strongly believed to be the Airy$_1$ process (in analogy with the exponential and Poissonian LPP models where such a result has been proven), though we have been unable to find a proof in the literature.

As was qualitatively described in Section~\ref{s.intro general init condition Brownian regularity}, to upgrade this result from the $2-\eta$ moment to the second moment would require the knowledge that the first parameter of $2$ in the Brownian motion patchwork quilt parameters in Theorem~\ref{t.patchwork quilt} may be increased to $2+\rho$ for some $\rho>0$.

\begin{proof}[Proof of Corollary~\ref{c.precise increment moment bound}]
We assume without loss of generality that $y>0$, as the same argument works for the other case. The constant $G$ may vary from line to line, and may depend only on $\eta$ and $\bar\Psi$. For ease of notation, we will write $\weight(y)$ for $\weight^f_{\infty}[(*,0)\to (y,1)]$. Now, by Theorem~\ref{t.patchwork quilt} and Propositions~\ref{p.limit of general weight profile} and \ref{p.limit is quiltable}, we may write
$$\weight(y) = \mathrm{Quilt}[\overline F, S](y),$$
and we label the points of the stitch point set $S$ which are strictly smaller than $y$ as $s_1<s_2< \ldots <s_{N-1}  < y$. We set $s_0 = 0$ and $s_{N} = y$ by convention, though they need not lie in $S$. Then, using that the convexity of the mapping $x\mapsto x^{2-\eta}$ implies $\left(\sum_{i=1}^n a_i\right)^{2-\eta}\leq n^{1-\eta} \sum_{i=1}^na_i^{2-\eta}$, we get
\begin{align*}
\E\left[\bigl|\weight(y) - \weight(0)\bigr|^{2-\eta}\right] = \E\left[\left|\sum_{i=1}^{N} \bigl(F_i(s_i) - F_i(s_{i-1})\bigr)\right|^{2-\eta}\right]
&\leq \E\left[\left(\sum_{i=1}^{N} \big|F_i(s_i) - F_i(s_{i-1})\big|\right)^{2-\eta}\right]\\
&\leq \E\left[N^{1-\eta} \sum_{i=1}^{N}\big|F_i(s_i) - F_i(s_{i-1})\big|^{2-\eta}\right].
\end{align*}
We write the last sum as $\sum_{i=1}^\infty \E\left[N^{1-\eta}\big|F_i(s_i) - F_i(s_{i-1})\big|^{2-\eta} \one_{i\leq N}\right]$. We will bound each individual summand using the generalised H\"older's inequality, with $p=2+2\eta, q=2-\eta,$ and $r$ defined by 
$$\frac{1}{r} = 1-\frac{1}{2+2\eta} - \frac{1}{2-\eta},$$
which is positive for $0<\eta<1/2$. Doing so, we get that $\E\left[N^{1-\eta}\left|F_i(s_i) - F_i(s_{i-1})\right|^{2-\eta} \one_{i\leq N}\right]$ is bounded by
\begin{equation}\label{e.generalized holder result}
\E\left[N^{(1-\eta)p}\right]^{1/p}\cdot\P\left(N\geq i\right)^{1/q}\cdot \E\left[|F_i(s_i) - F_i(s_{i-1})|^{(2-\eta)r}\right]^{1/r}.
\end{equation}
We require this to be summable and bounded by $Gy^{1-\eta/2}$. Note that $(1-\eta) p = 2(1-\eta^2)<2$; since the tail bound on $N$ ensures it has finite $2-\epsilon$ moment for any $\epsilon>0$, the first factor in \eqref{e.generalized holder result} is finite and bounded by an $\eta$-dependent quantity.

Now we look at the third factor of \eqref{e.generalized holder result}. This is clearly bounded by
$$G\cdot \E\left[\sup_{s\in[0,y]} |F_i(s)-F_i(0)|^{(2-\eta)r}\right]^{1/r},$$
as $s_i\leq y$. Let $Z$ be the Radon-Nikodym derivative of $F_i$ with respect to standard Brownian motion on $[0,1]$, denoted $B$, so that we know that $Z$ has finite $3-\epsilon$ moment for any $\epsilon>0$. Thus we get
\begin{align*}
\E\left[\sup_{s\in[0,y]} |F_i(s)-F_i(0)|^{(2-\eta)r}\right] &= \E\left[\sup_{s\in[0,t]} |B(s)|^{(2-\eta)r}Z\right]
\leq \E\left[\sup_{s\in[0,y]} |B(s)|^{3(2-\eta)r}\right]^{1/3}\cdot \E[Z^{3/2}]^{2/3},
\end{align*}
again using H\"older's inequality with H\"older conjugates $3$ and $3/2$. Using the symmetry of Brownian motion and the reflection principle, we obtain 
$$\E\left[\sup_{s\in[0,y]} |B(s)|^{3(2-\eta)r}\right] \leq G\cdot\E\left[|X|^{3(2-\eta)r}\right] \leq Gy^{3(1-\eta/2)r},$$
where $X$ is a normal random variable with mean 0 and variance $y$, and the last inequality is by a standard expression for the moments of the normal distribution. Tracing the steps back, we see that the third factor of \eqref{e.generalized holder result} is bounded by $Gy^{(1-\eta/2)}$, since $\E[Z^{3/2}]$ is finite and depends only on $\bar\Psi$.

Thus all that remains to be shown is that the second factor of \eqref{e.generalized holder result} is summable in $i$. Taking the $\epsilon$ in Definition~\ref{d.patchwork quilt} to be $\eta/2$, we see that for large enough $i$, we have
$$\P\left(N\geq i\right)^{1/(2-\eta)} \leq i^{-(2-\eta/2)/(2-\eta)},$$
which is summable. This completes the proof of Corollary~\ref{c.precise increment moment bound}.
\end{proof}

\bibliographystyle{alphaurl}
\bibliography{bmreg}

\appendix

\chapter{Brownian meander calculations}\label{a.brownian meander calculations}

In this appendix we carry out the calculations concerning Brownian meander whose conclusions were used in Chapter~\ref{ch.application proofs}; these calculations were recorded as Lemmas~\ref{l.near zero meander}, \ref{l.brownian meander from 0 estimate}, \ref{l.brownian meander increment estimate}, and \ref{l.prob of return from above}. We adopt the notation for the density and modified distribution function of mean zero normal random variables with variance $\sigma^2$:
$$\varphi_{\sigma^2}(x) = \frac{1}{\sqrt{2\pi}\sigma}\exp\left(-\frac{x^2}{2\sigma^2}\right) \quad \text{and} \quad \tilde\Phi_{\sigma^2}(x) = \int_0^x \varphi_{\sigma^2}(w)\, \mathrm dw.$$

We recall that the standard Brownian meander $\bme$ is a non-homogeneous Markov process on $[0,1]$, started at 0, which can be obtained as a weak limit, as $\epsilon \searrow 0$, of standard Brownian motion on $[0,1]$ conditioned to stay above $-\epsilon$ \cite{durrett1977weak}. Its transition probabilities are given---see, for example, \cite{durrett1977weak}---for $x,y>0$ and $0<s<t\leq 1$ by
\begin{align*}
\P\Big(\bme(t) \in \mathrm dy \mid \bme(s) = x\Big) &= \Big(\varphi_{t-s}(y-x) - \varphi_{t-s}(y+x)\Big)\frac{\tilde\Phi_{1-t}(y)}{\tilde\Phi_{1-s}(x)}\,\mathrm dy,\\
\P\Big(\bme(t)\in \mathrm dy\Big) &= 2\sqrt{2\pi}\cdot\frac{y}{t}\cdot \varphi_t(y)\tilde\Phi_{1-t}(y)\, \mathrm dy.
\end{align*}

Before turning to the proof of Lemma~\ref{l.near zero meander}, we state and prove the following simple estimate.

\begin{lemma}\label{l.conditioned inf distribution bound}
Let $X\geq 0$ be a random variable with density $f$, and suppose that $f$ is non-increasing on $[0,\infty)$. Then we have, for all $x,y\in\R$ with $0\leq x<y$,
$$\P\bigl(X\geq x\mid X\leq y\bigr) \leq \frac{y-x}{y}.$$
\end{lemma}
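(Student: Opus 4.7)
The plan is to unfold the conditional probability into a ratio of integrals of $f$ and reduce the claim to a comparison of averages of $f$ over two adjacent intervals.

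First I would write
\[
\P(X \geq x \mid X \leq y) \;=\; \frac{\int_x^y f(t)\,\mathrm dt}{\int_0^y f(t)\,\mathrm dt}.
\]
The inequality $\int_x^y f \,\big/ \int_0^y f \leq (y-x)/y$ is, after cross-multiplying and subtracting $(y-x)\int_x^y f$ from both sides, equivalent to
\[
x \int_x^y f(t)\,\mathrm dt \;\leq\; (y-x)\int_0^x f(t)\,\mathrm dt,
\]
i.e.\ to the assertion that the mean of $f$ on $[x,y]$ is at most the mean of $f$ on $[0,x]$.

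The hypothesis that $f$ is non-increasing on $[0,\infty)$ makes this immediate: every value of $f$ on $[x,y]$ is bounded above by $f(x)$, while every value of $f$ on $[0,x]$ is bounded below by $f(x)$. Hence
\[
\frac{1}{y-x}\int_x^y f(t)\,\mathrm dt \;\leq\; f(x) \;\leq\; \frac{1}{x}\int_0^x f(t)\,\mathrm dt,
\]
which, when rearranged, yields the target bound. (In the degenerate case $x=0$ the inequality $\P(X\geq 0 \mid X \leq y) \leq 1$ is trivial.) There is no real obstacle here; the one microscopic point to mind is the case where $\int_0^y f = 0$, which by non-negativity and monotonicity of $f$ forces $f\equiv 0$ on $[0,y]$ and so renders the conditioning degenerate and the conclusion vacuous.
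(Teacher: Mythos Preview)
Your proof is correct and follows essentially the same approach as the paper: both reduce the inequality to the fact that $\int_0^x f(t)\,\mathrm dt \geq xf(x)$ when $f$ is non-increasing. The paper frames this as showing $F(x)/x$ is non-increasing via its derivative, while you compare averages of $f$ over $[0,x]$ and $[x,y]$ directly, but the underlying idea is identical.
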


\begin{proof}
Let $F$ be the distribution function of $X$. We have that
$$\P\bigl(X\geq x\mid X\leq y\bigr) = \frac{\P\left(x\leq X\leq y\right)}{\P\left(X\leq y\right)} = \frac{F(y)-F(x)}{F(y)} = 1-\frac{F(x)}{F(y)}.$$
We wish to show that $F(x)/F(y)\geq x/y$ for all $x\leq y$, which, by rearranging, is equivalent to showing that the function $x\mapsto F(x)/x$ is non-increasing. By considering the derivative of this function, we see that the desired bound is equivalent to showing that for all $x\geq 0,$ $xf(x)\leq F(x).$

It is easy to see that our assumptions imply this inequality holds, for we have
$$F(x)\geq F(x)-F(0) = \int_0^x f(x)\,\mathrm dx \geq xf(x),$$
as $f$ is non-increasing on $[0,\infty)$.
\end{proof}

\begin{proof}[Proof of Lemma~\ref{l.near zero meander}]
We may assume $\eta\leq 1$ as we are working on $[0,1]$. We use the well-known fact that the distribution of $\bme$ on an interval $[r,1]$, conditionally on $\bme(r) = x$, is that of a Brownian motion on $[r,1]$ started at $x$ and conditioned to stay positive which is independent of $\bme$ on $[0,r]$. Thus, we have that
\begin{align}
\P\big(\nz(\bme, \eta) \mid \bme(\eta)\big) &= 
\P\left(\inf_{x\in[\eta,1]} B(x) < a\eta^{1/2}-\bme(\eta)\ \Big| \  \bme(\eta), \inf_{[\eta,1]}B(x) \geq -\bme(\eta)\right)\cdot\one_{\bme(\eta)\geq a\eta^{1/2}}\nonumber\\
&\qquad +\one_{\bme(\eta)\leq a\eta^{1/2}}, \label{e.nz breakup}
\end{align}
where $B$ is a standard Brownian motion started at 0. The reflection principle for Brownian motion asserts that $\inf_{x\in[\eta,1]} B(x) \stackrel{d}{=} -|N(0,1-\eta)|$, where $N(0,\sigma^2)$ is a normal random variable with mean zero and variance $\sigma^2$. Thus, letting $Z$ be distributed as $|N(0,1-\eta)|$, we may write
\begin{align*}
\MoveEqLeft[10]
\P\left(\inf_{x\in[\eta,1]} B(x) < a\eta^{1/2}-\bme(\eta) \ \big| \  \bme(\eta), \inf_{[\eta,1]}B(x) \geq -\bme(\eta)\right)\\
&= \P\Big(Z > \bme(\eta)-a\eta^{1/2} \ \big| \  \bme(\eta), Z \leq \bme(\eta)\Big)
\leq \frac{a\eta^{1/2}}{\bme(\eta)};
\end{align*}
here we were able to use Lemma~\ref{l.conditioned inf distribution bound}, as the law of $Z$ clearly satisfies this lemma's hypotheses.
We now take expectations in \eqref{e.nz breakup}, using that $a\eta^{1/2}/\bme(\eta)\geq 1$ when $\bme(\eta)\leq a\eta^{1/2}$ and that the density of $\bme(\eta)$ at $x$ is $2\sqrt{2\pi}\eta^{-1} x\varphi_\eta(x)\tilde\Phi_{1-\eta}(x)$, to obtain 
\begin{align*}
\P\left(\nz(\bme, \eta)\right) \leq
\int_0^{\infty}2\sqrt{2\pi}\frac{x}{\eta}\varphi_\eta(x)\tilde\Phi_{1-\eta}(x) \cdot \frac{a\eta^{1/2}}{x} \, \mathrm dx
&= 2\sqrt{2\pi} a\eta^{-1/2}\int_0^{\infty} \varphi_\eta(x)\tilde\Phi_{1-\eta}(x) \, \mathrm dx \\
&\leq \sqrt{2\pi} a\eta^{-1/2} \E[|N(0,\eta)|] = 2a, 
\end{align*}
where we used that $\tilde\Phi_{1-\eta}(x)\leq 1/2$ for all $x$ and also that $\E|N(0,\eta)| = \eta^{1/2}\E|N(0,1)| = \sqrt{2/\pi}\eta^{1/2}$.
\end{proof}

\begin{proof}[Proof of Lemma~\ref{l.brownian meander from 0 estimate}]
\begin{align*}
\P\big(\bme(\eta) < 1.1\eta^{1/2}\big) =\frac{2\sqrt{2\pi}}{\eta}\int_0^{1.1\eta^{1/2}} y\varphi_{\eta}(y)\tilde\Phi_{1-\eta}(y)\, \mathrm dy.
\end{align*}
Note that 
$$\tilde\Phi_{1-\eta}(y) \leq (2\pi(1-\eta))^{-1/2}y \quad \text{and} \quad \varphi_\eta(y) \leq (2\pi \eta)^{-1/2}.$$
Substituting these in, we get
\begin{align*}
\P\big(\bme(\eta) < 1.1\eta^{1/2}\big)\leq \frac{2\sqrt{2\pi}}{2\pi\eta^{3/2}(1-\eta)^{1/2}}\int_0^{1.1\eta^{1/2}}y^2\, \mathrm dy = \frac{(1.1)^3}{3\sqrt\pi}< \frac{1}{2},
\end{align*}
the last inequality by numerical evaluation. We have used that $\eta\leq 1/2$.
\end{proof}

\begin{proof}[Proof of Lemma~\ref{l.brownian meander increment estimate}]
We need to bound $\P\big(\bme(t) < 1.1\eta^{1/2} \mid \bme(t-\eta) = x\big)$ for all values of $x\leq 2\eta^{1/2}$ and $t\in(\eta,1-4\eta)$. Let $s=t-\eta$ and note that $s\leq 1-5\eta$.

We have
\begin{align}
\P\Big(\bme(t) < 1.1\eta^{1/2} \ \Big| \  \bme\left(s\right)=x\Big)
& = \int_0^{1.1\eta^{1/2}} (\varphi_\eta(y-x) - \varphi_\eta(y+x))\cdot \frac{\tilde\Phi_{1-t}(y)}{\tilde\Phi_{1-s}(x)}\, \mathrm dy\nonumber\\
&= \int_0^{1.1\eta^{1/2}} \varphi_\eta(y-x)\left(1 - \exp(-2xy/\eta)\right)\cdot \frac{\tilde\Phi_{1-t}(y)}{\tilde\Phi_{1-s}(x)}\, \mathrm dy\nonumber\\
&\leq \frac{2}{\eta}\cdot\int_0^{1.1\eta^{1/2}} \varphi_\eta(y-x)\cdot xy\cdot \frac{\tilde\Phi_{1-t}(y)}{\tilde\Phi_{1-s}(x)}\, \mathrm dy,\label{e.conditional Brown meander integral}
\end{align}
using in the last line that $1-e^{-r}\leq r$ for any $r\geq 0$.
Now we bound the fraction involving $\tilde\Phi$. Note that the numerator satisfies
$$\tilde\Phi_{1-t}(y) \leq \frac{y}{\sqrt{2\pi(1-t)}},$$
while for the denominator we have, using that the standard normal density is decreasing on the positive real line,
\begin{align*}
\tilde\Phi_{1-s}(x) = \P\left(N(0,1)\in (1-s)^{-1/2}[0,x]\right) \geq \frac{x}{\sqrt{2\pi(1-s)}}\exp\left(-\frac{x^2}{2(1-s)}\right)\geq \frac{x}{\sqrt{2\pi(1-s)}}\cdot e^{-2/5},
\end{align*}
as $x\leq 2\eta^{1/2}$ and $1-s\geq 5\eta$.
We also have the bound $\varphi_\eta(y-x) \leq (2\pi\eta)^{-1/2}$. Substituting these inequalities in\eqref{e.conditional Brown meander integral}, we get
\begin{align*}
\P\Big(\bme(t) < 2\eta^{1/2} \mid \bme\left((k-1)\eta\right)=x\Big)
&\leq \frac{2e^{2/5}}{\eta^{3/2}}\cdot\int_0^{1.1\eta^{1/2}} y^2(1-s)^{1/2}(2\pi(1-t))^{-1/2}\, \mathrm dy.
\end{align*}
Since $s=t-\eta$, we have $\sqrt{(1-s)/(1-t)}\leq \sqrt 2$.
Thus we may bound the last expression as
\begin{align*}
\frac{2e^{2/5}}{\eta^{3/2}\pi^{1/2}}\cdot\int_0^{1.1\eta^{1/2}} y^2\, \mathrm dy = \frac{2e^{2/5}(1.1)^3\eta^{3/2}}{3\eta^{3/2}\pi^{1/2}}.
\end{align*}
Overall, this gives that
$$\P\Big(\bme(t) < 1.1\eta^{1/2} \ \big| \  \bme\left(s\right)=x\Big)\leq \frac{2\times(1.1)^3e^{2/5}}{3\sqrt\pi} \leq \frac{3}{4},$$
the last inequality by numerical evaluation.
\end{proof}

\begin{proof}[Proof of Lemma~\ref{l.prob of return from above}]
We again use that, conditionally on $\bme\left(t-\eta\right) = x$, the distribution of $\bme$ on $[t-\eta, 1]$ is that of a Brownian motion $B$ started at $x$ conditioned to stay positive. This implies
\begin{align*}
\P\left(\inf_{s\in[t-\eta, 1]} \bme(s) < \eta^{1/2} \ \Big| \  \bme\left(t-\eta\right) =x\right)
%
%
&\leq \P\left(\inf_{s\in[t-\eta,1]} B(s) < \eta^{1/2}-x \ \Big| \  \inf_{s\in[t-\eta,1]} B(s)>-x\right),
\end{align*}
where $B$ is a standard Brownian motion on $[t-\eta,1]$ started at 0.
Now by the reflection principle and the fact that the normal density decreases on the positive real line, we know that
$$\P\left(\inf_{s\in[t-\eta,1]} B(s)>-x\right) = \P\big(|N(0,1-(t-\eta))| < x\big) \geq \frac{1.1\eta^{1/2}}{\sqrt{2\pi(1-(t-\eta))}} \exp\left(-\frac{(1.1)^2\eta}{2(1-(t-\eta))}\right)$$
since $x\geq 1.1\eta^{1/2}$; and also
\begin{align*}
\P\left(\inf_{s\in[t-\eta,1]} B(s)\in[-x, -x+\eta^{1/2}]\right) &= \P\Big(\big|N(0,1-(t-\eta))\big|\in [x-\eta^{1/2},x]\Big)\\
&\leq \P\Big(\big|N(0,1-(t-\eta))\big|\in [0.1\eta^{1/2}, 1.1\eta^{1/2}]\Big)\\
&\leq \frac{\eta^{1/2}}{\sqrt{2\pi(1-(t-\eta))}} \exp\left(-\frac{(0.1)^2\eta}{2(1-(t-\eta))}\right),
\end{align*}
again using the decreasing property of the normal density. So we obtain that the probability in the statement of Lemma~\ref{l.prob of return from above} is bounded above by 
$$\frac{1}{1.1}\exp\left(\frac{(1.1)^2\eta-(0.1)^2\eta}{2(1-(t-\eta))}\right) \leq \frac{1}{1.1}\exp\left(\frac{1.2}{20}\right)< \frac{1+0.09}{1.1} = 1-\delta$$
for a $\delta>0$ (where we have used that $e^x\leq 1+\frac{3}{2}x$ for $x<1/2$, as can be routinely checked). We have also used that $t<1-10\eta$.
\end{proof}

\end{document}